%% file: main.tex
\documentclass{amsart}

\input{nir}
\usepackage[backend=biber,
    style=alphabetic,
    sorting=ynt
]{biblatex}
\title{Intertwining Operators for Siegel Parabolics over Finite Fields} 
\author{Nir Elber and Hahn Lheem}
\date{\today}

\begin{document}

\begin{abstract}
    We consider degenerate principal series representations $\Ind_P^G\chi$ over finite fields, where $G$ is a classical subgroup of $\operatorname{GL}_{2n}$, and $P$ is the Siegel parabolic subgroup. For example, we show that this representation is always multiplicity-free and irreducible for generic characters $\chi$. We then discuss a particular intertwining operator $I$ on $\Ind_P^G\chi$ and its related combinatorics. Firstly, this operator $I$ produces families of diagonalizable antitriangular matrices with well-behaved eigenvalues. Secondly, applying $I$ to a special vector in $\Ind_P^G\chi$ leads us to various matrix Gauss sums, whose evaluations imply an explicit equidistribution result of the trace and determinant of symmetric and alternating invertible matrices.
\end{abstract}

\maketitle


\tableofcontents

\section{Introduction}
For motivation, we begin by reviewing the doubling method for finite fields as worked out in \cite{chang-doubling}, though we remark that the notation of the introduction thus differs from the notation of the rest of the paper. We refer to \cite{ps-r-doubling} for the theory over local and global fields. Let $q$ be a prime power not divisible by $2$ or $3$, and let $2n$ be a positive even integer. The ``doubling method'' is a way to define zeta functions and gamma factors to arbitrary irreducible representations $\pi$ of a classical group; notably, we require no genericity assumption. For the purposes of the introduction, we work with the group $G=\op{SL}_n(\FF_q)$.

The main idea of the doubling method is to embed our classical group of one of the same type and twice the size. Thus, we let $H\coloneqq\op{SL}_{2n}(\FF_q)$, and we let $P\subseteq H$ denote the Siegel parabolic subgroup of matrices of the form $\begin{bsmallmatrix}
    A & B \\ & D
\end{bsmallmatrix}$ where $A,B,D\in\FF_q^{n\times n}$ and $\det AD=1$. 
Note that $G\times G$ embeds diagonally into $P$ via $(A,B)\mapsto\op{diag}(A,B)$. 
Then for a character $\omega$ of $P$, one tries to prove a multiplicity one result of the form
\[\dim\op{Hom}_{G\times G}\left(\op{Ind}_P^H\omega\otimes\pi\otimes\pi^\lor,\CC\right)\stackrel?=1.\]
This multiplicity one result allows one to define a zeta function $Z(f,v,w)$ for $\pi$. 
One would like this zeta function to have a functional equation, so we need a map from $\Ind_P^H\omega$ to a dual version. For this, we define a special intertwining operator $M\colon\Ind_P^G\omega\to\Ind_P^G\omega'$ by
\[Mf(g)\coloneqq\sum_{B\in\FF_q^{n\times n}}f\left(\begin{bmatrix}
    & 1_n \\ -1_n
\end{bmatrix}\begin{bmatrix}
    1_n & B \\ & 1_n
\end{bmatrix}g\right),\]
where $\omega'\colon P\to\CC^\times$ is some other explicitly defined character; for example, one has $\omega''=\omega$. In general, one has that $M\circ M$ is an operator on $\Ind_P^G\omega$; when $\omega^2=1$, it turns out that $\omega'=\omega$ so that $M$ is an operator on $\Ind_P^G\omega$.

This intertwining operator $M$ now provides a functional equation for $Z$ of the form
\[Z(Mf,v,w)=\Gamma(\pi,\omega)Z(f,v,w),\]
where $\Gamma(\pi,\omega)$ is our gamma factor; see \cite[Theorem~3.14]{chang-doubling}. One would like to normalize $\Gamma(\pi,\omega)$, which is typically done by using the functional equation twice to note that $\left|\Gamma(\pi,\omega)\right|^2$ should be an eigenvalue of $M\circ M$.

Thus, we are interested in knowing the eigenvalues of $M$. Because $M$ is $H$-invariant, it will be helpful to decompose $\op{Ind}_P^H\omega$ into irreducible components. For example, the following theorem counts those components. It follows by combining \Cref{prop:ind-irred,prop:ind-mult-free}. The proof uses Gelfand pairs and Mackey theory.
\begin{theorem}
    The representation $\Ind_P^H\omega$ is multiplicity-free as an $H$-representation. 
    Furthermore, the number of irreducible components equals
    \[\begin{cases}
        1 & \text{if }\omega^2\ne1, \\
        2 & \text{if }\omega^2=1\text{ but }\omega\ne1, \\
        n+1 & \text{if }\omega=1.
    \end{cases}\]
\end{theorem}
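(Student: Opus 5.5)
By Mackey theory, $\dim\End_H(\Ind_P^H\omega)$ equals the number of double cosets $PwP$ in $P\backslash H/P$ that support an intertwiner, that is, those with $\omega(p)=\omega(wp'w^{-1})$ whenever $p=wp'w^{-1}\in P\cap wPw^{-1}$. Since the representation is multiplicity-free, this dimension $\sum m_i^2$ equals the number of irreducible components, so the plan is to prove multiplicity-freeness and then count supporting double cosets.

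\textbf{Double cosets.} The group $H$ acts transitively on $n$-dimensional subspaces of $\FF_q^{2n}$, and $P$ is the stabilizer of $L_0=\FF_q^n\oplus0$. Hence $P\backslash H/P$ is indexed by $k=\dim(L_0\cap gL_0)\in\{0,\dots,n\}$, giving $n+1$ double cosets. As representatives I take
\[w_k=\begin{bsmallmatrix}1_k&&&\\&&&1_{n-k}\\&&1_k&\\&-1_{n-k}&&\end{bsmallmatrix},\]
with the sign chosen so that $\det w_k=1$.

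\textbf{Support condition.} Write $\omega(p)=\chi(\det A)$ for a character $\chi$ of $\FF_q^\times$; this is forced because $\det D=(\det A)^{-1}$. Take $p\in P\cap w_kPw_k^{-1}$ with Levi part $A=\op{diag}(a,b)$ and $D=\op{diag}(d,e)$, block sizes $k,n-k$. Conjugating by $w_k$ swaps the $b$ and $e$ blocks up to sign, so the new $A$-block is $\op{diag}(a,e)$. The support condition therefore becomes
\[\chi(\det a\det b)=\chi(\det a\det e).\]
The constraint $\det a\det b\det d\det e=1$ leaves $\det b$ and $\det e$ free for $k<n$. Thus for $k<n$ the condition says $\chi(x)=\chi(y)$ for all $x,y\in\FF_q^\times$, i.e.\ $\chi=1$. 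I need to check this carefully against the sign and determinant-one constraints: the unipotent parts contribute trivially, and one should confirm exactly which pairs $(\det b,\det e)$ occur. Since the theorem predicts $2$ cosets when $\chi^2=1\ne\chi$, the correct condition is likely $\chi(x)=\chi(x^{-1})$, i.e.\ $\chi^2=1$, for the big cell $k=0$ (where $e$ plays the role of $d$, forcing $\det e=(\det b)^{-1}$ up to the $\det a\det d$ factor), and $\chi=1$ for the intermediate $0<k<n$ where extra freedom exists.

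\textbf{Counting.} The trivial coset $k=n$ always supports an intertwiner. So the dimension of $\End_H(\Ind_P^H\omega)$ is $1$ if $\chi^2\ne1$, $2$ if $\chi^2=1\ne\chi$, and $n+1$ if $\chi=1$. Pinning down the exact torus constraints in $P\cap w_kPw_k^{-1}$ is the main computational step.

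\textbf{Multiplicity-freeness.} This is the main obstacle. I would show that $\End_H(\Ind_P^H\omega)$ is commutative, equivalently that $(H,P,\omega)$ is a twisted Gelfand pair, via Gelfand's trick. Let $\theta$ be the anti-involution $g\mapsto Jg^{T}J^{-1}$, where $J=\begin{bsmallmatrix}&1\\1&\end{bsmallmatrix}$, adjusted by a sign if needed so that it preserves $H$. This $\theta$ maps $P$ to itself, since transpose followed by block swap keeps block upper triangular form, and it satisfies $\omega\circ\theta=\omega$ because it swaps $A\leftrightarrow D^T$ and $\det D=(\det A)^{-1}$; if the second fact fails, I would replace $\theta$ by $g\mapsto Jg^TJ^{-1}$ composed with inversion twisted appropriately. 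It remains to check that $\theta$ fixes each double coset. Since $\theta(w_k)$ is again a representative of the coset with the same $k$, or lies in $Pw_kP$ by the subspace-intersection invariant, each double coset is fixed. Then $\theta$ induces an anti-automorphism of the Hecke algebra fixing each basis function supported on $Pw_kP$, which forces the algebra to be commutative. Together with the count above, this gives the theorem.
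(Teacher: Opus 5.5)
Your overall strategy matches the paper's. You count the $n+1$ double cosets via Mackey theory; your $k$ is $n-\op{rank}C$ in the paper's indexing, and the support condition there reduces to $\beta(\det A_4)=\beta(\det D_4)$. You then apply Gelfand's trick with an anti-involution fixing each representative. Your $\theta(g)=Jg^{\intercal}J^{-1}$ is a fine substitute for the paper's $\iota(g)=\op{diag}(1_n,-1_n)g^{-1}\op{diag}(1_n,-1_n)$. Indeed, $\theta\left(\begin{bsmallmatrix}A&B\\C&D\end{bsmallmatrix}\right)=\begin{bsmallmatrix}D^{\intercal}&B^{\intercal}\\C^{\intercal}&A^{\intercal}\end{bsmallmatrix}$ preserves $P$ and $\op{rank}C$, and in fact $\theta(w_k)=w_k$ exactly. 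However, two steps are wrong as written.

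First, $\omega\circ\theta=\omega^{-1}$, not $\omega$. Your own observation that $\theta$ replaces $A$ by $D^{\intercal}$, with $\det D=(\det A)^{-1}$, gives $\omega(\theta(p))=\chi(\det A)^{-1}$. So $f\mapsto f\circ\theta$ preserves the Hecke algebra only when $\omega^2=1$; the paper's footnote records the same restriction for $\iota$. Your fallback of composing with inversion produces an automorphism rather than an anti-automorphism, so it gives no commutativity. The fix is that you do not need it. For $\omega^2\ne1$ your count already gives $\dim\End_H(\Ind_P^H\omega)=1$, so the representation is irreducible outright. The Gelfand argument is then only invoked when $\omega^2=1$, which is exactly how the paper organizes the proof.

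Second, your claim that $\det b$ and $\det e$ are independent for all $k<n$ is false at $k=0$. There the $a$ and $d$ blocks are empty, so $\det p=1$ forces $\det e=(\det b)^{-1}$, and the condition is precisely $\chi^2=1$. For $0<k<n$ the factor $\det a$ absorbs any choice, and the condition is $\chi=1$. You should derive this rather than infer it from the answer the theorem predicts. One route is the paper's description of $P\cap w_kPw_k^{-1}$ by $A_3=B_4=D_2=0$, which also shows that testing block-diagonal Levi elements suffices.

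Finally, your Mackey criterion should read $\omega(p)=\omega(p')$ for $p=wp'w^{-1}$; as typed it is a tautology.
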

For example, we see that most $\omega$ make $\Ind_P^H\omega$ irreducible, so $M\circ M$ must be a scalar. In these cases, one way to proceed is to find a special vector in $\Ind_P^H\omega$ for which one can see directly that it is an eigenvector and compute this eigenvalue; see \cite[Section~3.6]{chang-doubling} for more discussion. This motivates the following result, which is \Cref{prop:i-on-psi-eigen}.
\begin{theorem}
    Fix a nontrivial character $\psi\colon\FF_q\to\CC^\times$. For each character $\omega\colon P\to\CC^\times$, we define a vector $f_\omega\in\Ind_P^H\omega$ as supported on matrices of the form $p\begin{bsmallmatrix}
        & -1_n \\ 1_n
    \end{bsmallmatrix}\begin{bsmallmatrix}
        1_n & B \\ & 1_n
    \end{bsmallmatrix}$ for $p\in P$ with value
    \[f_\omega\left(p\begin{bmatrix}
        & -1_n \\ 1_n
    \end{bmatrix}\begin{bmatrix}
        1_n & B \\ & 1_n
    \end{bmatrix}\right)\coloneqq\omega(p)\psi(\tr B).\]
    Then
    \[Mf_\omega=\Bigg(\sum_{B\in\GL_n(\FF_q)}\omega(\det B)\psi(\tr B)\Bigg)f_{\omega'}.\]
\end{theorem}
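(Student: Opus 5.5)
The plan is to avoid computing $Mf_\omega$ at every point of $H$. Instead we use a uniqueness argument and then evaluate at one point. Write $w=\begin{bsmallmatrix} & -1_n\\ 1_n\end{bsmallmatrix}$ and $n(B)=\begin{bsmallmatrix}1_n & B\\ & 1_n\end{bsmallmatrix}$, and let $N=\{n(B)\}$ be the unipotent radical of $P$.

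\textbf{Step 1 (equivariance).} Directly from the definition, $f_\omega(g\,n(C))=\psi(\tr C)f_\omega(g)$ for all $g\in H$ and $C\in\FF_q^{n\times n}$. Indeed, $f_\omega$ is supported on $PwN$, and on that set right multiplication by $n(C)$ replaces $B$ by $B+C$. The operator $M$ is defined by left translations inside the argument of $f$, so it commutes with right translation by $H$. Hence $Mf_\omega\in\Ind_P^H\omega'$ satisfies the same $(N,\psi\circ\tr)$-equivariance on the right.

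\textbf{Step 2 (the equivariant space is a line).} We show that the space of $F\in\Ind_P^H\omega'$ with $F(gn(C))=\psi(\tr C)F(g)$ is one-dimensional and spanned by $f_{\omega'}$. Use the Bruhat decomposition $H=\bigsqcup_{r=0}^n Pw_rN$, where $w_r$ is the partial Weyl element swapping $r$ coordinates. Such an $F$ is determined by its values at the $w_r$. Suppose $r<n$. Then $N\cap w_r^{-1}Pw_r$ contains elements $n(C)$ with $C$ supported on the block of indices that $w_r$ fixes, and $\psi(\tr C)$ is nontrivial on these. Also $w_rn(C)w_r^{-1}$ lies in the unipotent radical of $P$, on which $\omega'$ is trivial. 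Comparing
\[F(w_r)=F\bigl(w_r n(C) w_r^{-1}\,w_r\bigr)=F\bigl(w_r n(C)\bigr)=\psi(\tr C)F(w_r)\]
forces $F(w_r)=0$. So $F$ is supported on the big cell $PwN$. On that cell it is determined by $F(w)$, and the map $pwn(C)\mapsto\omega'(p)\psi(\tr C)$ is well defined because $P\cap wNw^{-1}=1$. Thus $Mf_\omega=c\,f_{\omega'}$ with $c=Mf_\omega(w)$.

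\textbf{Step 3 (evaluate at $w$).} By definition,
\[c=\sum_B f_\omega\bigl(w^{-1}n(B)w\bigr),\]
with the paper's sign conventions for the two Weyl elements. A short computation gives $w^{-1}n(B)w=\begin{bsmallmatrix}1_n & \\ \pm B & 1_n\end{bsmallmatrix}$, up to a central sign absorbed by $\omega$. This element lies in $PwN$ exactly when its lower-left block is invertible, so the terms with $B\notin\GL_n(\FF_q)$ vanish. For invertible $B$ we write the explicit factorization
\[\begin{bmatrix}1 & \\ B & 1\end{bmatrix}=\begin{bmatrix}-B^{-1} & 1\\ & B\end{bmatrix}w\,n(B^{-1}),\]
with signs to be fixed. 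This gives the term $\omega\bigl(\begin{bsmallmatrix}-B^{-1}&*\\&B\end{bsmallmatrix}\bigr)\psi(\tr B^{-1})$. Expressing $\omega$ on the Levi through $\det$ of the blocks, reindexing $B\mapsto B^{-1}$, and using the stated relation between $\omega$ and $\omega'$ should turn the sum into $\sum_{B\in\GL_n}\omega(\det B)\psi(\tr B)$.

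\textbf{Main obstacle.} I expect the main difficulty to be the bookkeeping in Step 3. The signs in $w$ versus $w^{-1}$, the precise Bruhat factorization, and how $\omega$ of the Levi element (involving both $\det B$ and $\det B^{-1}$) combines into $\omega(\det B)$ must all match the definition of $\omega'$. Step 2 is conceptually the key point, but it is a routine Mackey-style vanishing argument.
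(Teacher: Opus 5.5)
Your route is the same as the paper's: a multiplicity-one statement for right $(N,\psi\circ\tr)$-equivariant vectors, followed by evaluation at the big-cell representative. Steps~1 and~3 are fine, but Step~2 rests on a false decomposition.

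It is not true that $H=\bigsqcup_r Pw_rN$. Since $P=MN$, we have $Pw_rP=\bigcup_{m\in M}Pw_rmN$, and $w_r$ does not normalize $M$ when $0<r<n$. Counting makes this concrete. On one side, $|P\backslash Pw_rN|=[N:N\cap w_r^{-1}Pw_r]=q^{r^2}$. On the other, $|P\backslash Pw_rP|=q^{r^2}{\binom{n}{r}_q}^2$, and these sum to $\binom{2n}{n}_q=|P\backslash H|$. So an equivariant $F$ is \emph{not} determined by the values $F(w_r)$. Representatives of $P\backslash H/N$ are the elements $w_rm$ with $m\in M$, exactly as in the proof of \Cref{prop:psi-t-mult-one}, and you have only shown $F(w_rm)=0$ for $m=1$.

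The missing case is precisely where invertibility does its work. Take $m=\op{diag}(A,D)$ and $n(C)\in N\cap w_r^{-1}Pw_r$; that is, $C$ is arbitrary except that the $r\times r$ block moved by $w_r$ vanishes. Put $u=m^{-1}n(C)m=n(A^{-1}CD)$. Then $w_rmu=(w_rn(C)w_r^{-1})\,w_rm$, and $w_rn(C)w_r^{-1}\in P$ has unipotent Levi part, so
$$F(w_rm)=\psi\bigl(\tr(C\,DA^{-1})\bigr)F(w_rm).$$
Thus the character you must show is nontrivial is $C\mapsto\psi(\tr(CT'))$ with $T'=DA^{-1}$, not $C\mapsto\psi(\tr C)$.

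Your choice of $C$ supported on the block fixed by $w_r$ no longer suffices for this twisted character. Take $n=2$, $r=1$ and $m=\op{diag}\bigl(1_2,\begin{bsmallmatrix} & 1\\ -1 & \end{bsmallmatrix}\bigr)\in\SL_4$. The twisted character is trivial on all such $C$, and one checks that $w_1m\notin Pw_1N$.

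The fix is to use the whole allowed region together with invertibility of $T'$. If $\psi(\tr(CT'))=1$ for every allowed $C$, then $T'$ is supported on a single $r\times r$ block, so $\op{rank}T'\le r<n$, a contradiction.

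With this repair you get $Mf_\omega=Mf_\omega(w)\,f_{\omega'}$, and Step~3 goes through as you outline. In the introduction's normalization, $w^{-1}n(B)w=\begin{bsmallmatrix}1_n & \\ -B & 1_n\end{bsmallmatrix}$ with no extra sign. The factorization is $\begin{bsmallmatrix}1_n & \\ X & 1_n\end{bsmallmatrix}=\begin{bsmallmatrix}X^{-1} & 1_n\\ & X\end{bsmallmatrix}w\,n(X^{-1})$; the top-left block is $+X^{-1}$, which is harmless anyway. Only $\omega$ on the lower-right block $X=-B$ matters, so no relation between $\omega$ and $\omega'$ is needed before reindexing $B\mapsto(-B)^{-1}$.
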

\begin{remark}
    In fact, even when $M$ fails to be a scalar, we will be able to show that the eigenvalue given by the Gauss sum equals the eigenvalue of smallest absolute value of $M$, and we conjecture that this eigenvalue has the largest eigenspace in $\Ind_P^H\omega$. This is discussed further in \Cref{rem:generic-eigenvalue}. It would be interesting to explicitly compute (or at least compare) the dimensions of all the eigenspaces of $\op{Ind}_P^H\omega$.
\end{remark}

Thus, we are motivated to evaluate these Gauss sums. In the case of $H=\SL_{2n}(\FF_q)$, the corresponding Gauss sums given as above have been evaluated in \cite{kim-gauss-sum}. However, considerations of other groups $G$ lead to different sums. For example, $H=\Sp_{2n}(\FF_q)$ leads to a sum over invertible symmetric matrices considered in \cite{saito-sym-gauss-sum}, and $H=\O_{4n}(\FF_q)$ leads to a sum over invertible alternating matrices (which appears to be new).

We provide evaluations for all of these matrix Gauss sums. Our methods are based on an explicit row-reduction analogous to the Bruhat decomposition methods of \cite{kim-gauss-sum}, but the explicit nature of our exposition allows our proofs to be rather uniform over all the various sums. For example, even though the sum over invertible symmetric matrices has already been considered in \cite{saito-sym-gauss-sum}, our method appears more direct.

Evaluating these Gauss sums also has a combinatorial application: we are able to provide an explicit formula for the number of invertible symmetric, alternating, or general matrices with given trace and determinant. For general invertible matrices, this application is essentially implicit in \cite[Theorem~6.2]{kim-gauss-sum}, so we only state results for symmetric and alternating matrices, which appear to be new. The following results follow from \Cref{cor:count-sym,cor:count-alt}, and they provide an explicit equidistribution result for the trace and determinant.
\begin{theorem}
    Fix $d\in\FF_q^\times$ and $t\in\FF_q$. For odd integers $2m+1$, the number $N(d,t)$ of symmetric $A\in\GL_{2m+1}(\FF_q)$ with $(\det A,\tr A)=(d,t)$ is bounded by
    \[\left|N(d,t)-\frac N{q(q-1)}\right|\le q^{m(m+1)}(q-1)^{m+1},\]
    where $N$ is the total number of invertible symmetric $(2m+1)\times(2m+1)$ matrices.
\end{theorem}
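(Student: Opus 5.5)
We count with characters. Write $S$ for the set of invertible symmetric $(2m+1)\times(2m+1)$ matrices over $\FF_q$. For a multiplicative character $\chi$ of $\FF_q^\times$ and an additive character $\psi_a(x)=\psi(ax)$, $a\in\FF_q$, we use the Gauss sums
\[G(\chi,a)\coloneqq\sum_{A\in S}\chi(\det A)\psi(a\tr A).\]
Orthogonality of characters gives
\[N(d,t)=\frac1{q(q-1)}\sum_{\chi}\sum_{a\in\FF_q}\overline{\chi}(d)\psi(-at)\,G(\chi,a).\]
The term $(\chi,a)=(1,0)$ contributes exactly $N/(q(q-1))$, the main term. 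So it suffices to show $|G(\chi,a)|\le q^{m(m+1)}(q-1)^{m+1}$ for every $(\chi,a)\ne(1,0)$; averaging the $q(q-1)-1$ error terms with weight $1/(q(q-1))$ then gives the bound, even with a little room to spare.

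We split the error terms into two cases.

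\textbf{Case $a\neq0$.} Substitute $A\mapsto a^{-1}A$. Since the size $2m+1$ is odd, $\det(a^{-1}A)=a^{-(2m+1)}\det A$, so $G(\chi,a)=\chi(a)^{-(2m+1)}G(\chi,1)$. It therefore suffices to bound the symmetric matrix Gauss sum $G(\chi,1)=\sum_{A\in S}\chi(\det A)\psi(\tr A)$. This is exactly the sum evaluated later in the paper by the explicit row-reduction. I would invoke that evaluation, presumably of the form: a power of $q$, times a product of ordinary Gauss sums $g(\chi^{k})$ or quadratic Gauss sums, times factors like $(q^{2i-1}-1)$ or $(q-1)$. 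I would then bound the absolute value using $|g(\chi)|=\sqrt q$ for nontrivial characters and $|g(1)|=1$ for the trivial character (with the convention that the sum runs over $\FF_q^\times$).

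\textbf{Case $a=0$, $\chi\ne1$.} Here $G(\chi,0)=\sum_{A\in S}\chi(\det A)$. Scaling $A\mapsto cA$ multiplies $\det A$ by $c^{2m+1}$, which shows the sum vanishes unless $\chi^{2m+1}=1$. To bound it in general, use that $\det$ takes each value in a fixed square class equally often, since $S$ splits into two orbits under congruence $A\mapsto gAg^T$ and the determinant moves by squares. Hence $G(\chi,0)=0$ unless $\chi$ is the quadratic character $\eta$. In that case $|G(\eta,0)|$ equals the difference of the two orbit sizes $|\GL_{2m+1}|/|\O_{2m+1}|$. Since the two orthogonal groups in odd dimension are isomorphic, these orbit sizes are equal, so the difference is $0$.

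\textbf{Main obstacle.} The real work is the evaluation of $G(\chi,1)$ and checking that its absolute value is at most $q^{m(m+1)}(q-1)^{m+1}$. I expect the evaluation to come out as $q^{m^2}$-ish powers times $m$ factors of size about $(q^{2i}-1)$-like terms, or of Gauss-sum products. The inequality must then be verified factor by factor. I would first attempt it for trivial $\chi$, where Ramanujan-type sums appear and the $(q-1)^{m+1}$ factor likely arises. I would then check that nontrivial $\chi$ only gives smaller sizes, $\sqrt q$ per Gauss sum against a $q-1$ factor. If the closed form is awkward, the fallback is the recursion from the row reduction: it expresses the $(2m+1)$-sum via a $(2m-1)$-sum times explicit two-dimensional factors of size at most $q^{2m}(q-1)$, and the bound follows by induction on $m$.
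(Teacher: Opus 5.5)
Your argument is correct and follows the paper's route. Like the paper, you expand in multiplicative and additive characters and feed in the closed form of \Cref{thm:gsum-sym}(a), which gives $|G(\chi,1)|=q^{m(m+1)}|g(\chi,\psi)|\,|g(\chi^2,\psi)|^m\le q^{m(m+1)}(q-1)^{m+1}$ at once, so your ``main obstacle'' is not really an obstacle. The $a=0$, $\chi\ne1$ terms vanish by the scaling argument of \Cref{lem:gsum-sym-basic}; your orbit-size detour is unnecessary, since $\chi^{2m+1}=1$ together with $\chi^2=1$ already forces $\chi=1$. The only difference is that the paper first inverts to the exact formula of \Cref{cor:count-sym}(a) and then bounds the leftover tuple count trivially, whereas you apply the triangle inequality term by term, which gives the same bound.
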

\begin{remark}
    There is an analogous, albeit slightly more complicated, result for even integers $2m$.
\end{remark}
\begin{theorem}
    Fix a positive integer $m$, a square $d\in\FF_q^{\times2}$, some $t\in\FF_q$, and an invertible $2m\times2m$ alternating matrix $T$. The number $N(d,t,T)$ of alternating $A\in\GL_{2m}(\FF_q)$ with $(\det A,\tr AT)=(d,t)$ is bounded by
    \[\left|N(d,t,T)-\frac N{q(q-1)/2}\right|\le q^{m(m-1)}(q-1)^m,\]
    where $N$ is the total number of invertible alternating $2m\times2m$ matrices.
\end{theorem}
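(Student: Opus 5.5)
We count by Fourier analysis on the pair $(\det A,\tr AT)$, using the alternating matrix Gauss sums. Write $\mathcal A$ for the set of invertible alternating $2m\times 2m$ matrices. The determinant of an alternating matrix is the square of its Pfaffian, so $\det A$ ranges only over $\FF_q^{\times2}$. The relevant group is therefore $\FF_q^{\times2}\times\FF_q$, of order $q(q-1)/2$. We sum over characters $\chi$ of $\FF_q^{\times}$ and additive characters $\psi_a(x)=\psi(ax)$. Since only $\chi|_{\FF_q^{\times2}}$ matters, it is cleaner to use the Pfaffian: for $\chi$ ranging over characters of $\FF_q^\times$, put $\chi(\det A)=\chi^2(\op{Pf}A)$. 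Then orthogonality on $\FF_q^{\times2}\times\FF_q$ gives
\[N(d,t,T)=\frac{2}{q(q-1)}\sum_{\chi}{}'\sum_{a\in\FF_q}\overline{\chi}(d)\psi(-at)\sum_{A\in\mathcal A}\chi(\det A)\psi(a\tr AT),\]
where $\sum'$ runs over the $(q-1)/2$ distinct restrictions of characters to squares. The term $(\chi,a)=(1,0)$ gives exactly $N/(q(q-1)/2)$.

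Next we reduce each remaining term to a matrix Gauss sum. If $a=0$ and $\chi|_{\FF_q^{\times2}}\neq1$, the inner sum $\sum_A\chi(\det A)$ vanishes. To see this, act by $A\mapsto gAg^\top$ with $g=\op{diag}(\lambda,1,\dots,1)$, which multiplies $\det A$ by $\lambda^2$; choosing $\lambda$ with $\chi(\lambda^2)\ne1$ forces the sum to be $0$. If $a\ne0$, we substitute $A\mapsto gAg^\top$ with $g$ chosen so that $aT$ becomes the standard symplectic form $J$ (all invertible alternating forms are equivalent). Concretely, $\tr(gAg^\top\cdot aT)=\tr(A\, g^\top aT g)$, and we pick $g$ with $g^\top aTg=J$, up to the sign conventions matching the paper's special vector. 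This changes $\det A$ by $\det g^2$, contributing only a unimodular factor. Every nontrivial term thus becomes, up to a factor of absolute value $1$, the alternating Gauss sum
\[G(\chi)=\sum_{A\in\mathcal A}\chi(\det A)\psi(\tr AJ),\]
which is the eigenvalue attached to $f_\omega$ in the $H=\O_{4m}$ case.

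The main input, and the main obstacle, is the evaluation $|G(\chi)|\le q^{m(m-1)}(q-1)^m$, uniformly in $\chi$. I would take this from the paper's explicit evaluation of the alternating Gauss sum by row reduction. The idea there is to decompose $A$ via an alternating Bruhat-type reduction into $2\times2$ blocks. Each block contributes a $\GL_1$-type Gauss sum of size $\sqrt q$ or a sum of size $\le q-1$, and the unipotent parts contribute powers of $q$ totaling $q^{m(m-1)}$. If the exact formula gives $G(\chi)=q^{m(m-1)}\prod_{i=1}^m g_i$ with each $|g_i|\le q-1$ (for example $g_i=\sum_{x\ne0}\chi(x)^2\psi(x)$-type sums), the bound follows. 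The delicate case is $\chi^2=1$ on the relevant variable, where the $1$-dimensional sums degenerate to $-1$ or $q-1$; this is still within the bound.

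Finally we count terms. There are at most $\frac{q-1}{2}\cdot q$ pairs $(\chi,a)$ in total, so at most that many nontrivial ones. Combined with the prefactor $\frac{2}{q(q-1)}$ and the bound on $|G(\chi)|$, this gives
\[\left|N(d,t,T)-\frac N{q(q-1)/2}\right|\le \max|G(\chi)|\le q^{m(m-1)}(q-1)^m,\]
using that the terms with $a=0$ and $\chi|_{\FF_q^{\times2}}\neq1$ vanish.
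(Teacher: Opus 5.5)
Your argument is correct and is essentially the paper's. Both expand $N(d,t,T)$ in characters of $\FF_q^{\times2}\times\FF_q$ and feed in Theorem~\ref{thm:gsum-alt}, whose part (a) gives exactly $q^{m(m-1)}\omega(\det T)^{-1}g(\omega^2,\psi^2)^m$ with $|g(\omega^2,\psi^2)|\in\{1,\sqrt q\}$, so your hedged ``if the exact formula gives'' step does hold. The only difference is that the paper reassembles these Gauss sums into an exact count of tuples $(y_1,\dots,y_m)$ in Corollary~\ref{cor:count-alt} and reads the bound off that count, whereas you apply the triangle inequality directly, which in fact gives the sharper error bound $\frac{q-1}{q}q^{m(m-1)+m/2}$.
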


We now return to our discussion of the eigenvalues of $M$. We have left to deal with some cases where $\Ind_P^H\omega$ fails to be irreducible. In this cases, we are able to write down a matrix representing $M$ and compute its eigenvalues. For simplicity, suppose $\omega=1$. By choosing a ``basis'' of $\Ind_P^H1$, we show the following in \Cref{prop:trivial-matrix-coeffs} and \Cref{thm:eigens-gl}.
\begin{theorem} \label{thm:sln-eigens}
    One can give $\big(\Ind_P^H1\big)^P$ an ordered basis so that the operator $M$ has matrix given by
    \[\left[(-1)^{i+j-n}q^{n^2-i^2+\binom{i+j-n}2}\frac{(q;q)_i^2}{(q;q)_{n-j}^2(q;q)_{i+j-n}}\right]_{i+j\ge n},\]
    where $(a;q)_n\coloneqq\prod_{i=0}^{n-1}\left(1-aq^i\right)$ is the $q$-Pochhammer symbol; here, $i,j\in\{0,\ldots,n\}$ are indices, and $i+j\ge n$ indicates that the matrix has zeroes when $i+j< n$. This matrix is diagonalizable and has eigenvalues given by
    \[\left\{(-1)^{n-i}q^{\binom n2+\binom{i+1}2}:0\le i\le n\right\}.\]
\end{theorem}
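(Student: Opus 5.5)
The plan has two parts: compute the matrix of $M$ on $P$-fixed vectors using the Bruhat decomposition, then diagonalize that matrix either by representation theory or directly.

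\textbf{Step 1: basis and matrix coefficients.} By Mackey theory, $(\Ind_P^H1)^P$ has a basis of indicator functions of the double cosets $P\backslash H/P$. These double cosets are indexed by the rank $r\in\{0,\dots,n\}$ of the lower-left $n\times n$ block. Concretely, take representatives $w_r=\begin{bsmallmatrix}1_{n-r}&&&\\&&&-1_r\\&&1_{n-r}&\\&1_r&&\end{bsmallmatrix}$ and let $f_i$ be the indicator of the double coset whose lower-left block has rank $n-i$ (one could also normalize by a $q$-power).

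Since $M$ is $H$-equivariant, $Mf_j$ is again $P$-invariant. Its coefficient on $f_i$ is therefore $Mf_j(w_{n-i})$, which equals the number of $B\in\FF_q^{n\times n}$ such that $\begin{bsmallmatrix}&1\\-1&\end{bsmallmatrix}\begin{bsmallmatrix}1&B\\&1\end{bsmallmatrix}w_{n-i}$ lies in the double coset of $f_j$. Writing this matrix out, the rank of its lower-left block is $\operatorname{rank}$ of an explicit affine expression in $B$. So each coefficient is a count of matrices $B$ of prescribed shape for which a certain $n\times n$ matrix has a given rank.

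After row reduction, the count should become: the number of $n\times n$ matrices with a fixed $i\times i$ block forced invertible (or zero) and total rank constrained. The standard count of rank-$k$ matrices, $\prod_{l<k}(q^n-q^l)^2/(q^k-q^l)$, then produces the $q$-Pochhammer ratios. The triangularity ($0$ unless $i+j\ge n$) comes from the rank inequality $\operatorname{rank}\ge$ (codimension constraint). The sign $(-1)^{i+j-n}$ and the factor $q^{\binom{i+j-n}{2}}$ look like an inclusion–exclusion (Möbius inversion on the subspace lattice), and I expect them to arise exactly as $\sum_k(-1)^kq^{\binom k2}\cdots$ from counting matrices of exact rank. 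This bookkeeping is the main obstacle. I would first try it for $n=1,2$ to pin down the normalization of $f_i$ that gives the stated form.

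\textbf{Step 2: eigenvalues.} Diagonalizability follows from the multiplicity-freeness theorem. $(\Ind_P^H1)^P=\End_H$-module of dimension $n+1$ equal to the number of irreducible constituents, so the Hecke algebra is commutative. Moreover $M$ lies in it and is self-adjoint up to scalar with respect to the natural inner product ($M^*$ corresponds to $w^{-1}$, and $w^2$ is central-ish). Alternatively, an antitriangular matrix whose $M^2$ is triangular with distinct diagonal entries is diagonalizable.

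To get the eigenvalues, I would compute $M^2$. Then $M^2=$ (scalar)$\cdot$ a triangular matrix, whose diagonal gives $q^{2\binom n2+2\binom{i+1}2}$, i.e. the squares of the claimed eigenvalues. Signs are then fixed by the trace of $M$, or by checking eigenvectors directly.

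A cleaner route is to exhibit the eigenvectors explicitly. Guess that the $H$-irreducible constituents are generated by functions $g_k(h)=\#\{$the $k$-dimensional subspaces related to the Lagrangian $hV\cap V_0\}$, i.e. $q$-binomial combinations of the $f_i$. Then verify $Mg_k=(-1)^{n-k}q^{\binom n2+\binom{k+1}2}g_k$ via the $q$-Vandermonde identity $\sum_j\binom{a}{j}_q\binom{b}{c-j}_q q^{(a-j)(c-j)}=\binom{a+b}{c}_q$ and the $q$-binomial theorem $\sum_k(-1)^kq^{\binom k2}\binom nk_q x^k=(x;q)_n$. Because the eigenvalues are distinct, this also proves diagonalizability.

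Finally, as a consistency check, the smallest-magnitude eigenvalue $q^{\binom n2}$ (at $i=0$) should match the absolute value of the Gauss sum from the preceding theorem.
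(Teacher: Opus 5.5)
Your Step 1 follows the paper's route (\Cref{lem:matrix-coeff}, \Cref{prop:trivial-matrix-coeffs}). Only the lower-right $i\times i$ block of $B$ affects the rank of the lower-left block of $J^{-1}u\eta_i$, so the $(i,j)$ entry is $q^{n^2-i^2}$ times the number of $i\times i$ matrices of rank $k=i+j-n$. Two corrections are needed. First, no inclusion--exclusion is involved: $\prod_{l<k}(q^i-q^l)^2/(q^k-q^l)=(-1)^kq^{\binom k2}(q;q)_i^2/\bigl((q;q)_{i-k}^2(q;q)_k\bigr)$ with $i-k=n-j$. All entries are positive counts, and the factor $(-1)^{i+j-n}$ only cancels the sign of $(q;q)_k$. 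Second, your labelling ($f_i$ supported on rank $n-i$) produces the reversed matrix, which vanishes for $i+j>n$; label by rank as the paper does. The genuine gap is in Step 2, which is where the content of the theorem lies.

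Diagonalizability via multiplicity-freeness is fine, but neither of your routes to the eigenvalues works.
\begin{itemize}
\item $M^2$ is not triangular. Write $M=LR$ with $L$ lower triangular and $R$ the antidiagonal permutation; then $M^2=L\cdot(RLR)$ is lower times upper. For $n=1$ the matrix is $\begin{bsmallmatrix}0&q\\1&q-1\end{bsmallmatrix}$, and its square $\begin{bsmallmatrix}q&q^2-q\\q-1&q^2-q+1\end{bsmallmatrix}$ has no zero entry.
\item The functions $g_k(W)=\binom{\dim(W\cap W_0)}{k}_q$ are not eigenvectors. (For $\SL_{2n}$, $W$ ranges over $n$-planes in $\FF_q^{2n}$, not Lagrangians.) For $n=1$ one has $g_0=f_0+f_1$ and $g_1=f_0$, and $Mg_1=f_1=g_0-g_1$. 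Moreover $g_0=\mathbf 1$ has eigenvalue $|U|=q^{n^2}$, which is the $i=n$ value, not the $i=0$ one.
\end{itemize}
What you get for free is only that $M$ preserves the $H$-span $V_k$ of $g_k$. To make $M$ triangular in the $g$-basis you would have to prove $V_k^P=\op{span}(g_0,\ldots,g_k)$, which needs a full-rank result for incidence maps between $k$-planes and $n$-planes. You would then still have to compute the diagonal coefficient of each $Mg_k$. None of this is in your plan.

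The paper does this work differently. It conjugates by an explicit $T$ to split off the eigenvalue $q^{n^2}$ and reduce to the helper matrix $A$ of \Cref{prop:gl-helper}. It then shows that conjugating $A$ by the matrix $\bigl[q^{-(i+1)(j+1)}\bigr]_{i,j}$ gives an upper-triangular matrix. The needed entrywise identity is \Cref{prop:gl-q-identity}, certified by \texttt{qZeil} and \texttt{qMultiSum}.

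Another legitimate route exists: $M$ is the adjacency operator of the relation $W\cap W'=0$ on $n$-planes of $\FF_q^{2n}$, so its eigenvalues could be read off from Delsarte's eigenvalues of the Grassmann scheme. As written, however, your proposal does not establish the eigenvalues.
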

\begin{remark}
    Considerations with other classical groups $G$ produce other families of diagonalizable antitriangular matrices.
\end{remark}
This produces a family of diagonalizable ``antitriangular'' matrices. We are not aware of any general method to handle such diagonalization problems, and it does not appear clear a priori that the eigenvalues listed above should be so well-behaved. Diagonalizing certain antitriangular (satisfying a ``global antidiagonal property'') matrices has combinatorial applications in \cite{britnell-antitriangular}, and some aspects of our methods can be considered $q$-analogues of their arguments, but the analogy is weak. Notably, the family of matrices considered in \Cref{thm:sln-eigens} does not satisfy the global antidiagonal property.

\subsection{Layout}
In \Cref{sec:rep-theory}, we examine the representation theory of $\Ind_P^H\omega$ and explain where the combinatorial applications arise. In \Cref{sec:gsum}, we evaluate our matrix Gauss sums and provide the combinatorial applications. Lastly, in \Cref{sec:qcombo}, we provide the diagonalization of our intertwining operator.

\subsection{Acknowledgements}
This research was conducted during the University of Michigan REU during the summer of 2023; it was funded by the NSF RTG Number Theory and Representation Theory grant. The authors are particularly indebted to their advisors Elad Zelingher and Jialiang Zou for endlessly helpful advice and guidance in many aspects of this paper, from suggestions on the Hecke algebra to a plethora of helpful references. This project could not exist without them. The authors are also grateful to Ofir Gorodetsky for aid in proving certain $q$-identities used in the article, most notably by explaining how to use the packages \texttt{qZeil} and \texttt{qMultiSum}. As such, the authors are also grateful to the Research Institute for Symbolic Computation for access to those two packages. Large language models were used in the proof-reading stage of the paper; notably, a technical error was located and resolved in the proof of \Cref{prop:ind-mult-free} during proof-reading.

The first author would also like to thank various friends in the undergraduate mathematics department at the University of California at Berkeley, in particular Jad Damaj, Sophie McCormick, and Julie Shields for diverting conversations regarding diagonalizing antitriangular matrices. Lastly, the first author is most thankful to Hui Sun for consistent companionship.

\input{intertwining/rep}

\input{intertwining/gsum}

\input{intertwining/qcombo}

\section*{References}
\printbibliography[heading=none]

\end{document}

%% file: nir.tex
\newif\ifnirfiles
\nirfilesfalse
\newif\ifnirfancy
\makeatletter
\@ifclassloaded{amsart}{\nirfancyfalse}{\nirfancytrue}
\makeatother

\usepackage[margin=1in, marginparwidth=2cm]{geometry}
\usepackage[table,dvipsnames]{xcolor}
\usepackage{amsmath,amssymb,amsthm}
\usepackage{amsfonts}
\usepackage{asymptote}
\usepackage{cancel}
\usepackage{enumitem}
\usepackage{etoolbox}
\usepackage{graphicx}
\usepackage{mathdots}
\usepackage{mathtools} 
\usepackage{pgffor}
\usepackage{subfiles}
\usepackage{tikz-cd}
\usepackage{todonotes}
\usepackage{xparse}
\usepackage{xr}

\newtoggle{nirfancy}
\ifnirfancy
	\toggletrue{nirfancy}
\else
	\togglefalse{nirfancy}
\fi
\ifnirfancy
	\usepackage{footnotebackref}
	\usepackage{tocbibind}
	
	\usepackage{cabin}
	\usepackage[default]{cantarell}
\fi

\usepackage{fancyhdr}

\fancypagestyle{contentpage}{%
	\lhead{\textit{\rightmark}}
	\cfoot{\thepage}
}

\definecolor{wikipediadarkblue}{rgb}{0.023, 0.270, 0.676}
\providecommand{\nirpdftitle}{Intertwining Operators}
\ifnirfancy
	\hypersetup{
		colorlinks,
		citecolor=black,
		filecolor=black,
		linkcolor=wikipediadarkblue,
		urlcolor=wikipediadarkblue,
		pdftitle={\nirpdftitle},
		pdfauthor={Nir Elber}
	}
\else
	\AtBeginDocument{
		\hypersetup{
			citecolor=black,
			filecolor=black,
			linkcolor=wikipediadarkblue,
			urlcolor=wikipediadarkblue,
			pdftitle={\nirpdftitle},
			pdfauthor={Nir Elber}
		}
	}
\fi
\usepackage{hyperref,cleveref}

\numberwithin{equation}{section}
\def\equationautorefname#1#2\null{%
	(#2\null)%
}

\newif\ifnirdebug
\nirdebugfalse

\ifnirdebug
	\usepackage{showkeys}
\fi

\newcommand{\ZZ}{\mathbb Z}

\newcommand{\CC}{\mathbb C}
\newcommand{\FF}{\mathbb F}

\newcommand{\floor}[1]{\left\lfloor{#1}\right\rfloor}

\newcommand{\op}[1]{\operatorname{#1}}

\newcommand{\mc}[1]{\mathcal{#1}}

\newcommand{\ov}[1]{\overline{#1}}

\newcommand{\tr}{\operatorname{tr}}

\newcommand{\onto}{\twoheadrightarrow}

\newcommand{\Ind}{\operatorname{Ind}}
\newcommand{\Res}{\operatorname{Res}}
\newcommand{\Sym}{\operatorname{Sym}}
\newcommand{\Alt}{\operatorname{Alt}}
\newcommand{\Hom}{\operatorname{Hom}}
\newcommand{\End}{\operatorname{End}}
\newcommand{\GL}{\mathrm{GL}}
\newcommand{\SL}{\mathrm{SL}}
\newcommand{\GO}{\mathrm{GO}}
\renewcommand{\O}{\mathrm{O}}
\newcommand{\GSp}{\mathrm{GSp}}
\newcommand{\Sp}{\mathrm{Sp}}

\DeclareFontFamily{U}{dmjhira}{}
\DeclareFontShape{U}{dmjhira}{m}{n}{ <-> dmjhira }{}

\AtBeginDocument{%
	\mathchardef\ordinarycolon=\mathcode`:
	\mathcode`:="8000
}
\makeatletter
\newcommand{\coloncheck}{\@ifnextchar={\coloneqq\@gobble}{\ordinarycolon}}
\makeatother
\begingroup\lccode`~=`: \lowercase{\endgroup\let~}\coloncheck

\usepackage{marginnote}
\makeatletter
\long\def\@mn@@@marginnote[#1]#2[#3]{%
	\begingroup
		\ifmmode\mn@strut\let\@tempa\mn@vadjust\else
			\if@inlabel\leavevmode\fi
			\ifhmode\mn@strut\let\@tempa\mn@vadjust\else\let\@tempa\mn@vlap\fi
		\fi
		\@tempa{%
			\vbox to\z@{%
				\vss
				\@mn@margintest
				\if@reversemargin\if@tempswa
						\@tempswafalse
					\else
						\@tempswatrue
				\fi\fi

					\llap{%
						\vbox to\z@{\kern\marginnotevadjust\kern #3
							\vbox to\z@{%
								\hsize\marginparwidth
								\linewidth\hsize
								\kern-\parskip
								\marginfont\raggedleftmarginnote\strut\hspace{\z@}%
								\ignorespaces#1\endgraf
								\vss
							}%
							\vss
						}%
						\if@mn@verbose
							\PackageInfo{marginnote}{xpos seems to be \@mn@currxpos}%
						\fi
						\begingroup
							\ifx\@mn@currxpos\relax\else\ifx\@mn@currpos\@empty\else
									\kern\@mn@currxpos
							\fi\fi
							\ifx\@mn@currpage\relax
								\let\@mn@currpage\@ne
							\fi
							\if@twoside\ifodd\@mn@currpage\relax
									\kern-\oddsidemargin
								\else
									\kern-\evensidemargin
								\fi
							\else
								\kern-\oddsidemargin
							\fi
							\kern-1in
						\endgroup
						\kern\marginparsep
					}%
			}%
		}%
	\endgroup
}
\makeatother
\newlist{listalph}{enumerate}{1}
\setlist[listalph,1]{label=(\alph*)}
\newlist{listroman}{enumerate}{1}
\setlist[listroman,1]{label=(\roman*)}

\usepackage{thmtools,thm-restate}

\usepackage[framemethod=TikZ]{mdframed}
\usepackage[framemethod=TikZ]{mdframed}
\usepackage{xpatch}
\makeatletter
\xpatchcmd{\endmdframed}
	{\aftergroup\endmdf@trivlist\color@endgroup}
	{\endmdf@trivlist\color@endgroup\@doendpe}
	{}{}
\makeatother

\iftoggle{nirfancy}
{
	\definecolor{nirlightblue}{HTML}{f7f7ff}
	\definecolor{nirdarkblue}{HTML}{1d1dbf}
	\declaretheoremstyle[
		mdframed={
			backgroundcolor=nirlightblue,
			linecolor=nirdarkblue,
			rightline=false,
			topline=false,
			bottomline=false,
			linewidth=2pt,
			innertopmargin=5pt,
			innerbottommargin=8pt,
			innerleftmargin=8pt,
			leftmargin=-2pt,
			skipbelow=2pt,
			nobreak
		},
		headfont=\normalfont\bfseries\color{nirdarkblue}
	]{nirbluebox}
}
{
	\declaretheoremstyle[bodyfont=\itshape]{nirbluebox}
}
\ifx\thechapter\undefined
	\declaretheorem[style=nirbluebox,name=Theorem,within=subsection]{thm}
\else
	\declaretheorem[style=nirbluebox,name=Theorem,within=chapter]{thm}
\fi
\declaretheorem[style=nirbluebox,name=Theorem,numbered=no]{thm*}
\declaretheorem[style=nirbluebox,name=Theorem,sibling=thm]{theorem}
\declaretheorem[style=nirbluebox,name=Theorem,numbered=no]{theorem*}
\declaretheorem[style=nirbluebox,name=Proposition,sibling=thm]{prop}
\declaretheorem[style=nirbluebox,name=Proposition,numbered=no]{prop*}
\declaretheorem[style=nirbluebox,name=Proposition,sibling=thm]{proposition}
\declaretheorem[style=nirbluebox,name=Proposition,numbered=no]{proposition*}

\declaretheorem[style=nirbluebox,name=Lemma,numbered=no]{lem*}
\declaretheorem[style=nirbluebox,name=Lemma,sibling=thm]{lemma}
\declaretheorem[style=nirbluebox,name=Lemma,numbered=no]{lemma*}

\declaretheorem[style=nirbluebox,name=Corollary,numbered=no]{cor*}
\declaretheorem[style=nirbluebox,name=Corollary,sibling=thm]{corollary}
\declaretheorem[style=nirbluebox,name=Corollary,numbered=no]{corollary*}

\iftoggle{nirfancy}
{
	\definecolor{nirlightred}{RGB}{250, 220, 220}
	\definecolor{nirdarkred}{HTML}{f40000}
	\declaretheoremstyle[
		mdframed={
			backgroundcolor=nirlightred,
			linecolor=nirdarkred,
			rightline=false,
			topline=false,
			bottomline=false,
			linewidth=2pt,
			innertopmargin=5pt,
			innerbottommargin=8pt,
			innerleftmargin=8pt,
			leftmargin=-2pt,
			skipbelow=2pt,
			nobreak
		},
		headfont=\normalfont\bfseries\color{nirdarkred}
	]{nirredbox}
}
{
	\declaretheoremstyle[bodyfont=\itshape]{nirredbox}
}

\declaretheorem[style=nirredbox,name=Conjecture,numbered=no]{conj*}

\declaretheorem[style=nirredbox,name=Question,numbered=no]{ques*}

\declaretheorem[style=nirredbox,name=Convention,numbered=no]{conv*}

\declaretheorem[style=nirredbox,name=Convention,numbered=no]{convention*}

\iftoggle{nirfancy}
{
	\definecolor{nirdarkgreen}{HTML}{20660a}
	\definecolor{nirlightgreen}{HTML}{ebffeb}
	\declaretheoremstyle[
		mdframed={
			backgroundcolor=nirlightgreen,
			linecolor=nirdarkgreen,
			rightline=false,
			topline=false,
			bottomline=false,
			linewidth=2pt,
			innertopmargin=5pt,
			innerbottommargin=8pt,
			innerleftmargin=8pt,
			leftmargin=-2pt,
			skipbelow=2pt,
			nobreak
		},
		headfont=\normalfont\bfseries\color{nirdarkgreen}
	]{nirgreenbox}
}
{
	\declaretheoremstyle[bodyfont=\itshape]{nirgreenbox}
}

\declaretheorem[style=nirgreenbox,name=Axiom,numbered=no]{ax*}

\declaretheorem[style=nirgreenbox,name=Axiom,numbered=no]{axiom*}

\declaretheorem[style=nirgreenbox,name=Inventory,numbered=no]{inv*}

\declaretheorem[style=nirgreenbox,name=Inventory,numbered=no]{inventory*}
\declaretheorem[style=nirgreenbox,name=Notation,sibling=thm]{notation}
\declaretheorem[style=nirgreenbox,name=Notation,numbered=no]{notation*}

\declaretheorem[style=nirgreenbox,name=Definition,numbered=no]{defihelper*}

\iftoggle{nirfancy}
{
	\definecolor{nirdarkcyan}{HTML}{25805e}
	\definecolor{nirlightcyan}{HTML}{edfffb}
	\declaretheoremstyle[
		mdframed={
			backgroundcolor=nirlightcyan,
			linecolor=nirdarkcyan,
			rightline=false,
			topline=false,
			bottomline=false,
			linewidth=2pt,
			innertopmargin=5pt,
			innerbottommargin=8pt,
			innerleftmargin=8pt,
			leftmargin=-2pt,
			skipbelow=2pt,
			nobreak
		},
		headfont=\normalfont\bfseries\color{nirdarkcyan}
	]{nircyanbox}
}
{
	\declaretheoremstyle[bodyfont=\itshape]{nircyanbox}
}

\declaretheorem[style=nircyanbox,name=Example,numbered=no]{ex*}

\declaretheorem[style=nircyanbox,name=Example,numbered=no]{example*}

\declaretheorem[style=nircyanbox,name=Non-Example,numbered=no]{nex*}

\declaretheorem[style=nircyanbox,name=Non-Definition,numbered=no]{ndefi*}

\declaretheorem[style=nircyanbox,name=Exercise,numbered=no]{exercise*}

\declaretheorem[style=nircyanbox,name=Exercise,numbered=no]{exe*}

\iftoggle{nirfancy}
{
	\definecolor{nirlightbrown}{RGB}{252,240,235}
	\definecolor{nirdarkbrown}{HTML}{7a5342}
	\declaretheoremstyle[
		mdframed={
			backgroundcolor=nirlightbrown,
			linecolor=nirdarkbrown,
			rightline=false,
			topline=false,
			bottomline=false,
			linewidth=2pt,
			innertopmargin=5pt,
			innerbottommargin=8pt,
			innerleftmargin=8pt,
			leftmargin=-2pt,
			skipbelow=2pt,
			nobreak
		},
		headfont=\normalfont\bfseries\color{nirdarkbrown}
	]{nirbrownbox}
}
{
	\declaretheoremstyle[bodyfont=\itshape]{nirbrownbox}
}
\declaretheorem[style=nirbrownbox,name=Remark,sibling=thm]{remark}
\declaretheorem[style=nirbrownbox,name=Remark,numbered=no]{remark*}

\declaretheorem[style=nirbrownbox,name=Quote,numbered=no]{quot*}

\makeatletter
\def\thmt@setheadstyle#1{%
	\thmt@style@headstyle{%
		\def\NAME{\the\thm@headfont ##1}%
		\def\NUMBER{\bgroup\@upn{##2}\egroup}%
		\def\NOTE{\if=##3=\else\bgroup\thmt@space\the\thm@notefont(##3)\egroup\fi}%
		\def\NIRNOTE{\if=##3=\else##3\fi}%
	}%
	\def\thmt@tmp{#1}%
	\@onelevel@sanitize\thmt@tmp
	\ifcsname thmt@headstyle@\thmt@tmp\endcsname
		\thmt@style@headstyle\@xa{%
			\the\thmt@style@headstyle
			\csname thmt@headstyle@#1\endcsname
		}%
	\else
		\thmt@style@headstyle\@xa{%
			\the\thmt@style@headstyle
			#1%
		}%
	\fi
}
\renewenvironment{thmt@restatable}[3][]{%
	\thmt@toks{}
	\stepcounter{thmt@dummyctr}
	\long\def\thmrst@store##1{%
		\@xa\gdef\csname #3\endcsname{%
			\@ifstar{%
				\thmt@thisistheonefalse\csname thmt@stored@#3\endcsname
			}{%
				\thmt@thisistheonetrue\csname thmt@stored@#3\endcsname
			}%
		}%
		\@xa\long\@xa\gdef\csname thmt@stored@#3\@xa\endcsname\@xa{%
			\begingroup
			\ifthmt@thisistheone
				\thmt@rst@storecounters{#3}%
			\else
				\@xa\protected@edef\csname the#2\endcsname{%
					\thmt@trivialref{thmt@@#3}{??}}%
				\ifcsname r@thmt@@#3\endcsname\else
					\G@refundefinedtrue
				\fi
				\@xa\let\csname c@#2\endcsname=\c@thmt@dummyctr
				\@xa\let\csname theH#2\endcsname=\theHthmt@dummyctr
				\let\label=\thmt@gobble@label
				\let\index=\@gobble
				\let\ltx@label=\@gobble
				\def\thmt@restorecounters{}%
				\@for\thmt@ctr:=\thmt@innercounters\do{%
					\protected@edef\thmt@restorecounters{%
						\thmt@restorecounters
						\protect\setcounter{\thmt@ctr}{\arabic{\thmt@ctr}}%
					}%
				}%
				\thmt@trivialref{thmt@@#3@data}{}%
			\fi
			\ifthmt@restatethis
				\thmt@restatethisfalse
			\else
				\csname #2\@xa\endcsname\ifx\@nx#1\@nx\else[{#1}]\fi
			\fi
			\ifthmt@thisistheone
				\label{thmt@@#3}%
			\fi
			##1%
			\csname end#2\endcsname
			\ifthmt@thisistheone\else\thmt@restorecounters\fi
			\endgroup
		}
		\csname #3\@xa\endcsname\ifthmt@thisistheone\else*\fi
		\@xa\end\@xa{\@currenvir}
	}
	\thmt@collect@body\thmrst@store
}{%
}
\makeatother
\iftoggle{nirfancy}
{
	\declaretheoremstyle[
		mdframed={
			backgroundcolor=nirlightgreen,
			linecolor=nirdarkgreen,
			rightline=false,
			topline=false,
			bottomline=false,
			linewidth=2pt,
			innertopmargin=5pt,
			innerbottommargin=8pt,
			innerleftmargin=8pt,
			leftmargin=-2pt,
			skipbelow=2pt,
			nobreak
		},
		headfont=\normalfont\bfseries\color{nirdarkgreen},
		headformat={\NAME\ \NUMBER\NOTE{\nirindex{\NIRNOTE}}}
	]{nirgreenboxindexed}
}
{
	\declaretheoremstyle[bodyfont=\itshape]{nirgreenboxindexed}
}

\declaretheorem[style=nirgreenboxindexed,name=Definition,numbered=no]{defi*}
\declaretheorem[style=nirgreenboxindexed,name=Definition,sibling=thm]{definition}
\declaretheorem[style=nirgreenboxindexed,name=Definition,numbered=no]{definition*}

\newcommand{\zjl}[2][]{\if\relax\detokenize{#1}\relax{\color{blue}\vspace{0em}{ZJL:}#2}\else\ifx#1h\relax\else{\color{blue}\vspace{0em}{ZJL}#2}\fi\fi
}

%% file: intertwining/rep.tex

\section{Group-Theoretic Set-Up} \label{sec:rep-theory}
In this section, we set up the necessary representation theory to proceed with the results in the rest of the paper.

\subsection{Groups and Subgroups}
Let $q$ be an odd prime power, and let $2n$ be a positive even integer; for convenience, we will take $3\nmid q$, but this is used infrequently. Throughout, $G$ will be one of the groups $\{{\GL}_{2n},{\SL}_{2n},\GO_{2n},\O_{2n},\GSp_{2n},\Sp_{2n}\}$ over the finite field $\FF_q$. To explicate our orthogonal and symplectic groups, we fix
\[\varepsilon\coloneqq\begin{cases}
    +1 & \text{if }G\in\{\GO_{2n},\O_{2n}\}, \\
    -1 & \text{if }G\in\{\GSp_{2n},\Sp_{2n}\},
\end{cases}\qquad\text{and}\qquad J\coloneqq\begin{bmatrix}
    & \varepsilon1_n \\
    1_n
\end{bmatrix}\]
so that $G$ is defined to preserve the quadratic form given by $J$, possibly up to multiplier. In the cases where $G\in\{\GL_{2n},\SL_{2n}\}$, it will be convenient to define $\varepsilon\coloneqq-1$ as well. Here, the blank entries in $J$ indicate zeroes, a convention that will stay in place for the rest of the article. Throughout, when there are multiple groups $G$ involved, we will use a superscript $(\cdot)^G$; for example, $\varepsilon^{\GL_{2n}}=-1$.

Note that $G$ has split maximal torus $T$ given by the diagonal matrices. The degenerate principal series representations are induced from the Siegel parabolic subgroup
\[P\coloneqq\left\{\begin{bmatrix}
    A & B \\
      & D
\end{bmatrix}\in G\right\},\]
where $A,B,D$ are implicitly in $\FF_q^{n\times n}$, a convention that will remain in place for any expression in block matrix form as above. We let $U\subseteq P$ be the unipotent radical of $P$, and we let $M\subseteq P$ be the Levi subgroup so that $P=M\ltimes U$. Explicitly,
\[U=\left\{\begin{bmatrix}
    1_n & B \\
      & 1_n
\end{bmatrix}\in G\right\}\qquad\text{and}\qquad M=\left\{\begin{bmatrix}
    A &   \\
      & D
\end{bmatrix}\in G\right\}.\]
The various cases of $G$ provide more constraints on these two subgroups. For example, if $G\in\{\GO_{2n},\O_{2n}\}$, then $B$ above must be alternating; if $G\in\{\GSp_{2n},\Sp_{2n}\}$, then $B$ above must be symmetric. Similarly, if $G=\SL_{2n}$, then $\det D=(\det A)^{-1}$; if $G\in\{\O_{2n},\Sp_{2n}\}$, then $D=A^{-\intercal}$; and if $G\in\{\GO_{2n},\GSp_{2n}\}$, then $D=\lambda A^{-\intercal}$ for some $\lambda\in\FF_q^\times$. A quick computation with the definition of $G$ in the various cases reveals that these are the only constraints.

It will be helpful in the sequel to understand characters of $P$. In all cases, we are able to define a ``Siegel determinant'' $\chi_{\det}\colon P\to\FF_q^\times$ given by
\[\chi_{\det}\left(\begin{bmatrix}
    A & B \\
      & D
\end{bmatrix}\right)\coloneqq(\det D)^{-1}.\]
In the cases $G\in\{\GL_{2n},\GO_{2n},\GSp_{2n}\}$, there is an additional ``multiplier'' $m\colon P\to\FF_q^\times$ given by
\[\begin{cases}
    m\left(\begin{bmatrix}
        A & B \\
          & D
    \end{bmatrix}\right)=\det AD & \text{if }G=\GL_{2n}, \\
    m\left(\begin{bmatrix}
        \lambda A & B \\
          & A^{-\intercal}
    \end{bmatrix}\right)\coloneqq\lambda & \text{else}.
\end{cases}\]
For the remaining cases of $G$, we will define $m$ to just be the trivial character. Both $\chi_{\det}$ and $m$ are characters by a direct computation. It turns out that these are essentially the only characters.
\begin{lemma} \label{lem:decompose-character}
    Let $\chi\colon P\to\CC^\times$ be a character. Then $\chi=(\alpha\circ m)(\beta\circ\chi_{\det})$ for some characters $\alpha,\beta\colon\FF_q^\times\to\CC^\times$.
\end{lemma}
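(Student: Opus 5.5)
A character $\chi$ of $P$ is trivial on the commutator subgroup $[P,P]$, so I would determine $[P,P]$ in each case. Since $M$ normalizes $U$, it should suffice to show two things: first, that $U\subseteq[P,P]$, so $\chi$ factors through $P/U\cong M$; second, that the abelianization of $M$ is detected by $(m,\chi_{\det})$, i.e.\ that $\chi|_M$ is pulled back along $(m,\chi_{\det})\colon M\to\FF_q^\times\times\FF_q^\times$. Characters of $\FF_q^\times\times\FF_q^\times$ all have the form $(x,y)\mapsto\alpha(x)\beta(y)$, which gives the statement. (In the cases where $m$ is trivial, $\alpha$ is irrelevant and we may simply take $\alpha=1$.)

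\emph{Step 1: $U\subseteq[P,P]$.} Conjugating $u_B=\begin{bsmallmatrix}1&B\\&1\end{bsmallmatrix}$ by $\op{diag}(A,D)\in M$ gives $u_{ABD^{-1}}$. Hence $u_{ABD^{-1}-B}\in[P,P]$. I would take a scalar element $\op{diag}(a1_n,a^{-1}1_n)$, which lies in $M$ in every case, including $\SL_{2n}$, $\O_{2n}$ and $\Sp_{2n}$. It sends $B\mapsto a^2B$, so $u_{(a^2-1)B}\in[P,P]$. Choosing $a\in\FF_q^\times$ with $a^2\ne1$ requires $q>3$, which holds since $q$ is odd and $3\nmid q$. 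As $B$ ranges over the allowed space (all, symmetric, or alternating matrices), so does $(a^2-1)B$. Hence $U\subseteq[P,P]$.

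\emph{Step 2: characters of $M$.} I would go case by case.

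\begin{itemize}
\item For $G=\GL_{2n}$, we have $M\cong\GL_n\times\GL_n$, whose characters are $\gamma(\det A)\delta(\det D)$. This rewrites as $\gamma(\det AD)\,(\gamma^{-1}\delta)(\det D)$, which has the required form.
\item For $\SL_{2n}$, the subgroup $M$ is the set of pairs $(A,D)$ with $\det A\det D=1$. Its commutator subgroup contains $\SL_n\times\SL_n$, so characters factor through $\det D$.
\item For $\O_{2n}$ and $\Sp_{2n}$, we have $M\cong\GL_n$ via $A$, so characters are $\gamma(\det A)=\gamma(\det D)^{-1}$.
\item For $\GO_{2n}$ and $\GSp_{2n}$, $M\cong\GL_n\times\FF_q^\times$ via $(A,\lambda)$, with $\op{diag}(\lambda A,A^{-\intercal})$. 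Characters are $\gamma(\det A)\alpha(\lambda)$, and $\det A=(\det D)^{-1}$.
\end{itemize}

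In each case I would use the standard fact that $[\GL_n,\GL_n]=\SL_n$ for $q>3$ (or $q\ge3$; the relevant exceptions are $q=2$, and the $n=2$, $q=3$ case only for $\SL$ perfectness). The $\GL_n$ abelianization fact holds for all $q$ anyway, except $n=2$, $q=2$.

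The main obstacle is Step 1, specifically the small-field issue. It is resolved by the existence of $a$ with $a^2\ne1$, which is where the hypothesis on $q$ enters. The remaining bookkeeping in Step 2 is routine.
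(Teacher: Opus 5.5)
Your proposal is correct and follows essentially the same route as the paper's sketched proof: you put $U$ inside $[P,P]$ using the element $\op{diag}(a1_n,a^{-1}1_n)\in M$, which scales $B$ by $a^2\ne1$ (this is where $3\nmid q$ enters), and then you read off the characters of $M$ case by case from the abelianization of $\GL_n$ and the perfectness of $\SL_n$ for $q>3$. Perfectness of $\SL_n$ is also the most direct justification for your unproved claim $\SL_n\times\SL_n\subseteq[M,M]$ in the $\SL_{2n}$ case, and it is the fact the paper cites; otherwise you supply more detail than the paper, which only names these two ingredients.
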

\begin{proof}
    This follows from an explicit computation of $[P,P]$ in all cases. One can use $3\nmid q$ to show that $U\subseteq[P,P]$, and it helps to know that $[\mathrm{SL}_n,\mathrm{SL}_n]=\mathrm{SL}_n$ for $q>3$.
\end{proof}
With a discussion of characters out of the way, we pick up the following notation, which we will use without comment in the sequel.
\begin{notation}
    Fix a group $P$ and a character $\chi\colon P\to\CC^\times$. For any representation $V$ of $P$, we let $V^\chi$ denote the subspace of $\chi$-eigenvectors. Explicitly,
    \[V^\chi\coloneqq\{v\in V:pv=\chi(p)v\text{ for all }p\in P\}.\]
\end{notation}

\subsection{Some Weyl Group Computations}
An argument similar to \cite[Example~17.88]{milne-alg-group} verifies that the diagonal subgroup $T$ of $G$ is always a maximal torus; namely, one can check that $C_G(T)=T$. Then an argument similar to \cite[Example~17.42]{milne-alg-group} verifies that $N_G(T)$ consists of permutation matrices (up to torus elements); alternatively, one can study the Weyl group of the relevant root system and then convert this back into permutation matrices by hand. In any case, we let $W$ denote the Weyl group of $G$, and we let $W_P$ denote the Weyl group of the Siegel parabolic subgroup $P$.

It will be useful to explicitly compute these Weyl groups. If $G\in\{\GL_{2n},\SL_{2n}\}$, then $W$ consists of the permutation matrices up to a sign. For each $w\in W$, we let $\sigma_w\in N_G(T)$ denote the corresponding permutation matrix, and we let $d_w\in T$ be a diagonal matrix with entries in $\pm1$ such that $\det d_w\sigma_w=1$. (The following arguments are independent of the choice of $d_w$.) The point is that $\{d_w\sigma_w\}_{w\in W}$ provides a set of representatives for $W$ in $G$.

We would like a similar description for $G\in\{\GO_{2n},\O_{2n},\GSp_{2n},\Sp_{2n}\}$. The following lemma, briefly, determines which permutation matrices actually belong to $G$, up to a diagonal element.
\begin{lemma} \label{lem:weyl-normal-form}
    Suppose $G\in\{\GO_{2n},\O_{2n},\GSp_{2n},\Sp_{2n}\}$. Let $\Sigma$ be the set of permutations $\sigma\in S_{2n}$ such that $\sigma(i+n)\equiv\sigma(i)+n\pmod{2n}$ for each $i$.
    \begin{enumerate}[label=(\alph*)]
        \item For each $w$ representing a class in $W$, there exists a unique permutation $\sigma\in\Sigma$ such that $w=d\sigma$ for some diagonal matrix $d$.
        \item For each $\sigma\in\Sigma$, there exists some diagonal matrix $d$ with entries in $\{\pm1\}$ such that $d\sigma\in G$.
    \end{enumerate}
\end{lemma}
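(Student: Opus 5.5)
We identify a permutation $\sigma\in S_{2n}$ with the permutation matrix sending $e_i\mapsto e_{\sigma(i)}$, and note that $G$ consists of those $g\in\GL_{2n}$ with $g^\intercal Jg=\lambda J$ for some $\lambda\in\FF_q^\times$ (with $\lambda=1$ for $\O_{2n},\Sp_{2n}$). Let $\iota$ be the fixed-point-free involution $i\mapsto i+n\pmod{2n}$. Up to signs, $J$ is the permutation matrix of $\iota$. The condition defining $\Sigma$ is exactly that $\sigma\iota=\iota\sigma$, so $\Sigma$ is the centralizer of $\iota$ in $S_{2n}$ (a hyperoctahedral group of order $2^nn!$).

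For (a), let $w\in N_G(T)$. By the discussion preceding the lemma, $N_G(T)$ consists of monomial matrices. Hence $w=d\sigma$ for a unique permutation $\sigma\in S_{2n}$ and a unique invertible diagonal $d$, since the support pattern of a monomial matrix determines $\sigma$. Uniqueness in (a) is therefore automatic; note also that changing $w$ within its class $wT$ changes only $d$. It remains to show $\sigma\in\Sigma$. From $w^\intercal Jw=\lambda J$ we get $\sigma^{-1}d\,J\,d\sigma=\lambda J$. Compare supports: $dJd$ is monomial with the same support as $J$, namely the pairs $(i,\iota(i))$. Conjugating by $\sigma$ produces a matrix supported on the pairs $(i,\sigma^{-1}\iota\sigma(i))$. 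Equating supports with $J$ gives $\sigma^{-1}\iota\sigma=\iota$, i.e.\ $\sigma\in\Sigma$.

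For (b), the plan is to reduce to generators of $\Sigma$ and use that $G$ is a group. For $\sigma\in\Sigma$ we have $\sigma^\intercal J\sigma=\sigma^{-1}J\sigma$. This is a monomial matrix with the support of $J$, but its entries are $\pm1$, with signs possibly differing from those of $J$. We must find a $\pm1$ diagonal $d$ correcting these signs. Since $d\sigma\in G$ is closed under products (the set of $\pm1$-monomial elements of $G$ is a group), it suffices to treat generators of $\Sigma$. We use the following generators:
\begin{itemize}
\item[(i)] $\sigma=\tau\times\tau$ for $\tau\in S_n$, acting on both halves simultaneously. This is $\op{diag}(\tau,\tau)$, and since $\tau^{-\intercal}=\tau$ it lies in $\O_{2n}$ and $\Sp_{2n}$ with $d=1$.
\item[(ii)] The transposition swapping $i$ and $i+n$ for a single $i$.
\end{itemize}
For (ii), we compute in the $2\times2$ block spanned by $e_i,e_{i+n}$, where the form is $\begin{bsmallmatrix}&\varepsilon\\1&\end{bsmallmatrix}$, i.e.\ $x_iy_{i+n}\varepsilon+x_{i+n}y_i$. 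The swap sends this to $\varepsilon$ times itself after symmetrizing appropriately.
\begin{itemize}
\item In the symplectic case ($\varepsilon=-1$), take $d=\op{diag}(1,-1)$ on that block, i.e.\ the matrix $\begin{bsmallmatrix}&1\\-1&\end{bsmallmatrix}$, which lies in $\Sp_2$.
\item In the orthogonal case the swap itself preserves the symmetric form $x_iy_{i+n}+x_{i+n}y_i$, so $d=1$ works.
\end{itemize}
In both cases the element acts as the identity on the other coordinates.

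The main obstacle is the bookkeeping in (b): checking that these elements generate $\Sigma$, and that the sign corrections are consistent. The first point is standard, since $\Sigma\cong(\ZZ/2)^n\rtimes S_n$ with $S_n$ embedded diagonally. The second could alternatively be done directly: one checks that $\sigma^{-1}J\sigma$ differs from $J$ by a sign pattern $J'$ with $J'^\intercal=\varepsilon J'$, and solves $d_{\sigma(i)}d_{\sigma(\iota i)}=\pm1$ pair by pair. I would present the generator argument, with the direct sign computation as a fallback. Finally, note that for the groups with multiplier the elements constructed lie in $\O_{2n}$ or $\Sp_{2n}$, hence in $\GO_{2n}$ or $\GSp_{2n}$.
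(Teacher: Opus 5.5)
Your proposal is correct. Part (a) follows the paper's route, which only says to determine which permutation patterns can occur in $G$. You import monomiality of $N_G(T)$ from the preceding discussion and compare supports in $\sigma^{-1}dJd\sigma=\lambda J$ to get $\sigma\iota=\iota\sigma$.

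One side remark: that imported monomiality is where the standing hypothesis $q>3$ enters. For example, in $\Sp_{2n}(\FF_3)$ the weights $t_i$ and $t_i^{-1}$ coincide on $T(\FF_3)$, so $N_G(T)$ contains non-monomial $\SL_2$ blocks.

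For (b) you take a different route. The paper, deferring to Kirillov, solves for the signs directly. The condition $d\sigma\in G$ reads
\[
d_{\sigma(j)}\,d_{\iota\sigma(j)}=J_{j,\iota j}\,J_{\sigma(j),\iota\sigma(j)}
\]
for each $j$, and this is solved one pair $\{\sigma(j),\iota\sigma(j)\}$ at a time; this is your ``fallback''. You instead note that the permutation parts of the $\pm1$-monomial elements of $G$ form a subgroup of $S_{2n}$. It then suffices to check the generators $\op{diag}(\tau,\tau)$ and the swaps $(i,i+n)$ of the hyperoctahedral group $\Sigma$.

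What each approach buys:
\begin{itemize}
\item Your generator argument avoids sign bookkeeping for a general $\sigma$ and makes the uniformity over the four groups transparent. In the orthogonal case it even yields $d=1$ for every $\sigma$ for free, since both generators need no correction.
\item The direct computation produces an explicit $d$ for every $\sigma$. The paper uses this immediately afterwards to fix the normalized representatives $d_w$: trivial in the orthogonal case, and $-1$ exactly at the positions $\sigma(i)$ with $i\le n<\sigma(i)$ in the symplectic case.
\item In the symplectic case your argument only gives $d$ as a product along a word in the generators. To recover that specific normalization you would still need the pair-by-pair computation.
\end{itemize}
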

\begin{proof}
    Checking (a) is a matter of determining which permutations live in $G$. Checking (b) comes down to writing down relations between the entries in $d$ enforced by $d\sigma\in G$. We refer to \cite[Exercise~7.16]{kirillov-lie-algebra} for more details.
\end{proof}
\begin{remark}
    For consistency, we provide a convenient choice of signs $d_w$ for $w\in W$. If $G\in\{\GO_{2n},\O_{2n}\}$, then $\varepsilon=1$, so $d_w\coloneqq1_{2n}$ will always work. If $G\in\{\GSp_{2n},\Sp_{2n}\}$, then one can put signs $d_w$ on the top-right quadrant of $\sigma_w$. Explicitly, we take $d_{\sigma(i)}=-1$ if $i\le n$ and $\sigma(i)>n$, and we take $d_{\sigma(i)}=1$ otherwise.
\end{remark}
We will use these explicit representatives of $W$ to provide explicit representatives of certain double quotients. For example, $W$ itself provides representatives of $B\backslash G/B$ by the Bruhat decomposition, where $B\subseteq G$ is a Borel subgroup containing $T$. We will be interested in $P\backslash G/P$.
\begin{lemma} \label{lem:compute-pgp}
    For each $r\in\{0,1,\ldots,n\}$, define
    \[\eta_r\coloneqq\begin{bmatrix}
        1_{n-r} \\ &&& \varepsilon1_r \\
        && 1_{n-r} \\
        & 1_r
    \end{bmatrix}.\]
    Then $\{\eta_0,\ldots,\eta_n\}\subseteq G$ provides a set of representatives of the double quotients $P\backslash G/P$.
\end{lemma}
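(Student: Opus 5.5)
Three things need to be shown: each $\eta_r$ lies in $G$; every double coset $PgP$ contains some $\eta_r$; and the $\eta_r$ lie in distinct double cosets. I would handle these in order, using a geometric invariant for the last two.

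\textbf{Membership.} That $\eta_r\in G$ is a direct check. For $\GL_{2n},\SL_{2n}$ we have $\varepsilon=-1$, and $\det\eta_r=1$ because the block permutation swapping the two size-$r$ blocks contributes sign $(-1)^r$. This sign cancels against $\det(-1_r)=(-1)^r$. For the orthogonal and symplectic cases I would verify $\eta_r^\intercal J\eta_r=J$ directly in block form. The permutation underlying $\eta_r$ swaps index $i$ with $i+n$ for $n-r<i\le n$, so it lies in the set $\Sigma$ of \Cref{lem:weyl-normal-form}. The sign $\varepsilon$ in the top-right block is exactly the sign choice from the remark following that lemma.

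\textbf{Invariant.} Let $L_0=\FF_q^n\oplus0$ be the subspace stabilized by $P$. Then $P\backslash G$ is identified with the $G$-orbit of row spaces $L_0g$. In the classical cases this orbit is the set of Lagrangian subspaces, and for $\GL,\SL$ it is the full Grassmannian $\op{Gr}(n,2n)$. The invariant of $PgP$ is
\[r(g)\coloneqq n-\dim(L_0g\cap L_0).\]
This is well defined: left multiplication by $P$ preserves $L_0g$, and right multiplication by $P$ preserves $L_0$. A direct computation gives $L_0\eta_r\cap L_0=\FF_q^{n-r}\oplus0$, so $r(\eta_r)=r$. Hence the $\eta_r$ are pairwise inequivalent.

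\textbf{Surjectivity.} This is the main step: I must show $P$ acts transitively on the (Lagrangian or arbitrary) $n$-planes $L$ with $\dim(L\cap L_0)=n-r$. Split by case.

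In the $\GL$ case, choose a basis of $L\cap L_0$ and extend it to a basis of $L$. The extra vectors have linearly independent projections to the quotient $\FF_q^{2n}/L_0$. Using the Levi blocks $A,D$ together with $U$, one moves $L$ to the standard plane $L_0\eta_r$. For $\SL$, we additionally need $\det AD=1$, which can be arranged by rescaling one basis vector inside $L\cap L_0$, or inside the complement when $r=n$.

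In the symplectic and orthogonal cases, the analogous argument is a Witt-type extension. Put $W=L\cap L_0$. Then $L\subseteq W^\perp$, and $L/W$ is a Lagrangian in $W^\perp/W$ transverse to $L_0/W$. The Levi $\GL_n$, acting on $L_0$ through $A$ with $D=A^{-\intercal}$, moves $W$ to the standard $\FF_q^{n-r}$. Then an element of $U$ together with the $\GL_r$ part of the Levi normalizes the complementary transverse Lagrangian.

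The parametrization of Lagrangians transverse to a fixed one by symmetric or alternating matrices is exactly what this step needs. I expect the orthogonal case to be the delicate point. There $B$ must be alternating, so one should confirm that the transverse Lagrangians are parametrized by alternating (not merely $\varepsilon$-symmetric) matrices. One should also check that the two $\O_{2n}$-families of Lagrangians do not split the orbit under $P$. Because the invariant $r$ ranges over all of $0,\ldots,n$ and is defined uniformly, I would fall back on a Bruhat-style count, combining $W_P\backslash W/W_P$ via \Cref{lem:weyl-normal-form} with $G=\bigsqcup_{w}BwB$. If any parity subtlety appeared there, this count would resolve it.
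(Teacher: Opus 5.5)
\textbf{A convention slip to fix first.} With row vectors and $G$ acting on the right, the subspace stabilized by $P$ is $0\oplus\FF_q^n$, not $\FF_q^n\oplus0$. With $L_0=\FF_q^n\oplus0$ as you wrote it, $n-\dim(L_0g\cap L_0)$ is the rank of the upper-right block of $g$, and that rank is not bi-$P$-invariant. For instance, in $\GL_{2n}$ both $1_{2n}$ and $\begin{bsmallmatrix}1_n&1_n\\&1_n\end{bsmallmatrix}$ lie in $P$, yet they would receive $r=0$ and $r=n$. Taking $L_0=0\oplus\FF_q^n$ repairs this; alternatively, switch to column vectors, $gL_0$, and $G/P$. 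The intersection $L_0\eta_r\cap L_0$ then becomes the span of $e_{n+1},\ldots,e_{2n-r}$, still of dimension $n-r$. After the fix, $r(g)=\operatorname{rank}C$ is exactly the paper's invariant $\rho$, and your distinctness argument is the paper's.

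\textbf{Comparison with the paper.} With that fixed, your proposal is correct. It differs from the paper in the main step, namely showing that $\operatorname{rank}C=r$ forces $g\in P\eta_rP$.
\begin{itemize}
\item The paper reduces, via the Bruhat decomposition, to signed permutation representatives $d_w\sigma_w$. It then moves these into $P\eta_rP$ using permutations lying in $P$.
\item You prove directly that $P$ acts transitively on the $n$-planes (resp.\ Lagrangians) $L$ with $\dim(L\cap L_0)=n-r$. The Levi standardizes $W=L\cap L_0$, and then an element of $U$ supported on the last $r\times r$ block straightens the transverse Lagrangian $L/W$ inside $W^\perp/W$. Since $P$ is the full stabilizer of $L_0$ in $G$, this gives $g\in P\eta_rP$.
\end{itemize}
Your route is self-contained linear algebra and explains $\operatorname{rank}C$ geometrically. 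It also identifies $P\backslash G$ with a (Lagrangian) Grassmannian, which accounts for facts used later such as $\left|P\backslash P\eta_nP\right|=\left|U\right|$. The paper's route is uniform across all six groups through Weyl group combinatorics, but it leaves the permutation-moving step as a sketch.

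\textbf{Your orthogonal-case worries.} Neither worry materializes.
\begin{itemize}
\item A graph $\{(x,xX)\}$ is isotropic for $J$ if and only if $X=-\varepsilon X^\intercal$. For $\varepsilon=1$ and $q$ odd, these are exactly the alternating matrices parametrizing $U$.
\item Your Levi-then-$U$ argument is already transitive on each stratum. The two families of maximal isotropic subspaces are distinguished by the parity of $\dim(L\cap L_0)$, and that parity is fixed on a stratum.
\end{itemize}
So the fallback Bruhat count is unnecessary.
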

\begin{proof}


    We define a function $\rho\colon G\to\{0,\ldots,n\}$ by $\rho\left(\begin{bsmallmatrix}
        A & B \\ C & D
    \end{bsmallmatrix}\right)\coloneqq\op{rank}C$. We will show that $\rho$ descends to a bijection $P\backslash G/P\to\{0,\ldots,n\}$, from which the result follows.
    
    Two of the required checks can be handled directly. Note that $\rho$ is surjective because $\rho(\eta_r)=r$ for each $r\in\{0,\ldots,n\}$. Additionally, an expansion of some $2\times2$ block matrices is able to show that $\rho$ actually descends to a function on $P\backslash G/P$.

    It remains to show that $\rho\colon P\backslash G/P\to\{0,
    \ldots,n\}$ is injective. Unwinding definitions, it is enough to show that $\rho(g)=r$ implies that $g\in P\eta_rP$. Choosing a Borel subgroup $B\subseteq P$ containing $T$, we may use the Bruhat decomposition to see that each coset in $B\backslash G/B$ is represented by an element of the Weyl group $W$. Thus, we may assume that $g=w=d_w\sigma_w$ where $d_w\in T$ and $\sigma_w$ is a permutation matrix. One now uses the permutations available in $P$ to show that $\sigma_w$ can be moved into $P\eta_rP$.
\end{proof}
\begin{remark}
    The above proof shows that the double cosets
    \[P\eta_rP=\left\{\begin{bmatrix}
        A & B \\ C & D
    \end{bmatrix}\in G:\op{rank}C=r\right\}\]
    are all (Zariski) locally closed. In fact, $P\eta_0P$ is (Zariski) closed, and $P\eta_nP$ is the only (Zariski) open double coset (it is defined by $\det C\ne0$).
\end{remark}


\subsection{Parabolic Induction}
In the sequel, we will be interested in the representations $\Ind_P^G\chi$ where $\chi\colon P\to\CC^\times$ is a character. We spend this subsection collecting a few facts about these representations. In particular, we will show that these representations are multiplicity-free and irreducible for ``general'' $\chi$.

We begin with the generic irreducibility of $\Ind_P^G \chi$.
\begin{proposition} \label{prop:ind-irred}
    Fix a character $\chi\colon P\to\CC^\times$, which we write as $\chi=(\alpha\circ m)(\beta\circ\chi_{\det})$. Then the dimension of $\End_G\Ind_P^G\chi$ equals
    \[\begin{cases}
        n+1 & \text{if }\beta=1, \\
        2 & \text{if }\beta^2=1,\beta\ne1\text{ and }G=\SL_{2n}, \\
        n+1 & \text{if }\beta^2=1,\beta\ne1\text{ and }G\in\{\O_{2n},\Sp_{2n}\}, \\
        \floor{\frac12(n+2)} & \text{if }\beta^2=1,\beta\ne1\text{ and }G\in\{\GO_{2n},\GSp_{2n}\}, \\
        1 & \text{else}.
    \end{cases}\]
    In particular, $\Ind_P^G\chi$ is irreducible provided $\beta^2\ne1$.
\end{proposition}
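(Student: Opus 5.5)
We compute $\dim\End_G\Ind_P^G\chi$ with Mackey theory. By Frobenius reciprocity and Mackey's formula,
\[\End_G\Ind_P^G\chi\cong\bigoplus_{r=0}^n\Hom_{P\cap\eta_rP\eta_r^{-1}}\big(\chi,\chi^{\eta_r}\big),\]
where $\chi^{\eta_r}(x)\coloneqq\chi(\eta_r^{-1}x\eta_r)$. The double coset representatives come from \Cref{lem:compute-pgp}. Each summand has dimension $0$ or $1$, according to whether $\chi$ and $\chi^{\eta_r}$ agree on $P_r\coloneqq P\cap\eta_rP\eta_r^{-1}$. So the dimension is the number of $r\in\{0,\ldots,n\}$ for which $\chi|_{P_r}=\chi^{\eta_r}|_{P_r}$. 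By \Cref{lem:decompose-character} we write $\chi=(\alpha\circ m)(\beta\circ\chi_{\det})$ and test the two factors separately.

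The first step is to describe $P_r$ explicitly. Write the blocks of size $n-r$ and $r$. The group $P_r$ consists of elements of $P$ whose $A$ and $D$ blocks are block upper triangular in a compatible way, with extra vanishing conditions on $B$. Conjugating by $\eta_r$ swaps the last $r$ coordinates of each half, up to the sign $\varepsilon$. The multiplier $m$ is invariant under conjugation by $\eta_r\in G$: it is $\det$ of the whole matrix for $\GL_{2n}$ and the similitude factor otherwise. Hence the $\alpha$ part never obstructs, and only $\beta\circ\chi_{\det}$ matters. I would then compute $\chi_{\det}$ and $\chi_{\det}^{\eta_r}$ on the Levi of $P_r$. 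For $g$ with $A=\op{diag}(a_1,a_2)$ and $D=\op{diag}(d_1,d_2)$ in blocks $(n-r,r)$, we get $\chi_{\det}(g)=(\det d_1\det d_2)^{-1}$. Conjugation exchanges $a_2$ and $d_2$, so $\chi^{\eta_r}_{\det}(g)=(\det d_1\det a_2)^{-1}$. The condition therefore becomes $\beta(\det a_2)=\beta(\det d_2)$ for all such elements, and also on the unipotent part, where both characters are trivial.

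Next I split by group, using the constraints on $(a_2,d_2)$ coming from the fact that these elements lie in $G$.
\begin{itemize}
\item For $\GL_{2n}$, $a_2$ and $d_2$ are independent. So for $r\ge1$ the condition forces $\beta=1$.
\item For $\SL_{2n}$, only the relation $\det a_1\det a_2\det d_1\det d_2=1$ holds. When $r<n$ the variables are still free, forcing $\beta=1$. When $r=n$ we get $\det d_2=(\det a_2)^{-1}$, so the condition is $\beta^2=1$. This gives the counts $n+1$ and $2$.
\item For $\O_{2n}$ and $\Sp_{2n}$, we have $d_2=a_2^{-\intercal}$. The condition is $\beta(\det a_2)^2=1$ for every $r$, so $\beta^2=1$ gives $n+1$.
\item For $\GO_{2n}$ and $\GSp_{2n}$, we have $d_2=\lambda a_2^{-\intercal}$. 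The condition becomes $\beta(\det a_2)^2\beta(\lambda)^{-r}=1$. If $\beta^2=1$ and $\beta\ne1$, this holds exactly when $r$ is even, which gives $\floor{\frac12(n+2)}$ values of $r$.
\end{itemize}
The case $r=0$ always contributes, and if $\beta^2\ne1$ every $r\ge1$ fails (taking $\lambda=1$ or suitable $a_2$). This yields $1$ and hence irreducibility.

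The main obstacle I expect is getting the structure of $P_r$ and the action of $\eta_r$ exactly right in the orthogonal and symplectic cases. There, $B$ is alternating or symmetric and $\varepsilon$ signs appear, and one must check that the torus-type elements used above really lie in $P_r$. This matters most in the orthogonal case with $r$ odd, where rank parity of alternating blocks could interfere. If it does, I would restrict to diagonal torus elements of $P_r$, which suffice to detect the characters, and verify directly that these lie in $G\cap\eta_rP\eta_r^{-1}$.
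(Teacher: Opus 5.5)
Your proposal is correct and follows essentially the same route as the paper: Mackey theory over the double cosets $P\eta_rP$, reduction of the compatibility condition on $P\cap\eta_rP\eta_r^{-1}$ to $\beta(\det a_2)=\beta(\det d_2)$, and the same group-by-group casework on $r$. The obstacle you anticipate in the orthogonal case with $r$ odd does not arise, because the Levi elements $\op{diag}(a_1,a_2,a_1^{-\intercal},a_2^{-\intercal})$ (or their similitude analogues) lie in $G\cap\eta_rP\eta_r^{-1}$ for every $r$, and no alternating $B$-block is involved.
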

\begin{proof}
    We use Mackey theory in the form of \cite[Theorem~32.1]{bump-lie-group}. Namely, we are interested in computing the dimension of the space $\mc H$ of functions $f\colon G\to\CC$ satisfying
    \[f(p_1gp_2)=\chi(p_1)\chi(p_2)f(g)\]
    for all $p_1,p_2\in P$ and $g\in G$. Thus, any $f\in \mc H$ is uniquely determined by its values on representatives of the double cosets $P\backslash G/P$. As such, we define $f_r\in \mc H$ to be supported on $P\eta_rP$ defined by $f_r(\eta_r)\in\{0,1\}$, where we take $f_r(\eta_r)=1$ provided that this gives a well-defined function in $\mc H$. \Cref{lem:compute-pgp} implies that $\{f_r:f_r\ne0\}$ is a basis of $\mc H$.

    We are left computing the number of $r$ such that $f_r\in \mc H$ is well-defined with $f_r(\eta_r)=1$. Fix some $r$ for us to check. After some rearranging, it is enough to check that any
    $p\in P$ such that $\eta_rp\eta_r^{-1}\in P$ satisfies $\chi(p)=\chi\left(\eta_rp\eta_r^{-1}\right)$. Writing
    \[p\coloneqq\begin{bmatrix}
        A_1 & A_2 & B_1 & B_2 \\
        A_3 & A_4 & B_3 & B_4 \\
            &     & D_1 & D_2 \\
            &     & D_3 & D_4
    \end{bmatrix}\]
    to have the same block matrix dimensions as $\eta_r$, one can compute that $\eta_rp\eta_r^{-1}\in P$ if and only if $A_3=B_4=D_2=0$.
    Thus, $\chi(p)=\chi\left(\eta_rp\eta_r^{-1}\right)$ is equivalent to always having
    \[\chi\left(\begin{bmatrix}
        A_1 & \varepsilon B_2 &  B_1 & A_2 \\
            &  D_4 & \varepsilon D_3 \\
            &      &  D_1 \\
            &      &  B_3 & A_4
    \end{bmatrix}\right)\stackrel?=\chi\left(\begin{bmatrix}
        A_1 & A_2 & B_1 & B_2 \\
            & A_4 & B_3 &     \\
            &     & D_1 &     \\
            &     & D_3 & D_4
    \end{bmatrix}\right).\]
    By expanding out the definition of $\chi$, we find that this is equivalent to
    \[\beta(\det A_4)\stackrel?=\beta(\det D_4),\]
    where we take the convention that the ``empty'' matrix has determinant $1$. 
    \begin{itemize}
        \item If $r=0$, then $A_4$ and $D_4$ are empty, so the condition holds. Thus, we will take $r>0$ in the rest of our casework.
        \item If $\beta=1$, then the condition holds. Thus, we will take $\beta\ne1$ in the rest of our casework.
        \item Take $G=\GL_{2n}$. Because $r>0$, $\det$ is always surjective, and here there are no conditions on how $\det A_4$ and $\det D_4$ should relate to each other, so the condition never holds.
        \item Take $G=\SL_{2n}$. Because $r>0$, $\det$ will always be surjective. If $r=n$, then the condition $\det p=1$ becomes $\det A_4=\det D_4^{-1}$, so we get a contribution in this case only when $\beta^2=1$. Otherwise, $r\notin\{0,n\}$, so $\det A_4$ and $\det D_4$ can be arbitrary elements of $\FF_q^\times$ (our condition $\det p=1$ only requires $\det A_1D_4D_1A_4=1$), so the condition never holds.
        \item Take $G\in\{\O_{2n},\Sp_{2n}\}$. Then $A_4=D_4^{-\intercal}$, so we are requiring $\beta(\det A_4)^2=1$. Because $\det$ is surjective when $r>0$, nonzero $r$ contribute in this case exactly when $\beta^2=1$.
        \item Take $G\in\{\GO_{2n},\GSp_{2n}\}$. Then $A_4=m(p)D_4^{-\intercal}$, so we are requiring
        \[\beta(\det A_4)^2=\beta(m(p))^r.\]
        With $r>0$, the values $\det A_4$ and $m(p)$ are arbitrary elements of $\FF_q^\times$, so we would like for $\beta(x)^2=\beta(y)^r$ for any $x,y\in\FF_q^\times$. Taking $y=1$ shows that we will only get contributions in this case when $\beta^2=1$, and taking $x=1$ shows that we will only get contributions when $\beta^r=1$ too. However, with $\beta\ne1$, we see that $\beta^r=1$ only happens when $r$ is even.
    \end{itemize}
    Tallying the above cases completes the proof.
\end{proof}
\begin{remark}
    In the sequel, we will make frequent use of the basis $f_\bullet$ of $\mc H$.
\end{remark}
Even though it is not currently relevant to our discussion, we will want a similar Mackey theory computation in the sequel. This requires a definition.
\begin{definition} \label{def:chi-j}
    Note that $J$ normalizes $M$.
    Thus, for any character $\chi\colon P\to\CC^\times$, we define the character $\chi^J$ as the following composite.
	\[\arraycolsep=1.4pt\begin{array}{cccccccc}
		P &\onto& M &\stackrel J\cong& M &\stackrel\chi\to& \CC^\times \\
		\begin{bsmallmatrix}
			A & B \\ & D
		\end{bsmallmatrix} &\mapsto& \begin{bsmallmatrix}
			A \\ & D
		\end{bsmallmatrix} &\mapsto& \begin{bsmallmatrix}
			D \\ & A 
		\end{bsmallmatrix} &\mapsto& \chi\left(\begin{bsmallmatrix}
			D \\ & A 
		\end{bsmallmatrix}\right)
	\end{array}\]
\end{definition}
\begin{remark}
    One can check that $\left(\chi^J\right)^J=\chi$. Further, if $\beta=1$, then one can compute that $\chi^J=\chi$; alternatively, if we only have $\beta^2=1$ but $G\in\{\SL_{2n},\O_{2n},\Sp_{2n}\}$ so that $m=1$, then we still have $\chi^J=\chi$.
	
\end{remark}
\begin{proposition} \label{prop:twisted-ind-basis}
	Fix a character $\chi\colon P\to\CC^\times$, which we write as $\chi=(\alpha\circ m)(\beta\circ\chi_{\det})$. Then we compute a basis for $\left(\Ind_P^G\chi\right)^{\chi^J}$. 
    In particular, we find
	\[\dim\left(\Ind_P^G\chi\right)^{\chi^J}=\dim\left(\Ind_P^G\chi\right)^{\chi}.\]
\end{proposition}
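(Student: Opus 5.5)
We follow the Mackey-theory argument of \Cref{prop:ind-irred}. An element of $\left(\Ind_P^G\chi\right)^{\chi^J}$ is a function $f\colon G\to\CC$ satisfying
\[f(p_1gp_2)=\chi(p_1)\chi^J(p_2)f(g)\]
for all $p_1,p_2\in P$ and $g\in G$. Such an $f$ is determined by its values on the representatives $\eta_0,\ldots,\eta_n$ of $P\backslash G/P$ from \Cref{lem:compute-pgp}. For each $r$ we define $f_r^J$ to be supported on $P\eta_rP$ with $f_r^J(\eta_r)=1$, whenever this is well defined. The nonzero ones among these form a basis. The value $f_r^J(\eta_r)$ can be nonzero exactly when every $p\in P$ with $\eta_rp\eta_r^{-1}\in P$ satisfies
\[\chi\left(\eta_rp\eta_r^{-1}\right)=\chi^J(p).\]

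Next we reuse the block computation from the proof of \Cref{prop:ind-irred}. There, the condition $\eta_rp\eta_r^{-1}\in P$ was shown to mean $A_3=B_4=D_2=0$, and the conjugate $\eta_rp\eta_r^{-1}$ was written out explicitly. The Levi part of the conjugate has diagonal blocks $A_1,D_4$ in its $A$-slot and $D_1,A_4$ in its $D$-slot. On the other hand, $\chi^J(p)$ evaluates $\chi$ on the Levi element $\op{diag}(D,A)$, where $A=\begin{bsmallmatrix}A_1&A_2\\&A_4\end{bsmallmatrix}$ and $D=\begin{bsmallmatrix}D_1&\\D_3&D_4\end{bsmallmatrix}$.

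We then expand both sides using $\chi=(\alpha\circ m)(\beta\circ\chi_{\det})$. The factor $\alpha\circ m$ is unaffected by conjugation by $\eta_r$. After swapping, however, $m$ applied to $\op{diag}(D,A)$ changes: in the $\GO/\GSp$ cases it becomes $m(p)^{-1}$ up to the $\det$ factors, and in the $\GL$ case it is unchanged. The $\beta$-part of the condition becomes a comparison of $\beta(\det D_1\det A_4)^{-1}$ with $\beta(\det A_1\det A_4)^{-1}$. I expect the whole condition to reduce to a relation of the form
\[\beta(\det A_1)\stackrel?=\beta(\det D_1)\]
together with possibly an $\alpha$-twist. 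This is the ``complementary'' version of the condition in \Cref{prop:ind-irred}, in that it involves the $(n-r)$-blocks rather than the $r$-blocks.

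The key step is the following symmetry. In $G\in\{\SL_{2n},\O_{2n},\Sp_{2n},\GO_{2n},\GSp_{2n},\GL_{2n}\}$, the pair $(A_1,D_1)$ of size $n-r$ obeys the same constraints as the pair $(A_4,D_4)$ of size $r$ in the original computation. Concretely, $D_1=A_1^{-\intercal}$ or $m(p)A_1^{-\intercal}$ in the classical cases, and $\det A_1D_4D_1A_4=1$ for $\SL$. Running the same case analysis as in \Cref{prop:ind-irred} therefore shows that $r$ contributes to the $\chi^J$-space if and only if $n-r$ contributes to the $\chi$-space. This gives the bijection $r\mapsto n-r$, and hence the equality of dimensions. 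The bijection matches $r=0$ with $r=n$ and swaps the parity condition in the $\GO/\GSp$ case, where the count $\floor{\frac12(n+2)}$ may need checking against the parity of $n$.

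The main obstacle is this last bookkeeping: tracking exactly how the multiplier and $\alpha$ enter after the swap $A\leftrightarrow D$, and confirming that in the $\GO/\GSp$ case with $\beta^2=1$ the conditions on $r$ transform correctly. If the naive bijection $r\mapsto n-r$ fails there, I would instead compare the counts directly in each case, reading them off from the list in \Cref{prop:ind-irred}.
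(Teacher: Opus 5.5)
Your setup matches the paper's: Mackey theory on the representatives $\eta_r$, with the same stabilizer computation $A_3=B_4=D_2=0$. Your condition $\chi(\eta_rp\eta_r^{-1})=\chi^J(p)$ is equivalent to the paper's $\chi(p)=\chi^J(\eta_rp\eta_r^{-1})$, because $\eta_r^2\in T$.

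You differ from the paper at the end. The paper lists, group by group, which $r$ contribute and then compares the counts with \Cref{prop:ind-irred}. You instead observe that the joint constraints on the Levi blocks are symmetric under exchanging $(A_1,D_1)$ with $(A_4,D_4)$, so that $f_r^J\neq0$ if and only if $f_{n-r}\neq0$. This is correct in every case, and it gives an explicit matching of bases rather than just equal counts. In particular, no parity check is needed for $\GO_{2n},\GSp_{2n}$: for $r<n$ and $\beta\neq1$, both conditions read $\beta^2=1$ and $\beta^{n-r}=1$.

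The gap is the step you flag as the main obstacle. Your tentative claim there is false, and in a way that would break the argument. The swap does not invert the multiplier. $J\in G$, and $m$ is the restriction to $P$ of a character of $G$ (the determinant for $\GL_{2n}$, the similitude factor for $\GO_{2n},\GSp_{2n}$), so $m(JxJ^{-1})=m(x)$. Explicitly, $\op{diag}(\lambda A,A^{-\intercal})\mapsto\op{diag}(A^{-\intercal},\lambda A)=\op{diag}(\lambda A',A'^{-\intercal})$ with $A'=\lambda^{-1}A^{-\intercal}$, so the multiplier is still $\lambda$. Hence $\chi^J(p)=\alpha(m(p))\beta(\det A)^{-1}$. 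The $\alpha$-factors then cancel identically on both sides, and the condition is exactly $\beta(\det A_1)=\beta(\det D_1)$, with no twist.

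If $m$ really became $m(p)^{-1}$, the condition would acquire a factor $\alpha(m(p))^{2}$. Already at $r=n$, where $A_1,D_1$ are empty, this would force $\alpha^2=1$. For $\alpha^2\neq1$ and $\beta^2\neq1$ the twisted space would then be $0$, while the untwisted space contains $f_0\neq0$, contradicting the statement.

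So you must establish this invariance of $m$ (one line) before your symmetry argument applies. With it, your proof is complete and somewhat cleaner than the paper's tally.
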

\begin{proof}
	We proceed as in \Cref{prop:ind-irred}. For brevity, set $\mc H_J\coloneqq\left(\Ind_P^G\chi\right)^{\chi^J}$. Again, $f\in\mc H_J$ is uniquely determined by its values on representatives of $P\backslash G/P$, so we set $f_r\in\mc H_J$ to be supported on $P\eta_rP$ defined by $f_r(\eta_r)\in\{0,1\}$ where we take $f_r(\eta_r)=1$ whenever possible; thus, $\{f_r:f_r\ne0\}$ is a basis of $\mc H_J$.

	Continuing as in \Cref{prop:ind-irred}, we are checking which $f_r\in\mc H_J$ are well-defined with $f_r(\eta_r)=1$.
    Rearranging, it is enough to check that
    if $p\in P$ has $\eta_rp\eta_r^{-1}\in P$, we need $\chi(p)=\chi^J\left(\eta_rp\eta_r^{-1}\right)$. Writing
	\[p\coloneqq\begin{bmatrix}
        A_1 & A_2 & B_1 & B_2 \\
        A_3 & A_4 & B_3 & B_4 \\
            &     & D_1 & D_2 \\
            &     & D_3 & D_4
    \end{bmatrix}\]
    to have the same dimensions as $\eta_r$, we can then compute that $\eta_rp\eta_r^{-1} \in P$
    if and only if $A_3=B_4=D_2=0$. Thus, $\chi(p)=\chi^J\left(\eta_rp\eta_r^{-1}\right)$ is equivalent to always having
	\[\chi\left(\begin{bmatrix}
        A_1 & A_2 & B_1 & B_2 \\
            & A_4 & B_3 &     \\
            &     & D_1 &     \\
            &     & D_3 & D_4
    \end{bmatrix}\right)\stackrel?=\chi^J\left(\begin{bmatrix}
        A_1 & \varepsilon B_2 &  B_1 & A_2 \\
            &  D_4 & \varepsilon D_3 \\
            &      &  D_1 \\
            &      &  B_3 & A_4
    \end{bmatrix}\right).\]
	The result now follows from a similar casework on $G$ and $r$. We will not write out the casework in its entirety because a similar computation is recorded in \Cref{prop:ind-irred}. However, we will provide the answers.
	\begin{itemize}
		\item Suppose $G\in\{\O_{2n},\Sp_{2n}\}$.
        If $r=n$, then we always get a contribution; otherwise, we get contributions only when $\beta^2=1$.
		\item Suppose $G=\SL_{2n}$. We get a contribution when $r=n$ and when $\beta=1$. Lastly, we also get a contribution when $r=0$ and $\beta^2=1$.
		\item Suppose $G=\GL_{2n}$. We get a contribution when $r=n$ or when $\beta=1$ only.
		\item Suppose $G\in\{\GO_{2n},\GSp_{2n}\}$. We get a contribution when $r=n$ and when $\beta=1$; otherwise, we get an additional contribution when $\beta^2=1$ and $r\equiv n\pmod2$.
	\end{itemize}
	Tallying the above cases and comparing with \Cref{prop:ind-irred} completes the proof.
\end{proof}

We now show that $\Ind_P^G\chi$ is multiplicity-free.
\begin{proposition} \label{prop:ind-mult-free}
    For any character $\chi\colon P\to\CC^\times$, the representation $\Ind_P^G\chi$ is multiplicity-free.
\end{proposition}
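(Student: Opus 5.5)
Since $\Ind_P^G\chi$ is multiplicity-free exactly when its endomorphism algebra $\End_G\Ind_P^G\chi$ is commutative, I will prove commutativity using Gelfand's trick. As in the proof of \Cref{prop:ind-irred}, identify $\End_G\Ind_P^G\chi$ with the Hecke algebra $\mc H$ of functions $f\colon G\to\CC$ satisfying $f(p_1gp_2)=\chi(p_1)\chi(p_2)f(g)$, up to the harmless normalization $\chi$ versus $\chi^{-1}$ on one side. Multiplication is convolution. I then look for an anti-involution $\iota$ of $G$ that preserves $P$, satisfies $\chi\circ\iota=\chi$ on $P$, and fixes every double coset $P\eta_rP$. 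Given such an $\iota$, the map $f\mapsto f\circ\iota$ is an anti-automorphism of $\mc H$: it reverses convolution, and it preserves $\mc H$ because $f(\iota(p_1g p_2))=f(\iota(p_2)\iota(g)\iota(p_1))$. By \Cref{lem:compute-pgp}, every $f\in\mc H$ is determined by its values at the $\eta_r$. If $\iota(\eta_r)=\eta_r$, or at least $\iota(\eta_r)\in p\eta_rp'$ with $\chi(p)\chi(p')=1$, then $f\circ\iota=f$. Hence $f_1*f_2=(f_1*f_2)\circ\iota=(f_2\circ\iota)*(f_1\circ\iota)=f_2*f_1$.

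Transpose does not preserve $P$, so the natural candidate is $\iota(g)=J^{-1}g^{\intercal}J$, possibly twisted by the multiplier. For $G\in\{\GL_{2n},\SL_{2n}\}$ this map sends $\begin{bsmallmatrix}A&B\\&D\end{bsmallmatrix}$ to a matrix of the form $\begin{bsmallmatrix}D^\intercal&\ast\\&A^\intercal\end{bsmallmatrix}$, so it preserves $P$ but swaps the diagonal blocks. This is precisely why the twist $\chi^J$ of \Cref{def:chi-j} appears. For the classical groups, $g^\intercal J g=\lambda J$ gives $J^{-1}g^\intercal J=\lambda g^{-1}$, so $\iota$ is essentially inversion. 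To get a genuine anti-involution there, I instead compose transpose with conjugation by $\op{diag}(1_n,-1_n)$ or by a suitable diagonal matrix, so that $P$ is preserved and the Levi blocks $A,D$ go to $A^\intercal,D^\intercal$ in place.

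The main obstacle is that $\iota$ need not preserve $\chi$: it may act on the Levi by swapping $A$ and $D$, turning $\chi$ into $\chi^J$ composed with transposes. I will handle this in two steps.

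\begin{enumerate}
\item When $\chi^J=\chi$, which by the remark after \Cref{def:chi-j} covers $\beta=1$, and $\beta^2=1$ for $m$ trivial, the trick works directly. I need to check that $\iota(\eta_r)=\eta_r^{\pm1}$ up to sign matrices in $P$ on which $\chi$ is trivial, and note that $\eta_r^{-1}\in P\eta_rP$ by the rank characterization.
\item In the remaining cases, I use the counts from \Cref{prop:ind-irred}. If $\beta^2\ne1$, then $\mc H$ is one-dimensional and there is nothing to prove. For $G\in\{\GO_{2n},\GSp_{2n}\}$ with $\beta^2=1$ and $\beta\ne1$, the basis of $\mc H$ consists of the $f_r$ with $r$ even. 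I will check directly that $\iota$, twisted by $m$, preserves $\chi$ on the stabilizers computed in that proof. Here I expect to use that $\beta\circ\chi_{\det}\circ\iota$ differs from $\beta\circ\chi_{\det}$ by $\beta(m)^n$-type factors, which cancel against the support condition that $r$ is even.
\end{enumerate}

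\Cref{prop:twisted-ind-basis} serves as the bookkeeping device: comparing the $\chi$ and $\chi^J$ eigenspaces guarantees that $f\mapsto f\circ\iota$ really maps $\mc H$ to itself with matching support. I expect this last casework — verifying $\chi\circ\iota=\chi$ on the relevant stabilizer $\{p:\eta_rp\eta_r^{-1}\in P\}$, using the explicit block form from \Cref{prop:ind-irred} — to be the delicate step.
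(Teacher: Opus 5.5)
\textbf{Verdict: there is a genuine gap.} Your framework (Gelfand's trick on $\mc H$, with \Cref{prop:ind-irred} disposing of the one-dimensional cases) is the one the paper uses for $\O_{2n},\Sp_{2n}$ and $\SL_{2n}$ with $\chi=1$. But the whole content of that argument is choosing $\iota$ so that $f(\iota(\eta_r))=f(\eta_r)$, and the candidates you propose fail this.

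On $\Sp_{2n}$ the relation $g^\intercal Jg=J$ makes your natural candidate $J^{-1}g^\intercal J$ equal to $g^{-1}$. Hence $\iota(\eta_r)=\eta_r^{-1}=t\eta_r$, where $t\coloneqq\eta_r^2=\op{diag}(1_{n-r},-1_r,1_{n-r},-1_r)\in M$ and $\chi(t)=\beta(-1)^r$. Take $\beta$ quadratic and $q\equiv3\pmod 4$. Then every $f_r$ is nonzero, and $f_r\circ\iota=-f_r$ for odd $r$. So $f\mapsto f\circ\iota$ is not the identity on $\mc H$, and the argument does not conclude. Knowing $\eta_r^{-1}\in P\eta_rP$ is not enough, because the sign matrix relating them is not in $\ker\chi$. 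The same failure occurs on $\O_{2n}$ if you conjugate inversion by $\op{diag}(1_n,-1_n)$.

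Your proposed repair, transpose followed by conjugation by a diagonal matrix, cannot preserve $P$ at all. Transpose sends $P$ to the opposite parabolic, and diagonal conjugation keeps it there. The missing idea is to choose $\iota$ fixing each $\eta_r$ on the nose. One option is the paper's $\iota(g)=\op{diag}(1_n,\varepsilon1_n)\,g^{-1}\,\op{diag}(1_n,\varepsilon1_n)$. Another, uniform in $G$, is $\iota(g)=wg^\intercal w$ with $w=\begin{bsmallmatrix}&1_n\\1_n&\end{bsmallmatrix}$; the two agree on $\O_{2n}$ and $\Sp_{2n}$. The latter fixes $\eta_r$ because the four blocks of $\eta_r$ are diagonal and its two diagonal blocks coincide.

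\textbf{The groups with a multiplier.} For $\GL_{2n},\GO_{2n},\GSp_{2n}$ you take a different route from the paper. The paper instead shows that $f_r^S\mapsto f_r^G$ is a surjective ring map $\mc H^S\to\mc H^G$ for $S=\ker m$, by comparing convolution sums term by term. A direct argument does work here, but not for the reason you give. On $\GO_{2n},\GSp_{2n}$ with $\beta^2=1$ one has $\chi\circ\iota=\chi^J=\chi\cdot(\beta\circ m)^n$ on $P$. This discrepancy is independent of $r$, so for odd $n$ the function $f\circ\iota$ lies in a different Hecke space, and no parity condition on $r$ can cancel it.

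What saves you is that $\beta\circ m$ is a character of all of $G$. Define $\Phi(f)\coloneqq(f\circ\iota)\cdot(\beta\circ m)^n$. This is again an anti-automorphism of $\mc H$, and with $\iota(g)=wg^\intercal w$ one gets $\Phi(f)(\eta_r)=f(\eta_r)$, since $m(\eta_r)=1$. That gives a single argument covering every case with $\dim\mc H>1$ and avoids the paper's reduction to $S$. Evenness of $r$ only matters if you keep an inversion-type $\iota$ on $\GSp_{2n}$, where it kills the sign $\beta(-1)^r$ coming from $t$.
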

\begin{proof}
    Write $\chi=(\alpha\circ m)(\beta\circ\chi_{\det})$. If $\beta^2\ne1$, then \Cref{prop:ind-irred} tells us that $\Ind_P^G\chi$ is irreducible. It remains to handle the case where $\beta^2=1$. Consider the Hecke algebra $\mc H$ of functions $f\colon G\to\CC$ satisfying
    \[f(p_1gp_2)=\chi(p_1)\chi(p_2)f(g).\]
    for all $p_1,p_2\in P$ and $g\in G$, where product is given by convolution. By \cite[Theorem~45.1]{bump-lie-group}, it suffices for the Hecke algebra $\mc H$ to be commutative. We will split this into three cases.
    \begin{itemize}
        \item Take $G=\SL_{2n}$ where $\chi\ne1$. We compute the Hecke algebra by hand. Here, $\alpha=1$, so we still have $\chi^2=1$. Then the computation of \Cref{prop:ind-irred} tells us that $\mc H$ has $\CC$-basis given by the functions $f_0,f_n\colon G\to\CC$ where $f_r$ is supported on $P\eta_rP$ with $f_r(\eta_r)=1$. To check that $\mc H$ is commutative, it is enough to verify that $f_0*f_n=f_n*f_0$. We will do this by explicit computation. It is enough to check that
        \[(f_0*f_n)(\eta_r)\stackrel?=(f_n*f_0)(\eta_r)\]
        for $r\in\{0,n\}$. For $\eta_0=1_{2n}$, both convolutions vanish because $f_0$ and $f_n$ have disjoint supports. For $\eta_n$, a similar comparison of supports finds that both sides equal $1$.
        
        
        \item Take $G\in\{\SL_{2n},\O_{2n},\Sp_{2n}\}$, except the above case. Again, $\alpha=1$, so $\chi^2=1$. We apply an argument similar to the theory of Gelfand pairs, such as in \cite[Theorem~45.2]{bump-lie-group}. Define $\iota\colon G\to G$ by $\iota(g)\coloneqq\op{diag}(1_n,\varepsilon1_n)g^{-1}\op{diag}(1_n,\varepsilon1_n)$. Then $\iota(\iota(g))=g$, and $\iota(gh)=\iota(h)\iota(g)$ for $g\in G$, and $\chi(\iota(p))=\chi(p)^{-1}=\chi(p)$ for $p\in P$.\footnote{This last identity crucially requires that $\chi^2=1$, which is why this argument fails when $G\in\{{\op{GL}_{2n}},{\op{GO}_{2n}},{\op{GSp}_{2n}}\}$.}
        Thus, we may define an operator $(\cdot)^\iota\colon\mc H\to\mc H$ by
        \[f^\iota(g)\coloneqq f(\iota(g)).\]
        Now, $(\cdot)^\iota$ is $\CC$-linear, and it can be checked to be an anti-automorphism from the fact $\iota(gh)=\iota(h)\iota(g)$.
        
        However, we claim that $(\cdot)^\iota$ is in fact the identity map on $\mc H$, from which it follows that $\mc H$ is commutative. Fix some $f\in\mc H$; we wish to show that $f^\iota=f$. By \Cref{lem:compute-pgp}, we see that $f$ is uniquely determined by its values on the $\eta_r$ for $r\in\{0,\ldots,n\}$ where $f_r\ne0$, so it is enough to check that $f\left(\iota(\eta_r)\right)=f(\eta_r)$. This holds because $\iota(\eta_r)=\eta_r$ by construction of $\iota$ and $\eta_r$.

        \item Take $G\in\{\GL_{2n},\GO_{2n},\GSp_{2n}\}$. Let $S\coloneqq\ker m$ so that $S\in\{\SL_{2n},\O_{2n},\Sp_{2n}\}$.
        We will show this case by reducing the claim from $G$ to $S$. Let $\mc H^S$ denote the Hecke algebra corresponding to the group $S$ and character $\chi^S\coloneqq\chi|_S$, and we will set $\mc H^G\coloneqq\mc H$ and $\chi^G\coloneqq\chi$. We will show that the ring $\mc H^S$ surjects onto $\mc H^G$, which shows that $\mc H^G$ is commutative.
        
        For each $r\in\{0,\ldots,n\}$, let $f_r^G\in\mc H^G$ and $f_r^S\in\mc H^S$ denote the functions on the corresponding group supported on the double coset of $\eta_r$ with $f_r^\bullet(\eta_r)=1$ whenever possible. Then the set of nonzero $f_r^\bullet$ forms a basis of $\mc H^\bullet$ as discussed in the proof of \Cref{prop:ind-irred}. In fact, a careful reading of the computation in \Cref{prop:ind-irred} shows that $f_r^G\ne0$ implies that $f_r^S\ne0$ for each $r$, so we may construct a $\CC$-linear surjection $\pi\colon\mc H^S\to\mc H^G$ by $\pi\colon f_r^S\mapsto f_r^G$.

        To complete the proof, we will show that $\pi$ is multiplicative. Fix indices $r,s,t\in\{0,\ldots,n\}$ with $f_r^G,f_s^G,f_t^G\ne0$, so it is enough to check that
        \[\left(f_r^G*f_s^G\right)(\eta_t)\stackrel?=\left(f_r^S*f_s^S\right)(\eta_t).\]
        Expanding out the convolution, we are being asked to show that
        \[\sum_{h\in P^G\backslash G}f_r^G\left(\eta_th^{-1}\right)f_s^G(h)\stackrel?=\sum_{h\in P^S\backslash S}f_r^S\left(\eta_th^{-1}\right)f_s^S(h),\]
        where $P^G\subseteq G$ and $P^S\subseteq S$ are the Siegel parabolic subgroups. We will show that these two sums are equal term-wise. Note that the number of terms on each side agree because the inclusion $S\subseteq G$ descends to a bijection $P^S\backslash S\to P^G\backslash G$.


        We now show that our sums are equal term-wise. Namely, we want to show that
        \[f_r^G\left(\eta_th^{-1}\right)f_s^G(h)\stackrel?=f_r^S\left(\eta_th^{-1}\right)f_s^S(h)\]
        for any $h\in S$. A computation of the supports shows that one side vanishes if and only if the other side vanishes. Otherwise,
        %
        we may assume that $f_r^S\left(\eta_th^{-1}\right)f_s^S(h)\ne0$. Then we can write $h=p_1\eta_sp_2$ and $\eta_th^{-1}=p_1'\eta_sp_2'$ for $p_1,p_2,p_1',p_2'\in P_S$, and an expansion of the definitions of $f_\bullet^S$ and $f_\bullet^G$ quickly show that both sides are equal.
        \qedhere
    \end{itemize}
\end{proof}
In the sequel, we will be interested in $G$-invariant operators on $\Ind^G_P\chi$, so it will be worth our time to provide some vectors which will behave like a basis for this space. The main idea is as follows.
\begin{lemma} \label{lem:basis-of-ind}
	Fix a character $\chi\colon P\to\CC^\times$. For each irreducible subrepresentation $\pi$ of $\Ind^G_P\chi$, the $\chi$-eigenspace of $\pi$ is one-dimensional.
\end{lemma}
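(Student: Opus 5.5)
The plan is to use Frobenius reciprocity together with the multiplicity-freeness of \Cref{prop:ind-mult-free}. For any representation $V$ of $G$, Frobenius reciprocity gives
\[V^\chi=\Hom_P(\chi,\Res^G_PV)\cong\Hom_G\big(\Ind_P^G\chi,V\big),\]
where we identify $V^\chi$ with the $\chi$-isotypic part of $\Res^G_PV$ (a character is one-dimensional, so $\Hom_P(\chi,V)$ is exactly the $\chi$-eigenspace). We will apply this to an irreducible subrepresentation $V=\pi$ of $\Ind_P^G\chi$.

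The key steps run in the following order.

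First, $\Hom_G(\Ind_P^G\chi,\pi)$ has dimension equal to the multiplicity of $\pi$ in $\Ind_P^G\chi$. This follows from complete reducibility over $\CC$ for finite groups together with Schur's lemma.

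Second, this multiplicity is at least $1$ because $\pi$ is a subrepresentation. It is at most $1$ by \Cref{prop:ind-mult-free}.

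Hence $\dim\pi^\chi=1$. There is one subtlety. The isomorphism above identifies $\pi^\chi$ with $\Hom_P(\chi,\pi)$ when $\chi$ is viewed as a $P$-representation, and with the convention that $\Ind_P^G\chi$ consists of $f$ with $f(pg)=\chi(p)f(g)$, Frobenius reciprocity pairs $\Ind$ with $\Res$ via $\chi$ rather than $\chi^{-1}$. I would check this convention carefully, and if necessary use duality instead.

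The duality route goes as follows. We have $\Hom_G(\pi,\Ind_P^G\chi)\cong\Hom_P(\pi,\chi)$, and this space is dual to $(\pi^\lor)^{\chi^{-1}}$. If the wrong character appears, I would pass through the contragredient and use that $\pi$ and $\pi^\lor$ pair $P$-eigenspaces for $\chi$ and $\chi^{-1}$. An alternative is a $G$-invariant Hermitian form: since $\chi$ is unitary, $\bar\chi=\chi^{-1}$, and $\pi\cong\bar\pi^\lor$. With that form, $\Hom_P(\pi,\chi)$ is identified with $\pi^\chi$ via $v\mapsto\langle\cdot,v\rangle$, because $\langle pw,v\rangle=\langle w,p^{-1}v\rangle=\overline{\chi(p^{-1})}\langle w,v\rangle=\chi(p)\langle w,v\rangle$ for $v\in\pi^\chi$.

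This sign/convention bookkeeping is the only real obstacle. The second route (the invariant Hermitian form giving $\Hom_P(\pi,\chi)\cong\pi^\chi$, combined with $\Hom_G(\pi,\Ind_P^G\chi)\cong\Hom_P(\pi,\chi)$) avoids it cleanly, so I would write the proof that way.
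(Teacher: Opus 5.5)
Your proposal is correct and is essentially the paper's proof: identify $\pi^\chi$ with $\Hom_P(\chi,\Res^G_P\pi)$, apply Frobenius reciprocity to get $\Hom_G(\Ind_P^G\chi,\pi)$, and use \Cref{prop:ind-mult-free} to see that the multiplicity is exactly one. Your convention worry is unfounded, because for a finite group with the model $f(pg)=\chi(p)f(g)$ and right translation, both $\Hom_G(\Ind_P^G\chi,V)\cong\Hom_P(\chi,V)$ and $\Hom_G(V,\Ind_P^G\chi)\cong\Hom_P(V,\chi)$ hold with the same $\chi$; so your first route already completes the proof, and the Hermitian-form detour, while also valid, is unnecessary.
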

\begin{proof}
	We are being asked to show that $\dim\Hom_P\left(\chi,\Res^G_P\pi\right)=1$. This follows by combining \Cref{prop:ind-mult-free} with Frobenius reciprocity.
\end{proof}
Thus, we note that we can understand operators on $\Ind^G_P\chi$ by merely understanding where they send a vector from each irreducible subrepresentation. Each irreducible subrepresentation contributes a unique basis element to $\left(\Ind^G_P\chi\right)^\chi$, so we may just understand how the operator behaves on $\left(\Ind^G_P\chi\right)^\chi$. But $\left(\Ind^G_P\chi\right)^\chi$ is exactly the underlying vector space of the corresponding Hecke algebra $\mc H$, so the computation of \Cref{prop:ind-irred} provides a basis for this space. 

\subsection{The Intertwining Operator}
We are now ready to introduce the principal operator of this paper, which is an operator $I$ on the space $\op{Mor}(G,\CC)=\Ind^G_11$ defined by
\[(If)(g)\coloneqq\sum_{u\in U}f\left(J^{-1}ug\right).\]
Note that $I\colon\Ind^G_11\to\Ind^G_11$ is $G$-invariant. In more typical notation, $I$ is the intertwining operator $M_J$, where we view $J$ as representing a Weyl group element. As the space $\Ind^G_11$ is too large, we are instead interested in the spaces $\Ind_P^G\chi$ where $\chi\colon P\to\CC^\times$ is some character. One can check that $I$ restricts to a $G$-invariant map $\Ind_P^G\chi\to\Ind_P^G\chi^J$.

This article is interested in understanding the linear transformation $I\colon\Ind^G_P\chi\to\Ind^G_P\chi^J$ and in particular the eigenvalues of the operator $I\circ I$. (Note $I\circ I$ is automatically diagonalizable because $\Ind_P^G\chi$ is multiplicity-free by \Cref{prop:ind-mult-free}.)
For later use, we would like to expand $I$ out as a matrix using the bases of \Cref{lem:basis-of-ind}, which we see makes $I$ into a linear transformation
\[\left(\Ind^G_P\chi\right)^\chi\to\left(\Ind^G_P\chi^J\right)^\chi,\]
both of which have explicit bases by the computations of \Cref{prop:ind-irred,prop:twisted-ind-basis}. Because we are interested in $I\circ I$ as well, we also want to compute the linear transformation
\[\left(\Ind^G_P\chi^J\right)^\chi\to\left(\Ind^G_P\chi\right)^\chi,\]
where we again have explicit bases.

To start, we begin with the easier generic case.
\begin{proposition} \label{prop:generic-intertwining}
	Fix a character $\chi\colon P\to\CC^\times$, which we write as $\chi=(\alpha\circ m)(\beta\circ\chi_{\det})$. Suppose $\beta^2\ne1$. Then let $\{f_0\}$ and $\left\{f_n^J\right\}$ be the bases of $\left(\Ind_P^G\chi\right)^\chi$ and $\left(\Ind_P^G\chi^J\right)^\chi$ described in \Cref{prop:ind-irred,prop:twisted-ind-basis}, respectively. Then
    \[\begin{cases}
        If_0=f_n^J, \\
        If_n^J=\beta(\varepsilon)^n\left|U\right|f_0.
    \end{cases}\]
	In particular, $I\circ I$ is the scalar $\beta(\varepsilon)^n\left|U\right|$.
\end{proposition}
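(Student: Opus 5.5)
Since $\beta^2\ne1$, \Cref{prop:ind-irred} says $\left(\Ind_P^G\chi\right)^\chi$ is one-dimensional, spanned by $f_0$, the function supported on $P\eta_0P=P$ with $f_0(p_1p_2)=\chi(p_1)\chi(p_2)$. In other words, $f_0(p)=\chi(p)$ on $P$ and $f_0=0$ off $P$. Similarly, the casework of \Cref{prop:twisted-ind-basis} shows that $\left(\Ind_P^G\chi^J\right)^\chi$ is spanned by $f_n^J$, supported on the big cell $P\eta_nP$. One checks $\eta_n=J$. The normalization is $f_n^J(J)=1$, so
\[f_n^J(p_1Jp_2)=\chi^J(p_1)\chi(p_2).\]
The operator $I$ is $G$-invariant, so it preserves right $\chi$-eigenvectors. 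Hence $If_0$ is a multiple of $f_n^J$ and $If_n^J$ is a multiple of $f_0$. The plan is to find each multiple by evaluating at one point.

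\textbf{First multiple.} Evaluate $If_0$ at $J$:
\[(If_0)(J)=\sum_{u\in U}f_0\left(J^{-1}uJ\right).\]
The element $J^{-1}uJ$ is lower unipotent, and it lies in $P$ only when $u=1$. So only the $u=1$ term survives, giving $f_0(1)=1$. Therefore $If_0=f_n^J$.

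\textbf{Second multiple.} Evaluate $If_n^J$ at $1$:
\[(If_n^J)(1)=\sum_{u\in U}f_n^J\left(J^{-1}u\right).\]
Write $J^{-1}=J\cdot(J^{-2})$. A direct computation gives $J^2=\op{diag}(\varepsilon1_n,\varepsilon1_n)$, which is central and lies in $M\subseteq P$, and $J^{-2}=J^2$. Each term then becomes
\[f_n^J\left(J\cdot J^{2}u\right)=\chi\left(J^2u\right)=\chi\left(\op{diag}(\varepsilon1_n,\varepsilon1_n)\right).\]
Here I use that $J^2u\in P$ and that $\chi$ is trivial on $U$, which holds by \Cref{lem:decompose-character}. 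Every term is equal, so the sum is $|U|\,\chi\left(\op{diag}(\varepsilon1_n,\varepsilon1_n)\right)$.

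\textbf{Evaluating the character.} Now compute $\chi$ on $\op{diag}(\varepsilon1_n,\varepsilon1_n)$ with $\chi=(\alpha\circ m)(\beta\circ\chi_{\det})$:
\begin{itemize}
\item $\chi_{\det}$ gives $\varepsilon^{-n}=\varepsilon^n$, contributing $\beta(\varepsilon)^n$.
\item For $\GL_{2n}$, $m$ gives $\det(AD)=\varepsilon^{2n}=1$.
\item For $\GO_{2n}$ and $\GSp_{2n}$, we have $D=\lambda A^{-\intercal}$ with $A=D=\varepsilon1_n$, so $\lambda=\varepsilon^2=1$.
\end{itemize}
So the $\alpha$-factor is always $1$, and the value is $\beta(\varepsilon)^n$. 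This gives $If_n^J=\beta(\varepsilon)^n|U|f_0$. Composing the two results, $I\circ I$ acts on $f_0$ by the scalar $\beta(\varepsilon)^n|U|$. Since $\Ind_P^G\chi$ is irreducible, $I\circ I$ is that scalar on the whole space.

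The main obstacle is bookkeeping rather than any real idea. Three points need care: the convention for which side $J^{-1}$ versus $J$ appears on, the identification of the basis vector $f_n^J$ with its normalization at $\eta_n=J$, and the sign conventions for $\GL_{2n}$ and $\SL_{2n}$, where $\varepsilon=-1$. One also has to check that $J$ actually lies in $G$ in each case; for $\SL_{2n}$ this holds because $\det J=(-1)^n\varepsilon^n=1$.
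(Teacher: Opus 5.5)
Your proposal is correct and follows essentially the same route as the paper: you fix each multiple by evaluating $If_0$ at $\eta_n=J$, where only $u=1$ survives because $J^{-1}uJ$ is lower unipotent, and $If_n^J$ at $\eta_0=1$, where $P$-equivariance collapses the sum to $|U|\,f_n^J(J^{-1})$. Your explicit evaluation $f_n^J(J^{-1})=\chi(J^2)=\beta(\varepsilon)^n$, including the check that the multiplier factor is trivial, fills in the step the paper leaves as ``computing with $f_n^J$.''
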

\begin{proof}
	Certainly $If_0\in\op{span}\left\{f_n^J\right\}$ and $If_n^J\in\op{span}\{f_0\}$. We now do our computations separately.
    \begin{itemize}
        \item For $If_0$, we know $If_0=If_0(\eta_n)f_n^J$, so we want to compute
    	\[If_0(\eta_n)=\sum_{u\in U}f_0\left(J^{-1}u\eta_n\right).\]
        But $U\cap JP\eta_n^{-1}=\{1_{2n}\}$, so the summand vanishes unless $u=1_{2n}$, hence $If_0(\eta_n)=1$ follows.
        \item For $If_n^J$, we know $If_n^J=If_n^J(\eta_0)f_0$, so we want to compute
    	\[If_n^J(\eta_0)=\sum_{u\in U}f_n^J\left(J^{-1}u\eta_0\right).\]
        One may use the $P$-invariance of $f$ to rearrange the sum into $\left|U\right|f_n^J\left(J^{-1}\right)$. Computing with $f_n^J$ completes the proof.
        \qedhere
    \end{itemize}
\end{proof}

We now turn towards the case $\beta^2=1$. We begin with a general lemma.
\begin{lemma} \label{lem:matrix-coeff}
	Fix a character $\chi\colon P\to\CC^\times$, which we write as $\chi=(\alpha\circ m)(\beta\circ\chi_{\det})$. Given $r,s\in\{0,\ldots,n\}$ such that $f_r\in\left(\Ind_P^G\chi\right)^\chi$ (of \Cref{prop:ind-irred}) is nonzero, we have 
	\[If_r(\eta_s)=\beta(\varepsilon)^{n-s}Q\sum_{\substack{D\in\FF_q^{s\times s}\\\begin{bsmallmatrix}
		1_n & \op{diag}(D,0_{n-s}) \\ & 1_n
	\end{bsmallmatrix}\in G\\\op{rank}D=r+s-n}}\beta(\det E)^{-1},\]
	where
	\[Q\coloneqq\begin{cases}
		q^{n^2-s^2} & \text{if }G\in\{\GL_{2n},\SL_{2n}\}, \\
		q^{\binom{n}2-\binom{s}2} & \text{if }G\in\{\GO_{2n},\O_{2n}\}, \\
		q^{\binom{n+1}2-\binom{s+1}2} & \text{if }G\in\{\GSp_{2n},\Sp_{2n}\},
	\end{cases}\]
	and $E\in\GL_{r+s-n}(\FF_q)$ is some matrix determined from $D$ (not necessarily uniquely\footnote{See \Cref{rem:det-e-well-defined} below for a short discussion about well-definedness of the sum and in particular $\det E$.}) as follows:
    \begin{itemize}
        \item we always have $\begin{bsmallmatrix}
    		E \\ & 0
    	\end{bsmallmatrix}=D_1DD_2$ for $D_1,D_2\in\SL_s(\FF_q)$;
        \item and if $G\in\{\GO_{2n},\O_{2n}\}$, we require $D_2=D_1^\intercal$ and $E$ to be invertible and alternating;
        \item and if $G\in\{\GSp_{2n},\Sp_{2n}\}$, we require $D_2=D_1^\intercal$ and $E$ to be diagonal.
    \end{itemize}
\end{lemma}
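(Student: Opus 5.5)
Write $u_B\coloneqq\begin{bsmallmatrix}1_n & B\\ & 1_n\end{bsmallmatrix}$ for $u_B\in U$. The approach is to compute $If_r(\eta_s)=\sum_{u\in U}f_r(J^{-1}u\eta_s)$ directly. First, by \Cref{lem:compute-pgp} and the following remark, $J^{-1}u_B\eta_s\in P\eta_rP$ exactly when its lower-left block has rank $r$. The plan has four steps: reduce the sum over $U$ to a sum over a smaller block, identify which terms lie in the support $P\eta_rP$, evaluate $f_r$ on those terms, and collect the leftover powers of $q$ into $Q$.

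\textbf{Splitting the sum.} Write $\eta_s$ in the block form of \Cref{lem:compute-pgp}, with blocks of sizes $n-s$ and $s$, and decompose $B$ accordingly. Conjugating $u_B$ past $\eta_s$ gives a factorization $u_B\eta_s=\eta_s\,u'\,u''$. Here $u'=u_{B'}$ with $B'=\op{diag}(D,0_{n-s})$, where $D$ is the $s\times s$ block of $B$ that $\eta_s$ moves into the lower-left. The element $u''$ is built from the remaining blocks of $B$, and I would check that $J^{-1}\eta_s u'' (J^{-1}\eta_s)^{-1}$, or a suitable rearrangement of it, lies in $P$ with $\chi$-value $1$, since it is unipotent. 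Those remaining blocks then contribute a free factor equal to the number of choices for them in $U$. For $\GL_{2n}$ and $\SL_{2n}$ this number is $q^{n^2-s^2}$. In the alternating and symmetric cases, counting the entries of an alternating or symmetric $n\times n$ matrix outside the $s\times s$ block gives $q^{\binom n2-\binom s2}$ and $q^{\binom{n+1}2-\binom{s+1}2}$ respectively, which is $Q$. The constraint $u_{B'}\in G$ is simply the restriction that $D$ be alternating or symmetric.

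\textbf{Support and evaluation.} Next compute $J^{-1}\eta_s u_{B'}$ explicitly. Its lower-left $n\times n$ block is, up to signs $\varepsilon$, $\op{diag}(1_{n-s},D)$ or a permutation of it. Its rank is $n-s+\op{rank}D$, so the term is nonzero exactly when $\op{rank}D=r+s-n$. To evaluate $f_r$ there, choose $D_1,D_2\in\SL_s$ with $D_1DD_2=\op{diag}(E,0)$, and require $D_2=D_1^\intercal$ in the orthogonal and symplectic cases. For alternating $D$ this uses the normal form of alternating forms, with congruence by $\SL_s$ allowed by absorbing a diagonal scaling. For symmetric $D$ it uses diagonalization of quadratic forms; since $q$ is odd, the determinant-one condition can be arranged by rescaling a basis vector in the kernel or adjusting $E$. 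Embed $D_1,D_2$ into Levi elements $m_1,m_2\in M$. This replaces $D$ by $\op{diag}(E,0)$, and the factors of $\chi(m_i)$ are trivial because the determinants are $1$, or, for the multiplier groups, cancel in pairs. For $\op{diag}(E,0)$ one then writes down explicit $p_1,p_2\in P$ with $J^{-1}\eta_s u = p_1\eta_r p_2$. The invertible block $E$ lands in the Levi part of one of the $p_i$, and $\varepsilon$-signs appear from $J$ and $\eta_s$. Computing $\chi(p_1)\chi(p_2)$ via $\chi=(\alpha\circ m)(\beta\circ\chi_{\det})$ should give $\beta(\det E)^{-1}\beta(\varepsilon)^{n-s}$. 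The sign factor comes from the $n-s$ diagonal entries $\varepsilon$ contributed by $J^{-1}$ on the complementary block.

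\textbf{Main obstacle.} I expect the hardest step to be the explicit factorization $J^{-1}\eta_s u_{\op{diag}(E,0)}=p_1\eta_r p_2$, uniformly across all six groups. In particular, one must verify that $p_1,p_2$ actually lie in $G$: the alternating or symmetric structure and the multiplier must be respected, which is where the extra constraints on $E$ enter. One must also track the $\varepsilon$ signs and $\alpha\circ m$ contributions so that exactly $\beta(\varepsilon)^{n-s}\beta(\det E)^{-1}$ remains. I would first do $\GL_{2n}$ with $4\times4$ block matrices, then impose $D_2=D_1^\intercal$ and check that the same factorization is compatible with $J$. Well-definedness of $\det E$ up to $\beta$-values is automatic here, because $f_r$ is well-defined by \Cref{prop:ind-irred}.
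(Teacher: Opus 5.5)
Your overall plan matches the paper's proof. You separate the block $D$ of $B$ that $\eta_s$ moves into the lower-left, and let the other blocks contribute the free factor $Q$. You then read off $\op{rank}D=r+s-n$ from the lower-left block, normalize $D$ to $\op{diag}(E,0)$ by Levi elements built from $D_1,D_2\in\SL_s$ (these lie in $G$ for the orthogonal and symplectic groups precisely because $D_2=D_1^\intercal$), and extract $\beta(\det E)^{-1}\beta(\varepsilon)^{n-s}$.

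However, your splitting step is written in the wrong order, and as stated it fails. For any $X$ and any $u\in U$, the lower-left $n\times n$ block of $Xu$ equals that of $X$. So $J^{-1}\eta_s u_{B'}$ has lower-left block $\op{diag}(\varepsilon1_{n-s},0_s)$ for every $D$, always lies in $P\eta_{n-s}P$, and $D$ never enters the support condition. The fix uses that $U$ is abelian: write $u_B=u_{B'}u_{B-B'}$, so
\[J^{-1}u_B\eta_s=\left(J^{-1}u_{B'}\eta_s\right)\left(\eta_s^{-1}u_{B-B'}\eta_s\right).\]
The second factor lies in $P$ with unipotent Levi blocks; it carries the remaining blocks of $B$. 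It drops out directly by the right $(P,\chi)$-equivariance of $f_r$, so no conjugation by $J^{-1}\eta_s$ is needed. The first factor has lower-left block $\op{diag}(\varepsilon1_{n-s},\varepsilon D)$. Equivalently, your $u'$ must be the lower unipotent element $\eta_s^{-1}u_{B'}\eta_s$, not $u_{B'}$. This is exactly the factorization displayed in the paper's proof.

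For the step you flag as the main obstacle, the paper needs only one block identity:
\[\begin{bmatrix}1 & \\ E & 1\end{bmatrix}=\begin{bmatrix}-\varepsilon E^{-1} & 1\\ & E\end{bmatrix}\begin{bmatrix} & \varepsilon\\ 1 & \end{bmatrix}\begin{bmatrix}1 & E^{-1}\\ & 1\end{bmatrix},\]
applied on the coordinates carrying $E$.
\begin{itemize}
\item The left factor lies in $P$ with $\chi$-value $\beta(\det E)^{-1}$. For the orthogonal and symplectic groups it lies in $G$ because $E^\intercal=-\varepsilon E$, i.e.\ $E$ is alternating for $\O_{2n}$ and symmetric (indeed diagonal) for $\Sp_{2n}$. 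This is where the constraints on $E$ are used.
\item The right factor lies in $U$.
\item Factoring out $\op{diag}(\varepsilon1_{n-s},1_s,\varepsilon1_{n-s},1_s)$ produces $\beta(\varepsilon)^{n-s}$.
\item Levi permutation matrices then carry the remaining matrix to $\eta_r$, as you predicted.
\end{itemize}
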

\begin{proof}
	We are asked to compute $If_r(\eta_s)=\sum_{u\in U}f_r\left(J^{-1}u\eta_s\right)$. For this, we want to compute $J^{-1}u\eta_s$ and in particular want to ask when it lives in $P\eta_rP$. As such, we write $u$ in a block matrix form
    \[u=\begin{bmatrix}
			1_{n-s} && A & B \\ & 1_s & C & D \\
			&& 1_{n-s} \\ &&& 1_s
		\end{bmatrix}\]
    and compute
	\begin{align*}
		J^{-1}u\eta_s
		&= \varepsilon\begin{bmatrix}
			&& \varepsilon1_{n-s} \\ & \varepsilon1_s \\
			1_{n-s} &   &   \\ & D &   & \varepsilon1_s
		\end{bmatrix}\begin{bmatrix}
			1_{n-s} & B & A \\ & 1_s \\
			&& 1_{n-s} \\ && \varepsilon C & 1_s
		\end{bmatrix}.
	\end{align*}
	Now, $\chi$ is trivial on the last rightmost matrix,
    so we are left with
	\[If_r(\eta_s)=Q\sum_{D\in\FF_q^{s\times s}}f_r\left(\begin{bmatrix}
		&& 1_{n-s} \\ & 1_s \\
		\varepsilon1_{n-s} &   &   \\ &  D &   & 1_s
	\end{bmatrix}\right)\]
    upon replacing $D$ with $\varepsilon D$.
	Now, $f_r$ is supported on $P\eta_rP$, so by \Cref{lem:compute-pgp}, we see that $D$ gives a nonzero contribution if and only if \(\op{rank}\begin{bsmallmatrix}
		\varepsilon1_{n-s} \\ & D
	\end{bsmallmatrix}=r\), which is equivalent to $\op{rank}D=r+s-n$, which we will assume from now on. Set $d\coloneqq\op{rank}D$ for brevity.
	
	We now place $D$ into a normal form, which need not be unique.
	\begin{itemize}
		\item If $G\in\{\GL_{2n},\SL_{2n}\}$, then we use row-reduction to find matrices $D_1,D_2\in\SL_{s}(\FF_q)$ such that $D_1DD_2$ takes the form $\begin{bsmallmatrix}
			E \\ & 0
		\end{bsmallmatrix}$. 
		\item If $G\in\{\GSp_{2n},\Sp_{2n}\}$, then $D$ is symmetric, so finding an orthogonal basis grants $D_1\in\SL_s(\FF_q)$ such that $D_2\coloneqq D_1^\intercal$ has $D_1DD_2=\begin{bsmallmatrix}
			E \\ & 0
		\end{bsmallmatrix}$ where $E\in\GL_d(\FF_q)$ is diagonal.
		\item If $G\in\{\GO_{2n},\O_{2n}\}$, then $D$ is alternating, so finding a symplectic basis
        grants $D_1\in\SL_s(\FF_q)$ such that $D_2\coloneqq D_1^\intercal$ has $D_1DD_2=\begin{bsmallmatrix}
			E \\ & 0
		\end{bsmallmatrix}$ where $E$ is invertible and alternating.
	\end{itemize}
    Using the above normalizations, we may rewrite our summand as
    \[f_r\left(\begin{bmatrix}
		&& 1_{n-s} \\ & 1_s \\
		\varepsilon1_{n-s} &   &   \\ & D_1 DD_2 &   & 1_s
	\end{bmatrix}\right),\]
	reducing ourselves from $D$ to $D_1DD_2=\begin{bsmallmatrix}
		E \\ & 0
	\end{bsmallmatrix}$.
    We now note that $\begin{bsmallmatrix}
		1 \\ E & 1
	\end{bsmallmatrix}=\begin{bsmallmatrix}
		-\varepsilon E^{-1} & 1 \\ & E
	\end{bsmallmatrix}\begin{bsmallmatrix}
		& \varepsilon \\ 1
	\end{bsmallmatrix}\begin{bsmallmatrix}
		1 & E^{-1} \\ & 1
	\end{bsmallmatrix}$,
    so the summand equals
    \[\beta(\det E)^{-1}f_r\left(\begin{bmatrix}
		&&& 1_{n-s} \\ &&&& \varepsilon1_d \\ && 1_{n-r} \\
		\varepsilon1_{n-s} \\ & 1_d \\ && 0_{n-r} &&& 1_{n-r}
	\end{bmatrix}\right).\]
	To compute the contribution of this element, it remains to transform the middle matrix into $\eta_r$. We may factor out $\op{diag}(\varepsilon1_{n-s},1_s,\varepsilon1_{n-s},1_s)$ to turn the summand into
    \[\beta(\varepsilon)^{n-s}\beta(\det E)^{-1}f_r\left(\begin{bmatrix}
		&& \varepsilon1_r \\ & 1_{n-r} \\
		1_r \\ &&& 1_{n-r}
	\end{bmatrix}\right).\]
    We can now multiply by suitable permutation matrices to the above $4\times4$ block matrix to show that this equals $\beta(\varepsilon)^{n-s}\beta(\det E)^{-1}f_r(\eta_r)$, and summing completes the proof.
\end{proof}
\begin{remark} \label{rem:det-e-well-defined}
	Consider $G\in\{\GL_{2n},\SL_{2n}\}$ with $\beta\ne1$. In this case, the sum for $If_r(\eta_s)$ is often not well-defined as the value of $\det E$ fails to be well-defined given $D$ for most values of $r$ and $s$. This corresponds to the fact that we tend to have $f_r=0$ for most $r$. A similar phenomenon can be seen for the other groups. Note however that when $G\in\{\GO_{2n},\O_{2n}\}$ and $\beta^2=1$, we always have $\beta(\det E) = 1$ since the determinant of an alternating matrix is always a square.
\end{remark}
\begin{remark} \label{rem:i-on-fj}
	We can similarly describe $If_r^J(\eta_s)$, where $f_r^J\in\left(\Ind_P^G\chi^J\right)^{\chi}$ is the usual basis vector (of \Cref{prop:twisted-ind-basis}). If $\beta^2\ne1$, then we appeal to \Cref{prop:generic-intertwining}. Otherwise, if $\beta^2=1$, then the matrix factorizations used above apply verbatim, yielding the same sum.
\end{remark}
We are now in a position to write down some matrices when $\beta^2=1$. We first consider when $\beta=1$.
\begin{proposition} \label{prop:trivial-matrix-coeffs}
	Fix a character $\chi\colon P\to\CC^\times$, which we write as $\chi=(\alpha\circ m)(\beta\circ\chi_{\det})$. Suppose $\beta=1$ so that $\chi=\chi^J$. Then let $\{f_0,\ldots,f_n\}$ be the basis of $\left(\Ind_P^G\chi\right)^\chi$ described in \Cref{prop:ind-irred}. For each $i,j\in\{0,\ldots,n\}$, define
    \[\varepsilon(i,j)\coloneqq\begin{cases}
        \displaystyle(-1)^{i+j-n} & \text{if }G\in\{\GL_{2n},\SL_{2n}\}, \\
		\displaystyle(-1)^{(i+j-n)/2} & \text{if }G\in\{\GO_{2n},\O_{2n}\}, \\
		\displaystyle(-1)^{i+j-n-\floor{(i+j-n)/2}} & \text{if }G\in\{\GSp_{2n},\Sp_{2n}\}, \\
    \end{cases}\]
    and
    \[Q(i,j)\coloneqq\begin{cases}
        q^{n^2-i^2+\binom{i+j-n}2} & \text{if }G\in\{\GL_{2n},\SL_{2n}\}, \\
		q^{\binom{n}2-\binom{i}2+2\binom{(i+j-n)/2}2} & \text{if }G\in\{\GO_{2n},\O_{2n}\}, \\
		q^{\binom{n+1}2-\binom{i+1}2+2\binom{\floor{(i+j-n)/2}+1}2} & \text{if }G\in\{\GSp_{2n},\Sp_{2n}\}, \\
    \end{cases}\]
    and
    \[R(i,j)\coloneqq\begin{cases}
        \displaystyle\frac{(q;q)_i^2}{(q;q)_{n-j}^2(q;q)_{i+j-n}} & \text{if }G\in\{\GL_{2n},\SL_{2n}\}, \\
		\displaystyle\frac{(q;q)_i}{(q;q)_{n-j}(q^2;q^2)_{(i+j-n)/2}} & \text{if }G\in\{\GO_{2n},\O_{2n}\}, \\
		\displaystyle\frac{(q;q)_i}{(q;q)_{n-j}(q^2;q^2)_{\floor{(i+j-n)/2}}}& \text{if }G\in\{\GSp_{2n},\Sp_{2n}\},
    \end{cases}\]
	where we implicitly take zeroes unless $i+j-n$ is nonnegative and unless $i+j-n$ is even when $G\in\{\GO_{2n},\O_{2n}\}$. Then $[\varepsilon
    (i,j)Q(i,j)R(i,j)]_{0\le i,j\le n}$ is the matrix representation of $I$.
\end{proposition}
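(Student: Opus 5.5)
By \Cref{lem:matrix-coeff} with $\beta=1$, every factor $\beta(\cdot)$ equals $1$, so the question of how $E$ is chosen from $D$ (\Cref{rem:det-e-well-defined}) disappears. Because $\beta=1$, \Cref{prop:ind-irred} gives $f_r\ne0$ for every $r\in\{0,\ldots,n\}$. Since $If_r\in\left(\Ind_P^G\chi\right)^\chi$ and the $f_s$ are determined by their values at $\eta_s$, we have $If_r=\sum_s If_r(\eta_s)f_s$. The entry of the matrix of $I$ in row $s$ and column $r$ is therefore
\[If_r(\eta_s)=Q_s\cdot\#\left\{D\in\FF_q^{s\times s}\text{ of the relevant type}:\op{rank}D=r+s-n\right\}.\]
Here $Q_s$ is the power of $q$ from \Cref{lem:matrix-coeff}, and "the relevant type" means arbitrary, alternating, or symmetric according to the family of $G$. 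Writing $i=s$ and $j=r$, the factor $Q_s$ is exactly the first part $q^{n^2-i^2}$, $q^{\binom n2-\binom i2}$, or $q^{\binom{n+1}2-\binom{i+1}2}$ of $Q(i,j)$. With $d\coloneqq i+j-n$, the proposition reduces to three classical counts of $i\times i$ matrices of rank $d$. Each count must equal $\varepsilon(i,j)$ times the remaining power of $q$ in $Q(i,j)$ times $R(i,j)$, and it must vanish when $d<0$, or when $d$ is odd in the alternating case.

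For general matrices, I count rank-$d$ matrices as injections $\FF_q^d\hookrightarrow\FF_q^i$ composed with surjections $\FF_q^i\twoheadrightarrow\FF_q^d$, modulo $\GL_d$. This gives
\[\prod_{k<d}\frac{(q^i-q^k)^2}{q^d-q^k}.\]
Rewriting $q^a-q^k=-q^k(1-q^{a-k})$, the numerator signs cancel and the denominator contributes $(-1)^d$. The result is $(-1)^dq^{\binom d2}(q;q)_i^2/\big((q;q)_{i-d}^2(q;q)_d\big)$, which is the claimed entry because $i-d=n-j$.

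For alternating matrices, a rank-$2k$ alternating form on $\FF_q^i$ is a choice of $2k$-dimensional quotient $\FF_q^i\twoheadrightarrow V$ together with a nondegenerate alternating form on $V$. So the count is
\[\frac{\#\{\text{surjections }\FF_q^i\twoheadrightarrow\FF_q^{2k}\}}{|\GL_{2k}|}\cdot\frac{|\GL_{2k}|}{|\Sp_{2k}|}.\]
Using $|\Sp_{2k}|=q^{k^2}\prod_{l\le k}(q^{2l}-1)$ and simplifying in $q$-Pochhammer notation, I expect $(-1)^kq^{2\binom k2}(q;q)_i/\big((q;q)_{i-2k}(q^2;q^2)_k\big)$, matching $\varepsilon$, $Q$, and $R$ with $k=(i+j-n)/2$.

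For symmetric matrices the same quotient argument applies, with nondegenerate symmetric forms in place of alternating ones. The count of nondegenerate symmetric forms is $|\GL_d|/|\O^+_d|+|\GL_d|/|\O^-_d|$, summing over the two isometry classes (MacWilliams' formula). I expect the two classes to combine into $(-1)^{d-\floor{d/2}}q^{2\binom{\floor{d/2}+1}2}(q;q)_i/\big((q;q)_{i-d}(q^2;q^2)_{\floor{d/2}}\big)$. I would verify this by treating $d$ even and $d$ odd separately. Alternatively, I would induct on $i$ via the recursion obtained by splitting off the last row and column, and then check the closed form.

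The main obstacle is the symmetric case. There the two orthogonal group orders, split by the parity of $d$ and by whether $-1$ is a square, must be shown to sum to the stated uniform formula. My fallback is to prove the formula directly by induction on $i$, using the decomposition of a symmetric matrix by its first row and column, and to check that the recursion matches the stated closed form.
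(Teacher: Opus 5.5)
Your proposal is correct and follows the paper's proof. With $\beta=1$, \Cref{lem:matrix-coeff} turns the $(i,j)$ entry $If_j(\eta_i)$ into the prefactor $Q_i$ times the number of $i\times i$ general, alternating, or symmetric matrices of rank $i+j-n$; your derivations of these three counts (via rank factorizations, and via orbit--stabilizer with $\Sp_{2k}$ and $\O^\pm_d$) reproduce the classical formulas that the paper simply cites from \cite{hach-gf}, and the symmetric case you flag as the main obstacle works out as you expect. For $d$ even, take $|\GL_d|/|\O^+_d|+|\GL_d|/|\O^-_d|$; for $d$ odd, take $2|\GL_d|/|\O_d|$. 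Multiplying by $\binom{i}{d}_q$ gives exactly $(-1)^{d-\floor{d/2}}q^{2\binom{\floor{d/2}+1}2}(q;q)_i/\big((q;q)_{i-d}(q^2;q^2)_{\floor{d/2}}\big)$, regardless of whether $-1$ is a square.
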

\begin{proof}
	We use \Cref{lem:matrix-coeff}, which applies because the $(i,j)$ matrix coefficient is given by $If_j(\eta_i)$. Because $\beta=1$, the coefficient equals the number of possible $D\in\FF_q^{i\times i}$ of rank $i+j-n$, with potentially additional conditions depending on $G$. If $G\in\{\GL_{2n},\SL_{2n}\}$, then we are counting all such matrices, so we appeal to \cite[Theorem~7.1.5]{hach-gf}. If $G\in\{\GO_{2n},\O_{2n}\}$, then we are counting alternating matrices, so we appeal to \cite[Theorem~7.5.5]{hach-gf}. Lastly, if $G\in\{\GSp_{2n},\Sp_{2n}\}$, then we are counting symmetric matrices, so we appeal to \cite[Theorem~7.5.2]{hach-gf}.
\end{proof}
We now turn to the case where $\beta^2=1$ but $\beta\ne1$.
For convenience, we explain how to reduce to the case $G\in\{\SL_{2n},\O_{2n},\Sp_{2n}\}$ by taking suitable submatrices.
\begin{lemma} \label{lem:general-from-special-matrix}
	Take $G\in\{\GL_{2n},\GO_{2n},\GSp_{2n}\}$ so that $S\coloneqq\ker m$ is in $\{\SL_{2n},\O_{2n},\Sp_{2n}\}$. Fix a character $\chi\colon P\to\CC^\times$, which we write as $\chi=(\alpha\circ m)(\beta\circ\chi_{\det})$.
    Suppose $\beta^2=1$ but $\beta\ne1$.
    Let $\left\{f_r^G\right\}_{r\in A}$ and $\left\{f_r^{JG}\right\}_{r\in B}$ be the bases of $\left(\Ind_P^G\chi\right)^\chi$ and $\left(\Ind_P^G\chi^J\right)^{\chi}$ described in \Cref{prop:ind-irred,prop:twisted-ind-basis} respectively; define $f_r^S$ and $f_r^{JS}$ similarly for $\chi|_S$. Further, let $\left[I^S(i,j)\right]_{0\le i,j\le n}$ be the matrix representation of $I$ on $\left(\Ind_{P^S}^S\chi\right)^\chi$.
	\begin{itemize}
		\item The matrix representation of $I^G\colon\left(\Ind_P^G\chi\right)^\chi\to\left(\Ind_P^G\chi^J\right)^\chi$ is
		\[\left[I^S(i,j)\right]_{\substack{0\le i,j\le n\\i\in B,j\in A}}.\]
		\item The matrix representation of $I^G\colon\left(\Ind_P^G\chi^J\right)^\chi\to\left(\Ind_P^G\chi\right)^\chi$ is
		\[\left[I^S(i,j)\right]_{\substack{0\le i,j\le n\\i\in A,j\in B}}.\]
	\end{itemize}
\end{lemma}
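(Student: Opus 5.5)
The plan is to compare the matrix coefficients of $I^G$ and $I^S$ entry by entry, imitating the reduction from $G$ to $S$ in the last case of the proof of \Cref{prop:ind-mult-free}. In the bases of \Cref{prop:ind-irred,prop:twisted-ind-basis}, the $(i,j)$ coefficient of $I^G\colon\left(\Ind_P^G\chi\right)^\chi\to\left(\Ind_P^G\chi^J\right)^\chi$ is $I^Gf_j^G(\eta_i)$, with $j\in A$ and $i\in B$. Indeed, $I^Gf_j^G$ lies in $\left(\Ind_P^G\chi^J\right)^\chi$ and is therefore determined by its values on the $\eta_i$. Since $\{f^{JG}_i\}_{i\in B}$ is a basis, we get $I^Gf_j^G=\sum_{i\in B}I^Gf_j^G(\eta_i)f_i^{JG}$. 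The same holds in the other direction with $A$ and $B$ swapped. It therefore suffices to show that $I^Gf_j^G(\eta_i)=I^S(i,j)$ whenever $j\in A$ and $i\in B$, together with the analogous identity for the $f^{JG}_j$.

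First, I would check that the index sets are compatible. By the casework in \Cref{prop:ind-irred,prop:twisted-ind-basis}, each $j\in A$ also has $f_j^S\ne0$. On $S$ we have $m=1$ and $\beta^2=1$, so $\chi|_S=(\chi|_S)^J$, and hence $\left(\Ind_{P^S}^S\chi\right)^\chi$ and its twisted version share the basis indexed by all $r\in\{0,\ldots,n\}$. This means $I^S(i,j)$ is defined for every $i,j$, and the submatrix in the statement makes sense.

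Next, I would apply \Cref{lem:matrix-coeff} (together with \Cref{rem:i-on-fj} for the $f^J$ vectors) to both groups. The proof of that lemma only uses $u\in U$, a factorization of $J^{-1}u\eta_s$ as a product of elements of $P^S$ and $\eta_r$, and the values $\chi(p)$ on those factors. The unipotent radical $U$ is the same for $G$ and $S$, the matrices $J$ and $\eta_r$ lie in $S$, and every factor used (the $D_1$, $D_2$ normalizations, the matrix $\begin{bsmallmatrix}-\varepsilon E^{-1}&1\\&E\end{bsmallmatrix}$, the sign diagonal, and the permutations) lies in $P^S$. Since $\chi$ and $\chi|_S$ agree there, the resulting sums $\beta(\varepsilon)^{n-s}Q\sum\beta(\det E)^{-1}$ are literally identical. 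The normalizations $f_r^G(\eta_r)=1=f_r^S(\eta_r)$ match, so the coefficients coincide.

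The main obstacle is well-definedness: the $S$-computation uses $f_j^S$ as an element of $\left(\Ind_{P^S}^S\chi\right)^\chi$, whereas on the $G$ side the target vector lies in the $\chi^J$-space. So I must make sure that restricting $f_j^G$ to $S$ gives exactly $f_j^S$, and that restricting $f_i^{JG}$ gives $f_i^{JS}=f_i^S$. This follows because both functions are supported on $P\eta_rP\cap S=P^S\eta_rP^S$, where restricting from $G$ to $S$ is compatible with the bijection $P^S\backslash S\to P^G\backslash G$, and both equal $1$ at $\eta_r$. Once this is checked, evaluation at $\eta_i\in S$ commutes with restriction, which gives the claimed submatrices.
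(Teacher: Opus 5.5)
Your proposal is correct and follows the paper's proof. The paper likewise reads off the $(i,j)$ coefficient as $I^\bullet f_j^\bullet(\eta_i)$, and notes that the computation of \Cref{lem:matrix-coeff} (or \Cref{rem:i-on-fj}) gives the same value for $G$ and $S$, because only $U$, $J$, $\eta_i$ and $P^S$ are involved. Your extra restriction check ($f_r^G|_S=f_r^S$ and $f_r^{JG}|_S=f_r^S$, together with $(I^Gf)|_S=I^S(f|_S)$ since $U\subseteq S$ and $J\in S$) is valid and would even prove the lemma without the explicit sum. The one slip is that for $S=\SL_{2n}$ with $\beta\ne1$ the basis is indexed by $\{0,n\}$, not by all $r$; this is harmless because $A=\{0\}$ and $B=\{n\}$ when $G=\GL_{2n}$.
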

\begin{proof}
    For the first point, we see that as in \Cref{prop:trivial-matrix-coeffs}, the $(i,j)\in B\times A$ coefficient of $I^\bullet$ equals $I^\bullet f_j^\bullet(\eta_i)$. But the computation of \Cref{lem:matrix-coeff} explains that $I^Gf_j^G(\eta_i)=I^Sf_j^S(\eta_i)$, as required.
	The proof of the second point is essentially the same upon replacing \Cref{lem:matrix-coeff} with \Cref{rem:i-on-fj}.
\end{proof}
\begin{remark}
    It is worth recalling $A$ and $B$. If $G=\GL_{2n}$, then $A=\{0\}$ and $B=\{n\}$. Otherwise if $G\in\{\GO_{2n},\GSp_{2n}\}$, then $A=\{r:r\equiv0\pmod2\}$ and $B=\{r:r\equiv n\pmod2\}$.
\end{remark}
\begin{remark} \label{rem:special-coefs-vanish}
    Using the notation of the lemma above, we can show that $I^S(i,j)=0$ if $i\notin B$ but $j\in A$. Indeed, the argument above implies
	\[I^S(i,j)=I^Gf_j^G(\eta_i),\]
	which vanishes because $f_i^{JG}=0$. Similarly, we find $I^S(i,j)=0$ if $i\notin A$ but $j\in B$ by using $If_j^{JG}(\eta_i)=0$ instead.
\end{remark}
We are now ready for our computation.
\begin{proposition} \label{prop:quadratic-matrix}
    Take $G\in\{\SL_{2n},\O_{2n},\Sp_{2n}\}$. Fix a character $\chi\colon P\to\CC^\times$, which we write as $\chi=\beta\circ\chi_{\det}$.
    Suppose $\beta^2=1$ but $\beta\ne1$ so that $\chi=\chi^J$. Let $\{f_r\}_{r\in A}$
    be the basis of $\left(\Ind_P^G\chi\right)^\chi$ described in \Cref{prop:ind-irred}.
    \begin{itemize}
        \item If $G=\SL_{2n}$ so that $A=\{0,n\}$, then $I$ has the matrix representation
        \[\begin{bmatrix}
            & \beta(-1)q^{n^2} \\ 1
        \end{bmatrix}.\]
        \item If $G\in\{\O_{2n},\Sp_{2n}\}$ so that $A=\{0,\ldots,n\}$, then define
        \[\varepsilon(i,j)\coloneqq\beta(\varepsilon)^{n-i+(i+j-n)/2}(-1)^{(i+j-n)/2},
        \]
        and
        \[Q(i,j)\coloneqq\begin{cases}
            q^{\binom{n}2-\binom{i}2+2\binom{(i+j-n)/2}2} & \text{if }G=\O_{2n}, \\
            q^{\binom{n+1}2-\binom{i+1}2+2\binom{(i+j-n)/2+1}2-(i+j-n)/2} & \text{if }G=\Sp_{2n},
        \end{cases}\]
        and
        \[R(i,j)\coloneqq\frac{(q;q)_i}{(q;q)_{n-j}(q^2;q^2)_{(i+j-n)/2}},
        \]
        which vanish unless $i+j-n$ is a nonnegative even integer. Then $[\varepsilon(i,j)Q(i,j)R(i,j)]_{0\le i,j\le n}$ is the matrix representation of $I$.
    \end{itemize}
\end{proposition}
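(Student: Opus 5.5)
We compute the matrix coefficients straight from \Cref{lem:matrix-coeff}. As in \Cref{prop:trivial-matrix-coeffs}, the $(i,j)$ entry of $I$ is $If_j(\eta_i)$, and the lemma gives
\[If_j(\eta_i)=\beta(\varepsilon)^{n-i}Q\sum_D\beta(\det E)^{-1},\]
where $D$ runs over the $i\times i$ matrices of the relevant type (general, alternating or symmetric) of rank $d\coloneqq i+j-n$, and $Q$ is taken at $s=i$. Since $\beta^2=1$ and $\beta\ne1$, $\beta$ is the quadratic character of $\FF_q^\times$, so $\beta^{-1}=\beta$. Everything therefore reduces to evaluating a signed count of rank-$d$ matrices, weighted by $\beta$ of the determinant of their nondegenerate part. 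We then treat the three groups separately.

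\textbf{Case $G=\SL_{2n}$.} The basis is $\{f_0,f_n\}$, so only $i,j\in\{0,n\}$ matter, and there are four entries.
\begin{itemize}
\item $(0,0)$: here $d=-n<0$, so the entry is $0$.
\item $(n,0)$: here $s=n$ and $d=0$, so only $D=0$ contributes, with $Q=1$, $\beta(\varepsilon)^0=1$ and $E$ empty; the entry is $1$.
\item $(0,n)$: here $s=0$, so $D$ is the empty matrix, $Q=q^{n^2}$, and the sign is $\beta(\varepsilon)^n=\beta(-1)^n$. This power of $\beta(-1)$ should be reconciled with the stated $\beta(-1)$ by tracking the sign conventions for $d_w$ and $J$ carefully.
\item $(n,n)$: the sum is $\sum_{D\in\GL_n}\beta(\det D)$, which vanishes by orthogonality because $\det$ is surjective and $\beta\ne1$.
\end{itemize}

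\textbf{Case $G=\O_{2n}$.} Here $\varepsilon=1$, so $\beta(\varepsilon)=1$. By \Cref{rem:det-e-well-defined}, $\det E$ is a square for alternating $E$, so $\beta(\det E)=1$. The sum is then exactly the unweighted count of alternating $i\times i$ matrices of rank $d$ used in \Cref{prop:trivial-matrix-coeffs} (via \cite[Theorem~7.5.5]{hach-gf}). This gives the stated $Q(i,j)R(i,j)(-1)^{d/2}$, which vanishes unless $d$ is even, and agrees with $\varepsilon(i,j)$ because $\beta(1)=1$.

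\textbf{Case $G=\Sp_{2n}$.} This is the main work. We stratify the symmetric rank-$d$ matrices $D$ by their radical, a subspace of codimension $d$. There are $\binom{i}{d}_q$ choices of radical, and on the quotient $D$ induces a nondegenerate symmetric form whose discriminant class is $\det E$. The weighted sum therefore factors as
\[\binom{i}{d}_q\cdot S_d,\qquad S_d\coloneqq\sum_{E\in\Sym_d\cap\GL_d}\beta(\det E).\]
For $d$ odd, the substitution $E\mapsto cE$ with $c$ a nonsquare multiplies $\det E$ by $c^d$, hence flips $\beta(\det E)$, so $S_d=0$; this explains the vanishing for odd $d$. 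For $d=2k$, we write
\[S_d=\pm\left(\frac{|\GL_{2k}|}{|\O^+_{2k}|}-\frac{|\GL_{2k}|}{|\O^-_{2k}|}\right),\]
where $\O^\pm_{2k}$ are the split and nonsplit orthogonal groups. The split forms are exactly those whose discriminant times $(-1)^k$ is a square; this is where the factor $\beta(-1)^{k}=\beta(\varepsilon)^{d/2}$ in $\varepsilon(i,j)$ comes from. Using the standard orders $|\O^\pm_{2k}|=2q^{k(k-1)}(q^k\mp1)\prod_{l<k}(q^{2l}-1)$, we simplify $S_{2k}$ to a signed power of $q$ times $|\GL_{2k}|/\bigl(q^{k(k-1)}\prod_{l\le k}(q^{2l}-1)\bigr)$. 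We then combine this with $\binom{i}{d}_q$ and $Q=q^{\binom{n+1}2-\binom{i+1}2}$, rewriting everything in $q$-Pochhammer form to reach $(q;q)_i/\bigl((q;q)_{n-j}(q^2;q^2)_{k}\bigr)$ together with the stated power of $q$, including the extra $-(i+j-n)/2$ in the exponent.

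I expect the main obstacle to be this last bookkeeping: pinning down the exact power of $q$ and the overall sign $(-1)^{k}\beta(-1)^{k}$ in $S_{2k}$. If that becomes messy, I would verify it independently by evaluating $S_{2k}$ recursively, splitting off a hyperbolic plane or an anisotropic vector, and checking small cases $k=1,2$ against the claimed formula.
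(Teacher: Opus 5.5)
Your proposal is correct. For $\SL_{2n}$ and $\O_{2n}$ it is the paper's argument; the only difference is that you kill the $(0,0)$ and $(n,n)$ entries directly (negative rank, and $\sum_{D\in\GL_n}\beta(\det D)=0$), where the paper cites \Cref{rem:special-coefs-vanish}.

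Do not try to ``reconcile'' the $(0,n)$ entry, though. Your value $\beta(\varepsilon)^nq^{n^2}=\beta(-1)^nq^{n^2}$ is exactly what the paper's own proof derives, and it agrees with \Cref{prop:generic-intertwining}. Indeed $If_n(\eta_0)=q^{n^2}f_n(J^{-1})$ and $J^{-1}=-J=\eta_n\cdot(-1_{2n})$, so the entry is $q^{n^2}\chi(-1_{2n})=\beta(-1)^nq^{n^2}$. No change of conventions can affect this: the product of the two off-diagonal entries is the scalar by which $I\circ I$ acts, and that scalar does not depend on the basis. The $\beta(-1)$ in the displayed matrix is a misprint for $\beta(-1)^n$; the two differ when $n$ is even and $q\equiv3\pmod 4$.

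For $\Sp_{2n}$ your route genuinely differs from the paper's.
\begin{itemize}
\item The paper's route: it takes the total number $N$ of symmetric $i\times i$ matrices of rank $d=i+j-n$ (already used in \Cref{prop:trivial-matrix-coeffs}). It then quotes MacWilliams for the proportion with $\det E$ a square, which gives the signed count $\beta(-1)^{d/2}q^{-d/2}N$ in one line. The extra $-(i+j-n)/2$ in the exponent of $Q(i,j)$ is visibly this $q^{-d/2}$.
\item Your route: you stratify by the radical to get $\binom{i}{d}_qS_d$, and evaluate $S_{2k}$ from $|\O^{\pm}_{2k}|$. This is self-contained.
\end{itemize}
The bookkeeping you were worried about closes cleanly. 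The sign in front of the difference of orbit sizes is exactly $\beta(-1)^k$, and
\[S_{2k}=\beta(-1)^kq^{k^2}\prod_{l=1}^k(q^{2l-1}-1)=\beta(-1)^k(-1)^kq^{k^2}(q;q^2)_k,\]
with no further power of $q$ (this is part (b) of \Cref{thm:gsum-sym}). Then the identities $(q;q)_{2k}=(q;q^2)_k(q^2;q^2)_k$ and $2\binom{k+1}2-k=k^2$ turn $\beta(-1)^{n-i}q^{\binom{n+1}2-\binom{i+1}2}\binom{i}{2k}_qS_{2k}$ into exactly $\varepsilon(i,j)Q(i,j)R(i,j)$.

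Your scaling argument for odd $d$ also buys something. The paper's citation only treats even $d$ and leaves odd $d$ to \Cref{rem:special-coefs-vanish}. For even $n$, that remark does not reach the entries with $i$ even and $j$ odd, whereas your argument disposes of all odd $d$ at once.
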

\begin{proof}
    \Cref{rem:special-coefs-vanish} explains all the vanishing entries. We now handle our groups separately.
    \begin{itemize}
        \item Take $G=\SL_{2n}$. Then it remains to compute $If_0(\eta_n)f_n^J$ and $If_n^J(\eta_0)f_0$.
    	\begin{itemize}
    		\item For $If_0(\eta_n)$, \Cref{lem:matrix-coeff} wants us to sum over $D\in\FF_q^{n\times n}$ of rank $0$. This requires $D=0$, so the sum yields $1$.
    		\item For $If_n^J(\eta_0)$, \Cref{lem:matrix-coeff} applies via  \Cref{rem:i-on-fj}. This time, we are summing $D\in\FF_q^{0\times0}$ of rank $0$, so the sum still returns $1$, yielding $If_n^J(\eta_0)=\beta(\varepsilon)^nq^{n^2}$.
    	\end{itemize}
        \item Take $G=\O_{2n}$. Using \Cref{lem:matrix-coeff}, we see $\beta(\det E)^{-1}=\beta(\varepsilon)=1$ always, so the same argument as in \Cref{prop:trivial-matrix-coeffs} goes through.
        \item Take $G=\Sp_{2n}$. Using \Cref{lem:matrix-coeff}, we see that our sum is the difference between the number of symmetric $D\in\FF_q^{i\times i}$ of rank $i+j-n$ with $\det E\in\FF_q^{\times2}$ and the number of such $D$ with $\det E\notin\FF_q^{\times2}$. The formulae of \cite{macwilliams-ortho-matrices} tell us that the number of such $D$ with $\det E\in\FF_q^{\times2}$ is
        \[\frac12N\cdot\frac{q^{(i+j-n)/2}+\beta(-1)^{(i+j-n)/2}}{q^{(i+j-n)/2}}\]
        when $i+j-n$ is even,
    	where $N$ is the total number of symmetric matrices $D\in\FF_q^{i\times i}$ of rank $i+j-n$. Thus, we see that the desired difference is $\beta(-1)^{(i+j-n)/2}q^{-(i+j-n)/2}N$. Plugging into \Cref{lem:matrix-coeff} completes the proof.
        \qedhere
    \end{itemize}
\end{proof}

\subsection{A Multiplicity One Result}
Our understanding of $I$ so far has relied on eigenvectors of $\Ind_P^G\chi$ with eigenvalue $\chi$ (or $\chi^J$). In this subsection, we will use eigenvectors
for the smaller subgroup $U\subseteq P$.
\begin{definition}
	Fix $T\in \FF_q^{n\times n}$ and a character $\psi\colon\FF_q\to\CC^\times$. Then we define the character $\psi_T\colon U\to\CC^\times$ by
	\[\psi_T\left(\begin{bmatrix}
		1_n & B \\ & 1_n
	\end{bmatrix}\right)\coloneqq\psi(\tr BT).\]
\end{definition}
\begin{definition}\label{def:produce-psi-t-eigen} 
	Fix $T\in \FF_q^{n\times n}$ and a character $\psi\colon\FF_q\to\CC^\times$. Given a character $\chi\colon P\to\CC^\times$, define $f_{\chi,T}\in\left(\Ind_P^G\chi\right)^{\psi_T}$ to be supported on $P\eta_nP$ and defined by
	\[f_{\chi,T}(p\eta_nu)\coloneqq\chi(p)\psi_T(u).\]
	One can show that any $g\in P\eta_nP$ can be written uniquely in the form $p\eta_nu$ where $p\in P$ and $u\in U$, so this is a well-defined function.
\end{definition} 
Here is the main result of the present subsection.
\begin{proposition} \label{prop:psi-t-mult-one}
	Fix $T\in\GL_n(\FF_q)$ such that $\begin{bsmallmatrix}
		1 & T \\ & 1
	\end{bsmallmatrix}\in G$ and a nontrivial character $\psi\colon\FF_q\to\CC^\times$. For any character $\chi\colon P\to\CC^\times$, we have
	\[\dim\op{Hom}_U\left(\psi_T,\Ind_P^G\chi\right)=1.\]
	In other words, $\op{Hom}_U\left(\psi_T,\Ind_P^G\chi\right)$ is spanned by the $f_{\chi,T}$, defined in \Cref{def:produce-psi-t-eigen}.
\end{proposition}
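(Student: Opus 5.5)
Since $\op{Hom}_U(\psi_T,\Ind_P^G\chi)$ is the $\psi_T$-eigenspace $(\Ind_P^G\chi)^{\psi_T}$, the plan is a Mackey-type computation over the double cosets $P\backslash G/U$. A function $f$ in this space satisfies $f(pgu)=\chi(p)\psi_T(u)f(g)$, so it is determined by its values on representatives of $P\backslash G/U$. Such a value can be nonzero only if the double coset supports the character. By \Cref{lem:compute-pgp}, $G=\bigsqcup_r P\eta_rP$. Since $P=MU$ and $\eta_r$ normalizes $M$ up to $P$-conjugation, each $P\eta_rP$ decomposes as a union of $P\backslash G/U$ cosets of the form $P\eta_r m U$ with $m\in M$. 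I will first treat the open cell $r=n$ and then show that every other cell contributes nothing.

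\emph{Open cell.} By the uniqueness statement in \Cref{def:produce-psi-t-eigen}, $P\eta_nP=P\eta_nU$ with trivial stabilizer: $\eta_n^{-1}P\eta_n\cap U=\{1\}$, because $\eta_n=J$ up to sign and $J^{-1}PJ$ is the opposite parabolic. So this single double coset supports exactly the one-dimensional space of functions $p\eta_nu\mapsto c\,\chi(p)\psi_T(u)$, spanned by $f_{\chi,T}$. This shows $\dim\ge1$, and it shows that any $f$ in the eigenspace agrees with a multiple of $f_{\chi,T}$ on $P\eta_nP$.

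\emph{Vanishing on the other cells.} Fix $r<n$ and $g=\eta_r m$ with $m=\op{diag}(A,D)\in M$. I need some $u\in U$ with $gug^{-1}\in P$, $\chi(gug^{-1})=1$ (most simply, $gug^{-1}\in U$) and $\psi_T(u)\ne1$. Then $f(g)=f(gug^{-1}g)\cdot$ (correction) $=\psi_T(u)f(g)$ forces $f(g)=0$. Conjugating $u=\begin{bsmallmatrix}1&B\\&1\end{bsmallmatrix}$ by $m$ replaces $B$ by $ABD^{-1}$, so it suffices to treat $g=\eta_r$ with $T$ replaced by $T'\coloneqq D^{-1}TA$, which is still invertible and still satisfies the symmetry condition for $G$ (alternating or symmetric as appropriate). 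In the block decomposition of $\eta_r$, the elements $u$ with $B$ supported in the top-left $(n-r)\times(n-r)$ block are fixed by conjugation by $\eta_r$, so they lie in $U\cap\eta_r^{-1}U\eta_r$. On these, $\psi_{T'}(u)=\psi(\tr B_{11}T'_{11})$, where $B_{11}$ ranges over all allowed $(n-r)\times(n-r)$ matrices (all, symmetric, or alternating). It therefore remains to show that $T'_{11}$ pairs nontrivially against this space, that is, that $T'_{11}\ne0$ modulo the annihilator.

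\emph{Main obstacle.} $T'_{11}$ can vanish for a particular choice of $m$. The fix is to use the freedom in choosing the representative: one may modify $m$ by elements of $M\cap\eta_r^{-1}P\eta_r$. I expect to argue instead that the stabilizer contains more of $U$. By the computation in \Cref{prop:ind-irred}, $\eta_r u\eta_r^{-1}\in P$ exactly when $B_4=0$ in the $r$-block notation, so $B_1,B_2,B_3$ are all available. The conjugate is then an element of $P$ whose Levi part involves $B_2,B_3$ only in off-diagonal unipotent blocks, where $\chi$ is trivial. Hence $\psi_{T'}$ must be trivial on every $B$ with $B_4=0$. That happens only if $T'$ is supported in the bottom-right $r\times r$ block (up to the transpose identifications in the symmetric and alternating cases), which has rank at most $r<n$. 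This contradicts the invertibility of $T'$. The delicate part of the argument is verifying, uniformly over the six groups and their sign conventions $\varepsilon$, that $\chi$ is trivial on these conjugates and that the allowed $B$ with $B_4=0$ really detect every nonzero entry of $T'$ outside the $B_4$ block.
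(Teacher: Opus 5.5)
Your proposal is correct and follows essentially the same route as the paper: Mackey theory over $P\backslash G/U$ with representatives $\eta_r d$, triviality of $\chi$ on $P\cap\eta_rU\eta_r^{-1}$ (the $u$ whose $B$ has vanishing bottom-right $r\times r$ block), reduction to $d=1_{2n}$ by transforming $T$, and then forcing $T$ to be singular when $r<n$. The only differences are that the paper uses just the $u$ supported on the first row and column to kill the first column of $T$, whereas you use all such $u$ to confine $T$ to an $r\times r$ block, and that conjugating by $d=\op{diag}(A,D)$ actually replaces $T$ by $DTA^{-1}$ rather than $D^{-1}TA$, a harmless slip since either matrix is invertible of the correct symmetry type.
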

\begin{proof}
	We use Mackey theory. By Frobenius reciprocity, we are computing the dimension of the space $\Hom_G\left(\Ind_U^G\psi_T,\Ind_P^G\chi\right)$, which \cite[Theorem~32.1]{bump-lie-group} explains is isomorphic to the space $\mc H$ of functions $f\colon G\to\CC$ such that
    \[f(pgu)=\chi(p)f(g)\psi_T(u)\]
    for $p\in P$ and $u\in U$. We will prove that $\mc H$ is spanned by $f_{\chi, T}$ in four steps.
	\begin{enumerate}
		\item Observe any $f \in \mc H$ is determined by its values on representatives of the double coset space $P\backslash G/U$. \Cref{lem:compute-pgp} tells us that the double cosets $P\backslash G/P$ are represented by $\{\eta_0,\ldots,\eta_n\}$. As $P=MU$, the double coset space $P\backslash G/U$ is represented (not uniquely) by the set
        \[\{\eta_rd:0\le r\le n,d\in M\}.\]
		For the remainder of the proof, our goal will be to show that $f\in\mc H$ will have $f(\eta_rd)=0$ for any $d\in M$ whenever $r\ne n$. This will complete the proof because it shows that any $f\in\mc H$ is supported on $P\eta_nP=P\eta_nU$, meaning that $f=f(\eta_n)f_{\chi,T}$, so $\left\{f_{\chi,T}\right\}$ is a basis of $\mc H$. 

		The basic sketch is that we will find various $u\in U$ such that $\eta_rdu=p\eta_rd$ for some $p\in P$, which will allow us to show that $f(\eta_rd)=f(\eta_rdu)$, but then $f(\eta_rd)\ne0$ would imply $\psi_T(u)=1$. Having many such $u$ will allow us to force a full column of $T$ to vanish, violating the hypothesis that $T$ is invertible.


		\item Fix some $\eta_r$ and $d\in M$. If $\eta_rdu=p\eta_rd$ for some $u\in U$ and $p\in P$, then we claim $\chi(p)=1$. In other words, we are showing that $\chi$ is trivial on any $p\in P\cap\eta_rdUd^{-1}\eta_r^{-1}$. As $M$ normalizes $U$, we may reduce to the case $d=1_{2n}$.

		Suppose $u\in U$ satisfies $p\coloneqq\eta_ru\eta_r^{-1} \in P$; we want to show that $\chi(p)=1$. We can expand $u$ as
        \[u=\begin{bmatrix}
            1_{n-r} && A & B \\ & 1_r & C & D \\
				&& 1_{n-r} \\ &&& 1_r
        \end{bmatrix}\]
        and then compute
		\begin{align*}
			p &= \begin{bmatrix}
				1_{n-r} & \varepsilon B & A \\ & 1_r \\
				&& 1_{n-r} \\ & \varepsilon D & C & 1_r
			\end{bmatrix}.
		\end{align*}
		Then $p\in P$ is equivalent to $D=0$, in which case one can directly compute $\chi(p)=1$.

		\item Fix some $\eta_r$ and $d\in M$ such that $r<n$. We claim that there exists $u\in U$ such that $\psi_T(u)\ne1$ and $\eta_rdu=p\eta_rd$ for some $p\in P$. Assuming the contrary, we will show that having $\psi_T(u)=1$ for all such $u$ implies $T$ fails to be invertible. This is the only step of the proof which will use the invertibility of $T$ and nontriviality of $\psi$.

		The condition on $u\in U$ is that $\eta_rdud^{-1}\eta_r^{-1}\in P$. Because $M$ normalizes $U$, our hypothesis is simply that $\eta_ru\eta_r^{-1}\in P$ implies $\psi_T\left(d^{-1}ud\right)=1$. By replacing $T$ with $DTA^{-1}$, where $d = \op{diag}(A,D)$, we reduce to the case $d=1_{2n}$. In the computation of the previous step, we found many $u$ with $\eta_ru\eta_r^{-1}\in P$; for example, using $r<n$, we know $\psi_T$ must be trivial on
		\[U_1\coloneqq\left\{\begin{bmatrix}
			1 & B \\ & 1
		\end{bmatrix}\in G:B_{ij}=0\text{ for }i,j>1\right\}.\]
		We claim that the first column of $T$ is zero, which
        implies $T$ is not invertible. Expanding the trace of $BT$ shows that $\psi_T(u)=1$ for $u = \begin{bsmallmatrix}
            1 & B \\ & 1
        \end{bsmallmatrix}\in U_1$ is simply asserting
        \[1=\psi(T_{11}B_{11})\cdot \prod_{i=2}^n\psi(T_{i1}B_{1i})\cdot \prod_{j=2}^n\psi(T_{1j}B_{j1}).\]
		To continue, we must do some casework on $G$. If $G\in\{\GL_{2n},\SL_{2n}\}$, then we may set the $B_{ij}$ arbitrarily, provided $i=1$ or $j=1$. We would like to show that $T_{i1}=0$ for all $i$, so fixing some $i$, we set all coordinates except $B_{1i}$ to zero to force $\psi(T_{i1}B_{1i})=1$ for all $B_{1i}\in\FF_q$. This successfully implies $T_{i1}=0$ because $\psi$ is nontrivial. The arguments for other $G$ are essentially the same, except we must keep track of the requirement that $T$ and $B$ are alternating for $G\in\{\GO_{2n},\O_{2n}\}$ and symmetric for $G\in\{\GSp_{2n},\Sp_{2n}\}$.

		\item We now complete the proof. Given some $f\in\mc H$, we would like to show that $f(\eta_rd)=0$ whenever $r<n$. The previous step provides $p\in P$ and $u\in U$ such that $p\eta_rd=\eta_rdu$ and $\psi_T(u)\ne1$. But any such $p$ must have $\chi(p)=1$ by the second step, so the equation
		\[\chi(p)f(\eta_rd)=\psi_T(u)f(\eta_rd)\]
		forces $f(\eta_rd)=0$, as claimed.
		\qedhere
	\end{enumerate}
\end{proof}
\begin{remark} \label{rem:bad-orthogonal}
	It is possible for no $T$ satisfying the hypotheses of \Cref{prop:psi-t-mult-one} to exist! Namely, suppose $n$ is odd and $G\in\{\GO_{2n},\O_{2n}\}$. Then we are asking for $T$ to be an invertible $n\times n$ alternating matrix, which is impossible! However, we can find some $T$ in all other cases.
\end{remark}
This multiplicity-one result means that we can gain insight into $I\colon\Ind_P^G\chi\to\Ind_P^G\chi^J$ by plugging in $f_{\chi,T}$. This will lead us to evaluate certain matrix Gauss sums.
\begin{definition}
	Fix $T\in \FF_q^{n\times n}$ and characters $\beta\colon\FF_q^\times\to\CC^\times$ and $\psi\colon\FF_q\to\CC^\times$. Then we define the ``matrix Gauss sum''
	\[g^G(\beta,\psi,T)\coloneqq\sum_{\substack{B\in\GL_n(\FF_q)\\\begin{bsmallmatrix}
		1 & B \\ & 1
	\end{bsmallmatrix}\in G}}\beta(\det B)\psi(\tr BT).\]
\end{definition}
\begin{proposition} \label{prop:i-on-psi-eigen}
	Fix $T\in\GL_n(\FF_q)$ such that $\begin{bsmallmatrix}
		1 & T \\ & 1
	\end{bsmallmatrix}\in G$ and a nontrivial character $\psi\colon\FF_q\to\CC^\times$. Further, fix a character $\chi\colon P\to\CC^\times$, which we write as $\chi=(\alpha\circ m)(\beta\circ\chi_{\det})$. Then
	\[If_{\chi,T}=g^G(\beta,\psi,T)f_{\chi^J,T}.\]
\end{proposition}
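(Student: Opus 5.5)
The plan is to use the multiplicity-one result of \Cref{prop:psi-t-mult-one} to reduce the identity to evaluating one number. Since $I$ is $G$-invariant, and hence $U$-equivariant, it sends $\left(\Ind_P^G\chi\right)^{\psi_T}$ into $\left(\Ind_P^G\chi^J\right)^{\psi_T}$. \Cref{prop:psi-t-mult-one}, applied to the character $\chi^J$, says that $\left(\Ind_P^G\chi^J\right)^{\psi_T}$ is spanned by $f_{\chi^J,T}$. Therefore $If_{\chi,T}=c\,f_{\chi^J,T}$ for some scalar $c$. Because $f_{\chi^J,T}(\eta_n)=\chi^J(1)\psi_T(1)=1$, we get $c=If_{\chi,T}(\eta_n)$, and it remains to show that this value is $g^G(\beta,\psi,T)$.

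To compute it, expand
\[If_{\chi,T}(\eta_n)=\sum_{u\in U}f_{\chi,T}\left(J^{-1}u\eta_n\right).\]
Write $u=\begin{bsmallmatrix}1&B\\&1\end{bsmallmatrix}$, and note that $\eta_n=J$ when $r=n$. Then
\[J^{-1}uJ=\begin{bsmallmatrix}1&\\ \varepsilon' B&1\end{bsmallmatrix}\]
is lower unipotent, with a sign $\varepsilon'$ that depends on $\varepsilon$. Its lower-left block has rank equal to $\op{rank}B$, so it lies in the support $P\eta_nP$ exactly when $B$ is invertible. For invertible $B$, the factorization already used in the proof of \Cref{lem:matrix-coeff} applies:
\[\begin{bmatrix}1\\E&1\end{bmatrix}=\begin{bmatrix}-\varepsilon E^{-1}&1\\&E\end{bmatrix}\begin{bmatrix}&\varepsilon\\1\end{bmatrix}\begin{bmatrix}1&E^{-1}\\&1\end{bmatrix}.\]
With $E=\varepsilon' B$, this writes $J^{-1}uJ$ as $p\,\eta_n\,u'$. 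Here $p\in P$ has lower-right block $E$, so $\chi(p)=\beta(\det E)^{-1}$ (the multiplier part is trivial up to checking). The unipotent factor has upper-right block $E^{-1}$, so $\psi_T(u')=\psi(\tr E^{-1}T)$. The middle Weyl element equals $\eta_n$ up to a torus element, and since $\eta_n$ has $\varepsilon$ in its top-right block, I would keep track of that sign through $\chi$.

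Putting this together, the coefficient is
\[c=\sum_{B}\beta(\det(\varepsilon' B))^{-1}\psi\left(\tr\left((\varepsilon'B)^{-1}T\right)\right),\]
where $B$ runs over invertible matrices with $\begin{bsmallmatrix}1&B\\&1\end{bsmallmatrix}\in G$. Next substitute $B\mapsto \varepsilon'B^{-1}$ (up to the appropriate sign). This is a bijection on invertible symmetric, alternating, or general matrices, because inversion preserves symmetry and alternation. The sum then becomes
\[\sum_B\beta(\det B)\psi(\tr BT)=g^G(\beta,\psi,T).\]

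I expect the sign bookkeeping to be the main obstacle: the factors $\varepsilon$, $\varepsilon'$, and $\beta(\pm1)^n$ have to cancel. For $\Sp$ the sign $\varepsilon=-1$ enters both $J^{-1}$ and $\eta_n$, and these should cancel because $\eta_n=J$. I would verify the cancellation group by group, using the explicit block forms of $J$ and $\eta_n$ and the sign conventions in \Cref{def:chi-j}. I would also check that the multiplier character $\alpha\circ m$ is trivial on the $p$ produced above; if it is not, the mismatch should be absorbed by passing from $\chi$ to $\chi^J$.
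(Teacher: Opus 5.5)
Your proposal is correct and is essentially the paper's proof: multiplicity one (\Cref{prop:psi-t-mult-one}) reduces the claim to computing $If_{\chi,T}(\eta_n)$, the conjugate $J^{-1}uJ$ is lower unipotent with lower-left block $\varepsilon B$, and the factorization from \Cref{lem:matrix-coeff} followed by inverting $B$ produces $g^G(\beta,\psi,T)$. The sign bookkeeping you anticipate needs no group-by-group check, and your fallback of absorbing a mismatch into $\chi^J$ is never needed, because: the middle factor is exactly $\eta_n=J$, so $\varepsilon'=\varepsilon$ and there is no torus correction; your substitution $B\mapsto\varepsilon B^{-1}$ already removes $\varepsilon$ since $\varepsilon^2=1$ (the paper instead absorbs $\varepsilon$ into $B$, using that the allowed $B$ are closed under negation); and $m(p)=1$ in every case, since $E^\intercal=-\varepsilon E$ makes the upper-left block $-\varepsilon E^{-1}$ equal to $E^{-\intercal}$, while for $\GL_{2n}$ one has $\det(-\varepsilon E^{-1})\det E=(-\varepsilon)^n=1$.
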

\begin{proof}
	For brevity, let $\ov U$ denote the subgroup of $B\in \FF_q^{n\times n}$ such that $\begin{bsmallmatrix}
		1 & B \\ & 1
	\end{bsmallmatrix}\in G$, and we let $\ov U^\times$ denote the invertible subset. Note that $I$ carries $\psi_T$-eigenvectors to $\psi_T$-eigenvectors, so \Cref{prop:psi-t-mult-one} tells us that
	\[If_{\chi,T}=\left(If_{\chi,T}(\eta_n)\right)f_{\chi^J,T}.\]
	It remains to evaluate $If_{\chi,T}(\eta_n)$, which we do directly. To begin, note
	\[If_{\chi,T}(\eta_n)=\sum_{u\in U}f_{\chi,T}\left(\eta_n^{-1}u\eta_n\right).\]
	Now, writing $u=\begin{bsmallmatrix}
		1 & B \\ & 1
	\end{bsmallmatrix}$, we see that $\eta_n^{-1}u\eta_n=\begin{bsmallmatrix}
		1 \\ \varepsilon B & 1
	\end{bsmallmatrix}$, so
	\[If_{\chi,T}(\eta_n)=\sum_{B\in\ov U}f_{\chi,T}\left(\begin{bmatrix}
		1_n \\ B & 1_n
	\end{bmatrix}\right).\]
	Because $f_{\chi,T}$ is supported on $P\eta_nU=P\eta_nP$, the proof of \Cref{lem:compute-pgp} tells us that $B\in\ov U$ produces a nonzero contribution if and only if $B$ is invertible. To compute this contribution, we note $\begin{bsmallmatrix}
	    1 \\ B & 1
	\end{bsmallmatrix}=\begin{bsmallmatrix}
	    -\varepsilon B^{-1} & 1 \\ & B
	\end{bsmallmatrix}\begin{bsmallmatrix}
	    & \varepsilon \\ 1
	\end{bsmallmatrix}\begin{bsmallmatrix}
	    1 & B^{-1} \\ & 1
	\end{bsmallmatrix}$,
    so
	\[If_{\chi,T}(\eta_n)=\sum_{B\in\ov U^\times}\beta\left(\det B^{-1}\right)\psi_T\left(B^{-1}\right).\]
	Replacing $B$ with $B^{-1}$ completes the proof.
\end{proof}
Thus, we see that the values of $g^G(\beta,\psi,T)$ will be interesting to us. For example, when $\chi=\chi^J$, we see that $g^G(\beta,\psi,T)$ is an eigenvalue of $I$. In the general case when merely $I\circ I$ is an operator on $\Ind_G^P\chi$, we get the following.
\begin{corollary}
	Fix $T\in\GL_n(\FF_q)$ such that $\begin{bsmallmatrix}
		1 & T \\ & 1
	\end{bsmallmatrix}\in G$ and a nontrivial character $\psi\colon\FF_q\to\CC^\times$. Further, fix a character $\chi\colon P\to\CC^\times$, which we write as $\chi=(\alpha\circ m)(\beta\circ\chi_{\det})$. Then
	\[(I\circ I)f_{\chi,T}=\beta(-1)^n\left|g^G(\beta,\psi,T)\right|^2f_{\chi,T}.\]
\end{corollary}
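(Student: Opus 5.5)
Apply \Cref{prop:i-on-psi-eigen} twice:
\[(I\circ I)f_{\chi,T}=g^G(\beta,\psi,T)\,If_{\chi^J,T}=g^G(\beta,\psi,T)\,g^G(\beta',\psi,T)\,f_{\chi,T},\]
where $\chi^J=(\alpha'\circ m)(\beta'\circ\chi_{\det})$ and we use $(\chi^J)^J=\chi$. So there are two steps: compute $\beta'$, and then relate $g^G(\beta',\psi,T)$ to the complex conjugate of $g^G(\beta,\psi,T)$.

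\emph{Computing $\beta'$.} Evaluate $\chi^J$ on $\op{diag}(A,D)$, which gives $\chi(\op{diag}(D,A))$. In every case the Siegel determinant of $\op{diag}(D,A)$ is $(\det A)^{-1}$, and the multiplier is unchanged. For $G=\GL_{2n}$ the multiplier is $\det AD$, which is symmetric in $A$ and $D$. For the similitude groups, writing $D=\lambda A^{-\intercal}$, we get $(\det A)^{-1}=\lambda^{n}(\det D)^{-1}$, which only shifts $\alpha$. Restricted to the Siegel-determinant part, this should give $\beta'=\beta^{-1}$, up to a twist absorbed into $\alpha'$. One checks this case by case using \Cref{lem:decompose-character}; for instance, for $\SL_{2n}$, $\O_{2n}$ and $\Sp_{2n}$ one has $\det A=(\det D)^{-1}$. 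Since $g^G$ depends only on the $\beta$-component, we get $If_{\chi^J,T}=g^G(\beta^{-1},\psi,T)f_{\chi,T}$.

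\emph{Conjugation.} Since characters are unitary,
\[\overline{g^G(\beta,\psi,T)}=\sum_{B}\beta^{-1}(\det B)\,\psi(-\tr BT).\]
Substitute $B\mapsto -B$; this preserves both $\ov U^\times$ and the condition $\begin{bsmallmatrix}1&B\\&1\end{bsmallmatrix}\in G$. Since $\det(-B)=(-1)^n\det B$, this yields
\[\overline{g^G(\beta,\psi,T)}=\beta(-1)^{n}\,g^G(\beta^{-1},\psi,T),\]
using $\beta^{-1}(-1)=\beta(-1)$. Therefore
\[g^G(\beta,\psi,T)\,g^G(\beta^{-1},\psi,T)=\beta(-1)^n\left|g^G(\beta,\psi,T)\right|^2,\]
which is the claim.

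The main obstacle is the first step in the similitude and $\GL$ cases. There, $\chi^J$ might a priori have Siegel-determinant component $\beta^{-1}$ times a power of something that interacts with $m$. This has to be sorted out carefully so that the second Gauss sum really is $g^G(\beta^{-1},\psi,T)$, and not a twist of it.
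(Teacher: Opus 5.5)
Your proposal is correct and follows the paper's own argument: apply \Cref{prop:i-on-psi-eigen} twice to get the scalar $g^G(\beta,\psi,T)\,g^G(\beta^{-1},\psi,T)$, then identify it with $\beta(-1)^n\left|g^G(\beta,\psi,T)\right|^2$, and your substitution $B\mapsto -B$ in the complex conjugate supplies exactly the step the paper leaves implicit. One algebra slip needs fixing: in the similitude case, $D=\lambda A^{-\intercal}$ gives $(\det A)^{-1}=\lambda^{-n}\det D$ rather than $\lambda^{n}(\det D)^{-1}$ (which equals $\det A$), so $\chi^J=((\alpha\beta^{-n})\circ m)(\beta^{-1}\circ\chi_{\det})$, and for $\GL_{2n}$ one gets $\chi^J=((\alpha\beta^{-1})\circ m)(\beta^{-1}\circ\chi_{\det})$; since $(m,\chi_{\det})\colon P\to(\FF_q^\times)^2$ is surjective in these cases, the $\chi_{\det}$-component of $\chi^J$ is unambiguously $\beta^{-1}$, and the ``obstacle'' you flag does not arise.
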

\begin{proof}
	Applying \Cref{prop:i-on-psi-eigen} twice, we see that
	\[(I\circ I)f_{\chi,T}=g^G(\beta,\psi,T)g^G\left(\beta^{-1},\psi,T\right)f_{\chi,T}.\]
    This scalar equals $\beta(-1)^n\left|g^G(\beta,\psi,T)\right|^2$.
\end{proof}
\begin{remark}
	Suppose $\beta^2\ne1$, and we compare the above computation with \Cref{prop:generic-intertwining}. When $G\in\{\GL_{2n},\SL_{2n},\GSp_{2n},\Sp_{2n}\}$, we see that $\varepsilon=-1$, so it follows that
	\begin{equation}
		\left|g^G(\beta,\psi,T)\right|^2=\left|U\right|. \label{eq:gauss-sum-mag}
	\end{equation}
	Thus, the sum in the definition of $g^G(\beta,\psi,T)$ obeys the expected ``square root'' cancellation generically. (When $G\in\{\GO_{2n},\O_{2n}\}$, it may appear that our signs may disagree, but recall from \Cref{rem:bad-orthogonal} that the statement is vacuous for odd $n$.)
    However, note that \eqref{eq:gauss-sum-mag} cannot hold when $\beta^2=1$ because $\left|g^G(\beta,\psi,T)\right|^2$ is (up to sign) an eigenvalue of $I\circ I$, but in general there need not be such an eigenvalue when $\beta^2=1$. We will compute the correct factor in \Cref{sec:gsum}.
\end{remark}
\begin{remark} \label{rem:generic-eigenvalue}
    When applicable, the above construction produces many linearly independent eigenvectors for our operator $I$. Indeed, each available $T$ produces a new eigenvector, and one can estimate that
    \[\left|\left\{T\in\op{GL}_n(\FF_q):\begin{bmatrix}
        1 & T \\ & 1
    \end{bmatrix}\in G\right\}\right|\]
    is approximately $\left|U\right|=\left|P\backslash P\eta_nP\right|$, which is approximately
    $\left|P\backslash G\right|=\dim\operatorname{Ind}_P^G\chi$. (In each inequality, the first set is Zariski open in the second set.) One can follow these estimates to see that the eigenspace of $I$ with eigenvalue given by the Gauss sum is large.
\end{remark}

%% file: intertwining/gsum.tex

\section{Computation of Matrix Gauss Sums} \label{sec:gsum}
As before, let $\FF_q$ denote the finite field with $q$ elements, where $q$ is an odd prime power. For characters $\omega\colon\FF_q^\times\to\CC^\times$ and $\psi\colon\FF_q\to\CC$, we are interested in computing sums of the form
\[\sum_A\omega(\det A)\psi(\tr AT),\]
where $A$ and $T$ are possibly subject to certain constraints (e.g., symmetric or alternating). To be explicit, our sums will be done for the following three cases:
\begin{itemize}
    \item $\GL_n(\FF_q)$.
    \item $\Sym_n^\times(\FF_q)$, the set of invertible $n\times n$ symmetric matrices with coefficients in $\FF_q$.
    \item $\Alt_{2n}^\times(\FF_q)$, the set of invertible $2n\times 2n$ alternating matrices with coefficients in $\FF_q$. (Note that there are no invertible alternating matrices of odd dimension.)
\end{itemize}
For brevity, we will remove $\FF_q$ from our notation as much as possible.

Note that the sum over $A\in\GL_n$ has already been considered by \cite{kim-gauss-sum} and many authors before; see \cite[Section 1]{kim-gauss-sum}. Additionally, the sum over symmetric matrices was considered in \cite{saito-sym-gauss-sum}; their method is based on a rather lengthy computation with the Bruhat decomposition. To the authors' knowledge, the sum over alternating matrices has not been previously evaluated.

Our method is rather uniform over all kinds of sums considered. We will induct on the size of $A$ via an explicit row-reduction. As such, the arguments are essentially the same as the spirit of the arguments in \cite{kim-gauss-sum} in the case of $\GL_n$. However, we believe that it is expositionally justified to include our proofs here because they motivate the rest of the section.

\subsection{Miscellaneous Computations}
We take a moment to discuss a few sums which will be used frequently in the sequel. For characters $\omega\colon\FF_q^\times\to\CC^\times$ and $\psi\colon\FF_q\to\CC^\times$, we denote the usual Gauss sum by
\[g(\omega,\psi)\coloneqq\sum_{a\in\FF_q^\times}\omega(a)\psi(a).\]
We will utilize several identities for Gauss sums, which we lay out now. The first is a well-known identity for quadratic Gauss sums.
\begin{prop} \label{prop:mag-gauss-sum}
    Let $\omega\colon\FF_q^\times\to\CC^\times$ and $\psi\colon\FF_q\to\CC^\times$ denote nontrivial characters. Then
    \[g(\omega,\psi)g\left(\omega^{-1},\psi^{-1}\right)=q.\]
    Thus, if $\chi\colon\FF_q^\times\to\CC^\times$ denotes the nontrivial quadratic character, then $g(\chi,\psi)^2=\chi(-1)q$.
\end{prop}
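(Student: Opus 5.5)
We expand the product directly and use orthogonality of additive characters. Writing
\[g(\omega,\psi)g\left(\omega^{-1},\psi^{-1}\right)=\sum_{a,b\in\FF_q^\times}\omega(ab^{-1})\psi(a-b),\]
we substitute $a=cb$ with $c\in\FF_q^\times$. This turns the double sum into
\[\sum_{c\in\FF_q^\times}\omega(c)\sum_{b\in\FF_q^\times}\psi\big((c-1)b\big).\]
The inner sum is evaluated by orthogonality. Since $\psi$ is nontrivial, $\sum_{b\in\FF_q}\psi(xb)=0$ for $x\neq0$, so the inner sum over $b\in\FF_q^\times$ equals $-1$ when $c\ne1$. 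When $c=1$ every term is $1$, so the inner sum is $q-1$.

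Hence the whole expression is
\[(q-1)\omega(1)-\sum_{c\ne1}\omega(c)=q-\sum_{c\in\FF_q^\times}\omega(c)=q,\]
where the last step uses that $\omega$ is nontrivial, so its values sum to zero. This proves the first identity.

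For the quadratic statement, we take $\omega=\chi$, so that $\chi^{-1}=\chi$. We then relate $g(\chi,\psi^{-1})$ to $g(\chi,\psi)$. Since $\psi^{-1}(a)=\psi(-a)$, the substitution $a\mapsto -a$ gives $g(\chi,\psi^{-1})=\chi(-1)g(\chi,\psi)$. Plugging this into the first identity yields $\chi(-1)g(\chi,\psi)^2=q$. Multiplying through by $\chi(-1)=\pm1$ gives $g(\chi,\psi)^2=\chi(-1)q$.

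The argument is entirely routine. The only care needed is in the orthogonality step: the inner sum runs over $\FF_q^\times$ rather than all of $\FF_q$, and this is what produces the $-1$ terms. Those $-1$ terms then cancel against the vanishing of $\sum_{c}\omega(c)$.
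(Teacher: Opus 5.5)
Your proof is correct: the substitution $a=cb$, the orthogonality evaluation of the inner sum over $\FF_q^\times$, and the relation $g(\chi,\psi^{-1})=\chi(-1)g(\chi,\psi)$ are all sound. It is the standard computation the paper has in mind when it dismisses the statement as ``standard facts about Gauss sums''; the paper gives no argument of its own, so your write-up fills in that citation.
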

\begin{proof}
    These are standard facts about Gauss sums.
    %
\end{proof}
The computation of the Gauss sums over $\Sym_n^\times$ will use the following fact.
\begin{prop} \label{prop:quad-twist-gauss-sum}
    Let $\omega\colon\FF_q^\times\to\CC^\times$ and $\psi\colon\FF_q\to\CC^\times$ be characters, and let $\chi\colon\FF_q^\times\to\CC^\times$ denote the nontrivial quadratic character. Then
    \[\omega(4)g(\omega,\psi)g(\omega\chi,\psi)=g\left(\omega^2,\psi\right)g(\chi,\psi).\]
\end{prop}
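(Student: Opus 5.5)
This is the duplication case of the Hasse--Davenport product formula, and I would prove it directly through Jacobi sums. Throughout, extend multiplicative characters by $\omega(0)\coloneqq0$, including the trivial one. Set $J(\omega_1,\omega_2)\coloneqq\sum_{a+b=1}\omega_1(a)\omega_2(b)$.

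\emph{Degenerate cases.} First, if $\psi$ is trivial, then $g(\omega,\psi)$ equals $q-1$ when $\omega=1$ and $0$ otherwise. Since $\omega$ and $\omega\chi$ cannot both be trivial, the left-hand side vanishes. Since $\chi\ne1$, the right-hand side vanishes as well. So assume $\psi$ is nontrivial, which gives $g(1,\psi)=-1$.

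If $\omega=1$, both sides equal $-g(\chi,\psi)$. If $\omega=\chi$, we have $\chi(4)=1$ and $\omega^2=1$, so again both sides equal $-g(\chi,\psi)$. So we may assume $\omega\notin\{1,\chi\}$. Then $\omega$, $\omega^2$ and $\omega\chi$ are all nontrivial, and by \Cref{prop:mag-gauss-sum} all the Gauss sums involved are nonzero.

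\emph{Gauss--Jacobi relation.} The standard relation says that for nontrivial $\omega_1,\omega_2$ with $\omega_1\omega_2$ nontrivial, $g(\omega_1,\psi)g(\omega_2,\psi)=J(\omega_1,\omega_2)\,g(\omega_1\omega_2,\psi)$. To prove it, expand the product as a double sum, substitute $a=tx$ and $b=t(1-x)$, and discard the $a+b=0$ terms, whose total vanishes because $\omega_1\omega_2\ne1$. Applying this twice gives
\[g(\omega,\psi)^2=J(\omega,\omega)\,g(\omega^2,\psi),\qquad g(\omega,\psi)g(\chi,\psi)=J(\omega,\chi)\,g(\omega\chi,\psi).\]

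\emph{Key identity.} The heart of the proof is
\[J(\omega,\omega)=\omega(4)^{-1}J(\omega,\chi).\]
To see this, group the terms of $J(\omega,\omega)=\sum_a\omega(a(1-a))$ by the value $c=a(1-a)$. The number of $a\in\FF_q$ with $a^2-a+c=0$ is $1+\chi(1-4c)$, which uses that $q$ is odd. Hence
\[J(\omega,\omega)=\sum_c\omega(c)+\sum_c\omega(c)\chi(1-4c)=\sum_c\omega(c)\chi(1-4c),\]
since $\omega\ne1$. Substituting $d=4c$ turns this into $\omega(4)^{-1}\sum_d\omega(d)\chi(1-d)=\omega(4)^{-1}J(\omega,\chi)$.

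\emph{Conclusion.} Combining the three displayed identities gives
\[\frac{g(\omega,\psi)^2}{g(\omega^2,\psi)}=\omega(4)^{-1}\frac{g(\omega,\psi)g(\chi,\psi)}{g(\omega\chi,\psi)}.\]
Cancel one factor of $g(\omega,\psi)\ne0$ and clear the denominators; this yields $\omega(4)g(\omega,\psi)g(\omega\chi,\psi)=g(\omega^2,\psi)g(\chi,\psi)$.

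The main obstacle is the counting step inside the key identity, namely the root count $1+\chi(1-4c)$, together with bookkeeping of the conventions for trivial characters. The latter is why I isolate the degenerate cases first.
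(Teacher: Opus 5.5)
Your proof is correct, but it takes a different route from the paper's. Your degenerate-case checks, the Gauss--Jacobi relation, the root count $1+\chi(1-4c)$, and the substitution $d=4c$ all check out.

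The paper never uses Jacobi sums. It expands the two sides as $\sum_{a,b}\omega(4ab)\chi(b)\psi(a+b)$ and $\sum_{a,b}\omega(a^2)\chi(b)\psi(a+b)$. It then proves something stronger: for every fixed $d\in\FF_q^\times$ and $t\in\FF_q$, the weighted counts $\sum_{a+b=t,\,4ab=d}\chi(b)$ and $\sum_{a+b=t,\,a^2=d}\chi(b)$ are equal. This is done by elementary casework on whether $d$ and $t^2-d$ are squares. In the last case it writes $t\pm x$ as squares to show that $\frac{t+y}{2}$ is a square. Because that identity holds coefficient by coefficient, it is uniform in $\omega$ and $\psi$. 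No degenerate characters need to be set aside, and no Gauss sum is ever cancelled or divided by.

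Your argument is the standard Hasse--Davenport duplication proof. The factorization $g(\omega_1,\psi)g(\omega_2,\psi)=J(\omega_1,\omega_2)g(\omega_1\omega_2,\psi)$ reduces everything to the single identity $J(\omega,\omega)=\omega(4)^{-1}J(\omega,\chi)$, which needs just one quadratic root count. This is shorter and more conceptual than the paper's casework. The cost is that you must separate out $\psi=1$ and $\omega\in\{1,\chi\}$, and you need \Cref{prop:mag-gauss-sum} for nonvanishing. Strictly, only $g(\omega,\psi)\neq0$ is needed if you cross-multiply the two factorizations instead of forming quotients.
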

\begin{proof}
    Expanding out the Gauss sums, we are trying to show that
    \[\sum_{a,b\in\FF_q^\times}\omega(4ab)\chi(b)\psi(a+b)\stackrel?=\sum_{a,b\in\FF_q^\times}\omega\left(a^2\right)\chi(b)\psi(a+b).\]
    Fixing some $d\in\FF_q^\times$ and $t\in\FF_q$, it is enough to show that
    \begin{equation}
        \sum_{\substack{a+b=t\\4ab=d}}\chi(b) \stackrel?= \sum_{\substack{a+b=t\\a^2=d}}\chi(b) \label{eq:combo-quad-twist-gauss-sum}
    \end{equation}
    and then sum over all possible values of $d$ and $t$. At this point, the proof has become combinatorial number theory. For convenience, extend $\chi$ to $\FF_q$ by $\chi(0)\coloneqq0$, and allow $a,b\in\FF_q$ in the right-hand sum above; this will not change its value. We begin with some reductions.
    \begin{itemize}
        \item Suppose that $d$ is not a square. Then the right-hand side of \eqref{eq:combo-quad-twist-gauss-sum} is empty and hence zero. We claim that the left-hand side is zero. For any $(a,b)$ solving $a+b=t$ and $4ab=d$, we see that $(b,a)$ also solves the system. Then because $d$ is not a square, we have $\{\chi(a),\chi(b)\}=\{+1,-1\}$, so the terms in the sum cancel out.
        \item In the rest of the proof, we may assume that $d=x^2$ where $x\in\FF_q^\times$, so the right-hand side of \eqref{eq:combo-quad-twist-gauss-sum} is $\chi(t+x)+\chi(t-x)$.
        
        To continue, observe that solving the system of equations $a+b=t$ and $4ab=d$ is equivalent to having $a=t-b$ and
        \[(2b-t)^2=t^2-d.\]
        As such, for our next case, suppose that $t^2-d$ fails to be a square. Then the left-hand side of \eqref{eq:combo-quad-twist-gauss-sum} is empty and hence vanishes, so we want to show that the right-hand side also vanishes. But if $t^2-d=(t+x)(t-x)$ is non-square, then $\{\chi(t+x),\chi(t-x)\}=\{+1,-1\}$, so the summands on the right-hand side cancel out.
        \item In the rest of the proof, we may assume that $t^2-d=y^2$ for some $y\in\FF_q$. For example, if $y=0$, then the left-hand side equals $\chi(t/2)$, and the right-hand side equals $\chi(2t)$.
    \end{itemize}
    At the current point, we can now say that $t^2=x^2+y^2$ where $x,y\in\FF_q^\times$. Every solution $(a,b)$ to the system of equations on the left can be written as $\left(\frac{t+y}{2}, \frac{t-y}{2}\right)$, while those for the system of equations on the right can be written as $(t+x,t-x)$. Thus, to show \eqref{eq:combo-quad-twist-gauss-sum}, it is enough to show
    \begin{equation*}
        \chi\left(\frac{t+y}2\right)+\chi\left(\frac{t-y}2\right)\stackrel?=\chi(t+x)+\chi(t-x). \label{eq:last-combo-quad-twist-case}
    \end{equation*}
    Because $(t-x)(t+x)=y^2$ and $\left(\frac{t+y}2\right)\left(\frac{t-y}2\right)=\frac14x^2$, we see $\chi\left(\frac{t+y}2\right)=\chi\left(\frac{t-y}2\right)$ and $\chi(t+x)=\chi(t-x)$. As these values are in $\{\pm1\}$, it is enough to show that $\chi(t+x)=1$ if and only if $\chi\left(\frac{t+y}2\right)=1$. We will show the forward implication; the reverse implication is similar.

    Suppose $\chi(t+x) = 1$. As both $t+x$ and $t-x$ are squares, we can write $t+x=x_1^2$ and $t-x=x_2^2$ for $x_1,x_2\in\FF_q^\times$. Adjusting signs, we may assume that $y=x_1x_2$. Thus,
    \[\frac{t+y}2=\left(\frac{x_1+x_2}2\right)^2\]
    is a square, hence $\chi\left(\frac{t+y}{2}\right) = 1$ as desired.
    %
\end{proof}
All of our computations will frequently sum over vectors in some way, so we pick up the following fact.
\begin{lemma} \label{lem:matrix-char-sum}
    Fix a character $\psi\colon\FF_q\to\CC^\times$ and some $A\in\FF_q^{n\times m}$. Then
    \[\sum_{B\in\FF_q^{m\times n}}\psi(\tr AB)=\begin{cases}
        0 & \text{if }A\ne0\text{ and }\psi\ne1, \\
        q^{mn} & \text{if }A=0\text{ or }\psi=1.
    \end{cases}\]
\end{lemma}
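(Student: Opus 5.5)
The plan is to reduce the matrix sum to a product of one-dimensional character sums. Write the trace as a sum over entries:
\[\tr AB=\sum_{i=1}^n\sum_{j=1}^m A_{ij}B_{ji}.\]
Since $\psi$ is a homomorphism from $(\FF_q,+)$, we have
\[\psi(\tr AB)=\prod_{i,j}\psi(A_{ij}B_{ji}).\]
The entries $B_{ji}$ of $B\in\FF_q^{m\times n}$ range independently over $\FF_q$, so the sum factors as
\[\sum_{B\in\FF_q^{m\times n}}\psi(\tr AB)=\prod_{i=1}^n\prod_{j=1}^m\Bigg(\sum_{b\in\FF_q}\psi(A_{ij}b)\Bigg).\]
This reduces the claim to the one-variable sum $S(a)\coloneqq\sum_{b\in\FF_q}\psi(ab)$ for $a\in\FF_q$.

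For $S(a)$ there are two cases. If $a=0$ or $\psi=1$, every term equals $1$, so $S(a)=q$. If $a\ne0$ and $\psi\ne1$, then $b\mapsto ab$ is a bijection of $\FF_q$, so $S(a)=\sum_{c\in\FF_q}\psi(c)$. This is the sum of a nontrivial character over a finite group, which vanishes by the standard translation trick: pick $c_0$ with $\psi(c_0)\ne1$ and note that the sum $S'$ satisfies $\psi(c_0)S'=S'$.

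Assembling the cases gives the statement. If $A=0$ or $\psi=1$, all $mn$ factors equal $q$, giving $q^{mn}$. If $A\ne0$ and $\psi\ne1$, some entry $A_{ij}\ne0$ contributes a zero factor, so the whole product vanishes.

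There is no real obstacle here. The only point needing care is that the factorization uses the independence of the entries of $B$, which holds because the sum runs over all of $\FF_q^{m\times n}$ with no symmetry or alternating constraint.
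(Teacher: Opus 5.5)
Your proposal is correct and follows essentially the same route as the paper: expand $\tr AB$ entrywise, factor the sum into a product of one-variable sums $\sum_{b\in\FF_q}\psi(A_{ij}b)$, and observe that each factor is $q$ or vanishes. The only difference is that you spell out why a nontrivial additive character sums to zero over $\FF_q$, which the paper takes for granted.
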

\begin{proof}
    Note that
    \[\tr AB=\sum_{i=1}^m\sum_{j=1}^nA_{ji}B_{ij},\]
    so
    \[\sum_{B\in\FF_q^{m\times n}}\psi(\tr AB)=\prod_{i=1}^m\prod_{j=1}^n\sum_{B_{ij} \in \FF_q}\psi(A_{ji}B_{ij}).\]
    If $A=0$ or $\psi=1$, then all summands are $1$, so we total to $q^{mn}$. Otherwise, say $A_{ji}\ne0$ for some given $(i,j)$. Then the factor $\sum_{B_{ij}}\psi(A_{ji}B_{ij})$ in the product will vanish, as desired.
\end{proof}

\subsection{The Sum Over \texorpdfstring{$\GL_n$}{GL}}
For the purposes of this subsection, we define
\[g_n(\omega,\psi,T)\coloneqq\sum_{A\in\GL_n}\omega(\det A)\psi(\tr AT)\]
where $\omega\colon\FF_q^\times\to\CC^\times$ and $\psi\colon\FF_q\to\CC^\times$ are characters, and $T\in\GL_n$. Even though our method to compute $g_n(\omega,\psi,T)$ is essentially equivalent to the one presented in \cite{kim-gauss-sum}, we present it here because it provides a reasonable background to the approach.

The following general results will be helpful.
\begin{lemma} \label{lem:gsum-gl-basic}
    Fix characters $\omega\colon\FF_q^\times\to\CC^\times$ and $\psi\colon\FF_q\to\CC^\times$ and some $T\in\GL_n$.
    \begin{listalph}
        \item For any $g,h\in\GL_n$, we have
        \[g_n(\omega,\psi,gTh)=\omega(\det gh)^{-1}g_n(\omega,\psi,T).\]
        \item If $\psi=1$, then $g_n(\omega,\psi,T)=0$ unless $\omega=1$.
    \end{listalph}
\end{lemma}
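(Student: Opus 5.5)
Part (a) follows by reindexing the sum, and part (b) by an averaging argument using the multiplicativity of $\omega$.

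For (a), I will substitute $A\mapsto h^{-1}Ag^{-1}$ in the definition of $g_n(\omega,\psi,gTh)$. Since $A\mapsto h^{-1}Ag^{-1}$ is a bijection of $\GL_n$ (with inverse $A\mapsto hAg$), we get
\[g_n(\omega,\psi,gTh)=\sum_{A\in\GL_n}\omega\left(\det h^{-1}Ag^{-1}\right)\psi\left(\tr h^{-1}Ag^{-1}gTh\right).\]
Cyclicity of the trace gives $\tr h^{-1}ATh=\tr AT$. Multiplicativity of $\det$ and $\omega$ gives $\omega(\det h^{-1}Ag^{-1})=\omega(\det gh)^{-1}\omega(\det A)$. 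The constant factor pulls out of the sum, leaving $\omega(\det gh)^{-1}g_n(\omega,\psi,T)$, as claimed.

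For (b), with $\psi=1$ the sum is $\sum_{A\in\GL_n}\omega(\det A)$. The map $\det\colon\GL_n\to\FF_q^\times$ is a surjective homomorphism, so each fiber is a coset of $\SL_n$ and has the same size $|\SL_n|$. The sum therefore equals $|\SL_n|\sum_{a\in\FF_q^\times}\omega(a)$. This vanishes when $\omega\ne1$ by orthogonality of characters: choosing $a_0$ with $\omega(a_0)\ne1$, the substitution $a\mapsto a_0a$ shows the sum equals $\omega(a_0)$ times itself, hence is $0$. Equivalently, one may apply (a) with $g=\op{diag}(a_0,1,\ldots,1)$, $h=1$, and $T=1_n$ to get the same scaling argument directly.

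Both steps are routine; the only point needing care in (a) is tracking the inverse so that $\omega(\det gh)^{-1}$, rather than $\omega(\det gh)$, appears.
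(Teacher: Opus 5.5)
Your proof is correct and is essentially the paper's: (a) is the same reindexing $A\mapsto h^{-1}Ag^{-1}$ combined with cyclicity of the trace, and (b) is the observation that for $\psi=1$ the sum $g_n$ is the sum of the character $\omega\circ\det$ over $\GL_n$, which vanishes when $\omega\ne1$. One small point, left tacit in the paper as well: surjectivity of $\det$ (equivalently, the existence of your $g=\op{diag}(a_0,1,\ldots,1)$) requires $n\ge1$, since for $n=0$ the sum equals $1$.
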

\begin{proof}
    Here, (a) follows from some quick rearranging, and (b) follows because $g_n(\omega,\psi,T)$ is the sum of the character $\omega$ on the group $\GL_n$.
\end{proof}
We wish to express the matrix Gauss sums $g_n(\omega, \psi, T)$ in terms of the standard Gauss sums $g(\omega,\psi)$. We do this by relating $g_n$ to $g_{n-1}$ via row-reduction. We split the Gauss sum $g_n$ into two cases: $A_{nn}\ne0$ and $A_{nn}=0$. We treat the case $A_{nn}\neq0$ first.
\begin{lemma} \label{lem:gsum-gl-not-0}
    Fix characters $\omega\colon\FF_q^\times\to\CC^\times$ and $\psi\colon\FF_q\to\CC^\times$. If $\psi\ne1$, then
    \[\sum_{\substack{A\in\GL_{n+1}\\A_{n+1,n+1}\ne0}}\omega(\det A)\psi(\tr A)=q^ng(\omega,\psi)g_n(\omega,\psi,1_n).\]
\end{lemma}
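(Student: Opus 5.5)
Write $A\in\GL_{n+1}$ with $a\coloneqq A_{n+1,n+1}\ne0$ in block form
\[A=\begin{bmatrix} X & v\\ w^\intercal & a\end{bmatrix},\qquad X\in\FF_q^{n\times n},\ v,w\in\FF_q^{n}.\]
The plan is to row- and column-reduce away $v$ and $w$ using the pivot $a$. We have
\[A=\begin{bmatrix}1_n & a^{-1}v\\ & 1\end{bmatrix}\begin{bmatrix} Y & \\ & a\end{bmatrix}\begin{bmatrix}1_n & \\ a^{-1}w^\intercal & 1\end{bmatrix},\qquad Y\coloneqq X-a^{-1}vw^\intercal.\]
Hence $\det A=a\det Y$, and $A$ is invertible exactly when $Y\in\GL_n$. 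Also $\tr A=\tr X+a=\tr Y+a+a^{-1}w^\intercal v$.

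The map $(X,v,w,a)\mapsto(Y,v,w,a)$ is a bijection for each fixed $a,v,w$, since it is a translation in $X$. So the sum becomes
\[\sum_{a\in\FF_q^\times}\omega(a)\psi(a)\sum_{Y\in\GL_n}\omega(\det Y)\psi(\tr Y)\sum_{v,w\in\FF_q^n}\psi\left(a^{-1}w^\intercal v\right).\]
The middle factor is $g_n(\omega,\psi,1_n)$. It remains to evaluate the inner sum over $v,w$ for fixed $a$. Fix $v$ and sum over $w$ using \Cref{lem:matrix-char-sum} with the $1\times n$ matrix $a^{-1}v^\intercal$. Since $\psi\ne1$, the sum over $w$ is $q^n$ when $v=0$ and $0$ otherwise. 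Summing over $v$ then gives exactly $q^n$, independent of $a$.

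Pulling out $q^n$ leaves $\sum_a\omega(a)\psi(a)=g(\omega,\psi)$, which gives the claimed identity. The only delicate point is checking the factorization and the trace bookkeeping, in particular the sign of the $vw^\intercal$ correction. Even if that sign were off, it would only replace $\psi$ by $\psi^{-1}$ in the $v,w$ sum, which still evaluates to $q^n$. So I expect no real obstacle.
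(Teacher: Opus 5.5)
Your proof is correct and is essentially the paper's argument. The paper uses the same block factorization, written as $\begin{bsmallmatrix}1_n & v\\ & 1\end{bsmallmatrix}\begin{bsmallmatrix}B\\ & c\end{bsmallmatrix}\begin{bsmallmatrix}1_n\\ w^\intercal & 1\end{bsmallmatrix}$; yours differs only in rescaling $v,w$ by $a^{-1}$ and taking $B$ to be the Schur complement $Y$. It then kills the cross term by summing over $w$ with \Cref{lem:matrix-char-sum}, and your trace bookkeeping, including the sign of the $vw^\intercal$ correction, checks out.
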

\begin{proof}
    The main idea is that
    \[\arraycolsep=1.4pt\begin{array}{ccccccccc}
        \GL_n &\times& \FF_q^n &\times& \FF_q^n &\times& \FF_q^\times &\to& \GL_{n+1} \\
        (B &,& v &,& w &,& c) &\mapsto& \begin{bsmallmatrix}
            1_n & v \\
            & 1
        \end{bsmallmatrix}\begin{bsmallmatrix}
            B \\ & c
        \end{bsmallmatrix}\begin{bsmallmatrix}
            1_n \\
            w^\intercal & 1
        \end{bsmallmatrix}
    \end{array}\]
    is a bijection onto elements of $A\in\GL_{n+1}$ with nonzero entry $A_{n+1,n+1}$.
    Our sum becomes
    \[\underbrace{\sum_{B\in\GL_n}\omega(\det B)\psi(\tr B)}_{g_n(\omega,\psi,1_n)}\sum_{c,v,w}\omega(c)\psi(c)\psi(\tr cvw^\intercal).\]
    With $\psi\ne1$, we can get cancellation by summing over $w$
    by \Cref{lem:matrix-char-sum}. In particular, we only get a nonzero contribution when $v=0$, leaving us with $q^ng_n(\omega,\psi,1_n)g(\omega,\psi)$
    after summing over $c$ as well.
\end{proof}
We now handle $A_{nn}=0$. This requires more care because the rightmost column must have a nonzero entry.
\begin{lemma} \label{lem:gsum-gl-0}
    Fix characters $\omega\colon\FF_q^\times\to\CC^\times$ and $\psi\colon\FF_q\to\CC^\times$. Suppose $\psi \neq 1$. Choosing some nonzero $\ov v,\ov w\in\FF_q^{n+2}$ such that $\ov v_{n+2}=\ov w_{n+2}=0$, we have
    \[\sum_{\substack{A\in\GL_{n+2}\\Ae_{n+2}=\ov v\\A^\intercal e_{n+2}=\ov w}}\omega(\det A)\psi(\tr A)=0.\]
\end{lemma}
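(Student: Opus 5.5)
We aim to produce a free additive averaging that kills the sum. The constraints fix the last column $Ae_{n+2}=\ov v$ and last row $e_{n+2}^\intercal A=\ov w^\intercal$, with $A_{n+2,n+2}=0$. The plan is to find a subgroup of transformations preserving these constraints and $\det A$, but shifting $\tr A$ by an arbitrary element of $\FF_q$. Averaging over this subgroup then gives a factor $\sum_{c\in\FF_q}\psi(c)=0$, because $\psi\ne1$.

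Since $\ov v\ne0$ and $\ov v_{n+2}=0$, we may choose an index $i\le n+1$ with $\ov v_i\ne0$. For $c\in\FF_q$, consider the elementary matrix $E_c\coloneqq 1_{n+2}+c\,e_ie_{n+2}^\intercal$ (add $c$ times row $n+2$ to row $i$), and the map $A\mapsto E_cAE_c^{-1}$. Conjugation preserves $\det A$ and $\tr A$, so we instead use a one-sided operation. Left multiplication $A\mapsto E_cA$ preserves $\det A$. It changes only row $i$, so the last row (row $n+2$, as $i\ne n+2$) is unchanged and $A^\intercal e_{n+2}=\ov w$ still holds. The last column changes in entry $i$ by $c\,A_{n+2,n+2}=0$, so $Ae_{n+2}=\ov v$ is preserved. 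The trace changes by $c\,A_{n+2,i}=c\,\ov w_i$. This is useful only if $\ov w_i\ne0$, which we cannot guarantee.

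We therefore use right multiplication as well. Choose $j\le n+1$ with $\ov w_j\ne0$, and consider $A\mapsto A(1_{n+2}+c\,e_{n+2}e_j^\intercal)$, which adds $c$ times column $n+2$ to column $j$. This preserves $\det$, leaves column $n+2$ alone, and changes row $n+2$ only in entry $j$, by $c\,A_{n+2,n+2}=0$. So both constraints survive, and the trace shifts by $c\,A_{j,n+2}=c\,\ov v_j$. If $\ov v_j\ne0$ we are done. Failing that, we combine the two operations: apply the row operation from index $i$ and the column operation from index $j$ simultaneously, but with row operation $1+c\,e_je_{n+2}^\intercal$ instead.

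The cleanest choice, which I would try first, is the unipotent map $A\mapsto (1+x\,e_{j}e_{n+2}^\intercal)A$ with $j$ chosen so that $\ov w_j\ne0$. This shifts the trace by $x\,A_{n+2,j}=x\,\ov w_j$ and preserves everything, as checked above with $i$ replaced by $j$ (we need $j\ne n+2$, which holds). Indeed the last column changes by $x\,A_{n+2,n+2}e_j=0$. So the row operation indexed by a coordinate where $\ov w$ is nonzero already works; only $\ov w\ne0$ is needed.

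Thus the group $\FF_q$ acts freely on the index set, via $x\cdot A=(1+x e_je_{n+2}^\intercal)A$, and satisfies $\psi(\tr(x\cdot A))=\psi(\tr A)\psi(x\ov w_j)$ while fixing $\omega(\det A)$. Summing over the action gives the original sum as $q^{-1}\sum_A\omega(\det A)\psi(\tr A)\sum_x\psi(x\ov w_j)=0$. The only delicate point is the bookkeeping that the constraint set is stable under the action, which relies crucially on $A_{n+2,n+2}=\ov v_{n+2}=0$.
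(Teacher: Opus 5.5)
Your final argument is correct, and it takes a genuinely different route from the paper. Choose $j\le n+1$ with $\ov w_j\ne0$. The map $A\mapsto(1_{n+2}+x\,e_je_{n+2}^\intercal)A=A+x\,e_j\ov w^\intercal$ has the following properties:
\begin{itemize}
\item it preserves invertibility and $\det A$;
\item it fixes the last row, since $j\ne n+2$;
\item it fixes the last column, since $\ov w_{n+2}=0$;
\item it shifts $\tr A$ by $x\ov w_j$.
\end{itemize}
These maps form an action of the additive group $\FF_q$ on the constrained set. Hence the sum equals $\psi(x\ov w_j)$ times itself for every $x$, and it vanishes because $\psi\ne1$.

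The paper proceeds differently. It first permutes coordinates so that $\ov v_{n+1},\ov w_{n+1}\ne0$, at the cost of replacing $\tr A$ by $\tr A\sigma\tau$ for permutation matrices $\sigma,\tau$. It then parametrizes the constrained set by $\GL_n\times\FF_q^n\times\FF_q^n\times\FF_q$ through the same block row-reduction as in \Cref{lem:gsum-gl-not-0}. Finally it extracts a vanishing character sum in a single coordinate, splitting into cases according to whether $\sigma(n+1)=n+1$.

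The paper's route keeps the section uniform, since that parametrization also computes the nonvanishing piece. Yours avoids the permutation reduction, the parametrization and the casework. It also shows that only $\ov w\ne0$ and $A_{n+2,n+2}=0$ matter; the hypothesis $\ov v\ne0$ is automatic, because an invertible matrix has no zero column. It is no less uniform, either. The congruence $A\mapsto gAg^\intercal$ with the same $g$ shifts $\tr AT$ by $2x(T\ov v)_j$. This disposes of the analogous vanishing lemmas for $\Sym$ and $\Alt$, for the diagonal and block-diagonal $T$ used there.

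When you write it up, delete the exploratory middle: the index chosen via $\ov v_i\ne0$, the right multiplication, and the aborted ``combine the two operations.'' Your very first left multiplication already works once the index is chosen via $\ov w$ instead of $\ov v$.
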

\begin{proof}
    We will transform our sum into
    \[\sum_{\substack{A\in\GL_{n+2}\\Ae_{n+2}=\ov v\\A^\intercal e_{n+2}=\ov w}}\omega(\det A)\psi(\tr AT),\]
    where $\ov v_{n+1},\ov w_{n+1}\ne0$, at the cost of allowing $T$ to be specified a permutation matrix.
    We may rearrange the coordinates of $\ov v$ and $\ov w$ so that $\ov v_{n+1},\ov w_{n+1}\ne0$ by replacing $A$ with $\tau A\sigma$ for suitable permutation matrices $\tau$ and $\sigma$.
    Looking at the original sum, this does not change $\det A$ (one can add a sign to $\ov v$ or $\ov w$ if necessary), but it transforms $\tr A$ into $\tr A\sigma\tau$. For brevity, we set $T\coloneqq\sigma\tau$. Note the construction promises that $\sigma(n+2)=n+2$.

    The rest of the argument proceeds as before. Write $\ov v=(cv,c,0)^\intercal$ and $\ov w=(dw,d,0)^\intercal$. Then, the point is that
    \[\arraycolsep=1.4pt\begin{array}{ccccccccccccc}
        \GL_n &\times& \FF_q^n &\times& \FF_q^n &\times& \FF_q &\to& \GL_{n+2} \\
        (B &,& v' &,& w' &,& e) &\mapsto& \begin{bsmallmatrix}
            1_n & v & v' \\
            & 1 \\ && 1
        \end{bsmallmatrix}\begin{bsmallmatrix}
            B \\ & e & c \\ & d
        \end{bsmallmatrix}\begin{bsmallmatrix}
            1_n \\
            w^\intercal & 1 \\ (w')^\intercal && 1
        \end{bsmallmatrix}
    \end{array}\]
    is a bijection onto $A\in\GL_{n+2}$ satisfying $Ae_{n+2}=\ov v$ and $A^\intercal e_{n+2}=\ov w$.
    
    We now show that the sum vanishes, splitting into two cases for the permutation matrix $\sigma$.
    \begin{itemize}
        \item Suppose that $\sigma(n+1)=n+1$. Then we may write $\sigma$ as $\begin{bsmallmatrix}
            T_n \\ & 1_2
        \end{bsmallmatrix}$. Then our sum looks like
        \begin{align*}
            & \sum_{B\in\GL_n}\omega(\det B)\psi(\tr BT_n) \\
            \times{}& \sum_{v',w',e}\omega(-cd)\psi(\tr cv(w')^\intercal T_n)\psi(\tr dv'w^\intercal T_n)\psi(\tr ecdvw^\intercal T_n)\psi(e).
        \end{align*}
        By \Cref{lem:matrix-char-sum}, we see that the sum over $w'$ will only produce nonzero contribution if $v=0$. But in this case, the sum over $e$ is just $\sum\psi(e)=0$, so the total sum vanishes.
        
        \item Suppose $\sigma(n+1)\ne n+1$; say $\sigma(i_0)=n+1$ for $i_0<n+1$. Here, we sum over $v'$ while holding all other variables fixed. The determinant does not depend on $v'$, so we are left summing over the $\psi$ terms. Only paying attention to $v'$, we see that we are computing
        \[\sum_{v'\in\FF_q^n}\prod_{i=1}^{n+2}\psi\left(e_i^\intercal \begin{bmatrix}
            dv'w^\intercal & dv' & 0 \\
            0 & 0 & 0 \\
            0 & 0 & 0
        \end{bmatrix}e_{\sigma(i)}\right).\]
        We now sum over $v'_{i_0}$ and hold the remaining coordinates $v'_\bullet$ constant. Then the only non-constant factor in the product is $i=i_0$, where $\sigma(i)=n+1$, thus producing the sum $\sum\psi(dv'_{i_0})=0$.
        \qedhere
    \end{itemize}
\end{proof}
We now synthesize our cases to evaluate our Gauss sums.
\begin{theorem} \label{thm:gsum-gl}
    Fix characters $\omega\colon\FF_q^\times\to\CC^\times$ and $\psi\colon\FF_q\to\CC^\times$ and some $T\in\GL_n$.
    \begin{listalph}
        \item Suppose $\psi\ne1$. Then
        \[g_n(\omega,\psi,T)=\frac{q^{n(n-1)/2}}{\omega(\det T)}\cdot g(\omega,\psi)^n.\]
        \item Suppose $\psi=1$ and $\omega=1$. Then
        \[g_n(\omega,\psi,T)=\prod_{i=0}^{n-1}\left(q^n-q^i\right).\]
    \end{listalph}For any $(\psi,\omega)$ not in the above list, the sum vanishes when $n$ is positive
\end{theorem}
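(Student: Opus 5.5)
We first reduce to $T=1_n$. By \Cref{lem:gsum-gl-basic}(a), taking $g=T^{-1}$ and $h=1_n$ gives $g_n(\omega,\psi,1_n)=\omega(\det T)\,g_n(\omega,\psi,T)$. This accounts for the factor $\omega(\det T)^{-1}$ in (a), so it suffices to compute $g_n(\omega,\psi,1_n)$. The degenerate cases are immediate. If $\psi=1$ and $\omega\ne1$, the sum vanishes by \Cref{lem:gsum-gl-basic}(b). If $\psi=\omega=1$, we are simply counting $|\GL_n|=\prod_{i=0}^{n-1}(q^n-q^i)$, which is (b).

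For (a), with $\psi\ne1$, we argue by induction on $n$ and aim for the recursion
\[g_{n+1}(\omega,\psi,1_{n+1})=q^n g(\omega,\psi)\,g_n(\omega,\psi,1_n).\]
The base case $g_1(\omega,\psi,1)=g(\omega,\psi)$ is the definition. Unrolling the recursion gives $q^{0+1+\cdots+(n-1)}g(\omega,\psi)^n=q^{n(n-1)/2}g(\omega,\psi)^n$, as claimed.

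To prove the recursion, we split the sum over $A\in\GL_{n+1}$ according to whether the corner entry $A_{n+1,n+1}$ is nonzero. The part with $A_{n+1,n+1}\ne0$ is exactly the right-hand side by \Cref{lem:gsum-gl-not-0}. It remains to show that the part with $A_{n+1,n+1}=0$ vanishes.

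For this, we further partition by the last column $\ov v=Ae_{n+1}$ and the last row $\ov w=A^\intercal e_{n+1}$. Invertibility forces both to be nonzero, and $A_{n+1,n+1}=0$ means both have vanishing last coordinate. Each such fibre is precisely a sum of the form in \Cref{lem:gsum-gl-0}, with $n+2$ there replaced by $n+1$. That lemma says each fibre vanishes, so the whole part vanishes. The lemma requires at least two rows so that a nonzero $\ov v$ with vanishing last coordinate can exist. This holds here, and when $n+1=1$ the condition $A_{11}=0$ is empty anyway.

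The main subtlety is purely bookkeeping: the indexing shift in \Cref{lem:gsum-gl-0}, which is stated for size $n+2$ so that its block decomposition has an $n\times n$ block $B$. We therefore apply it with $n-1$ in place of $n$. When $n=0$ there is nothing to check, since the corner-zero part is empty.

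Finally, the statement that the sum vanishes for every pair $(\psi,\omega)$ outside the listed cases refers only to $\psi=1\ne\omega$, which was handled at the start.
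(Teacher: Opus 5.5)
Your proposal is correct and is essentially the paper's proof: reduce to $T=1_n$ via \Cref{lem:gsum-gl-basic}, dispose of the $\psi=1$ cases directly, and prove (a) by induction using the recursion $g_{n+1}(\omega,\psi,1_{n+1})=q^n g(\omega,\psi)\,g_n(\omega,\psi,1_n)$, obtained by summing \Cref{lem:gsum-gl-not-0,lem:gsum-gl-0}. Your explicit partition of the corner-zero part into fibres by last column and last row, with the index shift in \Cref{lem:gsum-gl-0}, just spells out bookkeeping that the paper leaves implicit.
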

\begin{proof}
    Note the last sentence follows by \Cref{lem:gsum-gl-basic}. Note both (a) and (b) reduce to the case where $T=1_n$ by \Cref{lem:gsum-gl-basic} because both sides are invariant under replacing $T$ by $gT$ for some $g\in\GL_n$. Now, the sum in (b) is simply enumerating $\GL_n$, so the result follows from \cite[Proposition~7.1.1]{hach-gf}.

    It remains to show (a). For $n=0$, there is nothing to prove. Thus, by induction, it is enough to show that
    \[g_{n+1}(\omega,\psi,1_{n+1})\stackrel?=q^ng(\omega,\psi)g_n(\omega,\psi,1_n)\]
    for $n\ge0$, which follows by summing \Cref{lem:gsum-gl-not-0,lem:gsum-gl-0}.
\end{proof}
\begin{remark}
    It is possible to prove (b) in the theorem by tracking the case $(\omega,\psi)=(1,1)$ through \Cref{lem:gsum-gl-not-0,lem:gsum-gl-0}. We have not done so for brevity.
\end{remark}
We close this subsection with a combinatorial application; note there is a similar result in \cite[Theorem~6.2]{kim-gauss-sum}.
\begin{corollary}
    Let $n$ be a nonnegative integer, and fix some $T\in\GL_n$. Further, fix $d\in\FF_q^\times$ and $t\in\FF_q$. Then the number $N(d,t)$ of $A\in\GL_n$ such that $\det A=d$ and $\tr AT=t$ is
    \begin{align*}
        &\frac1{q(q-1)}\left(\prod_{i=0}^{n-1}\left(q^n-q^i\right)-q^{n(n-1)/2}(q-1)^n\right) \\
        &+q^{n(n-1)/2}\cdot\#\left\{(y_1,\ldots,y_n)\in\FF_q^n:(y_1+\cdots+y_n)=t,\frac{y_1\cdots y_n}{\det T}=d\right\}.
    \end{align*}
\end{corollary}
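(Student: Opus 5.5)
We detect the two conditions with characters and then evaluate the resulting sums with \Cref{thm:gsum-gl}. Fix a nontrivial $\psi$. By orthogonality on $\FF_q^\times$ and on $\FF_q$,
\[N(d,t)=\frac1{q(q-1)}\sum_{\omega}\sum_{a\in\FF_q}\sum_{A\in\GL_n}\omega(\det A)\overline{\omega(d)}\,\psi\big(a(\tr AT-t)\big),\]
where $\omega$ ranges over characters of $\FF_q^\times$. The inner sum over $A$ is $g_n(\omega,\psi_a,T)$ with $\psi_a(x)\coloneqq\psi(ax)$. We split according to whether $a=0$.

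For $a=0$, the character $\psi_0$ is trivial. By \Cref{thm:gsum-gl} the sum vanishes unless $\omega=1$, and then it equals $\prod_{i=0}^{n-1}(q^n-q^i)$.

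For $a\ne0$, the character $\psi_a$ is nontrivial, so \Cref{thm:gsum-gl}(a) gives
\[g_n(\omega,\psi_a,T)=q^{n(n-1)/2}\,\omega(\det T)^{-1}g(\omega,\psi_a)^n.\]
Expanding $g(\omega,\psi_a)^n=\sum_{y\in(\FF_q^\times)^n}\omega(y_1\cdots y_n)\psi(a\sum y_i)$, the $(a\ne0)$-part of $N(d,t)$ becomes
\[\frac{q^{n(n-1)/2}}{q(q-1)}\sum_{y\in(\FF_q^\times)^n}\Big(\sum_\omega\omega\big(y_1\cdots y_n/(d\det T)\big)\Big)\Big(\sum_{a\ne0}\psi\big(a(\textstyle\sum y_i-t)\big)\Big).\]
The sum over $\omega$ equals $(q-1)$ times the indicator that $y_1\cdots y_n/\det T=d$. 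The sum over $a\ne0$ equals $q\cdot\mathbf 1[\sum y_i=t]-1$.

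Therefore the $(a\ne0)$-part equals
\[q^{n(n-1)/2}\#\{y:\textstyle\sum y_i=t,\ \prod y_i/\det T=d\}-\frac{q^{n(n-1)/2}}{q}\#\{y\in(\FF_q^\times)^n:\prod y_i=d\det T\}.\]
The last count is $(q-1)^{n-1}$, since $y_1,\dots,y_{n-1}$ are free and $y_n$ is then determined. Hence the subtracted term is
\[\frac{q^{n(n-1)/2}(q-1)^n}{q(q-1)}.\]
Adding the $a=0$ contribution $\frac{1}{q(q-1)}\prod_{i=0}^{n-1}(q^n-q^i)$ yields exactly the stated formula.

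In the tuple count the $y_i$ are nonzero automatically because $d\ne0$, so it agrees with counting over $\FF_q^n$. The only real care needed is the bookkeeping of $\omega(\det T)^{-1}$ against $\overline{\omega(d)}$, which must combine into the indicator of $\prod y_i=d\det T$. The case $n=0$ is a degenerate check handled by the same computation.
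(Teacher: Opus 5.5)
Your argument is correct and is essentially the paper's: both detect $(\det A,\tr AT)=(d,t)$ by orthogonality over $\FF_q^\times\times\FF_q$, evaluate the character sums with \Cref{thm:gsum-gl}, and expand $g(\omega,\psi)^n$. The only difference is bookkeeping: the paper first packages the $\psi=1$ correction into one formula for $g_n(\omega,\psi,T)$ valid for all $(\omega,\psi)$, whereas you extract the correction term $q^{n(n-1)/2}(q-1)^n/(q(q-1))$ directly from the $a\neq0$ part. For $n=0$ your two intermediate steps are not literally valid: $g_0(\omega,1,T)=1$ for every $\omega$, and the tuple count is $[d=1]$ rather than $(q-1)^{-1}$. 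These two discrepancies cancel, and the formula is in any case immediate to check directly in that case.
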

\begin{proof}
    For any characters $\omega\colon\FF_q^\times\to\CC^\times$ and $\psi\colon\FF_q\to\CC^\times$, we claim that $g_n(\omega,\psi,T)$ equals
    \begin{align*}
        &\frac1{q(q-1)}\left(\prod_{i=0}^{n-1}\left(q^n-q^i\right)-q^{n(n-1)/2}(q-1)^n\right)\sum_{a\in\FF_q^\times,b\in\FF_q}\omega(a)\psi(b) \\
        &+ \frac{q^{n(n-1)/2}}{\omega(\det T)}\cdot g(\omega,\psi)^n
    \end{align*}
    If $\psi\ne1$, then this is (a) of \Cref{thm:gsum-gl}; if $\psi=1$, then both sides vanish unless $\omega=1$, in which case this is (b) of \Cref{thm:gsum-gl}. Now, we notice that full expansion gives
    \[\frac1{\omega(\det T)}\cdot g(\omega,\psi)^n=\sum_{y_1,\ldots,y_n\in\FF_q^\times}\omega\left(\frac{y_1\cdots y_n}{\det T}\right)\psi(y_1+\cdots+y_n),\]
    so the result follows by summing appropriately over all $\omega$ and $\psi$.
\end{proof}

\subsection{The Sum Over \texorpdfstring{$\Sym_n^\times$}{ Sym}}
For the purposes of this subsection, we define
\[g_n(\omega,\psi,T)\coloneqq\sum_{A\in\Sym_n^\times}\omega(\det A)\psi(\tr AT)\]
where $\omega\colon\FF_q^\times\to\CC^\times$ and $\psi\colon\FF_q\to\CC^\times$ are characters, and $T\in\Sym_n^\times$. Additionally, throughout we let $\chi\colon\FF_q^\times\to\CC^\times$ denote the nontrivial quadratic character.

We follow the arguments of the previous subsection on $\GL_n$.
\begin{lemma} \label{lem:gsum-sym-basic}
    Fix characters $\omega\colon\FF_q^\times\to\CC^\times$ and $\psi\colon\FF_q\to\CC^\times$ and some $T\in\Sym_n^\times$.
    \begin{listalph}
        \item For any $g\in\GL_n$, we have
        \[g_n(\omega,\psi,gTg^\intercal)=\omega(\det g)^{-2}g_n(\omega,\psi,T).\]
        \item If $\psi=1$ and $\omega\ne1$, then $g_n(\omega,\psi,T)=0$ unless $\omega^2=1$ and $n$ is even.
    \end{listalph}
\end{lemma}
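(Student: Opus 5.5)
Both parts mirror \Cref{lem:gsum-gl-basic}, adapted to the congruence action $A\mapsto g^\intercal Ag$ of $\GL_n$ on $\Sym_n^\times$.

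For (a), I will substitute $A=g^{-\intercal}A'g^{-1}$. This map is a bijection of $\Sym_n^\times$ onto itself. Under it, $\det A=(\det g)^{-2}\det A'$. For the trace term,
\[\tr\left(A\,gTg^\intercal\right)=\tr\left(g^\intercal Ag\,T\right)=\tr(A'T).\]
So
\[g_n(\omega,\psi,gTg^\intercal)=\sum_{A'\in\Sym_n^\times}\omega\left((\det g)^{-2}\det A'\right)\psi(\tr A'T)=\omega(\det g)^{-2}g_n(\omega,\psi,T).\]
This is just bookkeeping.

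For (b), with $\psi=1$ the sum $g_n(\omega,1,T)=\sum_{A\in\Sym_n^\times}\omega(\det A)$ is independent of $T$. I will use the same substitution as in (a), now with an arbitrary $g\in\GL_n$. It gives
\[\sum_{A}\omega(\det A)=\omega(\det g)^{2}\sum_A\omega(\det A).\]
Since $\det$ is surjective onto $\FF_q^\times$, the sum vanishes unless $\omega^2=1$.

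It remains to handle the case $\omega=\chi$, the quadratic character, with $n$ odd. Here I will use scaling by a nonsquare $c\in\FF_q^\times$. The map $A\mapsto cA$ is a bijection of $\Sym_n^\times$, and $\det(cA)=c^n\det A$. This gives
\[\sum_A\chi(\det A)=\chi(c)^n\sum_A\chi(\det A)=-\sum_A\chi(\det A),\]
because $n$ is odd. Since $q$ is odd, the sum is $0$.

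The only point needing care is this last step: congruence alone changes $\det$ only by squares, so it cannot detect $\chi$. The scalar trick is what settles it, and it is exactly what produces the parity condition on $n$.
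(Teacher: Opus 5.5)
Your proof is correct and matches the paper's argument: (a) by the congruence substitution, and (b) by reusing that substitution to kill the case $\omega^2\ne1$, then scaling $A\mapsto cA$ to kill $\omega=\chi$ with $n$ odd (the paper scales by an arbitrary $c$, you by a nonsquare). As in the paper, the surjectivity of $\det$ that you use tacitly requires $n\ge1$; for $n=0$ the sum equals $1$ even when $\omega^2\ne1$.
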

\begin{proof}
    Here, (a) follows by some elementary rearrangement upon noticing that $\op{GL}_n$ acts on $\op{Sym}_n^\times$ by $g\cdot A=gAg^\intercal$. For (b), we handle the two listed cases separately.
    \begin{itemize}
        \item Suppose $\omega^2\ne1$. Then the rearrangement which shows (a) is able to show that $g_n(\omega,1,T)=\omega(\det g)^2g_n(\omega,1,T)$ for any $g\in\GL_n$ (because $\psi=1$), so the $g_n(\omega,1,T)=0$ follows.
        \item Suppose $n$ is odd and $\omega^2=1$. Now, for any $c\in\FF_q^\times$, we see that $A\in\Sym_n^\times$ if and only if $cA\in\Sym_n^\times$, so some rearrangement shows $g_n(\omega,1,T)=\omega(c)^ng_n(\omega,1,T)$ for any $c\in\FF_q^\times$. However, $n$ is odd, so $\omega^n=\omega$ is nontrivial, so this forces $g_n(\omega,1,T)=0$.
        \qedhere
    \end{itemize}
\end{proof}
As before, our row-reduction will have two cases: $A_{nn}\ne0$ and $A_{nn}=0$.
\begin{lemma} \label{lem:gsum-sym-ind-not-0}
    Fix characters $\omega\colon\FF_q^\times\to\CC^\times$ and $\psi\colon\FF_q\to\CC^\times$ and diagonal $T_{n+1}\in\Sym_{n+1}^\times$. Letting $T_n\coloneqq\op{diag}(T_{11},\ldots,T_{nn})$, if $\psi\ne1$, then the sum
    \[\sum_{\substack{A\in\Sym_{n+1}^\times\\A_{n+1,n+1}\ne0}}\omega(\det A)\psi(\tr AT_{n+1})\]
    equals
    \[g_n(\omega,\psi,T_n)\frac{\chi(\det T_n)\chi(T_{n+1,n+1})^n}{\omega(T_{n+1,n+1})}g(\omega\chi^n,\psi)g(\chi,\psi)^n.
    \]
\end{lemma}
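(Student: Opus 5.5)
The plan is to imitate \Cref{lem:gsum-gl-not-0}, replacing the two-sided row reduction by a congruence. Symmetric matrices with nonzero bottom-right entry factor uniquely as a congruence of a block-diagonal matrix. The sum then splits into a product of three independent sums: over an $n\times n$ symmetric block, over a vector, and over a scalar. Throughout, write $t\coloneqq T_{n+1,n+1}$, which is nonzero because $T_{n+1}$ is invertible and diagonal.

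\textbf{Step 1 (parametrization).} Consider the map
\[\Sym_n^\times\times\FF_q^n\times\FF_q^\times\to\Sym_{n+1}^\times,\qquad (B,v,c)\mapsto\begin{bmatrix}1_n & v\\ & 1\end{bmatrix}\begin{bmatrix}B\\ & c\end{bmatrix}\begin{bmatrix}1_n\\ v^\intercal & 1\end{bmatrix}=\begin{bmatrix}B+cvv^\intercal & cv\\ cv^\intercal & c\end{bmatrix}.\]
I would check that it is a bijection onto the $A\in\Sym_{n+1}^\times$ with $A_{n+1,n+1}\ne0$. The inverse reads off $c=A_{n+1,n+1}$, then $v=c^{-1}(A_{i,n+1})_{i\le n}$, then $B=A_{[n],[n]}-cvv^\intercal$. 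The matrix $B$ is symmetric, and it is invertible because $\det A=c\det B$.

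\textbf{Step 2 (expanding the summand).} Under this parametrization $\omega(\det A)=\omega(c)\,\omega(\det B)$. Since $T_{n+1}=\op{diag}(T_n,t)$ is diagonal, the trace is
\[\tr AT_{n+1}=\tr BT_n+c\,v^\intercal T_nv+ct=\tr BT_n+c\sum_{i=1}^nT_{ii}v_i^2+ct.\]
The total sum therefore factors as
\[g_n(\omega,\psi,T_n)\cdot\sum_{c\in\FF_q^\times}\omega(c)\psi(ct)\prod_{i=1}^n\sum_{x\in\FF_q}\psi(cT_{ii}x^2).\]

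\textbf{Step 3 (quadratic Gauss sums).} I would use the standard identity $\sum_{x\in\FF_q}\psi(ax^2)=\chi(a)g(\chi,\psi)$ for $a\in\FF_q^\times$ and $\psi\ne1$. It follows by counting the solutions of $x^2=y$, which number $1+\chi(y)$, and then using $\sum_y\psi(ay)=0$. The product over $i$ becomes $\chi(c)^n\chi(\det T_n)g(\chi,\psi)^n$. The remaining $c$-sum is $\sum_c(\omega\chi^n)(c)\psi(ct)$. Substituting $c\mapsto c/t$ turns it into $(\omega\chi^n)(t)^{-1}g(\omega\chi^n,\psi)=\chi(t)^n\omega(t)^{-1}g(\omega\chi^n,\psi)$, using $\chi^{-1}=\chi$. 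Multiplying the three factors gives exactly the claimed formula.

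The only step needing real care is the bijectivity and invertibility bookkeeping in Step 1, together with the quadratic-form evaluation in Step 3. The latter is where diagonality of $T_{n+1}$ is essential: it makes the $v$-sum split coordinatewise. Everything else is direct substitution.
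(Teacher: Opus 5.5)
Your proposal is correct and follows essentially the same route as the paper: the same congruence parametrization $(B,v,c)\mapsto\begin{bsmallmatrix}B+cvv^\intercal & cv\\ cv^\intercal & c\end{bsmallmatrix}$ onto the $A\in\Sym_{n+1}^\times$ with $A_{n+1,n+1}\ne0$, the coordinatewise evaluation $\sum_x\psi(ax^2)=\chi(a)g(\chi,\psi)$, and the substitution $c\mapsto c/t$ in the remaining Gauss sum. You spell out the inverse map and the final rearrangement more explicitly than the paper does.
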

\begin{proof}
    The main point is that
    \[\arraycolsep=1.4pt\begin{array}{ccccccc}
        \operatorname{Sym}_n^\times & \times & \FF_q^n & \times & \FF_q^\times & \to & \Sym_{n+1}^\times \\
        (B & , & v & , & c) & \mapsto & \displaystyle\begin{bsmallmatrix}
            1 & v \\
              & 1
        \end{bsmallmatrix}\begin{bsmallmatrix}
            B \\ & c
        \end{bsmallmatrix}\begin{bsmallmatrix}
            1 \\ v^\intercal & 1
        \end{bsmallmatrix}
    \end{array}\]
    is a bijection onto $A\in\Sym_{n+1}^\times$ with $A_{n+1,n+1}\ne0$. 
    Thus, our sum is
    \[\underbrace{\sum_{B\in\Sym_n^\times}\omega(\det B)\psi(\tr BT_n)}_{g_n(\omega,\psi,T_n)}\sum_{v,c}\omega(c)\psi(\tr cvv^\intercal T_n)\psi(cT_{n+1,n+1}).\]
    %
    Now, for brevity, we set $T\coloneqq\op{diag}(d_1,\ldots,d_{n+1})$, so the sum over $v$ and $c$ above equals
    \[\sum_{c\in\FF_q^\times}\omega(c)\psi(cd_{n+1})\prod_{i=1}^n\Bigg(\sum_{a\in\FF_q}\psi\left(cd_ia^2\right)\Bigg)\]
    after some expansion (of $v\in\FF_q^n$). Quickly, we note that
    \[\sum_{a\in\FF_q}\psi\left(cd_ka^2\right)\stackrel?=\sum_{a\in\FF_q}(1+\chi(cd_ka))\psi(a),\]
    where we have extended $\chi$ to $\FF_q$ by $\chi(0)\coloneqq0$; indeed, $(1+\chi(cd_ka))$ is an appropriate indicator for $a/(cd_k)$ being a square. From here, we note $\psi\ne1$ implies
    \[\sum_{a\in\FF_q}\psi\left(cd_ka^2\right)=\chi(cd_k)g(\chi,\psi).\]
    Plugging this in, we see that our sum is
    \[g_n(\omega,\psi,T_n)\sum_{c\in\FF_q^\times}\omega(c)\chi(c)^n\psi(cd_{n+1})\chi(d_1\cdots d_n)g(\chi,\psi)^n,\]
    which rearranges into the desired.
\end{proof}
Next, we handle $A_{nn}=0$.
\begin{lemma} \label{lem:gsum-sym-ind-0}
    Fix characters $\omega\colon\FF_q^\times\to\CC^\times$ and $\psi\colon\FF_q\to\CC^\times$ and some diagonal $T_{n+2}\in\Sym^\times_{n+2}$, and let $T_n\coloneqq\op{diag}(T_{11},\ldots,T_{nn})$. Choosing some nonzero $\ov v\in\FF_q^{n+2}$ such that $\ov v_{n+2}=0$, if $\psi\ne1$, we have
    \[\sum_{\substack{A\in\Sym_{n+2}^\times\\Ae_{n+2}=\ov v}}\omega(\det A)\psi(\tr AT_{n+2})=0.
    \]
\end{lemma}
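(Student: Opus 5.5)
We follow the pattern of \Cref{lem:gsum-gl-0}: first move the nonzero entry of $\ov v$ into position $n+1$, then parametrize the summation set by an explicit symmetric row-reduction, and finally kill the sum with a character sum over a free variable.

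\emph{Step 1: normalize $\ov v$.} Since $\ov v\ne0$ and $\ov v_{n+2}=0$, some coordinate $\ov v_{i_0}$ with $i_0\le n+1$ is nonzero. Choose a permutation matrix $\sigma$ fixing $e_{n+2}$ that swaps $i_0$ and $n+1$, and substitute $A\mapsto\sigma A\sigma^\intercal$. This preserves $\Sym_{n+2}^\times$ and $\det A$. It replaces the condition $Ae_{n+2}=\ov v$ by the same condition for $\sigma\ov v$, which now has nonzero $(n+1)$-st coordinate. It also replaces $T_{n+2}$ by $\sigma^\intercal T_{n+2}\sigma$, which is still diagonal and invertible. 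So we may assume $\ov v_{n+1}\ne0$, and we write $\ov v=(cv,c,0)^\intercal$ with $c\in\FF_q^\times$ and $v\in\FF_q^n$. We set $T_{n+2}=\op{diag}(d_1,\ldots,d_{n+2})$ with all $d_i\ne0$.

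\emph{Step 2: parametrize.} We claim that the map
\[(B,v',e)\mapsto L\begin{bmatrix}B\\&e&c\\&c&0\end{bmatrix}L^\intercal,\qquad L\coloneqq\begin{bmatrix}1_n&v&v'\\&1\\&&1\end{bmatrix},\]
from $\Sym_n^\times\times\FF_q^n\times\FF_q$ is a bijection onto $\{A\in\Sym_{n+2}^\times:Ae_{n+2}=\ov v\}$.
\begin{itemize}
\item \emph{Image lands in the set.} The inner matrix has last column $(0,c,0)^\intercal$, and applying $L$ gives $(cv,c,0)^\intercal=\ov v$.
\item \emph{Inverse.} Given $A$ in the set, its last column fixes $c$ and $v$. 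Symmetric elimination using the pivot $c$ in position $(n+1,n+2)$ clears rows and columns $1,\ldots,n$ against index $n+2$; this determines $v'$ uniquely. The entry $e=A_{n+1,n+1}$ is then read off, and $B$ is what remains. Invertibility of $A$ is equivalent to invertibility of $B$, since $\det A=-c^2\det B$.
\end{itemize}
This uniqueness check is the main point requiring care, exactly as in \Cref{lem:gsum-sym-ind-not-0}.

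\emph{Step 3: expand and cancel.} Multiplying out, the upper-left $n\times n$ block of $A$ is $B+evv^\intercal+c(vv'^\intercal+v'v^\intercal)$, while $A_{n+1,n+1}=e$ and $A_{n+2,n+2}=0$. Hence
\[\tr AT_{n+2}=\tr BT_n+e\left(v^\intercal T_nv+d_{n+1}\right)+2c\,v'^\intercal T_nv,\]
and $\omega(\det A)=\omega(-c^2)\omega(\det B)$ depends on neither $v'$ nor $e$.
\begin{itemize}
\item \emph{Case $v\ne0$.} Then $T_nv\ne0$ because $T_n$ is invertible, and $2c\ne0$ because $q$ is odd. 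Summing over $v'\in\FF_q^n$ with \Cref{lem:matrix-char-sum} and $\psi\ne1$ gives $0$.
\item \emph{Case $v=0$.} The $e$-dependence reduces to $\psi(ed_{n+1})$, and $\sum_{e\in\FF_q}\psi(ed_{n+1})=0$ since $d_{n+1}\ne0$ and $\psi\ne1$.
\end{itemize}
In either case the total sum vanishes, which proves the lemma.
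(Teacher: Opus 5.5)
Your proposal is correct and follows essentially the same route as the paper's proof. Both arguments use a permutation to make $\ov v_{n+1}\ne0$, the same parametrization $L\,\op{diag}\!\left(B,\begin{bsmallmatrix} e & c\\ c & 0\end{bsmallmatrix}\right)L^\intercal$ (the paper calls your $(v',e)$ by $(w,d)$), and the same cancellation: the sum over $v'$ vanishes when $v\ne0$, and the sum over $e$ vanishes when $v=0$. You are also a bit more explicit than the paper about where invertibility of $T_n$ and oddness of $q$ enter.
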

\begin{proof}
    We begin by reducing to the case $\ov v_{n+1}\ne0$. Set $T\coloneqq T_{n+2}$ for brevity. We want to rearrange the coordinates of $\ov v$ so that $v_{n+1}\ne0$ by mapping $A\mapsto\sigma^{-1}A\sigma$ for suitable permutation matrix $\sigma$. This does not change $\det A$, but it transforms $\tr AT$ into $\tr A\sigma T\sigma^{-1}$, effectively rearranging the rows and columns of $T$ into a different diagonal matrix. However, the conclusion is independent of $T$, so this rearrangement is legal.

    We may now row-reduce. Write $\ov v=(cv,c,0)$ (as a column vector). The main point is that there is a bijection
    \[\arraycolsep=1.4pt\begin{array}{ccccccc}
        \operatorname{Sym}_{n}^\times & \times & \FF_q^{n} & \times & \FF_q & \to & \Sym_{n+2}^\times \\
        (B & , & w & , & d) & \mapsto &  \begin{bsmallmatrix}
            1 & v & w \\
                & 1 &   \\
                &   & 1
        \end{bsmallmatrix}\begin{bsmallmatrix}
            B \\
                & d & c \\
                & c
        \end{bsmallmatrix}\begin{bsmallmatrix}
            1 \\
            v^\intercal & 1 \\
            w^\intercal &   & 1
        \end{bsmallmatrix}
    \end{array}\]
    onto the set of $A\in\Sym_{n+2}^\times$ such that $Ae_{n+2}=\ov v$.
    Thus, we see that our sum is
    \begin{align*}
        &\sum_{B}\omega\left(-c^2\det B\right)\psi(\tr BT_n) \\
        \times{}& \sum_{d}\psi\left(dT_{n+1,n+1}+d\tr vv^\intercal T_n\right)\sum_{w}\psi(2c\tr vw^\intercal T_n).
    \end{align*}
    With $\psi\ne1$, we see that the sum over $w$ vanishes by \Cref{lem:matrix-char-sum} unless $v=0$. But in the case where $v=0$, we see that the sum over $d$ will vanish, so the total sum continues to vanish.
\end{proof}
We now synthesize our cases to evaluate our Gauss sums.
\begin{theorem} \label{thm:gsum-sym}
    Fix characters $\omega\colon\FF_q^\times\to\CC^\times$ and $\psi\colon\FF_q\to\CC^\times$ and some $T\in\Sym_n^\times$. Let $\chi\colon\FF_q^\times\to\CC^\times$ denote the quadratic character.
    \begin{listalph}
        \item Suppose $\psi\ne1$.
        \begin{itemize}
            \item If $n=2m$ is a positive even integer, then
            \[g_{2m}(\omega,\psi,T)=\frac{\chi(-1)^m\chi(\det T)q^{m^2}}{\omega(4^m\det T)}\cdot g\left(\omega^2,\psi\right)^m.\]
            \item If $n=2m+1$ is an odd nonnegative integer, then
            \[g_{2m+1}(\omega,\psi,T)=\frac{q^{m(m+1)}}{\omega(4^m\det T)}\cdot g(\omega,\psi)g\left(\omega^2,\psi\right)^m.\]
        \end{itemize}
        \item Suppose $\psi=1$ and $\omega=\chi$. If $n=2m$ is even, then
        \[g_{2m}(\chi,1,T)=\chi(-1)^mq^{m^2}\prod_{k=0}^{m-1}\left(q^{2k+1}-1\right).\]
        \item Suppose $\psi=1$ and $\omega=1$.
        \begin{itemize}
            \item If $n=2m$ is even, then
            \[g_{2m}(1,1,T)=q^{m^2+m}\prod_{k=0}^{m-1}\left(q^{2k+1}-1\right).\]
            \item If $n=2m+1$ is odd, then
            \[g_{2m+1}(1,1,T)=q^{m^2+m}\prod_{k=0}^{m}\left(q^{2k+1}-1\right).\]
        \end{itemize}
    \end{listalph}
    For any $(\psi,\omega)$ not in the above list, the sum vanishes when $n$ is positive
\end{theorem}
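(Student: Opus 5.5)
Follow the $\GL_n$ pattern. The last sentence is \Cref{lem:gsum-sym-basic}(b): the only survivors with $\psi=1$ are $\omega=1$, and $\omega=\chi$ with $n$ even. For (a), first reduce to diagonal $T$. Every symmetric invertible $T$ is congruent to a diagonal matrix, and \Cref{lem:gsum-sym-basic}(a) gives $g_n(\omega,\psi,gTg^\intercal)=\omega(\det g)^{-2}g_n(\omega,\psi,T)$. Both claimed right-hand sides transform the same way: $\omega(\det T)^{-1}$ acquires $\omega(\det g)^{-2}$, and $\chi(\det T)$ is unchanged. So it suffices to take $T=\op{diag}(d_1,\ldots,d_n)$.

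For diagonal $T$, split $g_{n+1}$ according to whether $A_{n+1,n+1}\ne0$. If $A_{n+1,n+1}=0$, then $Ae_{n+1}$ is a nonzero vector with last coordinate $0$. Sum over such vectors and apply \Cref{lem:gsum-sym-ind-0} (with $n+2$ replaced by $n+1$); these terms contribute $0$. \Cref{lem:gsum-sym-ind-not-0} then gives the recursion
\[g_{n+1}(\omega,\psi,T_{n+1})=\frac{\chi(\det T_n)\chi(d_{n+1})^n}{\omega(d_{n+1})}\,g(\omega\chi^n,\psi)\,g(\chi,\psi)^n\,g_n(\omega,\psi,T_n).\]
Iterating from $g_0=1$ yields
\[g_n=\omega(\det T)^{-1}\prod_{k=0}^{n-1}\Big[\chi(d_1\cdots d_k)\chi(d_{k+1})^k\,g(\omega\chi^k,\psi)\,g(\chi,\psi)^k\Big].\]

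Now simplify this product.
\begin{itemize}
\item The $\chi$-factors: $\prod_k\chi(d_1\cdots d_k)\chi(d_{k+1})^k=\prod_j\chi(d_j)^{(n-j)+(j-1)}=\chi(\det T)^{n-1}$. This is $\chi(\det T)$ for $n$ even and $1$ for $n$ odd, matching the statement.
\item The Gauss sums: the $g(\omega\chi^k,\psi)$ alternate between $g(\omega,\psi)$ and $g(\omega\chi,\psi)$. Pair consecutive factors using \Cref{prop:quad-twist-gauss-sum}: $g(\omega,\psi)g(\omega\chi,\psi)=\omega(4)^{-1}g(\omega^2,\psi)g(\chi,\psi)$. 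For $n=2m$ this produces $\omega(4^m)^{-1}g(\omega^2,\psi)^m$ times one extra $g(\chi,\psi)^m$. For $n=2m+1$ there is one leftover $g(\omega,\psi)$.
\item The powers of $g(\chi,\psi)$: the total exponent is $\binom n2$, plus $m$ in the even case. By \Cref{prop:mag-gauss-sum}, $g(\chi,\psi)^2=\chi(-1)q$. For $n=2m$ the exponent is $m(2m-1)+m=2m^2$, giving $\chi(-1)^{m^2}q^{m^2}=\chi(-1)^mq^{m^2}$. For $n=2m+1$ the exponent is $m(2m+1)$, which may be odd. Check this case: if a stray $g(\chi,\psi)$ remains, the pairing must be reorganised, perhaps via a different application of \Cref{prop:quad-twist-gauss-sum} to absorb it.
\end{itemize}
This parity bookkeeping is the main obstacle. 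I would verify it against $n=1$ (where $g_1=\omega(d_1)^{-1}g(\omega,\psi)$) and $n=2,3$ by hand before trusting the exponents. If a mismatch persists, I would recheck the exponent $n$ on $g(\chi,\psi)$ in \Cref{lem:gsum-sym-ind-not-0}.

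For (b) and (c), $\psi=1$, so no cancellation is available. The sums are then counts, signed by the square class of the determinant. Invariance lets us take $T$ arbitrary. For (c), $g_n(1,1,T)=|\Sym_n^\times|$, which is the classical count (cf. \cite[Theorem~7.5.2]{hach-gf} with full rank). For (b), $g_{2m}(\chi,1,T)$ is the difference between the numbers of invertible symmetric matrices with square and with nonsquare determinant. This difference follows from the MacWilliams count already used in \Cref{prop:quadratic-matrix}: half of $|\Sym_{2m}^\times|$ times $(q^m+\chi(-1)^m)/q^m$ for the square class, so the difference is $\chi(-1)^mq^{-m}|\Sym_{2m}^\times|$. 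Substituting $|\Sym_{2m}^\times|=q^{m^2+m}\prod_{k<m}(q^{2k+1}-1)$ gives the claimed formula.
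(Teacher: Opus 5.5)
Your argument is the paper's. You reduce to diagonal $T$ with \Cref{lem:gsum-sym-basic}, discard the $A_{n+1,n+1}=0$ terms by \Cref{lem:gsum-sym-ind-0}, recurse with \Cref{lem:gsum-sym-ind-not-0}, and simplify with \Cref{prop:mag-gauss-sum,prop:quad-twist-gauss-sum}. For (b) and (c) you use the same counts of invertible symmetric matrices (and the MacWilliams square-class count) that the paper cites. The only difference is cosmetic: the paper simplifies after each inductive step ($2m-1\to2m$ and $2m\to2m+1$), while you unroll the whole product first.

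As written, though, you leave the odd case of (a) open, and the obstacle you flag there comes from a miscount. For $n=2m+1$, the factors $g(\omega\chi^k,\psi)$ with $0\le k\le 2m$ are $m+1$ copies of $g(\omega,\psi)$ and $m$ copies of $g(\omega\chi,\psi)$. So you still apply \Cref{prop:quad-twist-gauss-sum} $m$ times, and each application produces a factor $g(\chi,\psi)$, exactly as in the even case. The total exponent of $g(\chi,\psi)$ is therefore $\binom{2m+1}2+m=2m(m+1)$, not $m(2m+1)$, and
\[g(\chi,\psi)^{2m(m+1)}=\chi(-1)^{m(m+1)}q^{m(m+1)}=q^{m(m+1)},\]
since $m(m+1)$ is even. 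Now combine this with $\chi(\det T)^{2m}=1$, the leftover $g(\omega,\psi)$, and $\omega(4)^{-m}\omega(\det T)^{-1}g(\omega^2,\psi)^m$. The result is exactly the stated formula. No reorganisation of the pairing is needed, and no change to the exponent in \Cref{lem:gsum-sym-ind-not-0}. Replace your hedge with this count and the proof is complete.
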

\begin{proof}
    Note the last sentence follows by \Cref{lem:gsum-sym-basic}. Additionally, we may quickly handle (b) and (c), where $\psi=1$ because they reduce to the combinatorics of symmetric matrices: (c) is counting invertible symmetric matrices, so the result is \cite[Theorem~7.5.2]{hach-gf}. Similarly, (b) is counting the difference between invertible symmetric matrices with square and non-square determinant, which is computed in \cite[p.~163]{macwilliams-ortho-matrices}.
    
    It remains to prove (a). We proceed by indution on $n$, where the case of $n=1$ can be checked directly. We quickly reduce to the case where $T$ is diagonal. By choosing an orthogonal basis for the symmetric bilinear form given by $T$, we receive some $g\in\GL_n$ such that $D\coloneqq gTg^\intercal$ is diagonal. As such, \Cref{lem:gsum-sym-basic} yields
    \[g_n(\omega,\psi,T) = \omega(\det g)^2g_n(\omega,\psi,D).\]
    Now, suppose we have proven the theorem for diagonal matrices. In this case, we see $g_n(\omega,\psi,D)=(\det D)^{-1}g_n(\omega,\psi,1)$, so $\det D=(\det g)^2(\det T)$ implies that
    \[g_n(\omega,\psi,T)=\omega(\det T)^{-1}g_n(\omega,\psi,1),\]
    which is the theorem for $T$, as desired.

    Thus, we may assume that $T_n\coloneqq\op{diag}(d_1,\ldots,d_n)$; set $T_{n-1}\coloneqq\op{diag}(d_1,\ldots,d_{n-1})$ and define $T_{n-2}$ analogously. We now induct on $n$ in cases.
    \begin{itemize}
        \item Suppose that $n=2m$ is an even positive integer. In this case, \Cref{lem:gsum-sym-ind-not-0,lem:gsum-sym-ind-0} and induction show $g_{2m}(\omega,\psi,T)$ equals
        \[\frac{\chi(\det T)q^{(m-1)m}}{\omega(4^{m-1}\det T)}\cdot g(\omega,\psi)g\left(\omega^2,\psi\right)^{m-1}g(\omega\chi,\psi)g(\chi,\psi)^{2m-1}.\]
        We now recall $g(\chi,\psi)^2=\chi(-1)q$ by \Cref{prop:mag-gauss-sum} and $\omega(4)g(\omega,\psi)g(\omega\chi,\psi)=g\left(\omega^2,\psi\right)g(\chi,\psi)$ by \Cref{prop:quad-twist-gauss-sum}, so rearrangement completes the proof.
        \item Suppose $n=2m+1$ is an odd positive integer with $m\ge1$. In this case, \Cref{lem:gsum-sym-ind-not-0,lem:gsum-sym-ind-0} and induction show $g_{2m+1}(\omega,\psi,T)$ equals
        \[\frac{\chi(-1)^mq^{m^2}}{\omega(4^m\det T)}\cdot g\left(\omega^2,\psi\right)^mg(\omega,\psi)g(\chi,\psi)^{2m}.\]
        We are now done after recalling $g(\chi,\psi)^2=\chi(-1)q$ by \Cref{prop:mag-gauss-sum} and rearranging.
        \qedhere
    \end{itemize}
\end{proof}
\begin{remark}
    It is possible to prove (b) and (c) above using the same inductive method which proves (a). 
\end{remark}
We conclude this subsection with a combinatorial application.
\begin{corollary} \label{cor:count-sym}
    Let $n$ be a nonnegative integer, and fix some $T\in\Sym_n^\times$. Further, fix $d\in\FF_q^\times$ and $t\in\FF_q$.
    \begin{enumerate}[label=(\alph*)]
        \item Suppose that $n=2m+1$ is odd. Then the number $N(d,t)$ of $A\in\Sym_{2m+1}^\times$ such that $\det A=d$ and $\tr AT=t$ is
        \begin{align*}
            &\frac{q^{m^2+m}}{q(q-1)}\Bigg(\prod_{k=0}^m\left(q^{2k+1}-1\right)-(q-1)^{m+1}\Bigg) \\
            &+q^{m^2+m}\#\left\{(y_0,\ldots,y_m)\in\FF_q^{m+1}:y_0+\cdots+y_m=t,\frac{y_0(y_1\cdots y_m)^2}{4^m\det T}=d\right\}.
        \end{align*}
        \item Suppose that $n=2m$ is even. Let $\chi\colon\FF_q^\times\to\CC^\times$ denote the nontrivial quadratic character. Then the number $N(d,t)$ of $A\in\Sym_{2m}^\times$ such that $\det A=d$ and $\tr AT=t$ is
        \begin{align*}
            &\frac{q^{m^2}}{q(q-1)}\Bigg(\left(q^m+\chi(-1)^m\chi(d)\right)\prod_{k=0}^{m-1}\left(q^{2k+1}-1\right) \\
            & \qquad\qquad\qquad-\chi(-1)^m\left(\chi(d)+\chi(\det T)\right)(q-1)^m\Bigg) \\
            &+\chi((-1)^m\det T)q^{m^2}\#\left\{(y_1,\ldots,y_m)\in\FF_q^m:y_1+\cdots+y_m=t,\frac{(y_1\cdots y_m)^2}{4^m\det T}=d\right\}.
        \end{align*}
    \end{enumerate}
\end{corollary}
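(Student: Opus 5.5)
Following the proof of the corresponding corollary for $\GL_n$, we use orthogonality of characters on $\FF_q^\times\times\FF_q$:
\[N(d,t)=\frac1{q(q-1)}\sum_{\omega,\psi}\overline{\omega(d)}\,\overline{\psi(t)}\,g_n(\omega,\psi,T),\]
where $\omega$ runs over characters of $\FF_q^\times$ and $\psi$ over characters of $\FF_q$. The first step is to rewrite every $g_n(\omega,\psi,T)$, including those with $\psi=1$, as a ``generic'' term plus a correction supported on $\psi=1$. The generic term is the formula of \Cref{thm:gsum-sym}(a). The corrections come from parts (b) and (c) and from the vanishing statement in \Cref{lem:gsum-sym-basic}(b). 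Once written this way, summing over $(\omega,\psi)$ turns the generic terms into a point count and the corrections into explicit constants.

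\textbf{Odd case $n=2m+1$.} Expand the generic term as
\[\frac{q^{m(m+1)}}{\omega(4^m\det T)}\,g(\omega,\psi)\,g(\omega^2,\psi)^m=q^{m^2+m}\sum_{y_0,\ldots,y_m\in\FF_q^\times}\omega\!\left(\frac{y_0(y_1\cdots y_m)^2}{4^m\det T}\right)\psi(y_0+\cdots+y_m).\]
When $\psi=1$, this expression equals $q^{m^2+m}(q-1)^{m+1}$ if $\omega=1$. If $\omega\ne1$ it gives $q^{m^2+m}g(\omega,1)g(\omega^2,1)^m/\omega(\cdots)$, which is $0$ because $g(\omega,1)=0$. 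The true value at $\psi=1$ is $0$ for $\omega\ne1$, since $n$ is odd, and equals part (c) for $\omega=1$. So the correction is supported on $(\omega,\psi)=(1,1)$ and equals
\[q^{m^2+m}\Bigg(\prod_{k=0}^m\left(q^{2k+1}-1\right)-(q-1)^{m+1}\Bigg).\]
Orthogonality then turns the generic term into $q^{m^2+m}$ times the number of tuples $(y_0,\ldots,y_m)$ with the prescribed sum $t$ and the prescribed ``determinant'' $d$. The correction contributes the constant divided by $q(q-1)$. This gives (a).

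\textbf{Even case $n=2m$.} The generic term is
\[\chi(-1)^m\chi(\det T)\,q^{m^2}\,\omega(4^m\det T)^{-1}g(\omega^2,\psi)^m,\]
which expands to $\chi((-1)^m\det T)\,q^{m^2}\sum_{y}\omega\big((y_1\cdots y_m)^2/(4^m\det T)\big)\psi(\sum_i y_i)$. At $\psi=1$ it equals $\chi((-1)^m\det T)\,q^{m^2}(q-1)^m$ exactly when $\omega^2=1$, that is, for $\omega\in\{1,\chi\}$; note $\omega(\det T)=\chi(\det T)$ when $\omega=\chi$. The true values are given by (c) for $\omega=1$ and by (b) for $\omega=\chi$, and all other $\omega$ give $0$ on both sides. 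So there are two corrections, at $\omega=1$ and at $\omega=\chi$. In the inversion they are weighted by $\overline{\omega(d)}=1$ and by $\chi(d)$ respectively.

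Summing the two corrections gives the main term. The (c) and (b) values together contribute $q^{m^2}\bigl(q^m+\chi(-1)^m\chi(d)\bigr)\prod_{k=0}^{m-1}(q^{2k+1}-1)$. The subtracted generic values contribute
\[-\chi((-1)^m\det T)\,q^{m^2}(q-1)^m\bigl(1+\chi(d)\chi(\det T)\bigr),\]
which rewrites as $-\chi(-1)^m\bigl(\chi(\det T)+\chi(d)\bigr)q^{m^2}(q-1)^m$ using $\chi(\det T)^2=1$. This matches (b) of the corollary.

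The main obstacle is getting the $\psi=1$ bookkeeping exactly right. Specifically, one must check that the Gauss-product expressions $g(\omega,1)=-1+(q-1)[\omega=1]$ make the generic formula vanish or agree wherever the true sum vanishes, so that no stray terms appear for $\omega\notin\{1,\chi\}$. One must also track the signs $\chi(-1)^m$ and $\chi(\det T)$ when combining the two corrections.
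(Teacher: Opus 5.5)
Your proposal is correct and is essentially the paper's argument. The paper likewise writes each $g_n(\omega,\psi,T)$ as the generic Gauss-product expression of \Cref{thm:gsum-sym}(a) plus correction terms proportional to $\sum_{a,b}\omega(a)\psi(b)$ (and, for even $n$, $\sum_{a,b}\chi(a)\omega(a)\psi(b)$), that is, supported at $(\omega,\psi)=(1,1)$ and $(\chi,1)$, and then sums against $\overline{\omega(d)\psi(t)}$. One wording slip: at $(\omega,\psi)=(\chi,1)$ the generic term equals $\chi(-1)^m q^{m^2}(q-1)^m$, not $\chi((-1)^m\det T)q^{m^2}(q-1)^m$ as your sentence says, though your later factor $1+\chi(d)\chi(\det T)$ already uses the correct value.
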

\begin{proof}
    We prove these separately.
    \begin{enumerate}[label=(\alph*)]
        \item For any characters $\omega\colon\FF_q^\times\to\CC^\times$ and $\psi\colon\FF_q\to\CC^\times$, we claim that $g_n(\omega,\psi,T)$ equals
        \begin{align*}
            & \frac{g_n(1,1,T)-q^{m(m+1)}(q-1)^{m+1}}{q(q-1)}\sum_{a\in\FF_q^\times,b\in\FF_q}\omega(a)\psi(b) \\
            &+ \frac{q^{m(m+1)}}{\omega\left(4^m\det T\right)}\cdot g(\omega,\psi)g\left(\omega^2,\psi\right)^m
        \end{align*}
        This is by casework, using \Cref{thm:gsum-sym} repeatedly. If $\psi$ is nontrivial, the sum vanishes, so the claim follows from \Cref{thm:gsum-sym}. If $\psi$ is trivial and $\omega$ is nontrivial, then everything vanishes. Lastly, if both $\psi$ and $\omega$ are trivial, then everything equals $g_n(1,1,T)$.

        Now, we notice that fully expanding $g(\omega,\psi)g\left(\omega^2,\psi\right)^m$ gives
        \[
        \sum_{y_0,y_1,\ldots,y_m\in\FF_q^\times}\omega\left(y_0(y_1\cdots y_m)^2\right)\psi(y_0+\cdots+y_m),\]
        so we achieve the result by summing appropriately over all $\omega$ and $\psi$ and using the formula for $g_n(1,1,T)$ given in \Cref{thm:gsum-sym}.

        \item For any characters $\omega\colon\FF_q^\times\to\CC^\times$ and $\psi\colon\FF_q\to\CC^\times$, we claim that $g_n(\omega,\psi,T)$ equals
        \begin{align*}
            &\frac{\chi(-1)^m\chi(\det T)q^{m^2}}{\omega\left(4^m\det T\right)}\cdot g\left(\omega^2,\psi\right)^m \\
            &+\frac{g_n(\chi,1,T)-\chi(-1)^mq^{m^2}(q-1)^m}{q(q-1)}\sum_{a\in\FF_q^\times,b\in\FF_q}\chi(a)\omega(a)\psi(b) \\
            &+\frac{g_n(1,1,T)-\chi((-1)^m\det T)q^{m^2}(q-1)^m}{q(q-1)}\sum_{a\in\FF_q^\times,b\in\FF_q}\omega(a)\psi(b).
        \end{align*}
        Again, this is by casework, repeatedly using \Cref{thm:gsum-sym}. If $\psi$ is nontrivial, this is \Cref{thm:gsum-sym}. Otherwise, $\psi$ is trivial. Then if $\omega^2\ne1$, then $\omega\notin\{1,\chi\}$, so everything vanishes. Lastly, if $\omega\in\{1,\chi\}$, then both sides are equal by construction.

        The rest of the proof proceeds as in (a) by expanding out $g\left(\omega^2,\psi\right)^m$ and summing over $\omega$ and $\psi$ appropriately.
        \qedhere
    \end{enumerate}
\end{proof}

\subsection{The Sum Over \texorpdfstring{$\Alt_{2n}^\times$}{ Alt}}
For the purposes of this subsection, we define
\[g_{2n}(\omega,\psi,T)\coloneqq\sum_{A\in\Alt_{2n}^\times}\omega(\det A)\psi(\tr AT)\]
where $\omega\colon\FF_q^\times$ and $\psi\colon\FF_q\to\CC^\times$ are characters, and $T\in\Alt_{2n}^\times$. Additionally, throughout we let $J\coloneqq\begin{bsmallmatrix}
    & -1 \\ 1
\end{bsmallmatrix}$, which is the $2\times2$ version of the matrix defined in \Cref{sec:rep-theory}.

We would like to follow the outline established for $\GL_n$ and row-reduce, but a more complicated induction will be required because alternating matrices have vanishing diagonal. Instead, our row-reduction will be based on subdividing $A$ into $2\times2$ minors. As such, our casework is based on $A_{2n,2n-1}=-A_{2n-1,2n}$. Otherwise, our outline is the same.
\begin{lemma} \label{lem:gsum-alt-basic}
    Fix characters $\omega\colon\FF_q^\times$ and $\psi\colon\FF_q\to\CC^\times$ and some $T\in\Alt_{2n}^\times$.
    \begin{listalph}
        \item For any $g\in\GL_{2n}$, we have
        \[g_{2n}(\omega,\psi,gTg^\intercal)=\omega(\det g)^{-2}g_{2n}(\omega,\psi,T).\]
        \item If $\psi=1$, then $g_{2n}(\omega,\psi,T)=0$ unless $\omega^2=1$.
    \end{listalph}
\end{lemma}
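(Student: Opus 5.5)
Both parts follow the pattern of \Cref{lem:gsum-sym-basic}, using the action of $\GL_{2n}$ on $\Alt_{2n}^\times$ by $g\cdot A\coloneqq gAg^\intercal$. This is well defined: if $A^\intercal=-A$, then $(gAg^\intercal)^\intercal=gA^\intercal g^\intercal=-gAg^\intercal$, the diagonal of $gAg^\intercal$ still vanishes, and invertibility is preserved. It is also a bijection of $\Alt_{2n}^\times$ onto itself.

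For (a), I substitute $A=g^\intercal A'g$ in the definition of $g_{2n}(\omega,\psi,gTg^\intercal)$. As $A'$ ranges over $\Alt_{2n}^\times$, so does $A$. Then
\[\tr\left(AgTg^\intercal\right)=\tr\left(g^\intercal A'ggTg^\intercal\right),\]
which is not the right shape, so I will use the other substitution $A=g^{-\intercal}A'g^{-1}$. Cyclicity of trace gives
\[\tr\left(g^{-\intercal}A'g^{-1}gTg^\intercal\right)=\tr\left(g^{-\intercal}A'Tg^\intercal\right)=\tr(A'T).\]
For the determinant, $\det A=(\det g)^{-2}\det A'$, so $\omega(\det A)=\omega(\det g)^{-2}\omega(\det A')$. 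Summing over $A'$ yields
\[g_{2n}(\omega,\psi,gTg^\intercal)=\omega(\det g)^{-2}g_{2n}(\omega,\psi,T).\]

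For (b), suppose $\psi=1$. Then $g_{2n}(\omega,1,T)=\sum_{A}\omega(\det A)$ does not depend on $T$. Applying the same substitution $A\mapsto gAg^\intercal$ directly to this sum gives
\[g_{2n}(\omega,1,T)=\omega(\det g)^2g_{2n}(\omega,1,T)\qquad\text{for all }g\in\GL_{2n}.\]
Taking $g=\op{diag}(a,1,\ldots,1)$, the determinant $\det g=a$ runs over all of $\FF_q^\times$. Hence if $\omega^2\ne1$, we can pick $a$ with $\omega(a)^2\ne1$, which forces $g_{2n}(\omega,1,T)=0$.

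Both parts are routine reindexing arguments, so I see no real obstacle. The only care needed is to use the substitution direction that makes the trace term transform cleanly.
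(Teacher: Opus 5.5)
Your proof is correct and is the same argument as the paper's: reindex via the action $A\mapsto gAg^\intercal$ of $\GL_{2n}$ on $\Alt_{2n}^\times$ (in the direction $A=g^{-\intercal}A'g^{-1}$ for (a)), and for (b) deduce $g_{2n}(\omega,1,T)=\omega(\det g)^2g_{2n}(\omega,1,T)$ and pick $g$ with $\omega(\det g)^2\ne1$. As in the paper, the choice $g=\op{diag}(a,1,\ldots,1)$ tacitly uses $n\ge1$ (for $n=0$ the sum equals $1$), which matches the ``$n$ positive'' caveat in \Cref{thm:gsum-alt}.
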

\begin{proof}
    Here, (a) follows after some rearrangement using the action of $\GL_{2n}$ on $\op{Alt}_{2n}^\times$ by $g\cdot A\coloneqq gAg^\intercal$. For (b), we use this same rearrangement to show $g_{2n}(\omega,1,T)=\omega(\det g)^2g_{2n}(\omega,1,T)$ for any $g\in\GL_{2n}$, so the result follows.
\end{proof}
We now handle $A_{2n,2n-1}\ne0$.
\begin{lemma} \label{lem:gsum-alt-not-0}
    Fix characters $\omega\colon\FF_q^\times\to\CC^\times$ and $\psi\colon\FF_q\to\CC^\times$, and set $T_{2i}\coloneqq\op{diag}(J,\ldots,J)\in\Alt^\times_{2i}$ for each $i$. Then if $\psi\ne1$,
    \[\sum_{\substack{A\in\Alt_{2n+2}^\times\\A_{2n,2n-1}\ne0}}\omega(\det A)\psi(\tr AT_{2n+2})=q^{2n}g\left(\omega^2,\psi^2\right)g_{2n}(\omega,\psi,T_{2n}).
    \]
\end{lemma}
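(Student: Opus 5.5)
The plan is a one-step row-reduction along the bottom-right $2\times2$ block, in the same pattern as \Cref{lem:gsum-gl-not-0,lem:gsum-sym-ind-not-0}.

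I read the condition $A_{2n,2n-1}\ne0$ as saying that the bottom-right $2\times2$ block of $A\in\Alt_{2n+2}^\times$ is invertible. That block is alternating, so it equals $cJ$ for some $c\in\FF_q^\times$. I will work with that reading throughout.

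\textbf{Step 1: parametrization.} I claim that the map
\[\Alt_{2n}^\times\times\FF_q^{2n\times2}\times\FF_q^\times\to\Alt_{2n+2}^\times,\qquad (B,V,c)\mapsto\begin{bmatrix}1_{2n} & V\\ & 1_2\end{bmatrix}\begin{bmatrix}B\\ & cJ\end{bmatrix}\begin{bmatrix}1_{2n}\\ V^\intercal & 1_2\end{bmatrix}\]
is a bijection onto those $A$ whose bottom-right block is invertible. Multiplying out gives
\[A=\begin{bmatrix}B+cVJV^\intercal & cVJ\\ cJV^\intercal & cJ\end{bmatrix},\]
and this is alternating. For the inverse, $c$ is read off from the bottom-right block. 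Then $V$ is recovered from the off-diagonal block because $cJ$ is invertible. Finally $B$ is recovered as a Schur complement, which is alternating and invertible. I expect this to be the only real check, and it is routine.

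\textbf{Step 2: determinant and trace.} From the factorization, $\det A=c^2\det B$. Since $T_{2n+2}=\op{diag}(T_{2n},J)$ is block diagonal, the trace splits as
\[\tr AT_{2n+2}=\tr BT_{2n}+c\,\tr\left(VJV^\intercal T_{2n}\right)+c\,\tr(J^2).\]
Here $\tr(J^2)=-2$. Consequently the sum factors into three pieces:
\[g_{2n}(\omega,\psi,T_{2n})\cdot\sum_{c\in\FF_q^\times}\omega(c)^2\psi(-2c)\sum_{V}\psi\left(c\,\tr(JV^\intercal T_{2n}V)\right).\]

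\textbf{Step 3: the $V$-sum, which is the main computation.} Write $V=[v_1\ v_2]$. The matrix $V^\intercal T_{2n}V$ is an alternating $2\times2$ matrix with off-diagonal entry $x=v_1^\intercal T_{2n}v_2$. A direct computation gives $\tr\left(J\,V^\intercal T_{2n}V\right)=2x$.

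So the inner sum is $\sum_{v_1,v_2}\psi\left(2c\,v_1^\intercal T_{2n}v_2\right)$. Summing over $v_2$ with \Cref{lem:matrix-char-sum}, the result is zero unless $2c\,T_{2n}^\intercal v_1=0$. This uses $\psi\ne1$, $q$ odd, and $T_{2n}$ invertible, and it forces $v_1=0$. Hence the $V$-sum equals $q^{2n}$, independently of $c$.

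\textbf{Step 4: the $c$-sum.} What remains is $\sum_{c\in\FF_q^\times}\omega^2(c)\psi(-2c)$. Substitute $c\mapsto-c$ and use $\omega^2(-1)=\omega(-1)^2=1$. Since $\psi(2c)=\psi^2(c)$, the sum becomes $g\left(\omega^2,\psi^2\right)$. Multiplying the three factors gives the claimed formula $q^{2n}g\left(\omega^2,\psi^2\right)g_{2n}(\omega,\psi,T_{2n})$.
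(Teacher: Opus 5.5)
Your proposal is correct and is essentially the paper's proof. It uses the same parametrization of $A$ through the bottom-right block $cJ$ and the same cancellation over one column of $V$, which forces the other column to vanish and leaves $q^{2n}$; the residual $c$-sum is then $g\left(\omega^2,\psi^2\right)$. Your reading of the condition as invertibility of the bottom-right $2\times2$ block (i.e.\ $A_{2n+2,2n+1}\ne0$) is exactly the one the paper's proof uses, and your explicit substitution $c\mapsto-c$ fills in a step the paper leaves implicit.
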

\begin{proof}
    The point is that the bottom-right $2\times2$ minor of our $A\in\Alt_{2n+2}^\times$ is invertible. Thus, the main point is that
    \[\arraycolsep=1.4pt\begin{array}{ccccccccccc}
        \Alt_{2n}^\times &\times& \FF_q^{2n\times2} &\times& \FF_q^\times &\to& \Alt_{2n+2}^\times \\
        (B &,& V &,& c) &\mapsto& \begin{bsmallmatrix}
            1_{2n} & V \\ & 1_2
        \end{bsmallmatrix}\begin{bsmallmatrix}
            B \\ & cJ
        \end{bsmallmatrix}\begin{bsmallmatrix}
            1_{2n} \\ V^\intercal & 1_2
        \end{bsmallmatrix}
    \end{array}\]
    is a bijection onto $A\in\Alt_{2n+2}^\times$ with $A_{2n+2,2n+1}\ne0$. Indeed, letting $V=\begin{bsmallmatrix}
        v & w
    \end{bsmallmatrix}$, we find our sum is
    \begin{align*}
        & \sum_{B}\omega\left(\det B\right)\psi(\tr BT_{2n}) \\
        \times{} & \sum_{c,v,w}\omega\left(c^2\right)\psi(-2c)\psi(\tr(cwv^\intercal-cvw^\intercal)T_{2n}).
    \end{align*}
    With $\psi\ne1$, we note $-\tr vw^\intercal T_{2n}=\tr wv^\intercal T_{2n}$, so the sum over $v$ and $w$ is
    \[\sum_{v,w\in\FF_q^{2n}}\psi(-2c\tr vw^\intercal T_{2n}).\]
    Fixing $v$ and summing over $w$, \Cref{lem:matrix-char-sum} tells us that we only get a nonzero contribution when $v=0$, where we see the sum will evaluate to $q^{2n}$. In this case, the desired sum compresses down to $q^{2n}g\left(\omega^2,\psi^2\right)g_{2n}(\omega,\psi,T_{2n})$, as required.
\end{proof}
Next, we handle $A_{2n-1,2n}=0$.
\begin{lemma} \label{lem:gsum-alt-0}
    Take $n\ge1$. Fix characters $\omega\colon\FF_q^\times$ and $\psi\colon\FF_q\to\CC^\times$, and set $T_{2i}\coloneqq\op{diag}(J,\ldots,J)\in\Alt^\times_{2i}$ for each $i$. Choosing some nonzero vector $\ov v\in\FF_q^{2n+2}$ such that $\ov v_{2n+1}=\ov v_{2n+2}=0$, if $\psi\ne1$, we have
    \[\sum_{\substack{A\in\Alt_{2n+2}^\times\\Ae_{n+2}=\ov v}}\omega(\det A)\psi(\tr AT_{2n+2})=0.
    \]
\end{lemma}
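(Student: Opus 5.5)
Note first that the condition ``$Ae_{n+2}=\ov v$'' is evidently meant to be $Ae_{2n+2}=\ov v$: the last column of $A$ is fixed to be $\ov v$, which is nonzero with vanishing last two coordinates. In particular $A_{2n+1,2n+2}=0$, so this is exactly the complementary case to \Cref{lem:gsum-alt-not-0}. The plan follows \Cref{lem:gsum-sym-ind-0}. First reduce to $\ov v_{2n}\ne0$ by conjugating $A\mapsto\sigma^{-1}A\sigma$ with a permutation matrix $\sigma$ fixing $2n+1,2n+2$. This preserves $\det A$ and alternation, and it replaces $T_{2n+2}$ by $\sigma T_{2n+2}\sigma^{-1}$. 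That matrix is still alternating and invertible, but no longer block-diagonal with $J$'s. So, exactly as in \Cref{lem:gsum-gl-0}, I will prove vanishing for an arbitrary $T$ of the form $\sigma T_{2n+2}\sigma^{-1}$: a signed permutation matrix whose last $2\times2$ block is $J$ and which pairs indices into transpositions. Since the conclusion is independent of $T$, the reduction is harmless.

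Next comes the row reduction. Write $\ov v=(cv,c,0,0)^\intercal$ with $v\in\FF_q^{2n-1}$ and $c\ne0$. Alternating $A$ with last column $\ov v$ should be parametrized bijectively as
\[A=\begin{bmatrix}1_{2n-1}&v&0&w\\&1\\&&1\\&&&1\end{bmatrix}\begin{bmatrix}B\\&0&d&c\\&-d&0&0\\&-c&0&0\end{bmatrix}\begin{bmatrix}1_{2n-1}&v&0&w\\&1\\&&1\\&&&1\end{bmatrix}^\intercal.\]
Here $w\in\FF_q^{2n-1}$ and $d\in\FF_q$, and $B$ ranges over $(2n-1)\times(2n-1)$ alternating blocks. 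I still need to check what invertibility forces on $B$. The middle matrix has determinant $\det B'\cdot c^2$ after clearing the $d$ entry using $c$, where $B'$ is the upper-left $(2n-2)$ alternating block. So the cleaner normal form is a $2\times2$ pivot $\begin{bsmallmatrix}&c\\-c&\end{bsmallmatrix}$ in positions $(2n,2n+2)$, with $B$ in the first $2n-2$ indices and index $2n+1$, and free off-diagonal vectors $w$ and $d$. I will set this up carefully so that the map $(B,w,\text{extra vector},d)\mapsto A$ is a bijection, with $\det A=c^2\det B$ independent of $w$.

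The key step is to show vanishing by summing a free variable against $\psi$. The variable $w$ appears in $A$ only through entries of the form $c\,w e_{2n}^\intercal$ minus its transpose (plus terms involving $v$). Hence $\tr AT$ is affine-linear in $w$, with linear coefficient essentially $c$ times a fixed row of $T$, corrected by $v$-dependent terms. By \Cref{lem:matrix-char-sum}, the $w$-sum vanishes unless that coefficient vector is zero. When the coefficient does vanish (e.g. when it forces $v$ or some $T$-entry to be special), I switch to summing over $d$ instead. The $d$ term enters $\tr AT$ through $d\,T_{2n+1,2n}$-type entries plus $v$-corrections, and $\sum_d\psi(\lambda d)=0$ when $\lambda\ne0$.

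This mirrors the two cases $\sigma(n+1)=n+1$ versus not in \Cref{lem:gsum-gl-0}. Which case occurs depends on whether $\sigma$ keeps index $2n$ paired with $2n-1$ in $T$. The main obstacle is this final casework: confirming that in every configuration of the pairing permutation, at least one of the free linear variables $w$ or $d$ has a nonzero coefficient in $\tr AT$ not depending on the summation variable. This step uses $\psi\ne1$ and $c\ne0$. I would first try summing over the full vector $w$ to force a linear condition on $v$, and then show that under that condition the $d$-coefficient is a nonzero entry of $T$, exactly as in the symmetric case.
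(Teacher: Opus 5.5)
Your setup is essentially the paper's: pivot on an invertible $2\times2$ block built from the nonzero entry of the fixed last column, write $A$ through a Schur complement $B\in\Alt_{2n}^\times$ plus a free vector so that $\det A=c^2\det B$, and kill the sum with one free coordinate. (The paper first uses block swaps and then $\sigma=(i_0,2n+1)$, so that its pivot is the bottom-right block $\{2n+1,2n+2\}$; pivoting on $\{2n,2n+2\}$ as you propose works equally well.) However, the proposal stops short of the decisive step, and the way you intend to finish it would fail.

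First, your displayed factorization is not usable. Its middle factor is block diagonal with two odd-size alternating blocks, so it only produces singular matrices; your claim $\det=c^2\det B'$ is wrong. It also never reaches the entries $A_{i,2n+1}$ for $i\le 2n-1$. With pivot $\{2n,2n+2\}$, the Schur block lives on the $2n$ indices $\{1,\dots,2n-1,2n+1\}$, not on ``the first $2n-2$ indices and $2n+1$''. The only free vector is column $2n$ on those indices. Setting $A_{i,2n}=cw_i$ for $i\le 2n-1$, $A_{2n+1,2n}=cd$, $w_{2n+1}:=d$ and $v_{2n+1}:=0$, one gets $A_{ij}=B_{ij}+c(w_iv_j-v_iw_j)$ on that index set. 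Then $(B,w,d)\mapsto A$ is a bijection onto the relevant invertible alternating matrices.

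The real gap is the step you call the main obstacle, together with its fallback. Because your $\sigma$ fixes $2n+1$ and $2n+2$, $T$ pairs $2n+1$ with $2n+2$. Hence $T_{2n,2n+1}=0$, and the $d$-dependent entries $A_{i,2n+1}=B_{i,2n+1}-cv_id$ are multiplied by $T_{2n+1,i}=0$. So $d$ does not occur in $\tr AT$ at all. If the $w$-coefficient could vanish for special $v$, your argument would have nothing left.

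What you are missing is that this coefficient never vanishes. Since $A$ and $T$ are alternating and $T$ is a signed pairing, $\tr AT=2\sum A_{ij}T_{ji}$ over the $T$-pairs $\{i,j\}$. Let $p\le 2n-1$ be the partner of $2n$.
\begin{itemize}
\item The pair $\{2n+1,2n+2\}$ contributes $2A_{2n+1,2n+2}T_{2n+2,2n+1}=0$, because $\ov v_{2n+1}=0$.
\item The pair $\{p,2n\}$ contributes $2T_{2n,p}\,c\,w_p$ with $T_{2n,p}=\pm1$.
\item Every other pair lies inside $\{1,\dots,2n-1\}\setminus\{p\}$, so its correction $c(w_iv_j-v_iw_j)$ never involves $w_p$.
\end{itemize}
Thus $w_p$ appears in $\tr AT$ with coefficient $\pm2c\ne0$ (using $q$ odd), independently of $v$, $B$ and the other coordinates. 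Meanwhile $\omega(\det A)=\omega(c^2\det B)$ does not depend on $w_p$. Summing over $w_p$ alone therefore gives $0$, with no case distinction at all. This is exactly the paper's closing step: there $\sigma T_{2n+2}\sigma$ pairs the pivot index $2n+1$ with $2n$ or $2n-1$, and the single sum over $w_{2n}$ or $w_{2n-1}$ vanishes.
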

\begin{proof}
    This argument is similar to \Cref{lem:gsum-gl-0}, but we are more careful with the permutation matrix. We would like to reduce to a case where $\ov v_{2n+1}\ne0$ by adjusting $T$ appropriately. Because $\ov v$ is nonzero, we may find an index $i_0\notin\{2n+1,2n+2\}$ such that $\ov v_{i_0}$ is nonzero. By mapping $A\mapsto\sigma A\sigma$ where $\sigma$ is the permutation matrix associated to some permutation of the form $(2i-1,2j-1)(2i,2j)$, we see that the sum will not change (because $\det\sigma A\sigma=\det A$ and $\sigma T_{2n+2}\sigma=T_{2n+2}$); thus, we may apply such a permutation to assume that $i_0\in\{2n-1,2n\}$. We now set $\sigma\coloneqq(i_0,2n+1)$ and apply $A\mapsto\sigma A\sigma$ to our sum, which does adjust $\ov v$ (so that $\ov v_{2n+1}\ne0$) as well as make our sum change $T_{2n+2}$ to the matrix $\sigma T_{2n+2}\sigma$, which is in
    \[\left\{\begin{bmatrix}
        T_{2n-2} \\
        &&&& -1 \\ &&& 1 \\ && -1 \\ & 1
    \end{bmatrix},\begin{bmatrix}
        T_{2n-2} \\
        &&& -1 \\ &&&& -1 \\ & 1 \\ && 1
    \end{bmatrix}\right\}\]
    (The left happens when $i_0=2n-1$, and the right happens when $i_0=2n$.) With our now adjusted $\ov v$, we write $\ov v=(-cv,-c,0)$ where $v\in\FF_q^{2n}$ and $c\in\FF_q^\times$, and we note we want to compute
    \[\sum_{\substack{A\in\Alt_{2n+2}^\times\\Ae_{n+2}=\ov v}}\omega(\det A)\psi(\tr A\sigma T_{2n+2}\sigma).\]
    Now, the main point is that
    \[\arraycolsep=1.4pt\begin{array}{cccccc}
        \Alt_{2n}^\times &\times& \FF_q^n &\to& \Alt_{2n+2}^\times \\
        (B &,& w) &\mapsto& \begin{bsmallmatrix}
            1_{2n} & v & w \\
            & 1 \\ && 1
        \end{bsmallmatrix}\begin{bsmallmatrix}
            B \\ && -c \\ & c
        \end{bsmallmatrix}\begin{bsmallmatrix}
            1_{2n} \\ v^\intercal & 1 \\ w^\intercal && 1
        \end{bsmallmatrix}
    \end{array}\]
    is a bijection onto $A\in\Alt_{2n+2}^\times$ with $Ae_{2n+2}=(-cv,-c,0)$.
    Now, to find cancellation in our sum, we hold $B$ constant and let $w$ vary.
    In particular, after some rearrangement, we see that the sum in question contains the factor
    \[\sum_{w\in\FF_q^n}\psi\left(\tr\begin{bmatrix}
        wv^\intercal-cvw^\intercal & cw & -cv \\
        -cw^\intercal && -c \\
        cv^\intercal & c
    \end{bmatrix}\sigma T_{2n+2}\sigma\right),\]
    which we will show vanishes. In fact, we will look at a factor of this sum. Because $\sigma T_{2n+2}\sigma$ takes the form $\op{diag}(T_{2n-2},T')$ for some $T'\in\Alt_4^\times$ described above, we see that the sum above contains the factor
    \[\sum_{w_{2n-1},w_{2n}}\psi\left(\tr\begin{bmatrix}
        & * & cw_{2n-1} & -cv_{2n-1} \\
        * & & cw_{2n} & -cv_{2n} \\
        -cw_{2n-1} & -cw_{2n} & & * \\
        cv_{2n-1} & cv_{2n} & *
    \end{bmatrix}T'\right)\]
    by using the bottom-right $4\times4$ minors of our matrices; here $*$s denote terms which do not matter. Now, if $i_0=2n-1$, then one finds that the sum over $w_{2n}$ vanishes; and if $i_0=2n$, then one finds that the sum over $w_{2n-1}$ vanishes.
\end{proof}
We now synthesize our cases.
\begin{theorem} \label{thm:gsum-alt}
    Fix characters $\omega\colon\FF_q^\times\to\CC^\times$ and $\psi\colon\FF_q\to\CC^\times$ and some $T\in\Alt_{2n}^\times$.
    \begin{listalph}
        \item Suppose $\psi\ne1$. Then
        \[g_{2n}(\omega,\psi,T)=\frac{q^{n(n-1)}}{\omega(\det T)}\cdot g\left(\omega^2,\psi^2\right)^n.\]
        \item Suppose $\psi=1$ and $\omega^2=1$. Then
        \[g_{2n}(\omega,1,T)=q^{n(n-1)}\prod_{i=1}^{n}\left(q^{2i-1}-1\right).\]
    \end{listalph}
    For any $(\psi,\omega)$ not in the above list, the sum vanishes when $n$ is positive.
\end{theorem}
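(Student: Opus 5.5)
We follow the template of \Cref{thm:gsum-gl,thm:gsum-sym}. The final sentence is immediate from \Cref{lem:gsum-alt-basic}(b). For (b), when $\psi=1$ and $\omega^2=1$, every $A\in\Alt_{2n}^\times$ has square determinant (the Pfaffian squared), so $\omega(\det A)=1$. The sum then simply counts $\Alt_{2n}^\times$, and we cite \cite[Theorem~7.5.5]{hach-gf} (the rank-$2n$ case) for the count $q^{n(n-1)}\prod_{i=1}^n(q^{2i-1}-1)$.

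For (a), we first reduce to $T=T_{2n}\coloneqq\op{diag}(J,\ldots,J)$. Choosing a symplectic basis for the form given by $T$ produces $g\in\GL_{2n}$ with $gTg^\intercal=T_{2n}$, so $\det T=(\det g)^{-2}$. Then \Cref{lem:gsum-alt-basic}(a) gives
\[g_{2n}(\omega,\psi,T)=\omega(\det g)^2g_{2n}(\omega,\psi,T_{2n})=\omega(\det T)^{-1}g_{2n}(\omega,\psi,T_{2n}),\]
which matches the claimed dependence on $T$. It therefore suffices to show
\[g_{2n}(\omega,\psi,T_{2n})=q^{n(n-1)}g\left(\omega^2,\psi^2\right)^n.\]

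We prove this by induction on $n$. The case $n=0$ is trivial. For $n=1$, we have $A=cJ$ with $c\in\FF_q^\times$, so $\det A=c^2$ and $\tr(cJ\cdot J)=-2c$. The sum is $\sum_c\omega(c^2)\psi(-2c)$, which equals $g(\omega^2,\psi^2)$ after substituting $c\mapsto-c$ (here $\omega^2(-1)=1$). For the inductive step, we split $\Alt_{2n+2}^\times$ according to whether the bottom-right $2\times2$ block entry $A_{2n+2,2n+1}$ is nonzero. When it is nonzero, \Cref{lem:gsum-alt-not-0} contributes $q^{2n}g(\omega^2,\psi^2)g_{2n}(\omega,\psi,T_{2n})$. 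When it vanishes, we partition by the last column $Ae_{2n+2}=\ov v$. Invertibility forces $\ov v\neq0$, and $\ov v_{2n+2}=0$ automatically while $\ov v_{2n+1}=0$ by assumption, so each fibre vanishes by \Cref{lem:gsum-alt-0} (reading its $e_{n+2}$ as $e_{2n+2}$). Combining, we get
\[g_{2n+2}=q^{2n}g(\omega^2,\psi^2)g_{2n},\]
and since $\sum_{k<n}2k=n(n-1)$, induction gives the exponent $q^{n(n-1)}$.

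The main obstacle is bookkeeping in the two lemmas: the indexing there mixes $A_{2n,2n-1}$ with the bottom-right block, and the condition $A_{2n+2,2n+1}=0$ must be correctly identified with the hypothesis $\ov v_{2n+1}=0$ of \Cref{lem:gsum-alt-0}. Granting those lemmas as stated, the only delicate point is the sign in the base case: since $\psi(-2c)$ is $\psi^2$ evaluated at $-c$, we must check that the substitution $c\mapsto-c$ leaves $\omega(c^2)$ unchanged, which it does.
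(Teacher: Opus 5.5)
Your proposal is correct and follows the paper's proof essentially step for step. The last sentence comes from \Cref{lem:gsum-alt-basic}; (b) comes from square determinants plus the count in \cite[Theorem~7.5.5]{hach-gf}; and (a) comes from reducing to $T=\op{diag}(J,\ldots,J)$ and inducting by summing \Cref{lem:gsum-alt-not-0,lem:gsum-alt-0}. Your explicit $n=1$ computation and your reading of the index typos in those two lemmas are also correct: the relevant entry is $A_{2n+2,2n+1}$, and the relevant column is $Ae_{2n+2}$.
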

\begin{proof}
    Note the last sentence follows from \Cref{lem:gsum-alt-basic}. We also quickly note that (b) is combinatorics: because any $A\in\op{Alt}_{2n}^\times$ can be written as $g\op{diag}(J,\ldots,J)g^\intercal$ for some $g\in\op{GL}_n$ (by finding a symplectic basis for $A$), we see that $\det A$ is a square always, so (b) is simply counting the number of invertible $2n\times2n$ alternating matrices. Thus, (b) follows from \cite[Theorem~7.5.5]{hach-gf}.
    
    It remains to show (a). We note we may reduce to the case where $T=\op{diag}(J,\ldots,J)$ using \Cref{lem:gsum-alt-basic}. We now induct on $n$, where the cases $n=0$ and $n=1$ can be checked directly. The induction now follows from summing \Cref{lem:gsum-alt-not-0,lem:gsum-alt-0}.
\end{proof}
\begin{remark}
    One can prove (b) via the same method as (a).
\end{remark}
Here is the corresponding combinatorial application.
\begin{corollary} \label{cor:count-alt}
    Fix some even nonnegative integer $2n$ and some $T\in\Alt_{2n}^\times$. Further, fix $d\in(\FF_q^{\times})^2$ and $t\in\FF_q$. Then the number $N(d,t)$ of $A\in\Alt_{2n}^\times$ such that $\det A=d$ and $\tr AT=t$ is
    \begin{align*}
        &\frac2{q(q-1)}\left(q^{n(n-1)}\prod_{i=1}^{n}\left(q^{2i-1}-1\right)-q^{n(n-1)}(q-1)^n\right) \\
        &+ q^{n(n-1)}\#\left\{(y_1,\ldots,y_n)\in\FF_q^n:2(y_1+\cdots+y_n)=t,\frac{(y_1\cdots y_n)^2}{\det T}=d\right\}.
    \end{align*}
\end{corollary}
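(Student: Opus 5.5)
The plan follows the pattern of the corollaries for $\GL_n$ and $\Sym_n^\times$. First I will show that for every pair of characters $\omega\colon\FF_q^\times\to\CC^\times$ and $\psi\colon\FF_q\to\CC^\times$,
\begin{align*}
    g_{2n}(\omega,\psi,T)={}&\frac{g_{2n}(1,1,T)-q^{n(n-1)}(q-1)^n}{q(q-1)}\sum_{a\in\FF_q^\times,b\in\FF_q}\left(\omega(a)+\chi(a)\omega(a)\right)\psi(b) \\
    &+\frac{q^{n(n-1)}}{\omega(\det T)}\cdot g\left(\omega^2,\psi^2\right)^n,
\end{align*}
where $\chi$ is the nontrivial quadratic character. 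This identity is checked case by case using \Cref{thm:gsum-alt}.
\begin{itemize}
    \item If $\psi\ne1$, the character sum over $b$ vanishes, and the identity is (a) of \Cref{thm:gsum-alt}.
    \item If $\psi=1$ and $\omega^2\ne1$, both sides vanish. For the last term, note that $g(\omega^2,1)=0$ when $\omega^2\neq 1$.
    \item If $\psi=1$ and $\omega\in\{1,\chi\}$, then $g(\omega^2,1)=q-1$, so the last term is $q^{n(n-1)}(q-1)^n/\omega(\det T)$. Since $\det T$ is a square, $\omega(\det T)=1$. The first sum equals $q(q-1)$, because exactly one of $\omega,\chi\omega$ is trivial. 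Hence the right-hand side is $g_{2n}(1,1,T)$, which equals $g_{2n}(\omega,1,T)$ by (b).
\end{itemize}

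Next I invert via orthogonality:
\[N(d,t)=\frac1{q(q-1)}\sum_{\omega,\psi}\ov{\omega(d)}\,\ov{\psi(t)}\,g_{2n}(\omega,\psi,T).\]
The first term contributes $\frac{1}{q(q-1)}\left(g_{2n}(1,1,T)-q^{n(n-1)}(q-1)^n\right)\left(1+\chi(d)\right)$. Since $d$ is a square, $1+\chi(d)=2$, which gives the stated first line once (b) is substituted for $g_{2n}(1,1,T)$.

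For the second term, I expand
\[g\left(\omega^2,\psi^2\right)^n=\sum_{y_1,\ldots,y_n\in\FF_q^\times}\omega\left((y_1\cdots y_n)^2\right)\psi\bigl(2(y_1+\cdots+y_n)\bigr),\]
using $\psi^2(y)=\psi(2y)$. Applying orthogonality over $\omega$ and $\psi$ then counts tuples $(y_1,\ldots,y_n)$ with $2\sum y_i=t$ and $(y_1\cdots y_n)^2/\det T=d$. The tuples range over $(\FF_q^\times)^n$, which agrees with the statement's $\FF_q^n$ because $d\ne0$ forces every $y_i\ne0$.

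The only delicate step is the case $\psi=1$, $\omega=\chi$. There I need $\det T$ to be a square, which holds by the symplectic-basis remark in the proof of \Cref{thm:gsum-alt}, and I need $\chi(d)=1$, which is the reason the hypothesis $d\in(\FF_q^\times)^2$ is imposed. Everything else is bookkeeping.
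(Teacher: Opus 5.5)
Your proposal is correct and takes the same route as the paper. Both write $g_{2n}(\omega,\psi,T)$ as a degenerate term supported on $\psi=1$, $\omega^2=1$ plus $\frac{q^{n(n-1)}}{\omega(\det T)}g(\omega^2,\psi^2)^n$, check this by casework with \Cref{thm:gsum-alt}, and invert by orthogonality of characters. Your encoding of the degenerate term via $\omega+\chi\omega$ is in fact more accurate than the paper's: the paper's factor $\sum_{a\in(\FF_q^\times)^2}\omega^2(a)$ does not vanish when $\omega$ has order $4$ (so $q\equiv1\pmod 4$), which breaks its intermediate identity for $\psi=1$ and $n\ge2$, whereas yours yields exactly the factor $1+\chi(d)=2$.
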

\begin{proof}
    For any characters $\omega\colon\FF_q^\times$ and $\psi\colon\FF_q\to\CC^\times$, we claim that $g_{2n}(\omega,\psi,T)$ equals
    \begin{align*}
        &\frac 2{q(q-1)}
        \left(g_{2n}(1,1,T)-q^{n(n-1)}(q-1)^n\right)\sum_{a\in(\FF_q^{\times})^2,b\in\FF_q}\omega^2(a)\psi(b) \\
        &+\frac{q^{n(n-1)}}{\omega(\det T)}\cdot g\left(\omega^2,\psi^2\right)^n.
    \end{align*}
    This is the usual casework with \Cref{thm:gsum-alt}: if $\psi\ne1$, then the top row vanishes; if $\psi=1$ and $\omega^2\ne1$, then everything vanishes; and if $\psi=1$ and $\omega^2=1$, then this holds by construction.

    The result now follows by a direct expansion of $g\left(\omega^2,\psi^2\right)$
    as
    \[\sum_{y_1,\ldots,y_n}\omega\left((y_1\cdots y_n)^2\right)\psi(2(y_1+\cdots+y_n)),\]
    and then summing over $\omega$ and $\psi$ appropriately.
\end{proof}

%% file: intertwining/qcombo.tex
\section{\texorpdfstring{$q$}{q}-Combinatorial Inputs} \label{sec:qcombo}
In this section, we discuss the eigenvalues of some antitriangular matrices. Essentially the only method in the literature to access the eigenvalues of an antitriangular matrix is to do some educated guessing in order to make the given matrix upper-triangular. See \cite{britnell-antitriangular} for a thorough discussion of a special case; the work in this subsection can be seen as a $q$-analogue for some of their results. In \Cref{subsec:q-ids,subsec:helper}, we discuss some purely combinatorial inputs into our main results. Notably, these subsections will not use the notation of \Cref{sec:rep-theory}, and $q$ will be treated as a free variable.

\subsection{A Couple \texorpdfstring{$q$}{q}-Identities} \label{subsec:q-ids}
This subsection records two $q$-identities used later on. Throughout, we freely use the packages \texttt{qZeil} and \texttt{qMultiSum} developed by Axel Riese. See \cite{riese-zeil,riese-multisum} for a description of these packages, and see \cite{qmultisum-code} for the Mathematica notebook used in the proofs.

The following identity is used for the linear groups.
\begin{proposition} \label{prop:gl-q-identity}
    For any nonnegative integers $m$ and $n$, we have
    \begin{align*}
        &q^{-m^{2}+mn}\sum_{i=0}^{m}\left(-1\right)^{i}q^{\frac{1}{2}i\left(i-1\right)-ni}\frac{\left(q;q\right)_m^{2}}{\left(q;q\right)_i\left(q;q\right)_{m-i}^{2}} \\
        ={}&\sum_{i+j+k=n}\left(-1\right)^{i}q^{\frac{1}{2}i\left(i-1\right)-mi}\frac{\left(q;q\right)_n}{\left(q;q\right)_i\left(q;q\right)_j\left(q;q\right)_k}.
    \end{align*}
\end{proposition}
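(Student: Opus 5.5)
The plan is to collapse the triple sum on the right-hand side to a single sum, so that both sides become terminating basic hypergeometric series in one index. I would then compare those two series, either with a classical transformation or with a recurrence certified by \texttt{qZeil} as in \cite{riese-zeil}.

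\emph{Step 1: generating function for the right-hand side.} Fix $m$ and form $\sum_{n\ge0}\mathrm{RHS}_n\, t^n/(q;q)_n$. Since the summand factors into pieces depending on $i$, $j$, $k$ separately, Euler's identities give
\[\sum_{i\ge0}\frac{(-1)^iq^{\binom i2}(q^{-m}t)^i}{(q;q)_i}=(q^{-m}t;q)_\infty,\qquad \sum_{j\ge0}\frac{t^j}{(q;q)_j}=\frac1{(t;q)_\infty}.\]
Hence the generating function equals
\[\frac{(q^{-m}t;q)_\infty}{(t;q)_\infty^2}=\frac{(q^{-m}t;q)_m}{(t;q)_\infty}.\]
Expanding $(q^{-m}t;q)_m$ by the finite $q$-binomial theorem and multiplying by $\sum_k t^k/(q;q)_k$ gives
\[\mathrm{RHS}_n=\sum_{i=0}^{\min(m,n)}\binom mi_q(-1)^iq^{\binom i2-mi}\frac{(q;q)_n}{(q;q)_{n-i}}.\]
The left-hand side is already a single sum: its summand equals $q^{-m^2+mn}\binom mi_q(-1)^iq^{\binom i2-ni}(q;q)_m/(q;q)_{m-i}$.

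\emph{Step 2: rewrite both sides as ${}_2\phi_0$-type sums.} Use
\[\frac{(q;q)_N}{(q;q)_{N-i}}=(-1)^iq^{Ni-\binom i2}(q^{-N};q)_i.\]
Then
\[\mathrm{RHS}=\sum_i\frac{(q^{-m};q)_i(q^{-n};q)_i}{(q;q)_i}(-1)^iq^{ni-\binom i2},\qquad \mathrm{LHS}=q^{-m^2+mn}\sum_i\frac{(q^{-m};q)_i^2}{(q;q)_i}(-1)^iq^{(2m-n)i-\binom i2}.\]
The claim thus becomes an identity between two terminating series with different numerator parameters. I would first try to prove it by reversing the order of summation ($i\mapsto m-i$) on the left. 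Then I would insert a $q$-Chu--Vandermonde expansion of $(q^{-m};q)_i$ (equivalently of $\binom mi_q$) on one side, interchange the sums, and sum the inner series with the $q$-binomial theorem.

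\emph{Main obstacle and fallback.} The genuinely nontrivial step is matching these two single sums, since no single standard transformation is obviously in the right form. If a hand transformation does not close, I would proceed as the paper indicates. Apply \texttt{qZeil} to each single sum, viewed as a function of $n$ with $m$ fixed, to produce a linear $q$-recurrence in $n$ with certificate, and check that both sides satisfy the same recurrence. Then check the initial values: for $n=0$ the left-hand side should be evaluated directly, since the $n$-dependence is not polynomial, while the right-hand side is $1$. Alternatively, one can apply \texttt{qMultiSum} directly to the original triple sum, without Step 1.
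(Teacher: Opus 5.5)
Your Steps 1 and 2 are correct: the generating function $(q^{-m}t;q)_m/(t;q)_\infty$ and the resulting single-sum form of the right side check out. But your hand route after Step 2 is only sketched, so as written the proof rests on your fallback, and that fallback is essentially the paper's argument. The paper applies \texttt{qMultiSum} to the triple sum (it never collapses it) to get a second-order recurrence in $n$, certifies the same recurrence for the left side with \texttt{qZeil}, and then checks initial values. If you go that way, the initial values need more care than you give them. A second-order recurrence needs both $n=0$ and $n=1$, and each is an identity in $m$ that still requires proof. The value $L_{m,0}=1$ cannot just be ``evaluated directly''; the paper certifies it with \texttt{qZeil}, and for $n=1$ it uses a further recurrence in $m$.

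You do not need the fallback, though. Your reduction already gives a short hand proof once you notice that both sides are polynomials of degree at most $m$ in $x=q^n$. So the $n$-dependence is polynomial after all, just in $q^n$. Substituting $i=m-k$ on the left, which is your proposed reversal, gives
\[\mathrm{LHS}=q^{-m^2}\sum_{k=0}^{m}(-1)^{m-k}q^{\binom{m-k}{2}}\frac{(q;q)_m^2}{(q;q)_k^2(q;q)_{m-k}}\,x^k.\]
On the right, $\tfrac{(q;q)_n}{(q;q)_{n-i}}=\prod_{l=0}^{i-1}(1-xq^{-l})$. This vanishes for $i>n$, so you may sum over all $0\le i\le m$. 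Expand the product by the $q$-binomial theorem in base $q^{-1}$:
\[\prod_{l=0}^{i-1}(1-xq^{-l})=\sum_{k=0}^{i}\binom{i}{k}_q(-1)^kq^{-\binom{k}{2}-k(i-k)}x^k.\]
Now use $\binom{m}{i}_q\binom{i}{k}_q=\binom{m}{k}_q\binom{m-k}{i-k}_q$ and write $i=k+j$. Applying the $q$-binomial theorem again, the coefficient of $x^k$ on the right is
\[\binom{m}{k}_q q^{-mk}\sum_{j=0}^{m-k}\binom{m-k}{j}_q(-1)^jq^{\binom{j}{2}}q^{-mj}=\binom{m}{k}_q q^{-mk}(q^{-m};q)_{m-k}=(-1)^{m-k}q^{\binom{m-k}{2}-m^2}\frac{(q;q)_m^2}{(q;q)_k^2(q;q)_{m-k}}.\]
This is exactly the left-hand coefficient, which finishes the proof.

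The factor to expand is the $n$-dependent one, $\tfrac{(q;q)_n}{(q;q)_{n-i}}$, not $\binom{m}{i}_q$ via $q$-Chu--Vandermonde as you suggested. Compared with the paper's computer-certified recurrences, this route can be checked entirely by hand and needs no separate initial-value verification. It also proves the statement as a polynomial identity in $x$, not only at $x=q^n$.
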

\begin{proof}
    Let the left-hand side be $L_{m,n}(q)$ and the right-hand side be $R_{m,n}(q)$ so that we want to show that $L_{m,n}(q)=R_{m,n}(q)$. We will show that $L_{m,n}(q)$ and $R_{m,n}(q)$ satisfy the same recurrence in $n$ and then check that $L_{m,n}(q)=R_{m,n}(q)$ for some small $n$. With this outline in mind, we have the following steps.
    \begin{enumerate}
        \item After some rearranging, \texttt{qMultiSum} shows that $n\ge0$ makes
        \[R_{m,n+2}\left(q\right)+\left(q^{1+n-m}-2\right)R_{m,n+1}\left(q\right)-\left(q^{1+n}-1\right)R_{m,n}\left(q\right)\]
        vanish. We would like to show that $L_{m,n}(q)$ satisfies the same recurrence in $n$. Define $\widetilde L_{m,n}(q)$ to be
        \[L_{m,n+2}\left(q\right)+\left(q^{1+n-m}-2\right)L_{m,n+1}\left(q\right)-\left(q^{1+n}-1\right)L_{m,n}\left(q\right),\]
        which we would like to vanish. Then \texttt{qZeil} is able to show that $q^{m^2-mn}\widetilde L_{m,n}(q)$ vanishes after some rearranging.
        \item It remains to check that $L_{m,n}(q)=R_{m,n}(q)$ for $n\in\{0,1\}$. For $n=0$, \texttt{qZeil} shows $L_{m,0}(q)=1$, which agrees with $R_{m,0}(q)$. For $n=1$, \texttt{qZeil} shows
        \[L_{m,1}(q)=\frac{2q^m-1}{2q^m-q}L_{m-1,1}(q)\]
        for $m\ge1$ and checks that $R_{m,1}(q)$ satisfies the same recurrence in $m$. So we complete the proof upon computing $L_{0,1}(q)=R_{0,1}(q)=1$.
        \qedhere
    \end{enumerate}
\end{proof}
The following identity is used for the symplectic and orthogonal groups.
\begin{prop} \label{prop:sp-q-identity}
    For any nonnegative integers $m,n\in\ZZ$, we have
    \begin{align*}
        & q^{\frac{-m^2+m}2+mn}\sum_{i=0}^{\floor{m/2}}(-1)^iq^{i(i-1)-2in}\frac{(q;q)_m}{\left(q^2;q^2\right)_i(q;q)_{m-2i}} \\
        ={}& \sum_{j=0}^n(-1)^jq^{j(j-m)}\frac{\left(q^2;q^2\right)_{n}}{\left(q^2;q^2\right)_j(q;q)_{n-j}}.
    \end{align*}
\end{prop}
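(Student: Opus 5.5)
We follow the same strategy as in the proof of \Cref{prop:gl-q-identity}: treat both sides as holonomic sequences in $n$ (with $m$ a parameter), find a common recurrence, and then match initial conditions. Write $L_{m,n}(q)$ for the left-hand side and $R_{m,n}(q)$ for the right-hand side. Both are finite sums of proper $q$-hypergeometric terms. The left side is a single sum over $i$ whose summand depends on $n$ only through the factor $q^{-2in}$ and the prefactor $q^{mn}$. The right side is a single sum over $j$ with natural boundaries $0\le j\le n$. So \texttt{qZeil} applies directly to each side, and no multisum machinery should be needed.

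The first step is to run \texttt{qZeil} on $R_{m,n}(q)$ in the variable $n$ and obtain a recurrence. Because the summand involves $(q^2;q^2)_n/(q;q)_{n-j}$, I expect this to be first or second order, with coefficients polynomial in $q^n$ and $q^m$. For instance, the $j=0$ term already satisfies $R\mapsto(1+q^{n+1})R$ under $n\mapsto n+1$, which suggests a second-order recurrence of the shape
\[R_{m,n+2}+a(q^n,q^m)R_{m,n+1}+b(q^n,q^m)R_{m,n}=0.\]
Next, I would define $\widetilde L_{m,n}(q)$ as the same combination applied to $L$. After clearing the prefactor $q^{(-m^2+m)/2+mn}$, \texttt{qZeil} should certify $\widetilde L_{m,n}=0$ via a telescoping certificate in $i$. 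The certificate must be checked against the boundary at $i=\floor{m/2}$. There the factor $1/(q;q)_{m-2i}$ makes the terms vanish naturally, so the boundary should cause no trouble.

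Finally, the initial conditions $n=0$ and $n=1$ must be checked for all $m$. For $n=0$ the right side is $1$, and the left side becomes
\[q^{(m-m^2)/2}\sum_i(-1)^iq^{i(i-1)}\frac{(q;q)_m}{(q^2;q^2)_i(q;q)_{m-2i}}.\]
This should equal $1$. I would verify it either by a \texttt{qZeil} recurrence in $m$ together with the base cases $m=0,1$, or by recognizing it as a $q$-binomial-type evaluation. For $n=1$ the right side is $(1+q)-q^{1-m}(1+q)/(1-q)\cdot(1-q)$, i.e.\ an explicit simple expression in $q^m$. In the same way, I would obtain a first-order recurrence in $m$ for $L_{m,1}$ from \texttt{qZeil}, confirm that the explicit $R_{m,1}$ satisfies it, and check $m=0$.

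The main obstacle I anticipate is the initial condition step rather than the recurrence in $n$. The left side for fixed small $n$ is a nontrivial sum in $m$ over $i\le\floor{m/2}$, and its parity-dependent upper limit can produce recurrences in $m$ of step $2$, or inhomogeneous ones. If \texttt{qZeil} returns a recurrence in $m$ of order greater than one, I would fall back on checking enough base values of $m$ by hand to cover its order, or treat even and odd $m$ separately.
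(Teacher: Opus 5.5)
Your proposal is correct and follows the paper's proof essentially step for step: a second-order \texttt{qZeil} recurrence in $n$ for $R_{m,n}$, a check that $L_{m,n}$ satisfies the same recurrence, and initial conditions $n\in\{0,1\}$ handled via $L_{m,0}=1$ and a first-order recurrence in $m$ for $L_{m,1}$ with base case $m=0$. One small arithmetic slip: the $j=1$ term of $R_{m,1}$ is just $-q^{1-m}$, so $R_{m,1}=1+q-q^{1-m}$ (giving $R_{0,1}=1=L_{0,1}$), not $(1+q)(1-q^{1-m})$.
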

\begin{proof}
    Let the left-hand side be $L_{m,n}(q)$, and let the right-hand side be $R_{m,n}(q)$. The proof is essentially the same as in \Cref{prop:gl-q-identity}: we will show that $L_{m,n}(q)$ and $R_{m,n}(q)$ satisfy the same recurrence in $n$ and then check that $L_{m,n}(q)=R_{m,n}(q)$ for some small $n$.
    \begin{enumerate}
        \item The package \texttt{qZeil} shows that $n\ge2$ has
        \[R_{m,n}\left(q\right)+\frac{\left(q^{2n}-q^{m+1}-q^{m+2}\right)}{q^{m+1}}R_{m,n-1}\left(q\right)+q\left(1-q^{2n-2}\right)R_{m,n-2}\left(q\right)\]
        vanishes. As before, we define $\widetilde L_{m,n}(q)$ as the same expression above but replacing $R$s with $L$s, and we would like to show that $\widetilde L_{m,n}(q)$ vanishes. Well, \texttt{qZeil} is able to show this after a little rearrangement.
        \item It remains to check that $L_{m,n}(q)=R_{m,n}(q)$ for $n\in\{0,1\}$. For $n=0$, \texttt{qZeil} shows that $L_{m,0}(q)=1$, which agrees with $R_{m,0}(q)$. For $n=1$, \texttt{qZeil} shows that
        \[L_{m,1}(q)=\frac{q-q^m-q^{m+1}}{q^2-q^m-q^{m+1}}L_{m-1,1}(q)\]
        and checks that $R_{m,1}(q)$ satisfies the same recurrence. Thus, it is enough to check that $L_{0,1}(q)=R_{0,1}(q)=1$.
        \qedhere
    \end{enumerate}
\end{proof}

\subsection{Eigenvalues for Linear Groups}
We continue with the notation of \Cref{sec:rep-theory} with $G\in\{\GL_{2n},\SL_{2n}\}$. In this subsection, we will compute the eigenvalues of the intertwining operator when $\beta=1$; if $\beta^2=1$ while $\beta\ne1$, then the eigenvalues are straightforward to compute from \Cref{prop:quadratic-matrix}.

For expositional reasons, we begin by computing the eigenvalues of a certain helper matrix.
\begin{proposition} \label{prop:gl-helper}
    Fix a positive integer $n$. For indices $i,j\in\{0,1,\ldots,n\}$ such that $i+j-n\ge0$, define
    \[\varepsilon_A(i,j)\coloneqq(-1)^{i+j-n},\]
    and
    \[Q_A(i,j)\coloneqq q^{\binom{i+j-n+1}2-\left(i+1\right)^{2}},\]
    and
    \[R_A(i,j)\coloneqq\frac{(q;q)_i^{2}}{(q;q)_{n-j}^{2}(q;q)_{i+j-n}},\]
    and define $R_A(i,j)=0$ for other $i$ and $j$. Then the matrix $A\coloneqq[\varepsilon_A(i,j)Q_A(i,j)R_A(i,j)]_{0\le i,j\le n}$ is diagonalizable with eigenvalues
    \[\left\{(-1)^{n-i}q^{\binom{i+1}2-\binom{n+2}2}:0\le i\le n\right\}.\]
\end{proposition}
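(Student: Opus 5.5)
The plan is to make the antitriangular matrix $A$ upper-triangular by a change of basis to ``geometric'' vectors, with \Cref{prop:gl-q-identity} supplying the needed identity. For $k\in\{0,\ldots,n\}$, consider the vector $g_k\in\CC^{n+1}$ whose $j$th coordinate is $q^{-kj}$, possibly after a harmless reindexing such as $j\mapsto n-j$ and a diagonal rescaling, to be fixed by matching exponents. Since the $q^{-k}$ are distinct, the $g_k$ form a basis by a Vandermonde argument. The goal is to show that $Ag_k$ lies in $\op{span}\{g_0,\ldots,g_k\}$ and to identify the coefficient of $g_k$. Once this holds, $A$ is triangular in the basis $\{g_k\}$, so its eigenvalues are the diagonal coefficients. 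If these are pairwise distinct (distinct powers of $q$ up to sign), then $A$ is diagonalizable.

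To compute $Ag_k$, fix a row $i$. Write $d=i+j-n$ for the summation index, so that $n-j=i-d$ and $d$ runs from $0$ to $i$. The $i$th coordinate of $Ag_k$ is
\[\sum_{d=0}^{i}(-1)^dq^{\binom{d+1}2-(i+1)^2}\frac{(q;q)_i^2}{(q;q)_{i-d}^2(q;q)_d}\,q^{-k(d+n-i)}.\]
This has exactly the shape of the left-hand side $L_{m,n}$ of \Cref{prop:gl-q-identity}, with the roles $m=i$ (the row index) and ``$n$'' $=k$ (the geometric parameter). The powers $q^{\binom{d+1}{2}}$ and $q^{\binom d2}$ differ only by $q^{d}$, which gets absorbed into a shift of $k$. \Cref{prop:gl-q-identity} rewrites this as the right-hand side $R_{i,k}$. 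That is a finite sum over $a+b+c=k$ of coefficients independent of $i$, times $q^{-ia}$ with $0\le a\le k$. As a function of the row index $i$, the right-hand side is therefore a linear combination of the geometric vectors $(q^{-ai})_i$ for $a\le k$. After restoring the prefactors $q^{-(i+1)^2}$, $q^{-i^2+ik}$, and so on, this gives $Ag_k\in\op{span}\{g_0,\ldots,g_k\}$. The diagonal coefficient is then read off from the $a=k$ terms. Those are the terms with $b=c=0$, contributing $(-1)^kq^{\binom k2}$ times the accumulated prefactor, which should equal $(-1)^{n-i}q^{\binom{i+1}2-\binom{n+2}2}$ after relabeling $k\leftrightarrow n-i$.

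The main obstacle is purely bookkeeping. The prefactors of $A$ as stated, namely $q^{\binom{i+j-n+1}2-(i+1)^2}$ and the column dependence through $(q;q)_{n-j}$, must match the normalization of \Cref{prop:gl-q-identity} exactly. This requires choosing the right geometric basis: possibly $g_k$ indexed by $n-j$, scaled by a diagonal factor like $q^{j^2}$, or with the shift $k\mapsto k\pm1$. I would first test small $n$ ($n=1,2$) by hand to pin down the correct vectors $g_k$ and check that the triangularity runs in the correct direction. After that, the general case is a direct substitution into \Cref{prop:gl-q-identity}, followed by reading off the leading coefficient. A final check confirms that the $n+1$ diagonal entries are distinct, which yields diagonalizability.
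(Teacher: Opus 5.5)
Your proposal is correct and is essentially the paper's proof: the paper conjugates $A$ by $M_{ij}=q^{-(i+1)(j+1)}$, whose columns are exactly your geometric vectors, and reduces $AM=MB$ with $B$ upper triangular to \Cref{prop:gl-q-identity}. Doing your bookkeeping, the right choice is $g_k=(q^{-(k+1)j})_j$ for $0\le k\le n$, with no reindexing or extra rescaling; the shift is forced because the all-ones vector would need the identity at ``$n=-1$''. Then $Ag_k\in\op{span}\{g_0,\ldots,g_k\}$ with diagonal coefficient $(-1)^kq^{\binom{k+1}2-(n+1)(k+1)}$, which equals the claimed eigenvalue after setting $k=n-i$.
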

\begin{proof}
    This is essentially equivalent to \Cref{prop:gl-q-identity}. For indices $i,j\in\{0,1,\ldots,n\}$ with $j\ge i$, define
    \[\varepsilon_B(i,j)\coloneqq(-1)^i,\]
    and
    \[Q_B(i,j)\coloneqq q^{-(n+1)(j+1)+\binom{i+1}2},\]
    and
    \[R_B(i,j)\coloneqq\sum_{k=0}^{j-i}\frac{(q;q)_j}{(q;q)_i(q;q)_k(q;q)_{j-i-k}},\]
    and define $R_B(i,j)=0$ for other $i$ and $j$. Then we claim that the matrix $B\coloneqq[\varepsilon_B(i,j)Q_B(i,j)R_B(i,j)]_{0\le i,j\le n}$ is similar to $A$, which completes the proof upon reading off the diagonal entries of $B$.

    It remains to show $A\sim B$. We will show $M^{-1}AM=B$, where $M$ has entries
    \[M_{ij}\coloneqq q^{-(i+1)(j+1)}\]
    for $i,j\in\{0,1,\ldots,n\}$. Because $M$ is invertible, it suffices to show $AM=MB$. Thus, for indices $i$ and $k$, we want $(AM)_{ik}=(MB)_{ik}$. Because $A_{ij}$ will vanish unless $i+j\ge n$, and $B_{jk}$ will vanish unless $j\le k$, we see we are asking for
    \[\sum_{j=0}^iA_{i,n-i+j}M_{n-i+j,k}\stackrel?=\sum_{j=0}^kM_{ij}B_{jk}.\]
    Upon plugging in our definitions and simplifying, this reduces to \Cref{prop:gl-q-identity}.
\end{proof}
\begin{theorem} \label{thm:eigens-gl}
    Take $G\in\{\GL_{2n},\SL_{2n}\}$. Fix a character $\chi\colon P\to\CC^\times$, which we write as $\chi=(\alpha\circ m)(\beta\circ\chi_{\det})$. Suppose $\beta=1$ so that $\chi=\chi^J$. Then the operator $I$ on $\Ind_P^G\chi$ is diagonalizable and has eigenvalues given by
    \[\left\{(-1)^{n-i}q^{\binom n2+\binom{i+1}2}:0\le i\le n\right\}.\]
\end{theorem}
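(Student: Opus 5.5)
Diagonalizability is automatic: $I$ is a $G$-endomorphism of $\Ind_P^G\chi$ (here $\chi=\chi^J$), and $\Ind_P^G\chi$ is multiplicity-free by \Cref{prop:ind-mult-free}. So $I$ acts by a scalar on each irreducible constituent. By \Cref{lem:basis-of-ind}, each constituent contributes exactly one dimension to $\left(\Ind_P^G\chi\right)^\chi$. Hence the multiset of scalars is exactly the set of eigenvalues of $I$ restricted to $\left(\Ind_P^G\chi\right)^\chi$, and this restriction is diagonalizable. The plan is therefore to compute the eigenvalues of the $(n+1)\times(n+1)$ matrix of $I$ on the basis $\{f_0,\dots,f_n\}$ given by \Cref{prop:trivial-matrix-coeffs}.

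For $G\in\{\GL_{2n},\SL_{2n}\}$, \Cref{prop:trivial-matrix-coeffs} gives the entries
\[(-1)^{i+j-n}q^{n^2-i^2+\binom{i+j-n}2}\frac{(q;q)_i^2}{(q;q)_{n-j}^2(q;q)_{i+j-n}}.\]
I would compare this with the helper matrix $A$ of \Cref{prop:gl-helper}. The sign and $R$ factors agree on the nose, so the ratio of entries is
\[q^{n^2-i^2+\binom{i+j-n}2-\binom{i+j-n+1}2+(i+1)^2}=q^{n^2+2i+1-(i+j-n)}=q^{n^2+n+1}\cdot q^{i}\cdot q^{-j}.\]
Thus the matrix of $I$ equals $q^{n^2+n+1}DAD^{-1}$ with $D=\op{diag}(q^i)$. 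It is therefore similar to $q^{n^2+n+1}A$.

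By \Cref{prop:gl-helper}, the eigenvalues are
\[(-1)^{n-i}q^{n^2+n+1+\binom{i+1}2-\binom{n+2}2}.\]
Since $\binom{n+2}2=\binom n2+2n+1$, the exponent simplifies to $n^2-n-\binom n2+\binom{i+1}2=\binom n2+\binom{i+1}2$, as claimed. These $n+1$ values are distinct, since their absolute values are distinct, which gives diagonalizability again directly.

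The only real content is already in \Cref{prop:gl-helper} and the $q$-identity \Cref{prop:gl-q-identity}. The step I expect to need care is the bookkeeping in the exponent of the conjugation, and in particular checking that the residual factor is exactly of the form $c\,q^{i}q^{-j}$, so that it is a diagonal similarity times a scalar rather than something more complicated. A secondary point is confirming that the zero pattern ($i+j<n$) matches on both sides, which it does, since both vanish there.
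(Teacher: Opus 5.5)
Your proof is correct, and it takes a shorter route than the paper's.

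\textbf{Your route.} The key step checks out. The entries of $I$ from \Cref{prop:trivial-matrix-coeffs} and those of the helper matrix $A$ of \Cref{prop:gl-helper} (same $n$) differ exactly by the factor $q^{n^2+n+1}q^{i-j}$. Hence $I=q^{n^2+n+1}DAD^{-1}$ with $D=\op{diag}(q^i)$, the zero patterns agree, and the exponent simplification to $\binom n2+\binom{i+1}2$ is right.

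\textbf{The paper's route.} The paper does not compare $I$ with the helper of the same size. It conjugates $I$ by the matrix $T$ with $T_{ij}=1$ for $i+j=n$ and $T_{ij}=-1$ for $i+j=n-1$. A six-case entrywise verification then gives
$TIT^{-1}=q^{n^2}\begin{bsmallmatrix}-\sigma A_{n-1}\sigma & \\ (1,\ldots,1) & 1\end{bsmallmatrix}$,
where $A_{n-1}$ denotes the helper matrix with $n$ replaced by $n-1$. The eigenvalue $q^{n^2}$ is split off, and the remaining ones are read from $A_{n-1}$.

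\textbf{Comparison.} Your diagonal rescaling makes the reduction a one-line identity. It removes the casework entirely and only calls \Cref{prop:gl-helper} for positive $n$. What the paper's route buys is uniformity with the symplectic case. There the $\beta=1$ matrix has nonzero entries when $i+j-n$ is odd, while the helper of \Cref{subsec:helper} vanishes at those positions. Since a diagonal similarity preserves zero patterns, no rescaling trick is available there, and the $T$-conjugation (with parameters $(n-1,1,2)$) is genuinely needed.

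\textbf{Diagonalizability.} Your first paragraph derives diagonalizability on all of $\Ind_P^G\chi$ from multiplicity-freeness and \Cref{lem:basis-of-ind}. This is more careful than the paper's closing remark that distinct eigenvalues give diagonalizability, which by itself only addresses the $(n+1)$-dimensional space $\left(\Ind_P^G\chi\right)^\chi$.
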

\begin{proof}
    Identify $I$ with its matrix representation given in \Cref{prop:trivial-matrix-coeffs}. We apply \Cref{prop:gl-helper} after conjugating $I$ by the matrix $T$ with entries
    \[T_{ij}=\begin{cases}
        -1 & \text{if }i+j=n-1, \\
        1 & \text{if }i+j=n, \\
        0 & \text{otherwise},
    \end{cases}\]
    where $i,j\in\{0,\ldots,n\}$. Now, define $A$ as in \Cref{prop:gl-helper} with $n$ replaced with $n-1$. Then we claim that
    \begin{equation}
        TIT^{-1} \stackrel?= q^{n^2}\begin{bmatrix}
            -\sigma A\sigma \\
            (1,\ldots,1) & 1
        \end{bmatrix}, \label{eq:conj-intertwining-gl}
    \end{equation}
    where $\sigma$ is the permutation sending $e_i\mapsto e_{n-1-i}$ for all $i\in\{0,\ldots,n-1\}$, and $(1,\ldots,1)$ is a row vector consisting of all $1$s. Before proving the claim, we explain how it implies the theorem. Taking the eigenvalues of \Cref{prop:gl-helper} (and some simplification) provides the needed eigenvalues; diagonalizability follows because all eigenvalues are distinct.

    It remains to show \eqref{eq:conj-intertwining-gl}. It's enough to show $TI=q^{n^2}\begin{bsmallmatrix}
        -\sigma A\sigma \\ (1,\ldots,1) & 1
    \end{bsmallmatrix}T$ because $T$ is invertible. Choosing some indices $i$ and $k$,
    we see that we want to show that the $(i,k)$ entries are equal, which amounts to checking
    \begin{align*}
        & I_{n-i,k}-1_{i<n}I_{n-i-1,k} \\
        ={}& q^{n^2}\left(\begin{bmatrix}
            -\sigma A\sigma \\
            (1,\ldots,1) & 1
        \end{bmatrix}_{i,n-k}-1_{k<n}\begin{bmatrix}
            -\sigma A\sigma \\
            (1,\ldots,1) & 1
        \end{bmatrix}_{i,n-k-1}\right)
    \end{align*}
    after expanding out the definition of $T$.
    We verify this by rather tedious casework on $i$ and $k$. Denote the left-hand side by $L$ and the right-hand side by $R$.
    \begin{itemize}
        \item Suppose $i=k=n$. Then the definition of $I$ yields $L=q^{n^2}$, and we find $R=q^{n^2}$ as well.
        \item Suppose $i=n$ but $k<n$. Then we see $L=0$, and one can check that $R=q^{n^2}(1-1)=0$.
        \item Suppose $i<n$ but $k=n$. Then $L$ and $R$ equal
        \[(-1)^{n-i}q^{n^2-\binom{n-i+1}2}(q;q)_{n-i-1}.\]
        \item Suppose $k<i<n$. Then $(n-i)+k<n$ and $(n-1-i)+(n-1-(n-k-1))<n-1$, so all coefficients vanish.
        \item Suppose $i=k<n$. Then $L=q^{n^2-(n-i)^2}=R$.
        \item Suppose $i<k<n$. Then $L$ and $R$ equal
        \[(-1)^{k-i}q^{n^2-(n-i)^2+\binom{k-i}2}\frac{(q;q)_{n-i-1}^2}{(q;q)_{n-k}^2(q;q)_{k-i}}\left(1-2q^{n-i}+q^{2n-i-k}\right).\]
    \end{itemize}
    The above casework completes the proof.
\end{proof}

\subsection{A Helper Matrix} \label{subsec:helper}
For this subsection, $q$ will return to being a free variable. Akin to \Cref{prop:gl-helper}, we describe a general helper matrix which shows up in the upper-triangularization for the groups $G\in\{\GO_{2n},\O_{2n},\GSp_{2n},\Sp_{2n}\}$, so we handle it here. For some fixed nonnegative integer $n$ and sign $\varepsilon\in\{\pm1\}$ and $a\in\CC$, we select indices $0\le i,j\le n$ such that $i+j-n$ is an even nonnegative integer and define
\[\varepsilon_A(i,j)\coloneqq\varepsilon^{\frac{i-j-n}2}(-1)^{\frac{i+j-n}2}\]
and
\[Q_A(i,j)\coloneqq q^{-\binom{i+a}2+\frac{i+j-n}{2}\left(\frac{i+j-n}{2}+a-1\right)}\]
and
\[R_A(i,j)\coloneqq\frac{(q;q)_i}{(q^2;q^2)_{(i+j-n)/2}(q;q)_{n-j}},\]
and define $R_A(i,j)=0$ for other indices $i$ and $j$. Then  set $A\coloneqq[\varepsilon_A(i,j)Q_A(i,j)R_A(i,j)]_{0\le i,j\le n}$. This $(n+1)\times(n+1)$ matrix will be used in approximately the same way we used \Cref{prop:gl-helper}. In particular, we want to understand its eigenvalues.

We now compute the eigenvalues of $A$ when $n$ is even.
\begin{proposition} \label{prop:helper-matrix-even}
    Define $n$, $\varepsilon$, $a$, and $A$ as above. If $n=2m$, then the antitriangular matrix $[A(i,j)]_{0\le i,j\le n}$ is diagonalizable with eigenvalue multiset
    \[\left\{\varepsilon^{m}(-1)^{\floor{\frac i2}}q^{-\binom{a+m}2-\binom{m+1}2+\left(m-\floor{\frac i2}\right)^2}:0\le i\le n\right\}.\]
\end{proposition}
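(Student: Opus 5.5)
The plan mirrors \Cref{prop:gl-helper}: exhibit an explicit invertible matrix $M$ and an upper-triangular matrix $B$ with $AM=MB$. The eigenvalues are then read off the diagonal of $B$, and the $q$-hypergeometric identity needed to verify $AM=MB$ entrywise is supplied by \Cref{prop:sp-q-identity}.

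\textbf{Step 1: choose the conjugating matrix.} Since $A_{ij}\ne0$ only when $i+j-n$ is even and nonnegative, $A$ respects the parity of indices. In fact $A$ sends even-indexed basis vectors to even ones when $n=2m$ is even, and likewise for odd ones. So I would first split $\{0,\dots,n\}$ into even and odd indices, writing $i=2i'$ or $i=2i'+1$. This decomposes $A$ as a direct sum of two antitriangular blocks $A^{\mathrm{ev}}$ (size $m+1$) and $A^{\mathrm{odd}}$ (size $m$). The claimed multiset, indexed by $\floor{i/2}$, consists of each value
\[\varepsilon^m(-1)^{k}q^{-\binom{a+m}2-\binom{m+1}2+(m-k)^2}\]
appearing twice for $k<m$ and once for $k=m$. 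This matches $A^{\mathrm{ev}}$ contributing $k=0,\dots,m$ and $A^{\mathrm{odd}}$ contributing $k=0,\dots,m-1$.

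\textbf{Step 2: triangularize each block.} For each block I would conjugate by a Vandermonde-type matrix
\[M_{ij}=q^{-c(i+\kappa)(j+\kappa')},\]
with $q$-power depending on the product of the halved indices, exactly as $M_{ij}=q^{-(i+1)(j+1)}$ was used in \Cref{prop:gl-helper}. This $M$ is invertible because its nodes are distinct powers of $q$. I would guess $B$ upper triangular with entries of the shape
\[(-1)^{i}q^{\text{quadratic}}\frac{(q^2;q^2)_j}{(q^2;q^2)_i(q;q)_{j-i}}\cdot(\text{a finite sum}).\]
This shape is modeled on the right-hand side of \Cref{prop:sp-q-identity}, and its diagonal entries are the claimed eigenvalues. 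The entrywise identity
\[\sum_j A_{ij}M_{jk}=\sum_j M_{ij}B_{jk}\]
then has a left side that is a single sum over $\floor{m'/2}$ terms with $(q^2;q^2)$ denominators, matching the left of \Cref{prop:sp-q-identity}. Its right side is a sum over $j\le k$, matching the right of that identity. The parameter $a$ and the sign $\varepsilon$ only contribute overall factors, so they should be absorbable into a diagonal rescaling of $M$.

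\textbf{Step 3: conclude.} Reading off the diagonals of the two triangular blocks gives the stated multiset. For diagonalizability, the eigenvalues within each block are distinct: the exponent $(m-k)^2$ is strictly monotone in $k$ on $0\le k\le m$ when $q$ is a free variable. A block with distinct eigenvalues is diagonalizable, and a direct sum of diagonalizable blocks is diagonalizable, even though the same value repeats across the two blocks.

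\textbf{Main obstacle.} The main obstacle is Step 2: guessing the precise exponents in $M$ and $B$ (including parity offsets for the odd block) so that the entrywise identity reduces exactly to \Cref{prop:sp-q-identity} after substituting $m\mapsto$ the relevant row index and $n\mapsto$ the column index. I would first compute small cases ($m=1,2$) symbolically to pin down $B$'s entries, then match against that identity.
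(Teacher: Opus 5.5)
Your plan is the paper's proof. The paper splits by parity, conjugates by a parity-preserving Vandermonde matrix, and reaches an upper-triangular $B$ whose entry for $j-i\ge0$ even is $\varepsilon^m(-1)^{\lfloor i/2\rfloor}q^{-\binom{a+m}2-\binom{m+1}2+(m-\lfloor i/2\rfloor)^2-\frac{j-i}2(2m-i+2\lfloor i/2\rfloor)}\frac{(q^2;q^2)_{\lfloor j/2\rfloor}}{(q^2;q^2)_{\lfloor i/2\rfloor}(q;q)_{(j-i)/2}}$ (no extra inner sum is needed), then reduces $AM=MB$ to \Cref{prop:sp-q-identity}, with that identity's $m,n$ taken to be the row index $i$ and $\lfloor k/2\rfloor$, and gets diagonalizability from distinct eigenvalues on each parity block exactly as you do.

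Your negative-exponent template is the right one for the conjugator: $M_{ij}=\varepsilon^{\lfloor i/2\rfloor}q^{-\lfloor i/2\rfloor(2\lfloor j/2\rfloor+a)}$ for $i\equiv j\pmod 2$ works. The paper prints this exponent with a $+$ sign, and a check at $m=1$, $a=0$ shows that version fails $AM=MB$ in the $(0,2)$ entry ($\varepsilon q^{2}$ versus $\varepsilon q^{-2}$).
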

\begin{proof}
    We follow \Cref{prop:gl-helper}. For indices $0\le i,j\le n$ such that $j-i$ is a nonnegative even integer, define
    \[\varepsilon_B(i,j)\coloneqq\varepsilon^{m}\left(-1\right)^{\floor{\frac{i}{2}}}\]
    and
    \[Q_B(i,j)\coloneqq q^{-\binom{a+m}2-\binom{m+1}2+\left(m-\floor{\frac{i}{2}}\right)^{2}-\frac{j-i}{2}\left(2m-i+2\floor{\frac{i}{2}}\right)}\]
    and
    \[R_B(i,j)\coloneqq\frac{(q^2;q^2)_{\floor{j/2}}}{(q^2;q^2)_{\floor{i/2}}(q;q)_{(j-i)/2}},\]
    and define $R_B(i,j)=0$ for other indices $i$ and $j$. Then we claim that $A$ is similar to the matrix $B\coloneqq[\varepsilon_B(i,j)Q_B(i,j)R_B(i,j)]_{0\le i,j\le n}$, which will complete the proof upon reading off the diagonal entries of $B$. Notably, even though some eigenvalues are equal, $B$ splits into a direct sum of operators on the even basis vectors and on the odd basis vectors, and the operators on these subspaces have distinct eigenvalues.

    It remains to show the claim. We will show $M^{-1}AM=B$, where $M$ is defined by
    \[M_{ij}\coloneqq\varepsilon^{\floor{i/2}}q^{\floor{i/2}(2\floor{j/2}+a)}\]
    for indices $i$ and $j$ such that $i\equiv j\pmod2$ and zero elsewhere. Because $M$ is invertible, it is enough to show $AM=MB$. Thus, for indices $i$ and $k$, we want to show that $(AM)_{ik}=(MB)_{ik}$. 
    If $i$ and $k$ fail to have the same parity, then we note $(AM)_{ik}=(MB)_{ik}=0$ because $A$, $M$, and $B$ all send even (and odd) basis vectors to linear combinations of even (and odd) basis vectors. Thus, we may assume that $i\equiv k\pmod2$. Now, we are left to verify the identity
    \[\sum_{j=0}^nA_{ij}M_{jk}\stackrel?=\sum_{j=0}^nM_{ij}B_{jk}.\]
    Note $A_{ij}$ will vanish unless $i+j-n$ is a nonnegative even integer, and $B_{jk}$ will vanish unless $k-j$ is a nonnegative even integer. Thus, by reindexing, it suffices to check
    \[\sum_{j=0}^{\floor{i/2}}A_{i,n-i+2j}M_{n-i+2j,k}\stackrel?=\sum_{j=0}^{\floor{k/2}}M_{i,k-2\floor{k/2}+2j}B_{k-2\floor{k/2}+2j,k}.\]
    Upon plugging in our definitions and simplifying, this reduces to \Cref{prop:sp-q-identity}.
\end{proof}
One can upgrade the eigenvalue computations in the even case to the odd case as follows.
\begin{proposition} \label{prop:helper-matrix-odd}
    Define $n$, $\varepsilon$, $a$, and $A$ as above. If $n$ is odd with $n=2m+1$, then the antitriangular matrix $[A(i,j)]_{0\le i,j\le n}$ is diagonalizable with eigenvalues
    \[\left\{\pm\sqrt\varepsilon q^{-\frac a2-\binom{a+m}2-\binom{m+1}2+(m-i)^2-i}:0\le i\le m\right\}.\]
\end{proposition}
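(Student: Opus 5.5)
The plan is to reduce to \Cref{prop:helper-matrix-even} by studying $A^2$ rather than $A$. For $n=2m+1$, the condition that $i+j-n$ is even forces $i$ and $j$ to have opposite parity. Hence $A$ swaps the span $V_0$ of the even basis vectors $e_0,e_2,\ldots,e_{2m}$ with the span $V_1$ of the odd basis vectors, and each of these spaces has dimension $m+1$. In this decomposition $A=\begin{bsmallmatrix} & X\\ Y & \end{bsmallmatrix}$, so $A^2=\op{diag}(XY,YX)$. The matrices $XY$ and $YX$ have the same eigenvalues, apart from the usual caveat about zero, which cannot occur here since $A$ is antitriangular with nonzero antidiagonal and hence invertible. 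So it suffices to show that $YX$ (say) is diagonalizable with eigenvalues $\varepsilon q^{-a-2\binom{a+m}2-2\binom{m+1}2+2(m-i)^2-2i}$ for $0\le i\le m$, and that these are distinct.

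Granting that, the eigenvalues of $A$ follow. If $YX$ is diagonalizable with distinct nonzero eigenvalues $\lambda_0,\ldots,\lambda_m$, take an eigenvector $v$ of $YX$ for $\lambda_i$. Then $\lambda_i$ is also an eigenvalue of $XY$, with eigenvector $Xv$. The plane spanned by $v$ and $Xv$ is $A$-stable, and $A$ acts on it by $\begin{bsmallmatrix} 0&\lambda_i\\1&0\end{bsmallmatrix}$, whose eigenvalues are $\pm\sqrt{\lambda_i}$. This gives $2(m+1)=n+1$ distinct eigenvalues, so $A$ is diagonalizable. The exponents $2(m-i)^2-2i$ are strictly decreasing in $i$ for $0\le i\le m$, which gives the distinctness.

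The main step is identifying the eigenvalues of $YX$. I would follow \Cref{prop:helper-matrix-even}: write down a candidate upper-triangular matrix $B'$ and an explicit change of basis $M'$ of the form $M'_{ij}=\varepsilon^{i}q^{i(2j+c)}$, indexed by $0\le i,j\le m$ and with a shift $c$ depending on $a$. Then check $(YX)M'=M'B'$ entrywise. The entries of $YX$ are single sums of products of two $q$-binomial-type terms, so $(YX)M'$ is a double sum. I would first try to collapse the inner sum, which comes from multiplying $X$ by $M'$, using the computation already done in \Cref{prop:helper-matrix-even}. A cleaner alternative is to notice that the odd-indexed rows and columns of $A$ for $n=2m+1$ look like the matrix of the even case with $n=2m$ and $a$ shifted by one. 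If this holds exactly, $X$ and $Y$ can be compared directly with the blocks of the even-case matrix, and its triangularization can be reused.

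I expect this $q$-identity verification to be the hard part; it likely needs a second \texttt{qZeil}/\texttt{qMultiSum} computation analogous to \Cref{prop:sp-q-identity}. If the direct route stalls, I would first check small $m$ (say $m=0,1$) by hand to pin down the exact normalization, in particular the $-\frac a2$ term, before attempting the general identity.
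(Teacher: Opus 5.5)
Your outer reduction is sound, and it is essentially the paper's endgame. For $n=2m+1$ the matrix $A$ exchanges $V_0$ and $V_1$, and it is invertible. Once you know $YX$ has the distinct nonzero eigenvalues $\lambda_i=\varepsilon q^{-a-2\binom{a+m}2-2\binom{m+1}2+2(m-i)^2-2i}$, your invariant planes $\langle v,Xv\rangle$ play the role of the paper's $2\times2$ diagonal blocks. They give the eigenvalues $\pm\sqrt{\lambda_i}$ and diagonalizability.

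The gap is the one substantive step: you never determine the spectrum of $YX$. Instead you defer it to an unspecified new \texttt{qZeil}/\texttt{qMultiSum} identity. Your ``cleaner alternative'' does not fill this in as stated. For $n=2m+1$ the submatrix of $A$ on odd rows and odd columns is identically zero. More to the point, identifying $X$ and $Y$ separately with blocks of some even-case matrices (say with $a$ replaced by $a+1$) says nothing about the eigenvalues of the product $YX$. For that you need both to be triangularized by the \emph{same} change of basis.

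The missing observation is a column shift with the \emph{same} $a$. Write $A_n$ for the matrix of size $n+1$. The definitions give $A_n(i,j)=\varepsilon A_{n+1}(i,j+1)$, because $i+j-n$ and $n-j$ are unchanged and only the exponent of $\varepsilon$ drops by one. Consequently:
\begin{itemize}
\item $X=\varepsilon A_{2m}^{+}$, where $A_{2m}^{+}$ is the even-indexed principal submatrix of $A_{2m}$;
\item $Y=\varepsilon A_{2m+2}^{-}$, where $A_{2m+2}^{-}$ is the odd-indexed principal submatrix of $A_{2m+2}$.
\end{itemize}

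Now look at the conjugating matrix $M$ in the proof of \Cref{prop:helper-matrix-even}. It preserves parity, and its entries depend only on $\lfloor i/2\rfloor$ and $\lfloor j/2\rfloor$, not on $n$. So its even block for $n=2m$ and its odd block for $n=2m+2$ are one and the same invertible $(m+1)\times(m+1)$ matrix $N$. This is a Vandermonde-type matrix, much like your ansatz for $M'$.

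Hence $N^{-1}XN$ and $N^{-1}YN$ are simultaneously upper triangular. Therefore $N^{-1}(YX)N=(N^{-1}YN)(N^{-1}XN)$ is upper triangular, and its diagonal entries are the products of the $i$th diagonal entries of the two even-case triangular forms. The exponents of $q$ in these two entries differ by $-a-2i$, and their product is exactly $\lambda_i$. No new $q$-identity is required.

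This is precisely the paper's argument. The paper phrases it as $A_{2m+1}\sim\varepsilon\begin{bsmallmatrix} & A_{2m}^{+}\\ A_{2m+2}^{-} & \end{bsmallmatrix}$ becoming block upper triangular with $2\times2$ diagonal blocks after conjugating by $\op{diag}(N,N)$.
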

\begin{proof}
    Fixing $\varepsilon$ and $a$ but letting $n$ vary, denote the defined $(n+1)\times(n+1)$ matrix by $A_n$. We are interested in the eigenvalues of $A_{2m+1}$ for some $m\ge0$. For some $m\ge0$, note that $A_{2m}$ sends even (and odd) basis vectors to a linear combination of even (and odd) basis vectors, so we let $A_{2m}^+$ (and $A_{2m}^-$) denote the submatrices of $A_{2m}$ consisting of the even (and odd) and columns and rows (respectively). Thus, by rearranging the rows and columns of $A_{2m}$, we see that $A_{2m}$ is similar to 
    \[\begin{bmatrix}
        A_{2m}^+ \\ & A_{2m}^-
    \end{bmatrix}.\]
    On the other hand, we see that $A_{2m+1}$ sends even (and odd) basis vectors to a linear combination of odd (and even) basis vectors. Defining $A_{2m+1}^+$ (and $A_{2m+1}^-$) to be the submatrix consisting of the even (odd) columns and odd (even) rows, we thus see that $A_{2m+1}$ is similar to
    \[\begin{bmatrix}
        & A_{2m+1}^- \\ A_{2m+1}^+
    \end{bmatrix}.\]
    Now, for each $n$, we note that $A_n$ is a submatrix of $\varepsilon A_{n+1}$, so keeping track of parities reveals that $A_n^+=\varepsilon A_{n+1}^-$. Thus, $A_{2m+1}$ is similar to the antitriangular matrix
    \[\varepsilon\begin{bmatrix}
        & A_{2m}^+ \\ A_{2m+2}^-
    \end{bmatrix}.\]
    The proof of \Cref{prop:helper-matrix-even} explains that $A_{2m}^+$ is similar to an upper-triangular matrix with diagonal entries
    \[\left\{\varepsilon^{m+1}(-1)^{i}q^{-\binom{a+m}2-\binom{m+1}2+\left(m-i\right)^2}:0\le i\le m\right\},\]
    and $A_{2m+2}^-$ is similar to an upper-triangular matrix with diagonal entries
    \[\left\{\varepsilon^{m}(-1)^{i}q^{-\binom{a+m+1}2-\binom{m+2}2+(m+1-i)^2}:0\le i\le m\right\}\]
    where we conjugate by the same $(m+1)\times(m+1)$ matrix! Thus, viewing $A_{2m+1}$ as an $(m+1)\times(m+1)$ matrix with entries that are $2\times2$ (block) matrices, we see that $A_{2m+1}$ is similar to a (block) upper-triangular matrix with diagonal entries
    \[\left\{\varepsilon^{m+1}(-1)^iq^{-\binom{a+m}2-\binom{m+1}2+(m-i)^2}\begin{bmatrix}
        & 1 \\
        \varepsilon q^{-a-2i}
    \end{bmatrix}:0\le i\le m\right\}.\]
    The result follows from diagonalizing these $2\times2$ matrices and noting that all the eigenvalues are distinct.
\end{proof}

\subsection{Eigenvalues for Orthogonal Groups}
We continue with the notation of \Cref{sec:rep-theory}, taking $G\in\{\GO_{2n},\O_{2n}\}$. We (essentially) begin with the case $\beta=1$.
\begin{theorem} \label{thm:o-trivial-eigens}
    Take $G\in\{\GO_{2n},\O_{2n}\}$. Fix a character $\chi\colon P\to\CC^\times$, which we write as $\chi=(\alpha\circ m)(\beta\circ\chi_{\det})$. Suppose either that $\beta=1$, or $\beta^2=1$ for $G=\O_{2n}$.
    \begin{listalph}
        \item If $n=2m$ is even, then the intertwining operator $I$ on $\Ind_P^G\chi$ is diagonalizable and has eigenvalue multiset given by
        \[\left\{(-1)^{m-\floor{\frac i2}}q^{m(m-1)+\floor{\frac i2}^2}:1\le i\le n+1\right\}.\]
        \item If $n=2m+1$ is odd, then the intertwining operator $I$ on $\Ind_P^G\chi$ is diagonalizable and has eigenvalues given by
        \[\left\{\pm q^{m^2+i(i+1)}:0\le i\le m\right\}.\]
    \end{listalph}
\end{theorem}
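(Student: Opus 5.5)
Identify $I$ with its matrix from \Cref{prop:trivial-matrix-coeffs}. When $\beta^2=1$, $\beta\ne1$ and $G=\O_{2n}$, use \Cref{prop:quadratic-matrix} instead; there $\beta(\varepsilon)=\beta(1)=1$, so the two matrices coincide. For $G=\GO_{2n}$ with $\beta=1$ the matrix is again that of \Cref{prop:trivial-matrix-coeffs}. So in all cases the entries are
\[I_{ij}=(-1)^{(i+j-n)/2}q^{\binom n2-\binom i2+2\binom{(i+j-n)/2}2}\frac{(q;q)_i}{(q;q)_{n-j}(q^2;q^2)_{(i+j-n)/2}},\]
nonzero only when $i+j-n$ is a nonnegative even integer. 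The plan is to match this matrix exactly against a scalar multiple of the helper matrix $A$ of \Cref{subsec:helper}, for suitable $\varepsilon_A\in\{\pm1\}$ and $a\in\CC$, and then read off the eigenvalues from \Cref{prop:helper-matrix-even,prop:helper-matrix-odd}. Unlike the linear case (\Cref{thm:eigens-gl}), no auxiliary conjugation by a matrix $T$ should be needed, because $R_A$ already has exactly the shape of $R(i,j)$.

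To match, write $d=(i+j-n)/2$ and compare exponents. The helper exponent is
\[-\binom{i+a}2+d(d+a-1),\]
while ours is $\binom n2-\binom i2+d(d-1)$. Taking $a=0$ makes these agree up to the constant $q^{\binom n2}$. For the signs, ours is $(-1)^d$, and the helper sign is $\varepsilon_A^{(i-j-n)/2}(-1)^d$, so we take $\varepsilon_A=1$. Hence $I=q^{\binom n2}A$ with $(\varepsilon_A,a)=(1,0)$. The only real check is the rearrangement $-\binom{i}{2}+d(d-1)$ versus $-\binom{i+a}{2}+d(d+a-1)$ at $a=0$, together with the vanishing pattern.

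For even $n=2m$, \Cref{prop:helper-matrix-even} gives eigenvalues
\[(-1)^{\floor{i/2}}q^{-\binom{m+1}2+\binom m2\cdot 0+(m-\floor{i/2})^2}, \qquad 0\le i\le n,\]
with $\binom{a+m}2=\binom m2$. Multiplying by $q^{\binom{2m}2}$ and substituting $i\mapsto$ the complementary index (so that $m-\floor{i/2}$ becomes $\floor{i'/2}$ for $1\le i'\le n+1$) should give $(-1)^{m-\floor{i'/2}}q^{m(m-1)+\floor{i'/2}^2}$. One checks that $\binom{2m}2-\binom m2-\binom{m+1}2=m^2-m$, which holds. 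Diagonalizability is inherited from \Cref{prop:helper-matrix-even}.

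For odd $n=2m+1$, \Cref{prop:helper-matrix-odd} with $\varepsilon=1$, $a=0$ gives $\pm q^{-\binom m2-\binom{m+1}2+(m-i)^2-i}$. Multiplying by $q^{\binom{2m+1}2}=q^{m(2m+1)}$ and using $\binom m2+\binom{m+1}2=m^2$ gives $\pm q^{m^2+m+(m-i)^2-i}$. Reindexing $i\mapsto m-i$ turns this into $\pm q^{m^2+i^2+i}=\pm q^{m^2+i(i+1)}$, as claimed. These are distinct, so $I$ is diagonalizable.

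The main obstacle is the bookkeeping in the exponent and sign matching, especially making sure the parity-vanishing conventions agree (both require $i+j-n$ even and nonnegative). Should $a=0$ fail to match because of an off-by-one in the $\binom{i+a}{2}$ convention, I would instead solve for $a$ linearly from the coefficient of $i$ and absorb the resulting diagonal rescaling, which is a similarity $q^{ci}$, into a conjugation.
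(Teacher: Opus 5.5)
Your proposal is correct and follows the paper's argument: the paper also identifies the matrix of $I$ with $q^{\binom n2}A$ for the helper matrix of \Cref{subsec:helper} with $(n,\varepsilon,a)=(n,1,0)$, reads off the eigenvalues from \Cref{prop:helper-matrix-even,prop:helper-matrix-odd}, and your exponent matching and reindexing checks are right. The one slip is the garbled middle term in your even-case display, which should read $-\binom m2-\binom{m+1}2$; your subsequent identity $\binom{2m}2-\binom m2-\binom{m+1}2=m^2-m$ uses the correct term.
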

\begin{proof}
    The assumptions imply that the intertwining operator $I$ has a uniform matrix representation given in \Cref{prop:trivial-matrix-coeffs,prop:quadratic-matrix}. Now, we define $A$ as in \Cref{subsec:helper} with $(n,\varepsilon,a)=(n,1,0)$ so that $q^{\binom n2}A$ is the matrix representation of $I$. The result now follows from combining \Cref{prop:helper-matrix-even,prop:helper-matrix-odd} and simplifying the eigenvalues.
\end{proof}
It remains to cover the case where $\beta^2=1$ but $\beta\ne1$ when $G=\GO_{2n}$. This will follow by submatrix considerations via \Cref{lem:general-from-special-matrix}.
\begin{theorem} \label{thm:go-quad-eigens}
    Take $G=\GO_{2n}$. Fix a character $\chi\colon P\to\CC^\times$, which we write as $\chi=(\alpha\circ m)(\beta\circ\chi_{\det})$. Assume that $\beta^2=1$ but $\beta\ne1$.
    \begin{listalph}
        \item If $n=2m$ is even, then the intertwining operator $I\circ I$ on $\Ind_P^G\chi$ is diagonalizable and has eigenvalues
        \[\left\{q^{2m(m-1)+2i^2}:0\le i\le m\right\}.\]
        \item If $n=2m+1$ is odd, then the intertwining operator $I\circ I$ on $\Ind_P^G\chi$ is diagonalizable and has eigenvalues
        \[\left\{q^{2m^2+2i(i+1)}:0\le i\le m\right\}.\]
    \end{listalph}
\end{theorem}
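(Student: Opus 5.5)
By \Cref{lem:general-from-special-matrix}, I would reduce everything to the group $S=\O_{2n}$, where the matrix of $I$ is already understood. There $\chi|_S=\beta\circ\chi_{\det}$ with $\beta^2=1$, so \Cref{prop:quadratic-matrix} and the proof of \Cref{thm:o-trivial-eigens} give the matrix of $I^S$ on $(\Ind_{P^S}^S\chi)^\chi$ in the basis $f_0,\ldots,f_n$. Namely, it is $q^{\binom n2}A_n$, with $A_n$ the helper matrix of \Cref{subsec:helper} for $(n,\varepsilon,a)=(n,1,0)$.

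For $G=\GO_{2n}$ the index sets are $A=\{r\text{ even}\}$ and $B=\{r\equiv n\pmod 2\}$. By \Cref{lem:general-from-special-matrix}, the matrix of $I\circ I$ on $(\Ind_P^G\chi)^\chi$ is the product $[I^S]_{A\times B}[I^S]_{B\times A}$. By \Cref{rem:special-coefs-vanish}, $I^S$ sends $\op{span}\{f_j:j\in A\}$ into $\op{span}\{f_i:i\in B\}$, and it sends $\op{span}\{f_j:j\in B\}$ into $\op{span}\{f_i:i\in A\}$. Hence this product is exactly the restriction of $(I^S)^2$ to the invariant subspace spanned by the even basis vectors. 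So it suffices to find the eigenvalues and diagonalizability of $(I^S)^2$ on that subspace, and this is a question about the parity blocks $A_n^\pm$ already introduced in the proofs of \Cref{prop:helper-matrix-even,prop:helper-matrix-odd}.

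\textbf{Case $n=2m$.} Here $A=B$, and the restriction of $I^S$ to the even vectors is $q^{\binom n2}A_{2m}^+$. The proof of \Cref{prop:helper-matrix-even} conjugates $A_{2m}^+$ to an upper-triangular matrix with diagonal entries $(-1)^kq^{-\binom m2-\binom{m+1}2+(m-k)^2}$ for $0\le k\le m$. Squaring, and using $2\binom{2m}2-2\binom m2-2\binom{m+1}2=2m(m-1)$, gives the eigenvalues $q^{2m(m-1)+2(m-k)^2}$. Setting $i=m-k$ yields the stated set. These values are pairwise distinct for $0\le i\le m$, so the operator is diagonalizable.

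\textbf{Case $n=2m+1$.} Now $I^S$ swaps the even and odd subspaces. The restriction of $(I^S)^2$ to the evens is $q^{2\binom n2}$ times the product of the two off-diagonal blocks in the proof of \Cref{prop:helper-matrix-odd}, which up to $\varepsilon=1$ are $A_{2m}^+$ and $A_{2m+2}^-$. That proof triangularizes both blocks by the same $(m+1)\times(m+1)$ conjugating matrix. Therefore the product is similar to an upper-triangular matrix whose diagonal entries are the products of corresponding diagonal entries. These are exactly the squares $\lambda_i^2$ of the eigenvalues $\pm\lambda_i$ computed there, that is, the $2\times2$ block determinants up to sign.

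Tracking the factor $q^{\binom n2}$ as in \Cref{thm:o-trivial-eigens}, these squares become $q^{2m^2+2i(i+1)}$ for $0\le i\le m$. They are distinct, so this restriction of $I\circ I$ is diagonalizable with the claimed eigenvalues.

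The main obstacle is the odd case. One must make sure that the same conjugating matrix simultaneously triangularizes both parity blocks, in the correct order so that the product is $A^-A^+$ restricted to the evens rather than the odds, and then check the exponent bookkeeping. I would verify this first by reading off the diagonal entries listed in the proof of \Cref{prop:helper-matrix-odd}. An alternative route is to use that $(I^S)^2$ restricted to evens and to odds are $XY$ and $YX$, which share a spectrum. Combined with the known spectrum $\{q^{2m^2+2i(i+1)}\}$, each value occurring twice in $(I^S)^2$, this forces each value to occur once on the evens.
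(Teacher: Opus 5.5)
Your proposal is correct and follows essentially the paper's route. You reduce via \Cref{lem:general-from-special-matrix} to the parity blocks of $q^{\binom n2}A$ for $\O_{2n}$. In the even case you restrict the upper-triangularization of \Cref{prop:helper-matrix-even} to the even basis vectors. In the odd case you use that one $(m+1)\times(m+1)$ matrix triangularizes both $A_{2m}^+$ and $A_{2m+2}^-$, since $M_{2k,2l}=M_{2k+1,2l+1}$ does not depend on $n$, so the diagonal of the composite is the product of the two diagonals, giving $q^{2m^2+2i(i+1)}$. Viewing $I\circ I$ as $(I^S)^2$ restricted to the even invariant subspace, and the backup argument that $XY$ and $YX$ share a characteristic polynomial, are tidy reformulations of the same proof rather than a different one.
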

\begin{proof}
    We combine the computations of \Cref{thm:o-trivial-eigens} with \Cref{lem:general-from-special-matrix}. Let $I^0$ be the $(n+1)\times(n+1)$ matrix representation of the corresponding operator for $\O_{2n}$, and let $I^+$ and $I^-$ be the submatrices of $I^0$ given in \Cref{lem:general-from-special-matrix} which are the matrix representations of $I$ on $\left(\Ind_P^G\chi\right)^\chi\to\left(\Ind_P^G\chi^J\right)^\chi$ and $\left(\Ind_P^G\chi^J\right)^\chi\to\left(\Ind_P^G\chi\right)^\chi$, respectively. We must compute the eigenvalues of $I^-\circ I^+$. We now handle the even and odd cases separately.
    \begin{listalph}
        \item If $n=2m$ is even, then $I^+$ and $I^-$ are both the submatrix of $I^0$ consisting of the even rows and columns. Tracking through the proof of \Cref{thm:o-trivial-eigens} (and notably its input \Cref{prop:helper-matrix-even}), we will show that $I^+$ and $I^-$ are both diagonalizable with eigenvalues
        \[\left\{(-1)^{i}q^{m(m-1)+(m-i)^2}:0\le i\le m\right\},\]
        which completes the proof upon squaring our eigenvalues. Indeed, defining $A$ as in \Cref{thm:o-trivial-eigens}, we see that $I^+=I^-$ is a submatrix of $q^{\binom n2}A$, and the upper-triangularization of $A$ given in \Cref{prop:helper-matrix-even} restricts to an upper-triangularization of $I^+$ and $I^-$. In particular, we can read off the eigenvalues by taking the correct entries from (a) of \Cref{thm:o-trivial-eigens} (or equivalently, \Cref{prop:helper-matrix-even}).
        \item If $n=2m+1$ is odd, then $I^+$ is the submatrix of $I^0$ consisting of the even columns and odd rows, and $I^-$ is the submatrix of $I^0$ consisting of the odd columns and even rows. Arguing as above, we define $A$ as in \Cref{thm:o-trivial-eigens} so that we see that $I^0$ is similar to $q^{\binom{n}2}A$, which in turn \Cref{prop:helper-matrix-odd} explains is similar to the block diagonal matrix with block diagonal given by the $2\times2$ matrices
        \[\left\{(-1)^iq^{\binom{2m+1}2-\binom{m}2-\binom{m+1}2+(m-i)^2}\begin{bmatrix}
            & 1 \\
            q^{-2i}
        \end{bmatrix}:0\le i\le m\right\}.\]
        All stated similarities preserve the even and odd subspaces, so we see that the matrices $I^+$ and $I^-$ will be similar to the diagonal matrices achieved by reading off the block diagonal in the matrix described above. As such, computing the composite $I^-\circ I^+$ tells us that the eigenvalues are
        \[\left\{q^{2m^2+2i(i+1)}:0\le i\le m\right\}\]
        after a little simplification.
        \qedhere
    \end{listalph}
\end{proof}

\subsection{Eigenvalues for Symplectic Groups}
We continue with the notation of \Cref{sec:rep-theory}, taking $G\in\{\GSp_{2n},\Sp_{2n}\}$. We begin with the case $\beta=1$.
\begin{theorem}
    Take $G\in\{\GSp_{2n},\Sp_{2n}\}$. Fix a character $\chi\colon P\to\CC^\times$, which we write as $\chi=(\alpha\circ m)(\beta\circ\chi_{\det})$. Assume that $\beta=1$ so that $\chi=\chi^J$.
    \begin{listalph}
        \item If $n=2m$ is even, then the intertwining operator $I$ on $\Ind_P^G\chi$ is diagonalizable and has eigenvalues given by
        \[\left\{\pm q^{m^2+i(i+1)}:0\le i\le m-1\right\}\sqcup\left\{q^{\binom{2m+1}2}\right\}.\]
        \item If $n=2m+1$ is odd, then the intertwining operator $I$ on $\Ind_P^G\chi$ is diagonalizable and has eigenvalue multiset given by
        \[\left\{-(-1)^{m-\floor{\frac i2}}q^{m(m+1)+\floor{\frac i2}^2}:1\le i\le n+1\right\}.\]
    \end{listalph}
\end{theorem}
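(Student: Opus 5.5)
The plan is to imitate the proof of \Cref{thm:eigens-gl}. The symplectic analogue of $\sigma A\sigma$ there will be the helper matrix $A$ of \Cref{subsec:helper}, and the two halves of the statement will come from \Cref{prop:helper-matrix-even,prop:helper-matrix-odd}.

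First I would identify $I$ with its matrix representation $[\varepsilon(i,j)Q(i,j)R(i,j)]_{0\le i,j\le n}$ from \Cref{prop:trivial-matrix-coeffs} in the symplectic case. Every entry with $i+j\ge n$ is nonzero, but the data only depend on $\floor{(i+j-n)/2}$. So the entries with $i+j-n=2k$ and $2k+1$ come in pairs, with matching $q$-Pochhammer denominators $(q^2;q^2)_k$. This suggests conjugating by the same bidiagonal antidiagonal matrix $T$ used in \Cref{thm:eigens-gl}, with $T_{ij}=1$ if $i+j=n$, $T_{ij}=-1$ if $i+j=n-1$, and $T_{ij}=0$ otherwise. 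The row differences $I_{n-i,k}-I_{n-i-1,k}$ should cancel one parity class in each pair. The claim to verify is
\[TIT^{-1}=q^{\binom{n+1}2}\begin{bmatrix} c\,\sigma A'\sigma \\ (*,\ldots,*) & 1\end{bmatrix}.\]
Here $A'$ is the helper matrix of \Cref{subsec:helper} with $n$ replaced by $n-1$, and $c$, $\varepsilon$ and $a$ are constants to be determined. The bottom-right entry $1$, times $q^{\binom{n+1}2}$, is $If_n(\eta_n)=q^{\binom{n+1}2}$ (only $D=0$ contributes). I expect $a=1$ or $a=2$, chosen so that $-\binom{i+a}2$ matches the shift of $-\binom{i+1}2$ under the reindexing $i\mapsto n-1-i$. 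The sign $\varepsilon$ should be forced by comparing the factor $(-1)^{i+j-n-\floor{(i+j-n)/2}}$ against $\varepsilon^{(i-j-n)/2}(-1)^{(i+j-n)/2}$. If the correct $\varepsilon$ is $-1$, the overall scalar $c$ should absorb it, so that the odd case gives real eigenvalues.

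As in \Cref{thm:eigens-gl}, the identity would be checked entrywise via $TI=(\cdots)T$, with casework on $i$ and $k$: $i=k=n$; $i=n>k$; $i<n=k$; $k<i<n$, where everything vanishes; $i=k<n$; and $i<k<n$. The last case splits further by the parity of $k-i$. In each case both sides should reduce to a product of $q$-Pochhammer symbols times a simple polynomial factor, like the factor $1-2q^{n-i}+q^{2n-i-k}$ that appeared for $\GL$. I expect this to be the main obstacle: getting the exponents and signs to align exactly, and confirming that the odd-parity entries of the helper matrix really come out zero rather than merely small. If the plain bidiagonal $T$ fails, I would try weighting the $-1$ entries by a power of $q$, chosen to make the paired entries cancel exactly.

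Granting the identity, the eigenvalues of $I$ are $q^{\binom{n+1}2}$ together with $q^{\binom{n+1}2}c$ times those of $A'$. If $n=2m$, then $A'$ has odd size parameter $2m-1$, and \Cref{prop:helper-matrix-odd} with $m-1$ in place of $m$ gives pairs $\pm q^{(\cdots)}$. Simplifying the exponent should give $\{\pm q^{m^2+i(i+1)}:0\le i\le m-1\}$, and adding $q^{\binom{2m+1}2}$ gives (a). If $n=2m+1$, then $A'$ has even parameter $2m$, and \Cref{prop:helper-matrix-even} gives the multiset in (b) after simplification. The extra eigenvalue $q^{\binom{n+1}2}$ should be the missing member of that multiset: with $i\in\{n,n+1\}$ its sign is $-(-1)^{m-m}=-1$ on the paper's formula, so the signs need to be checked carefully here.

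For diagonalizability, in (a) all eigenvalues are distinct. In (b) there are repeated eigenvalues, so I would not rely on distinctness. Instead I would invoke the multiplicity-freeness of \Cref{prop:ind-mult-free}, which already makes $I$ diagonalizable on $\Ind_P^G\chi$ since $\chi=\chi^J$. Alternatively, the even/odd block splitting in the proof of \Cref{prop:helper-matrix-even} gives diagonalizability directly.
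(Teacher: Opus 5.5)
Your proposal is correct and is essentially the paper's proof: the plain $T$ works, with helper parameters $(n,\varepsilon,a)=(n-1,1,2)$, $c=-1$ and bottom row $(1,\ldots,1)$. Your appeal to multiplicity-freeness for diagonalizability is a valid substitute for the paper's remark that $A$ is diagonalizable and does not have $-1$ as an eigenvalue.

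There are two small slips. First, the bottom-right entry $q^{\binom{n+1}2}$ comes from row $0$ of $I$, i.e.\ $If_n(\eta_0)$ (where $D$ is empty), not from $If_n(\eta_n)=\left|\Sym_n^\times\right|$. Second, in (b) the extra eigenvalue is the $i=n+1$ term, where $\floor{i/2}=m+1$ gives sign $+1$, so there is no sign conflict.
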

\begin{proof}
    This proof follows \Cref{thm:eigens-gl} upon replacing the computations of \Cref{prop:gl-helper} with those of \Cref{prop:helper-matrix-even,prop:helper-matrix-odd}.
    Identify $I$ with its matrix representation. We will apply \Cref{prop:helper-matrix-even,prop:helper-matrix-odd} after conjugating $I$ by the $(n+1)\times(n+1)$ matrix $T$ defined by
    \[T_{ij}=\begin{cases}
        -1 & \text{if }i+j=n-1, \\
        1 & \text{if }i+j=n, \\
        0 & \text{otherwise},
    \end{cases}\]
    where $i,j\in\{0,\ldots,n\}$.
    Now, define $A$ as in \Cref{subsec:helper} with $(n,\varepsilon,a)=(n-1,1,2)$. Then we claim that
    \begin{equation}
        TIT^{-1}\stackrel?=q^{\binom{n+1}2}\begin{bmatrix}
            -\sigma A\sigma \\
            (1,\ldots,1) & 1
        \end{bmatrix}, \label{eq:conjugate-sp-trivial}
    \end{equation}
    where $\sigma$ is the permutation matrix sending $e_i\mapsto e_{n-1-i}$ for all $i\in\{0,\ldots,n-1\}$, and $(1,\ldots,1)$ is a row vector consisting of all $1$s. Before proving the claim, we explain how it implies the result. Using the eigenvalues of \Cref{prop:helper-matrix-even,prop:helper-matrix-odd} (and some simplification) checks that the eigenvalues in the theorem are correct. Because $A$ is diagonalizable and does not have $-1$ as an eigenvalue, diagonalizability follows.

    It remains to show \eqref{eq:conjugate-sp-trivial}. It is enough to check $TI=q^{\binom{n+1}2}\begin{bsmallmatrix}
        -\sigma A\sigma \\ (1,\ldots,1) & 1
    \end{bsmallmatrix}T$ because $T$ is invertible, which one can do using the same sort of casework engaged in \Cref{thm:eigens-gl}. We will not write out the results for brevity.
\end{proof}
We now move on to the case where $\beta^2=1$ but $\beta\ne1$. We first treat $\Sp_{2n}$.
\begin{theorem} \label{thm:sp-quadratic-eigens}
    Take $G=\Sp_{2n}$. Fix a character $\chi\colon P\to\CC^\times$, which we write as $\chi=\beta\circ\chi_{\det}$. Suppose $\beta^2=1$ but $\beta\ne1$ so that $\chi=\chi^J$.
    \begin{listalph}
        \item If $n=2m$ is even, then the intertwining operator $I$ on $\Ind_P^G\chi$ is diagonalizable and has eigenvalues
        \[\left\{\beta(-1)^m(-1)^{m-\floor{\frac i2}}q^{m^2+\floor{\frac{i}2}^2}:0\le i\le n\right\}.\]
        \item If $n=2m+1$ is odd, then the intertwining operator $I$ on $\Ind_P^G\chi$ is diagonalizable and has eigenvalues
        \[\left\{\pm\sqrt{\beta(-1)}q^{m(m+1)+i(i+1)+\frac12}:0\le i\le m\right\}.\]
    \end{listalph}
\end{theorem}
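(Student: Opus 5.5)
The plan is to handle this the way \Cref{thm:o-trivial-eigens} was handled: show that the matrix of $I$ from \Cref{prop:quadratic-matrix} is a scalar multiple of the helper matrix of \Cref{subsec:helper} for a suitable choice of $(n,\varepsilon,a)$, and then read off the eigenvalues from \Cref{prop:helper-matrix-even,prop:helper-matrix-odd}. No conjugation by the antidiagonal matrix $T$ should be needed here, unlike the $\beta=1$ symplectic case. The reason is that for $\beta\ne1$ the matrix is supported only where $i+j-n$ is even, exactly as in the orthogonal case.

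First, write out the $(i,j)$ entry of $I$ from \Cref{prop:quadratic-matrix} for $G=\Sp_{2n}$, with $k\coloneqq(i+j-n)/2$. The sign is $\beta(-1)^{n-i+k}(-1)^k$. Since $\beta(-1)=\pm1$, we have $\beta(-1)^{n-i+k}=\beta(-1)^{(j-i+n)/2}=\beta(-1)^{(i-j-n)/2}\beta(-1)^{i}$; I would rather keep the exponent $(i-j-n)/2$ and absorb the remaining parity dependence separately. The power of $q$ is $\binom{n+1}2-\binom{i+1}2+2\binom{k+1}2-k=\binom{n+1}2-\binom{i+1}2+k^2$. Compare this with $Q_A$ at $a=1$, which is $-\binom{i+1}2+k\cdot k$. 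These agree up to the constant $q^{\binom{n+1}2}$. The $R$-factor coincides with $R_A$.

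So I expect $I=q^{\binom{n+1}2}D\,A\,D^{-1}$ with $(n,\varepsilon,a)=(n,\beta(-1),1)$, where $D$ is a diagonal sign matrix, possibly the identity, used to fix the $\beta(-1)^{i}$-type discrepancy. The exponents satisfy $n-i+k\equiv (i-j-n)/2 \pmod 2$ exactly when $n-i\equiv i-j-n+\ldots$, and this is the main bookkeeping step to verify. If a factor of the form $\beta(-1)^{i}$ or $\beta(-1)^{j}$ remains, I would use a diagonal conjugation $D=\op{diag}(\beta(-1)^{i})$. Such a conjugation multiplies the entry by $\beta(-1)^{i-j}=\beta(-1)^{i+j}$, and since $i+j\equiv n$ on the support, this is a constant $\beta(-1)^n$ times the entry, which can be absorbed into the scalar. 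Either way, $I$ is similar to $\pm q^{\binom{n+1}2}A$ with $\varepsilon=\beta(-1)$ and $a=1$.

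Then plug into the helper propositions. For $n=2m$, \Cref{prop:helper-matrix-even} gives the eigenvalues $\varepsilon^m(-1)^{\floor{i/2}}q^{-\binom{m+1}2-\binom{m+1}2+(m-\floor{i/2})^2}$. Multiplying by $q^{\binom{2m+1}2}=q^{2m^2+m}$ gives the exponent $m^2-m+(m-\floor{i/2})^2$, which after reindexing $\floor{i/2}\mapsto m-\floor{i/2}$ should match $m^2+\floor{i/2}^2$ up to the $q^{-m}$ correction. This is where the sign convention and the exact value of $a$ must be re-checked. If $a=1$ is off, I would solve for $a$ by matching the $k$-linear term in $Q$, namely $k(k+a-1)$ against $k^2$.

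For $n=2m+1$, \Cref{prop:helper-matrix-odd} gives $\pm\sqrt{\beta(-1)}\,q^{-a/2-\cdots}$. The half-integer exponent $q^{1/2}$ in the statement is consistent with $a=1$, which is reassuring. Diagonalizability comes directly from those propositions. The main obstacle is purely the exponent and sign bookkeeping: confirming the identification $I\sim \pm q^{\binom{n+1}2}A$ and that the resulting closed forms simplify to those in the statement.
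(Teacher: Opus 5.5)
Your route is the paper's. Its proof consists of the single observation that the matrix of $I$ from \Cref{prop:quadratic-matrix} equals $q^{\binom{n+1}2}A$, where $A$ is the helper matrix with $(n,\varepsilon,a)=(n,\beta(-1),1)$, followed by reading off \Cref{prop:helper-matrix-even,prop:helper-matrix-odd}. That identification and the final simplification are the whole proof, however, and your bookkeeping goes wrong in both places. These need to be fixed rather than hedged.

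\textbf{The signs.} With $k=(i+j-n)/2$ you have $n-i+k=(n-i+j)/2=-(i-j-n)/2$. Since $\beta(-1)=\pm1$, this gives $\beta(-1)^{n-i+k}=\beta(-1)^{(i-j-n)/2}$ exactly. There is no leftover factor $\beta(-1)^i$, and $I=q^{\binom{n+1}2}A$ with no conjugation and no $\pm$.

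This matters because your fallback would not have worked. Conjugating by $\op{diag}(d_i)$ multiplies the $(i,j)$ entry by $d_i/d_j$. For $d_i=\beta(-1)^i$ this is, as you computed yourself, the constant $\beta(-1)^n$ on the support, so it cannot cancel a factor depending on the row index alone. A genuine extra factor $\beta(-1)^i$ would be a left multiplication by a diagonal sign matrix, which in general changes the spectrum.

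\textbf{The exponent.} You have $\binom{2m+1}2-\binom{1+m}2-\binom{m+1}2=2m^2+m-m(m+1)=m^2$, not $m^2-m$. So for $n=2m$ the eigenvalues are $\beta(-1)^m(-1)^{\floor{i/2}}q^{m^2+(m-\floor{i/2})^2}$, and there is no $q^{-m}$ discrepancy to explain. In particular there is no reason to revisit $a$: matching $-\binom{i+a}2+k(k+a-1)$ against $-\binom{i+1}2+k^2$ up to a constant already forces $a=1$.

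For $n=2m+1$, set $j=m-i$. The total exponent is $\binom{2m+2}2-\tfrac12-m(m+1)+(m-i)^2-i=m(m+1)+j(j+1)+\tfrac12$, which is (b).

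\textbf{A caution on (a).} The substitution $j=m-\floor{i/2}$ matches the statement's list as a set, not index by index. The multiplicities coming out of \Cref{prop:helper-matrix-even} are one for $j=0$ and two for each $1\le j\le m$; compare the range $1\le i\le n+1$ used in \Cref{thm:o-trivial-eigens}.
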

\begin{proof}
    Define $A$ as in \Cref{subsec:helper}, with $(n,\varepsilon,a)=(n,\beta(-1),1)$. Then the matrix representation of $I$ is the matrix $q^{\binom{n+1}2}A$. The result now follows by plugging into the eigenvalue computations of \Cref{prop:helper-matrix-even,prop:helper-matrix-odd} and simplifying.
\end{proof}
As in \Cref{thm:go-quad-eigens}, we now use submatrix arguments to compute the eigenvalues for $\GSp_{2n}$.
\begin{theorem}
    Take $G=\GSp_{2n}$. Fix a character $\chi\colon P\to\CC^\times$, which we write as $\chi=(\alpha\circ m)(\beta\circ\chi_{\det})$. Suppose that $\beta^2=1$ but $\beta\ne1$.
    \begin{listalph}
        \item If $n=2m$ is even, then the intertwining operator $I\circ I$ on $\Ind_P^G\chi$ is diagonalizable and has eigenvalues
        \[\left\{q^{2m^2+2i^2}:0\le i\le m\right\}.\]
        \item If $n=2m+1$ is odd, then the intertwining operator $I\circ I$ on $\Ind_P^G\chi$ is diagonalizable and has eigenvalues
        \[\left\{\beta(-1)q^{2m(m+1)+2i(i+1)+1}:0\le i\le m\right\}.\]
    \end{listalph}
\end{theorem}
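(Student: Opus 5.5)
We follow the template of \Cref{thm:go-quad-eigens}, combining the $\Sp_{2n}$ computation of \Cref{thm:sp-quadratic-eigens} with \Cref{lem:general-from-special-matrix}. Let $I^0$ be the $(n+1)\times(n+1)$ matrix of the intertwining operator for $S=\Sp_{2n}$ with the character $\chi|_S$. By \Cref{thm:sp-quadratic-eigens}, $I^0=q^{\binom{n+1}2}A$, where $A$ is the helper matrix of \Cref{subsec:helper} with parameters $(n,\varepsilon,a)=(n,\beta(-1),1)$. Recall $A=\{r:r\equiv0\pmod2\}$ and $B=\{r:r\equiv n\pmod2\}$. Then \Cref{lem:general-from-special-matrix} identifies $I^+$ (the map $(\Ind_P^G\chi)^\chi\to(\Ind_P^G\chi^J)^\chi$) with the submatrix of $I^0$ having rows in $B$ and columns in $A$. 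It identifies $I^-$ with the submatrix having rows in $A$ and columns in $B$. We must find the eigenvalues of $I^-\circ I^+$ and show it is diagonalizable.

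For $n=2m$ even, $A=B$ is the set of even indices. So $I^+=I^-$ is the even-even block of $q^{\binom{n+1}2}A$; this is the block $A_{2m}^+$ from the proof of \Cref{prop:helper-matrix-odd}. The conjugating matrix $M$ in \Cref{prop:helper-matrix-even} preserves parity, so it restricts to an upper-triangularization of this block. Its diagonal consists of the even-index entries $i=2k$ of the diagonal of $B$, namely $\varepsilon^m(-1)^kq^{-\binom{1+m}2-\binom{m+1}2+(m-k)^2}$ for $0\le k\le m$. These are pairwise distinct, so the block is diagonalizable. Multiplying by $q^{\binom{2m+1}2}$ and squaring, the factors $\varepsilon^{2m}$ and $(-1)^{2k}$ vanish, giving eigenvalues $q^{2\binom{2m+1}2-4\binom{m+1}2+2(m-k)^2}=q^{2m^2+2(m-k)^2}$. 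Reindexing yields (a). The only thing to check is the exponent arithmetic: $2m(2m+1)-2m(m+1)=2m^2$.

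For $n=2m+1$ odd, $I^+$ has rows odd and columns even, and $I^-$ has rows even and columns odd. The proof of \Cref{prop:helper-matrix-odd} exhibits a single parity-preserving conjugation that puts $A_{2m+1}$ into block upper-triangular form. Its $2\times2$ diagonal blocks are $\varepsilon^{m+1}(-1)^iq^{c_i}\begin{bsmallmatrix}&1\\ \varepsilon q^{-a-2i}&\end{bsmallmatrix}$, where $c_i=-\binom{a+m}2-\binom{m+1}2+(m-i)^2$. Because the conjugation preserves parity, it simultaneously triangularizes the off-diagonal pieces $I^+$ and $I^-$. Hence $I^-I^+$ is upper triangular with diagonal entries equal to the determinant-like product of the two off-diagonal entries of each block, times $q^{2\binom{n+1}2}$. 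That product is $\varepsilon q^{2c_i-a-2i}$.

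With $a=1$ and $\varepsilon=\beta(-1)$, the exponent is
\[2\binom{2m+2}2-2\binom{m+1}2-2\binom{m+1}2+2(m-i)^2-1-2i.\]
This simplifies to $2m(m+1)+2(m-i)^2+2(m-i)+1$ after rewriting $2(m-i)^2-2i=2(m-i)^2+2(m-i)-2m$. Reindexing $i\mapsto m-i$ gives $\beta(-1)q^{2m(m+1)+2i(i+1)+1}$. These values are distinct, so $I\circ I$ is diagonalizable; alternatively, diagonalizability follows from multiplicity-freeness via \Cref{prop:ind-mult-free}. The main obstacle is justifying that the block triangularization of \Cref{prop:helper-matrix-odd} really restricts to triangularizations of $I^+$ and $I^-$ separately, with compatible bases on the even and odd sides. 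This holds because the same $(m+1)\times(m+1)$ matrix conjugates both $A_{2m}^+$ and $A_{2m+2}^-$, as noted in that proof. After that, only exponent bookkeeping remains.
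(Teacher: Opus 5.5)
Your proposal is correct and takes the paper's route. The paper proves this by running the argument of \Cref{thm:go-quad-eigens} with \Cref{thm:sp-quadratic-eigens} (that is, $I^0=q^{\binom{n+1}2}A$ with $(n,\varepsilon,a)=(n,\beta(-1),1)$) in place of \Cref{thm:o-trivial-eigens}, and restricts the parity-preserving triangularizations of \Cref{prop:helper-matrix-even,prop:helper-matrix-odd} to the blocks $I^\pm$ of \Cref{lem:general-from-special-matrix}, exactly as you do; your exponent bookkeeping in both parities is right, and the only blemish is that you use $A$ and $B$ both for the index sets and for the helper matrices.
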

\begin{proof}
    The argument is exactly the same as \Cref{thm:go-quad-eigens} upon replacing the computations of \Cref{thm:o-trivial-eigens} with \Cref{thm:sp-quadratic-eigens}.
\end{proof}